\definecolor{MidnightBlack}{rgb}{0.1,0.1,.34}
\definecolor{MidnightBlue}{rgb}{0.1,0.1,0.43}
\definecolor{Black}{rgb}{0,0, 0}
\definecolor{Blue}{rgb}{0, 0 ,1}
\definecolor{Red}{rgb}{1, 0 ,0}
\definecolor{White}{rgb}{1, 1, 1}
\definecolor{grey}{rgb}{.6, .6, .6}
\definecolor{Mygreen}{rgb}{.0, .7, .0}
\definecolor{Yellow}{rgb}{.55,.55,0}
\definecolor{Mustard}{rgb}{1.0, 0.86, 0.35}
\definecolor{applegreen}{rgb}{0.55, 0.71, 0.0}
\definecolor{darkturquoise}{rgb}{0.0, 0.81, 0.82}
\definecolor{celestialblue}{rgb}{0.29, 0.59, 0.82}
\definecolor{green_yellow}{rgb}{0.68, 1.0, 0.18}
\definecolor{crimsonglory}{rgb}{0.75, 0.0, 0.2}
\definecolor{darkmagenta}{rgb}{0.30, 0.0, 0.30}
\definecolor{magenta}{rgb}{0.50, 0.0, 0.50}
\definecolor{internationalorange}{rgb}{1.0, 0.31, 0.0}
\definecolor{darkorange}{rgb}{1.0, 0.55, 0.0}
\definecolor{ao}{rgb}{0.0, 0.5, 0.0}
\definecolor{awesome}{rgb}{1.0, 0.13, 0.32}
\definecolor{darkcyan}{rgb}{0.0, 0.50, 0.50}
\definecolor{violet}{rgb}{0.93, 0.51, 0.93}
\definecolor{brown}{rgb}{0.65, 0.16, 0.16}
\definecolor{orange}{rgb}{1.0, 0.65, 0.0}
\definecolor{cornflowerblue}{rgb}{0.39, 0.58, 0.93}
\newcommand{\darkmagenta}[1]{{\color{darkmagenta}#1}}
\newcommand{\magenta}[1]{{\color{magenta}#1}}
\newcommand{\cref}[1]{\autoref{#1}}
\newcommand{\remove}[1]{}
\newcounter{func}
\newcommand{\funref}[1]{\hyperref[#1]{f_{\ref*{#1}}}} 
\tikzset{black node/.style={draw, circle, fill = black, minimum size = 5pt, inner sep = 0pt}}
\tikzset{white node/.style={draw, circlternary_treese, fill = white, minimum size = 5pt, inner sep = 0pt}}
\tikzset{normal/.style = {draw=none, fill = none}}
\tikzset{lean/.style = {draw=none, rectangle, fill = none, minimum size = 0pt, inner sep = 0pt}}
\tikzset{diam/.style={draw, diamond, fill = black, minimum size = 7pt, inner sep = 0pt}}
\tikzset{
	position/.style args={#1:#2 from #3}{
		at=($(#3)+(#1:#2)$)
	}
}
\tikzset{
  v:main/.style = {draw, circle, scale=0.8, thick,fill=black,inner sep=0.7mm},
  v:ghost/.style = {inner sep=0pt,scale=1},
  v:marked/.style = {circle, scale=1.3, fill=DarkGoldenrod,opacity=0.4},
  >={latex},
  e:main/.style = {line width=1pt}
}
\newcommand{\Bcal}{\mathcal{B}}
\newcommand{\Ccal}{\mathcal{C}}
\newcommand{\Dcal}{\mathcal{D}}
\newcommand{\Ecal}{\mathcal{E}}
\newcommand{\Gcal}{\mathcal{G}}
\newcommand{\Hcal}{\mathcal{H}}
\newcommand{\Ical}{\mathcal{I}}
\newcommand{\Kcal}{\mathcal{K}}
\newcommand{\Lcal}{\mathcal{L}}
\newcommand{\Mcal}{\mathcal{M}}
\newcommand{\Ocal}{\mathcal{O}}
\newcommand{\Pcal}{\mathcal{P}}
\newcommand{\Qcal}{\mathcal{Q}}
\newcommand\Rcal{\mathcal{R}}
\newcommand{\Scal}{\mathcal{S}}
\newcommand{\Tcal}{\mathcal{T}}
\newcommand{\Ucal}{\mathcal{U}}
\newcommand{\Vcal}{\mathcal{V}}
\newcommand{\Wcal}{\mathcal{W}}
\newcommand{\Xcal}{\mathcal{X}}
\newcommand{\Zcal}{\mathcal{Z}}
\newcommand{\Bbbb}{\mathbb{B}}
\newcommand{\Hbbb}{\mathbb{H}}
\newcommand{\Kbbb}{\mathbb{K}}
\newcommand{\Nbbb}{\mathbb{N}}
\newcommand{\Pbbb}{\mathbb{P}}
\newcommand{\Rbbb}{\mathbb{R}}
\newcommand{\Sbbb}{\mathbb{S}}
\newcommand{\Vbbb}{\mathbb{V}}
\newcommand{\eqdef}{\stackrel{{\scriptsize\rm def}}{=}}
\definecolor{Red}{rgb}{1, 0 ,0}
\definecolor{Blue}{rgb}{0, 0 ,1}
\newtheorem{theorem}{Theorem}[section]
\newaliascnt{question}{theorem}
\newaliascnt{lemma}{theorem}
\newtheorem{lemma}[lemma]{Lemma}
\newaliascnt{claim}{theorem}
\newtheorem{claim}[claim]{Claim}
\newaliascnt{invariant}{theorem}
\newaliascnt{proposition}{theorem}
\newtheorem{proposition}[proposition]{Proposition}
\newaliascnt{observation}{theorem}
\newtheorem{observation}[observation]{Observation}
\newaliascnt{corollary}{theorem}
\newtheorem{corollary}[corollary]{Corollary}
\newaliascnt{definition}{theorem}
\newaliascnt{conjecture}{theorem}
\newtheorem{conjecture}[conjecture]{Conjecture}
\newaliascnt{counterexample}{theorem}
\newcommand{\hh}{\end{document}}
\newcommand{\p}{{\sf p}}
\newcommand{\sobs}{{\sf sobs}}
\newcommand{\obs}{{\sf obs}}
\newcommand{\cobs}{\mbox{\rm \textsf{cobs}}}
\newcommand{\gall}{\mathcal{G}_{{\text{\rm  \textsf{all}}}}}
\newcommand{\tw}{{\sf tw}\xspace}
\newcommand{\pack}{{\sf pack}\xspace}
\newcommand{\cover}{{\sf cover}\xspace}
\newcommand{\cupall}{{\pmb{\bigcup}}}
\newenvironment{cproof}{\proof[Proof of claim]}{\endproof}
\newcommand*\samethanks[1][\value{footnote}]{\footnotemark[#1]}
\newcommand{\ground}{\ensuremath{\mathsf{ground}}}
\newcommand{\vcells}{\text{$\mathsf{vcells}$}\xspace}
\newcommand{\poly}{\text{$\mathsf{poly}$}\xspace}
\newcommand{\simple}{\text{$\mathsf{simple}$}\xspace}
\newcommand{\exceptional}{\text{$\mathsf{exceptional}$}\xspace}
\newcommand{\inG}{\text{$\mathsf{inner}$}\xspace}
\newcommand{\outG}{\text{$\mathsf{outer}$}\xspace}
\newcommand{\FPT}{\textsf{FPT}\xspace}
\newcommand{\nonplanar}{\mathsf{npl}}
\newcommand{\eg}{{\sf eg}\xspace}
\newcommand{\Perimeter}{{\sf Perimeter}\xspace}
\newcommand{\bd}{{\sf bd}\xspace}
\newcommand{\major}{{host}\xspace}
\newcommand{\majors}{{hosts}\xspace}
\newcommand{\FFcal}{\mathcal{M}}
\newcommand{\numen}[1]{\ifthenelse{\not\equal{#1}{1}}{#1}{}}
\definecolor{vagelisColour}{RGB}{0, 65, 130}
\title{Delineating Half-Integrality of the Erdős-Pósa
\\ Property for Minors: the Case of Surfaces%
\thanks{Emails of authors: 
\texttt{christophe.paul@lirmm.fr}, \texttt{evangelos.protopapas@lirmm.fr},~~\texttt{sedthilk@thilikos.info}, \texttt{sebastian.wiederrecht@gmail.com}\ .}
\thanks{All authors were supported by the French-German Collaboration ANR/DFG Project UTMA (ANR-20-CE92-0027). Dimitrios M. Thilikos was also supported by the Franco-Norwegian project PHC AURORA 2024. }}
\author{Christophe Paul\thanks{LIRMM, Univ Montpellier, CNRS, Montpellier, France.} \and Evangelos Protopapas\samethanks \and Dimitrios M. Thilikos\samethanks \and    Sebastian Wiederrecht\thanks{Discrete Mathematics Group, Institute for Basic Science, Daejeon, South Korea (IBS-R029-C1).}}
\date{\empty}
\begin{document}

\maketitle

\begin{abstract}
\noindent In 1986 Robertson and Seymour proved a generalization of the seminal result of Erdős and Pósa on the duality of packing and covering cycles: A graph has the \textsl{Erdős-Pósa property} for minors if and only if it is planar.
In particular, for every non-planar graph $H$ they gave examples showing that the Erdős-Pósa property does not hold for $H.$
Recently, Liu confirmed a conjecture of Thomas and showed that every graph has the half-integral Erdős-Pósa property for minors.
Liu's proof   is non-constructive and to this date, with the exception of a small number of examples, no constructive proof is known.

In this paper, we initiate the delineation of the half-integrality of the Erdős-Pósa property for minors.
We conjecture that for every graph $H,$ there exists a \textsl{unique} (up to a suitable equivalence relation) graph parameter $\text{\scriptsize\textsf{EP}}_H$ such that $H$ has the Erdős-Pósa property in a minor-closed graph class $\mathcal{G}$ if and only if $\sup\{ \text{\scriptsize\textsf{EP}}_H(G) \mid G\in\mathcal{G} \}$ is finite.
We prove this conjecture for the class $\mathcal{H}$ of Kuratowski-connected shallow-vortex minors by showing that, for every non-planar $H\in\mathcal{H},$ the parameter $\text{\scriptsize\textsf{EP}}_H(G)$ is \textsl{precisely} the maximum order of a Robertson-Seymour counterexample to the Erdős-Pósa property of $H$ which can be found as a minor in $G.$ 
Our results are constructive and imply, for the first time, parameterized algorithms that find either a packing, or a cover, or one of the Robertson-Seymour counterexamples, certifying the existence of  a half-integral packing for the graphs in $\mathcal{H}.$

\end{abstract}
\medskip

\noindent{\bf Keywords:} Erdős-Pósa property; Graph parameters; Graph minors; Universal obstructions; Surface containment;

\newpage
\tableofcontents
\newpage

\section{Introduction}\label{@hollingdale}

In 1965 Erdős and Pósa published a paper \cite{ErdosPosaOriginal} proving the following min-max duality theorem.
\begin{eqnarray*}
\begin{minipage}{14cm}
\textsl{For every positive integer $k$ and every graph $G,$ either $G$ contains $k$ pairwise vertex-disjoint cycles, or there exists a set $S\subseteq V(G)$ with $|S|=\Ocal(k\cdot\log(k))$ such that $G-S$ has no cycles.}
\end{minipage}\label{@exaltation}
\end{eqnarray*} 

 This result has since become central in both, graph theory and algorithm design  \cite{robertson1986graph,bodlaender1994disjoint,kakimura2011packing,van2019tight,liu2022packing}.
A collection of pairwise vertex-disjoint cycles is called a (cycle) \emph{packing}, while a set $S$ as above is commonly referred to as a (cycle) \emph{cover} or \emph{transversal}.
In a more general context, one may consider any family $\FFcal$ of graphs and define $\mathsf{pack}_{\FFcal}(G)$ to be the largest size of a \textsl{packing} of members of $\FFcal$ in $G,$ while $\mathsf{cover}_{\FFcal}(G)$ is the minimum size of a set $S\subseteq V(G)$ such that $G-S$ contains\footnote{At this point we consider  containment to be defined through the subgraph relation.} no member of $\FFcal.$
Clearly $\mathsf{pack}_{\FFcal}(G)\leq\mathsf{cover}_{\FFcal}(G).$
We say that $\FFcal$ has the \emph{Erdős-Pósa property} (EP-property) in some graph class $\Gcal$ if there exists some function $f$ such that, for every $G\in\Gcal,$ it holds that $\mathsf{cover}_{\FFcal}(G)\leq f(\mathsf{pack}_{\FFcal}(G))$.

If we now fix some graph $H$ and select $\FFcal_H$ to be the class of all   graphs containing $H$ as a minor, we enter the realm of the \textsl{Graph Minors Series} of Robertson and Seymour.
In Graph Minors V. \cite{robertson1986graph}, as an implication of their  min-max duality between the treewidth of a graph and its largest grid-minor, they prove that
\begin{eqnarray}
\begin{minipage}{13cm}
\textsl{For every graph $H,$  $\FFcal_{H}$ has the EP-property in the class of all graphs if and only if $H$ is planar.}
\end{minipage}\label{@reiteration}
\end{eqnarray} 

The tools and ideas of Erdős-Pósa-type dualities have since found many applications and interpretations \cite{bruce1999mangoes,marx2012obtaining,huynh2019unified,BasteST20acomp,fomin2020hitting1,RaymondT16}. 
Moreover, the study of   Erdős-Pósa dualities has led to important advances in structural graph theory.
As an example, the proof for the directed version of Erdős and Pósa's result \cite{reed1996packing}, known as \textsl{Younger's Conjecture} has paved the way for proving the \textsl{Directed Grid Theorem} \cite{kawarabayashi2015directed}.

\paragraph{Half-integral Erdős-Pósa.} We call a collection  $\Ccal$ of subgraphs of $G$  a \emph{half-integral packing} of $\FFcal$ in $G$
if every graph in $\Ccal$ belongs to $\FFcal$ and no vertex of $G$ belongs to more than two of them. We define $\nicefrac{1}{2}\text{-}\mathsf{pack}_{\FFcal}(G)$ to be the maximum size of such a half-integral packing. Accordingly, a graph $H$ has the $\nicefrac{1}{2}$EP-property in some graph class $\Gcal$ if  there exists some function $f$ such that, for every $G\in\Gcal,$ it holds that $\mathsf{cover}_{\FFcal}(G)\leq f(\nicefrac{1}{2}\text{-}\mathsf{pack}_{\FFcal}(G)).$

Attempting to generalize Robertson and Seymour's seminal result on planar graphs, Robin Thomas conjectured the following relaxation of the EP-property (see \cite{kawarabayashi2007half,liu2022packing}).
\begin{eqnarray}
\begin{minipage}{13cm}
\textsl{For every graph $H,$ $\FFcal_{H}$ has the $\nicefrac{1}{2}$EP-property in the class of all graphs.}
\end{minipage}\label{@empirically}
\end{eqnarray} 

The above conjecture was recently proven by Liu \cite{liu2022packing}.
As before, it is apparent from the definition that $\nicefrac{1}{2}\text{-}\mathsf{pack}_{\FFcal}(G)\leq 2\cdot\mathsf{cover}_{\FFcal}(G).$
Hence, Liu's theorem reveals  a min-max duality between half-integral packing and covering in all graphs.
Moreover, it is a consequence of the Graph Minors Theorem \cite{robertson2004graph} that for each graph $H$ and every graph parameter $\mathsf{p}\in\{\mathsf{pack}_{\FFcal_H},\nicefrac{1}{2}\text{-}\mathsf{pack}_{\FFcal_H},\mathsf{cover}_{\FFcal_H}\}$ one can decide in time $f_{H,\mathsf{p}}(k)|V(G)|^3$ if $\mathsf{p}(G)\geq k$ (or $\mathsf{p}(G)\leq k$ in the  case where $\mathsf{p}=\mathsf{cover}_{\FFcal_H}$) \cite{fellows1988nonconstructive} for some function $f_{H,\mathsf{p}}.$

In light of the above results, it appears that the story of the Erdős-Pósa property in the regime of graph minors, from both a structural and an algorithmic perspective, is quite complete.
However, we should stress the following two points.

\smallskip
\noindent \textbf{First:} The algorithm from \cite{fellows1988nonconstructive} is inherently \textsl{non-constructive}.
Indeed, while for $\mathsf{pack}_{\FFcal_H}$ and $\mathsf{cover}_{\FFcal_H}$ constructive algorithms are known \cite{SauST23apices,SauST22apicesalg,KawarabayashiKR12Thedisjoint}, with the exception of some small special cases \cite{kawarabayashi2007half}, \textsl{no} such results exist for $\nicefrac{1}{2}\text{-}\mathsf{pack}_{\FFcal_H},$ not even approximation algorithms.

\smallskip
\noindent \textbf{Second:} Let $\mathcal{C}$ be a graph class and let $\mathsf{p}$ be a graph parameter. We say that $\p$ is \emph{bounded} in $\Gcal$ if there exists $c\in \Nbbb$ such that, for every $G\in\Gcal$, it holds that  $\p(G)≤c$.
The proof of the ``if'' direction of \eqref{@reiteration} was based on the fact that, for every $H,$ $\FFcal_{H}$ has the EP-property in every graph class of bounded treewidth.
This leads  to the following \textsl{intermediate} question:
\textsl{For which graph parameters $\mathsf{p}$  it holds that $\FFcal_H$ has the EP-property in every class where $\mathsf{p}$ is bounded?}
To be specific, if we fix some graph $H,$ is it 
possible to find a
graph parameter $\text{\scriptsize\textsf{EP}}_H$ such that $\FFcal_H$ has the EP-property in some minor-closed\footnote{A graph class is \emph{minor-closed} if it contains all minors of its graphs.} graph class $\Gcal$ \textsl{if and only if} 
$\text{\scriptsize\textsf{EP}}_H$ is bounded in $\Gcal$?
Indeed, we conjecture that for every graph $H,$ such a graph parameter exists and \emph{precisely} delineates the half-integrality of the Erdős-Pósa property of $\FFcal_H.$

\begin{conjecture}\label{@physiologist}
For every graph $H$ there exists a minor-monotone graph 
parameter $\text{\scriptsize\textsf{EP}}_H$ such that $\FFcal_H$ has the Erdős-Pósa property in a 
minor-closed graph class $\mathcal{G}$ if and only if $\text{\scriptsize\textsf{EP}}_H$ is bounded in $\mathcal{G}.$
\end{conjecture}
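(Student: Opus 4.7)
The natural candidate for $\text{\scriptsize\textsf{EP}}_H$ is dictated by the introduction itself: define $\text{\scriptsize\textsf{EP}}_H(G)$ to be the largest order of a Robertson-Seymour counterexample to the EP-property of $H$ that appears as a minor of $G$ (with the convention $\text{\scriptsize\textsf{EP}}_H\equiv 0$ when $H$ is planar, so that the ``bounded'' side of the equivalence is vacuous and the statement reduces to \eqref{@reiteration}). Minor-monotonicity is automatic from the definition: any counterexample minor-contained in a minor of $G$ is already minor-contained in $G$. The content of the conjecture therefore lies entirely in the two directions of the equivalence, and the requirement that the right notion of ``order'' be used is precisely what should pin $\text{\scriptsize\textsf{EP}}_H$ down up to the equivalence relation alluded to in the abstract.

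The forward (``only if'') direction is a compactness-type argument. If $\text{\scriptsize\textsf{EP}}_H$ is unbounded on a minor-closed $\mathcal{G}$, then $\mathcal{G}$ contains, for each $k$, a Robertson-Seymour counterexample $R_k$ of order $k$. By the defining property of these graphs, $\mathsf{pack}_{\FFcal_H}(R_k)$ stays bounded (typically by a constant) while $\mathsf{cover}_{\FFcal_H}(R_k)$ grows with $k$, so no function $f$ can satisfy $\mathsf{cover}_{\FFcal_H}\le f(\mathsf{pack}_{\FFcal_H})$ on $\mathcal{G}$. The only subtlety is to make sure that the order parameter used in the definition of $\text{\scriptsize\textsf{EP}}_H$ really controls $\mathsf{cover}_{\FFcal_H}$ from below; this is what should force the ``correct'' normal form for Robertson-Seymour counterexamples and explains why the parameter is essentially unique.

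The converse is the heart of the conjecture. My plan is to mirror the classical proof of \eqref{@reiteration} but at a higher level, with $\text{\scriptsize\textsf{EP}}_H$ playing the role that treewidth plays there. The decisive step is a \emph{relative structure theorem}: excluding a sufficiently large Robertson-Seymour counterexample as a minor should force a tree-decomposition of $G$ whose torsos are near-embeddings, in surfaces of bounded Euler genus, with bounded vortex depth, bounded apex set, and bounded adhesion, all quantified by $\text{\scriptsize\textsf{EP}}_H(G)$. Given such a decomposition, the proof would extract, on each torso, either an $H$-packing or an $H$-cover, and then combine them along the tree decomposition via standard Erdős-Pósa-style glueing arguments. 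The hard part — and the point at which the paper restricts to Kuratowski-connected shallow-vortex minors — is establishing a precise \textbf{dichotomy} on each near-embedded torso: either its structure already reveals a Robertson-Seymour counterexample of controlled order (contradicting the bound on $\text{\scriptsize\textsf{EP}}_H$), or it admits the desired packing-or-cover. Generalizing this dichotomy to arbitrary $H$, which requires handling deeper vortices, multiple apices, and the interaction of several non-planar societies drawn on the same surface, is in my view the main obstacle and the principal content that remains to be proved beyond the results of the paper.
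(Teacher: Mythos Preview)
The statement you are attempting to prove is \cref{@physiologist}, which the paper explicitly presents as an \emph{open conjecture}; the paper proves it only for antichains in $\mathbb{H}=\mathbb{K}\cap\mathbb{V}$ (this is \cref{@admiration}), and discusses in \cref{@regressing} why extending beyond this class is genuinely hard. So there is no proof in the paper to compare your proposal against, and your closing paragraph correctly recognises that what you have written is a research plan rather than a proof.

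That said, there is a real gap even at the level of the plan. Your proposed definition of $\text{\scriptsize\textsf{EP}}_H$ as ``the largest order of a Robertson-Seymour counterexample appearing as a minor of $G$'' presupposes that we know, for arbitrary $H$, what the family of counterexamples is and how to parametrise it by an integer ``order''. The paper's point is precisely that outside $\mathbb{K}\cap\mathbb{V}$ this family is not just the Dyck grids: \cref{@collection} exhibits counterexamples of \emph{unbounded} Euler genus for graphs that fail Kuratowski-connectivity or are not shallow-vortex minors, and the paper's final conjecture in \cref{@regressing} circumscribes exactly when bounded-genus parametric families suffice. So your ``relative structure theorem'' step cannot simply bound Euler genus, vortex depth, and apex number in terms of $\text{\scriptsize\textsf{EP}}_H$; the excluded objects are themselves of unbounded genus, and the structure they force is unknown. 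The paper also notes (just after \cref{@physiologist}) that even the \emph{existence} of a canonical $\text{\scriptsize\textsf{EP}}_H$ is tied to the open $\omega^2$-well-quasi-ordering conjecture for minors, so your appeal to ``essential uniqueness'' is itself conjectural. In short: your outline matches the paper's successful strategy for $\mathbb{H}$, but the circularity in your definition of $\text{\scriptsize\textsf{EP}}_H$ and the need for structure theorems excluding unbounded-genus parametric graphs are exactly the obstacles the paper leaves open.
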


Notice that for any planar graph $P,$ we can simply set $\text{\scriptsize\textsf{EP}}_P$ to be the constant zero-function and thus, \cref{@physiologist} trivially holds for all planar  graphs, because of  \eqref{@reiteration}.
However, for non-planar graphs, the existence of such a parameter does not follow  from any known results.
Even {if} $\text{\scriptsize\textsf{EP}}_H$ would exist for some particular non-planar graph $H,$ it would be desirable to have some  constructive, and ideally  canonical, characterization of  $\text{\scriptsize\textsf{EP}}_H.$
That is, we aim at a description  of $\text{\scriptsize\textsf{EP}}_H$ that allows for algorithmic applications.
There are reasons to believe that $\text{\scriptsize\textsf{EP}}_H$ exists 
and moreover has some canonical representation. It has  recently been shown in \cite{paul2023graph}, that this   assertion is tied to the conjecture that graphs are $ω^2$-well quasi ordered by minors, which is a wide open question in order theory (see the classic result of  Thomas in  \cite{Thomas1989wellquasi} for the most advanced result on this conjecture).

The contribution of this paper is \textsl{resolving} \cref{@physiologist} for an infinite family of non-planar graphs.
Moreover, our results are constructive and provide a canonical  representation of $\text{\scriptsize\textsf{EP}}_H$ yielding parameterized approximation algorithms\footnote{This means that our algorithms run in time $f(k)\cdot |V(G)|^{\Ocal(1)}$ for some computable function $f$ where $k$ is the size of the half-integral packing we are looking for.} for $\nicefrac{1}{2}\text{-}\mathsf{pack}_H$ for any $H$ in our family.

\subsection{The threshold of half-integrality}

In Graph Minors V \cite{robertson1986graph}, towards proving the ``only if'' direction of  \eqref{@reiteration}, Robertson and Seymour gave counterexamples of   graphs where non-planar graphs cannot have the EP-property.
Let us investigate such an example for the graph $K_5.$
One may embed $K_5$ in both the projective plane and the torus, but it is impossible to have two disjoint drawings of $K_5$ in either of them. 

\begin{figure}[ht]
  \begin{center}
  \scalebox{.7}{\includegraphics{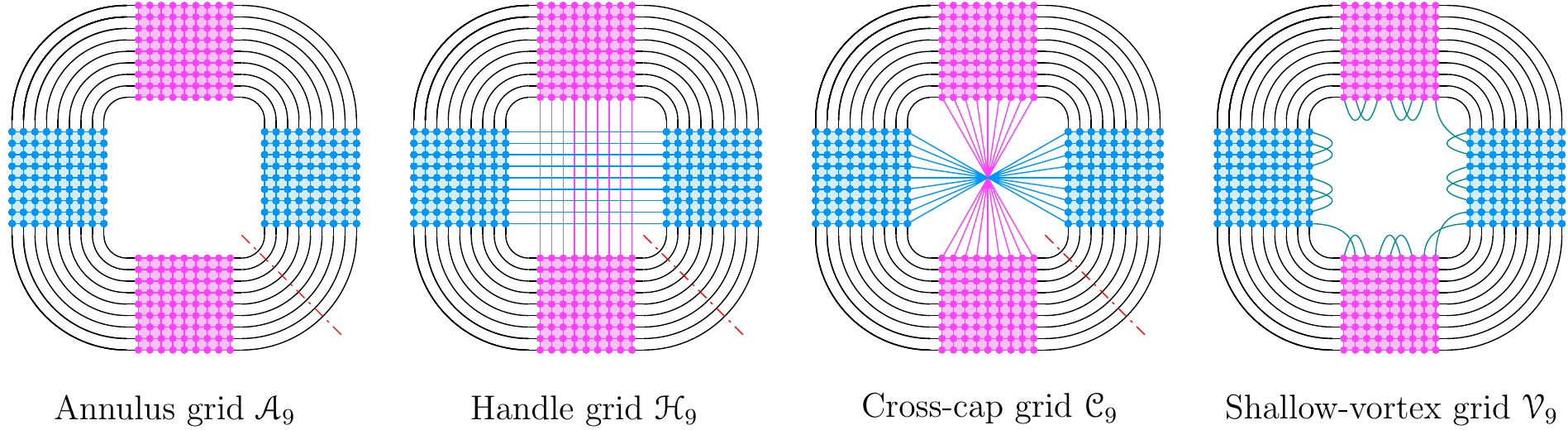}}
  \end{center}
    \caption{The parametric graphs representing the annulus grid $\mathscr{A}_{k},$ the handle grid $\mathscr{H}_{k},$ the cross-cap grid $\mathscr{C}_{k},$ and the shallow-vortex grid $\mathscr{V}_{k}.$}
  \label{@exorbitant}
\end{figure}

Consider the two graphs in the middle of \cref{@exorbitant} and notice that the number of cycles and paths can be scaled.
We call the infinite sequences defined by such ``scalable graphs'' \emph{parametric graphs}\footnote{We postpone the formal definition of parametric graphs to a later point. See \cref{sec_nots_defs}.}.
These parametric graphs are the \emph{handle grid} $\mathscr{H}$ and the \emph{cross-cap grid} $\mathscr{C}$ and represent the torus and the projective plane respectively (see \cref{@headstrong}). 
None of them contains two disjoint copies of graphs from $\FFcal_{K_5},$  both have  a half-integral packing of $Ω(k)$ members of $\FFcal_{K_5},$ and any minimum-size cover of all $\FFcal_{K_5}$  has $Ω(k)$ vertices.

The seminal theorem of Reed \cite{bruce1999mangoes} on the $\nicefrac{1}{2}$EP-property of odd cycles exhibits exactly this kind of behaviour.
Reed showed that odd cycles have the EP-property in every \emph{odd-minor}\footnote{\emph{Odd-minors} are a variant of the minor relation that preserves the parity of cycles. For example, bipartite graphs are exactly the $K_3$-odd-minor-free graphs. We refer the interested reader to \cite{geelen2009odd} for a formal definition.}-closed graph class excluding an \textit{Escher-wall}, while the Escher-wall itself is a counterexample to the EP-property of odd cycles.
Here, the $k$-Escher-wall is obtained by taking exactly the bipartite graphs from the parametric graph $\mathscr{C},$  representing the projective plane, and subdividing each of the ``crossing'' edges once.
The result is a non-bipartite graph where every odd cycle must use an odd number of these subdivided edges.
In the realm of odd-minors, this establishes a positive instance of \cref{@physiologist}:    pick 
$\text{\scriptsize\textsf{EP}}_{K_{3}}$ as the maximum $k$ for which $G$ contains the $k$-Escher-wall as an odd minor.

It is tempting to suspect that Reed's strategy can  apply for the Erdős-Pósa property for minors.
That is, for $K_5$, the two parametric graphs $\mathscr{H}$ and $\mathscr{C}$ are essentially the {only} counterexamples for the EP-property of $\FFcal_{K_5}$ and excluding both of them as minors always yields a class in which $\FFcal{K_5}$ exhibits the EP-property.
Notice that this would imply that  the $\subseteq$-minimal minor-closed classes where 
the EP-property fails for $\FFcal_{K_5}$ are precisely two: the class of graphs embeddable in the projective plane and the class of graphs embeddable in the torus.
Clearly, both these two classes have bounded Euler genus.
Our next step is to observe that this is not true in general.

\paragraph{Kuratowski-connectivity.}
We say that a graph $G$ is \emph{Kuratowski-connected} if for every separation $(A,  B)$ of $G$ of order at most $3,$ 
if there is a component $C$ of $G[A \setminus B]$ and a component $D$ of $G[B \setminus A],$ such that every vertex in $A \cap B$ has a neighbour in $V(C)$ and a neighbour in $V(D),$ 
then one of $G[A],$ $G[B]$ can be drawn in a disc $Δ$ with $A \cap B$ drawn in the boundary of $Δ.$ 
We denote by $\mathcal{K}$ the set of all Kuratowski-connected graphs.
This definition was introduced by Robertson, Seymour, and Thomas as a tool for their characterization of \textsl{linklessly embeddable graphs} via a finite set of minimal obstructions \cite{robertson1995sachs} (see also \cite{VANDERHOLST2009512,NORIN2023184}).

\begin{figure}[ht]
  \begin{center}
  \scalebox{.7}{\includegraphics{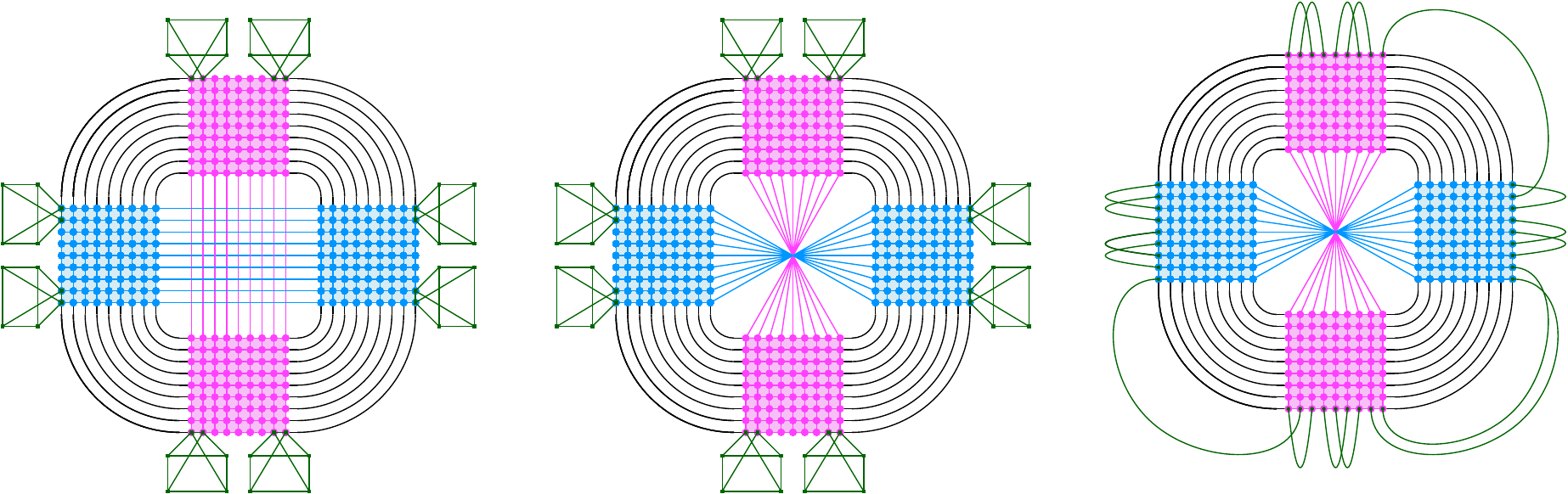}}
  \end{center}
    \caption{The two first parametric graphs serve as  counterexamples for the Erdős-Pósa property of the graph $J$.
    The third  parametric graph is a counterexample for the Erdős-Pósa property of $K_8.$ All three parametric graphs have unbounded Euler-genus.
    For the first two this is witnessed by a large packing of $K_{3,3}$ while the last one can be observed to contain $K_{3,r}$ as a minor.}
  \label{@collection}
\end{figure}

Consider the graph $J$ obtained by identifying two adjacent vertices of $K_{3,3}$ with two vertices of $K_5$ and observe that $J$ is not Kuratowski-connected.
Similar to $K_5,$ there cannot be two disjoint drawings of $K_{3,3}$ on the torus.
So, if we take the parametric graph representing the torus ($\mathscr{H}_{k}$) or the projective plane ($\mathscr{C}_{k}$) from \cref{@exorbitant} and paste ``many'' copies of $K_{3,3}$ around the ``outer cycle'', we obtain a parametric graph without two disjoint $J$-minors but where no small vertex-set can hit all $J$-minors (see the two first graphs in \cref{@collection}).

\paragraph{Shallow-vortex minors.}
There is a second property, that poses a similar issue.
In \cite{ThilikosW22Killingconf} Thilikos and Wiederrecht introduced the parametric graph $\mathscr{V}$ of \textsl{shallow-vortex grids}
where $\mathscr{V}_k$ is obtained from the annulus grid $\mathscr{A}_{k}$ by adding $k$ consecutive crossings in its internal cycle (see the fourth graph in \cref{@exorbitant} for an illustration of $\mathscr{V}_{9}$). The class $\Vcal$ of \emph{shallow-vortex minors} was defined in \cite{ThilikosW22Killingconf}  as the class containing all minors of $\mathcal{V}_{k},$ for all $k\in\Nbbb.$
Notice that $K_8$ is a Kuratowski-connected graph.
It was shown by Curticapean and Xia \cite{curticapean2022parameterizing} that $K_8$ is not a shallow-vortex minor.
However, this is the case for $K_{3,r},$  for every $r\in\mathbb{N},$ which implies that the  parametric graph $\mathscr{V}_{k}$ has unbounded Euler-genus.
If we now paste the $k$ extra crossings of $\mathscr{V}_{k}$ to the ``outer cycle'' of $\mathscr{C}_{k},$ we obtain a parametric graph that is a counterexample for the EP-property of $K_8$ but which is of unbounded Euler-genus (see the last graph in \cref{@collection}).
These observations indicate that, if we want to understand the graphs for which the counterexamples of Robertson and Seymour precisely define the boundary to the $\nicefrac{1}{2}$EP-property, we have to consider the graphs in $\mathcal{K}\cap \mathcal{V}.$

\paragraph{Our contribution.}
The main combinatorial result (stated in \cref{@admiration} in its full generality) is  that \cref{@physiologist} holds, for every graph $H$ that is Kuratowski-connected and a shallow vortex minor.
Moreover, for every such non-planar $H,$ $\text{\scriptsize\textsf{EP}}_H(G)$ is equivalent to the exclusion of the parametric graphs representing some particular set of surfaces   where $H$ embeds.
Therefore, for the non-planar graphs $H\in\mathcal{K}\cap\mathcal{V},$ the boundary between the Erdős-Pósa property and its half-integral relaxation is drawn \textsl{precisely} by a set of surfaces, depending on $H.$
Notice, that the class $\mathcal{K} \cap \mathcal{V}$ encompasses, apart from planar graphs, several important graphs such as  $K_5,K_{3,3},K_{4,4}$, $K_6,$ $K_7,$
and the entire \textsl{Petersen family}.
These last observations imply that our results extend, both algorithmically and combinatorially, to the half-integral packing of \textsl{links} and \textsl{knots}.

\subsection{Notation and definitions}\label{sec_nots_defs}

Let us introduce some notation in order to present our results in full generality.
A \emph{minor antichain} is a family $\mathcal{A}$ of graphs such that no graph $G_1\in \mathcal{A}$ is a minor of another graph $G_2\in\mathcal{A}\setminus\{ G_1\}.$
Since we focus on the minor relation, we refer to minor antichains simply as \emph{antichains}.
Let us denote by $\mathbb{K}$ the collection of all antichains $\mathcal{A}$ where \textsl{every} member of $\mathcal{A}$ is Kuratowski-connected.
Moreover, let us denote by $\mathbb{V}$ the collection of all antichains containing \textsl{at least} one shallow-vortex minor.
Finally, let $\mathbb{P}$ be the collection of all antichains  containing at least one planar graph.
We define $\mathbb{H} \coloneqq \mathbb{K} \cap \mathbb{V},$ $\mathbb{K}^{-} \coloneqq \mathbb{K} \setminus \mathbb{P}$ and $\mathbb{H}^{-} \coloneqq \mathbb{H} \setminus \mathbb{P}.$

\subparagraph*{The Erdős-Pósa property for antichains.}
Let $H$ and $G$ be graphs. A subgraph $H'\subseteq G$ is an \emph{$H$-host} in $G$ if $H$ is a minor of $H'.$
An \emph{$H$-packing} in $G$ is a collection of pairwise vertex-disjoint $H$-hosts in $G.$
An \emph{$H$-cover} is a set $S\subseteq V(G)$ such that $G-S$ is $H$-minor-free.
A \emph{half-integral $H$-packing} is a collection of $H$-hosts in $G$ such that no vertex of $G$ belongs to more than two of them. 

Given an antichain $\Zcal,$ we say that a subgraph $H'\subseteq G$ is a $\Zcal $-host in $G$ if it is an $H$-host for some $H\in\Zcal .$
A $\Zcal$-\emph{packing} is an $H$-packing of \textsl{some} $H\in\Zcal $ and a $\Zcal$-\emph{cover} is an $H$-cover for \textsl{all} $H\in\Zcal ,$ finally a \emph{half-integral $\Zcal$-packing} is a half-integral $H$-packing for some $H\in\Zcal .$
We define the two graph parameters $\cover_{\Zcal}$ and $\pack_{\Zcal}$ as follows.

$$\cover_{\Zcal}(G) \coloneqq \min\{k \mid \mbox{$G$ has an $\Zcal$-cover of size $k$}\}\mbox{~and}$$
$$\pack_{\Zcal}(G) \coloneqq \max\{k\mid \mbox{$G$ has an $\Zcal$-packing of size $k$}.$$

We say that $\Zcal $ has the \emph{Erdős-Pósa property} in a graph class $\mathcal{G}$ if there exists some function $f\colon\mathbb{N}\to\mathbb{N}$ such that $\cover_{\Zcal}(G)\leq f(\pack_{\Zcal}(G)),$ for all $G\in\mathcal{G}.$

\paragraph{Equivalence of graph parameters.}
We use $\gall$ for the class of all graphs.
Given two graph parameters $\mathsf{p},\mathsf{q}\colon\mathcal{G}_{\mathsf{all}}\to\mathbb{N},$ we say that $\mathsf{p}$ and $\mathsf{q}$ 
are \emph{equivalent}, and write $\mathsf{p}\sim\mathsf{q},$ if there exists a function $f\colon\mathbb{N}\to\mathbb{N}$ 
such that,  for every graph  $G,$ $\mathsf{p}(G) \leq f(\mathsf{q}(G))$ and 
$\mathsf{q}(G) \leq f(\mathsf{p}(G)).$ We refer to the function $f$ as the \emph{gap} of this equivalence.

Our result is the identification of a graph parameter $\text{\scriptsize\textsf{EP}}$ such that $\Zcal$ has the Erdős-Pósa property in a minor-closed graph class $\Gcal$ with single-exponential gap if and only if $\text{\scriptsize\textsf{EP}}$ is bounded in $\Gcal$, for every $\Zcal \in \mathbb{H}$.

\paragraph{Surfaces and embeddability.}
We consider a containment relation $\preceq$ between surfaces where we write $Σ\preceq Σ'$ if the surface $Σ'$ can be obtained by adding handles or cross-caps to the surface $Σ.$
The \emph{empty surface} will be denoted by $Σ^{\varnothing}$ and the surface obtained by adding $h$ handles and $c$ cross-caps to the sphere  $Σ^{(0,0)}$ is denoted by $Σ^{(h,c)}.$
Its \emph{Euler-genus} is defined to be $2h+c.$ 
Notice that, by Dyck's Theorem \cite{Dyck1888Beitrage}, we may assume that $c\leq 2$ for all surfaces.
Let $\mathbb{S}$ be a set of surfaces. We say that $\mathbb{S}$ is  \emph{closed}, if $Σ\in\mathbb{S}$ and $Σ'\preceqΣ$ imply that $Σ'\in\mathbb{S}$ and  that it  
 is  \emph{proper}, if it does not contain all surfaces.
If $\mathbb{S}$ is closed and proper we define the ``surface obstruction set''  $\mathsf{sobs}(\mathbb{S})$ as the set of all $\preceq$-minimal surfaces which do not belong to $\mathbb{S}.$
It is easy to observe that  $\mathsf{sobs}(\mathbb{S})$ always consists of one or two surfaces \cite{thilikos2023excluding}.
Notice that 
$\mathsf{sobs}(\emptyset)=\{Σ^{\varnothing}\},$ 
$\mathsf{sobs}(\{Σ^{\varnothing}\})=\{Σ^{(0,0)}\},$ 
$\mathsf{sobs}(\{Σ^{\varnothing},Σ^{(0,0)}\})=\{Σ^{(1,0)},Σ^{(0,1)}\},$ 
and, for a more complicated example, 
$\sobs(\{Σ^{\varnothing},Σ^{(0,0)},Σ^{(0,1)},Σ^{(0,2)}\})=\{Σ^{(1,0)}\}.$

We say that a graph $G$ is \emph{embeddable} in a surface $Σ$ (or \emph{$Σ$}-embeddable) if it has a drawing in $Σ$ without crossings.
The \emph{Euler genus} of a graph $G,$ denoted by $\mathsf{eg}(G),$ is the smallest Euler genus of a surface where $G$ is embeddable.

\paragraph{Parametric graphs and Dyck-grids.}
A \emph{parametric graph} is a sequence $\mathscr{G}=\langle \mathscr{G}_i\rangle_{i\in\mathbb{N}}$ of graphs indexed by non-negative integers.
We say that $\mathscr{G}$ is \emph{minor-monotone} if for every $i\in\mathbb{N}$ we have that $\mathscr{G}_i$ is a minor of $\mathscr{G}_{i+1}.$
All parametric graphs considered in this paper are minor-monotone.
We write $\mathscr{G}^{(1)}\lesssim\mathscr{G}^{(2)}$ for two minor-monotone parametric graphs $\mathscr{G}^{(1)}$ and $\mathscr{G}^{(2)}$ if there exists a function $f\colon\mathbb{N}\to\mathbb{N}$ such that for every $i\in\mathbb{N}$ it holds that $\mathscr{G}^{(1)}_i$ is a minor of $\mathscr{G}^{(2)}_{f(i)}.$
A \emph{minor-monotone parametric family} is a finite collection of $\mathfrak{G}=\{ \mathscr{G}^{(j)} \mid j\in[r] \}$ of minor-monotone parametric graphs such that for distinct $i,j\in[r]$ it holds that $\mathscr{G}^{(i)}\not\lesssim\mathscr{G}^{(j)}$ and $\mathscr{G}^{(j)}\not\lesssim\mathscr{G}^{(i)}.$
We define the minor-monotone parameter
\begin{eqnarray}
&\mathsf{p}_{\mathfrak{G}}(G)\coloneqq \max\{ k \mid  \text{there exists }i\in[r]\text{ such that }G\text{ contains }\mathscr{G}^{(i)}_k\text{ as a minor}\}.\label{@liberating}
\end{eqnarray}
 
The three parametric graphs $\mathscr{A}=\langle \mathscr{A}_{k}\rangle_{k\in \mathbb{N}},$ $\mathscr{H}=\langle \mathscr{H}_{k}\rangle_{k\in \mathbb{N}},$  and $\mathscr{C}=\langle \mathscr{C}_{k}\rangle_{k\in \mathbb{N}}$ are defined as follows:
The  \emph{annulus grid} $\mathscr{A}_{k}$ is the $(4k,k)$-cylindrical grid\footnote{An $(n \times  m)$-cylindrical grid is a Cartesian product of a cycle on $n$ vertices and a path on $m$ vertices.} depicted in the far left of \cref{@exorbitant}.
The \emph{handle grid} $\mathscr{H}_{k}$ (resp. \emph{cross-cap grid} $\mathscr{C}_{k}$) is obtained by adding in $\mathscr{A}_{k}$ edges as indicated in the middle left (resp. middle right) part of \cref{@exorbitant}. We refer to the added edges as \emph{transactions} of the handle  grid $\mathscr{H}_{k}$ or of  the cross-cap grid $\mathscr{C}_{k}.$

Let now $h\in\mathbb{N}$ and $c\in[0,2].$
We define the parametric graph $\mathscr{D}^{(h,c)}=\langle \mathscr{D}_{k}^{(h,c)}\rangle_{k\in \mathbb{N}}$ by taking one copy of $\mathscr{A}_{k},$ $h$ copies of $\mathscr{H}_{k},$ and  $c\in[0,2]$ copies of $\mathscr{C}_{k},$ then ``cut'' them along the dotted {red} line, as in \cref{@exorbitant}, and join them together in the cyclic order $\mathscr{A}_{k},\mathscr{H}_{k},\ldots,\mathscr{H}_{k},\mathscr{C}_{k},\ldots,\mathscr{C}_{k},$ as indicated in \cref{@freemasonry}.

\begin{figure}[ht]
  \begin{center}
  \scalebox{0.88}{\includegraphics{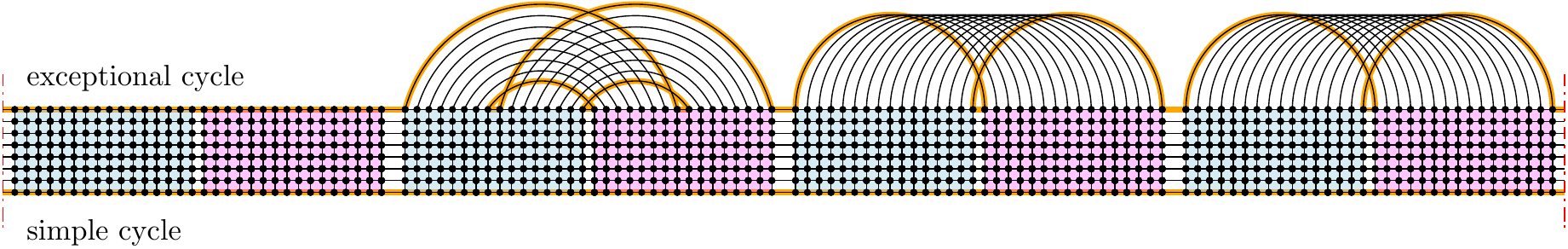}}
  \end{center}
    \caption{The Dyck-grid  $\mathscr{D}_{8}^{1,2}$. The simple and the exceptional cycles are drawn in orange.}
  \label{@freemasonry}
\end{figure}

We call the graph $\mathscr{D}_{k}^{(h,c)}$ the \emph{Dyck-grid} of \emph{order} $k$ \emph{with} $h$ \emph{handles and} $c$ \emph{cross-caps}.
Given some surface $Σ=Σ^{(h,c)},$ we say that the graph $D$ is the \emph{$(Σ;d)$-Dyck-grid} if $D=\mathscr{D}^{(h,c)}_d$ and we use $\mathscr{D}^{Σ}$ to denote the parametric graph $\langle \mathscr{D}^{Σ}_i\rangle_{i\in\mathbb{N}},$ where $\mathscr{D}^{Σ}_i$ is the $(Σ;i)$-Dyck-grid.

Let us return to our antichain $\Zcal \in\mathbb{H}^{-}.$
We denote by $\mathbb{S}_{\Zcal}$ the set of surfaces where none of the graphs in $\Zcal $ can be embedded.
Notice  that $\mathbb{S}_{\Zcal}$ is closed and proper and, for every $Σ\in\mathsf{sobs}(\mathbb{S}_{\Zcal})$, there exists some $H\in\Zcal $ such that $H$ embeds in $Σ.$

\subsection{Our results}

We associate with $\Zcal$ the  parametric family $\mathfrak{D}_{\Zcal}\coloneqq \{ \mathscr{D}^{Σ} \mid Σ\in\mathsf{sobs}(\mathbb{S}_{\Zcal}) \}$. 
Let $ \text{\scriptsize\textsf{EP}}_{\Zcal}:= \mathsf{p}_{\mathfrak{D}_{\Zcal}}$.
Our combinatorial result determines precisely when a member in $\mathbb{H}^{-}$ has the Erdős-Pósa property in some minor-closed graph class.

\begin{theorem}\label{@admiration}
For every $\Zcal\in\mathbb{H}^{-},$ for every minor-closed graph class $\mathcal{G}$, $\Zcal$ has the Erdős-Pósa property in $\mathcal{G}$ if and only if $\text{\scriptsize\textsf{EP}}_{\Zcal}$ is bounded in $\mathcal{G}$.
\end{theorem}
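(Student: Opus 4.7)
The plan is to argue the two implications separately: for $(\Rightarrow)$ I intend to show, by contrapositive, that each parametric Dyck-grid in $\mathfrak{D}_{\Zcal}$ is a Robertson--Seymour-type counterexample; for $(\Leftarrow)$ I intend to prove a structural dichotomy stating that excluding all large Dyck-grids from $\mathfrak{D}_{\Zcal}$ forces a tree-decomposition whose torsos embed in surfaces from $\mathbb{S}_{\Zcal}$, and then to bootstrap a standard Erdős--Pósa argument on such a decomposition.

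For the $(\Rightarrow)$ direction, suppose $\text{\scriptsize\textsf{EP}}_{\Zcal}$ is unbounded in $\mathcal{G}$. Since $\sobs(\mathbb{S}_{\Zcal})$ has at most two members (by Dyck's theorem) and $\mathcal{G}$ is minor-closed, there is some $\Sigma\in\sobs(\mathbb{S}_{\Zcal})$ such that every $\mathscr{D}^{\Sigma}_{k}$ lies in $\mathcal{G}$. Fix any $H_{\Sigma}\in\Zcal$ that embeds in $\Sigma$; such an $H_{\Sigma}$ exists by the definition of $\sobs$. I would establish three facts about $\mathscr{D}^{\Sigma}_{k}$: first, for $k$ large enough it contains $H_{\Sigma}$ as a minor, by ``spreading'' a fixed embedding of $H_{\Sigma}$ in $\Sigma$ along the dense surface-grid structure of $\mathscr{D}^{\Sigma}_{k}$; second, it contains no two vertex-disjoint $\Zcal$-hosts, because two disjoint hosts would furnish two disjoint copies of some $H\in\Zcal$ within the embedding of $\mathscr{D}^{\Sigma}_{k}$ in $\Sigma$, and cutting the surface along a curve separating them would re-embed $H$ in a proper sub-surface of $\Sigma$, contradicting the $\preceq$-minimality of $\Sigma$ inside $\sobs(\mathbb{S}_{\Zcal})$; and third, $\cover_{\Zcal}(\mathscr{D}^{\Sigma}_{k})=\Omega(k)$, since any $o(k)$-vertex deletion still leaves a $(\Sigma;\Omega(k))$-Dyck-subgrid, hence an $H_{\Sigma}$-host. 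Together these three facts witness the failure of the Erdős--Pósa property of $\Zcal$ in $\mathcal{G}$.

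For the $(\Leftarrow)$ direction, assume $\text{\scriptsize\textsf{EP}}_{\Zcal}(G)\leq k_{0}$ for every $G\in\mathcal{G}$. The plan splits into two parts. First, invoke (or establish) the following structural dichotomy: every graph excluding $\mathscr{D}^{\Sigma}_{k_{0}}$ as a minor for each $\Sigma\in\sobs(\mathbb{S}_{\Zcal})$ admits a tree-decomposition of bounded adhesion in which every torso embeds in some surface from $\mathbb{S}_{\Zcal}$. Here the hypothesis $\Zcal\in\mathbb{H}^{-}$ is essential: Kuratowski-connectedness prevents the spurious appearance of apex vertices that would produce $J$-type counterexamples (as on the left of \cref{@collection}), and the shallow-vortex minor hypothesis rules out the deep vortices responsible for $K_{3,r}$-style obstructions (as on the right of \cref{@collection}). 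Second, exploit this decomposition: since no $H\in\Zcal$ embeds in any surface in $\mathbb{S}_{\Zcal}$, no torso contains an $H$-host in its interior, so every $\Zcal$-host in $G$ must traverse a bounded-adhesion separator; a standard peeling / tree-DP argument along the decomposition then either collects many vertex-disjoint hosts (a packing) or identifies a small transversal of all adhesion sets hit by hosts (a cover), yielding the required duality function $f$.

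The principal obstacle is precisely the structural dichotomy invoked in $(\Leftarrow)$: turning ``no large Dyck-grid for any surface in $\sobs(\mathbb{S}_{\Zcal})$'' into a tree-decomposition whose torsos genuinely embed in the forbidden-embedding surfaces of $\Zcal$. Generic forms of the Graph Minors Structure Theorem yield decompositions whose torsos are merely \emph{nearly} embedded, possibly with apex vertices and vortices; I must refine this to show that the Kuratowski-connected and shallow-vortex-minor hypotheses on $\Zcal$ clean the decomposition to have honest surface-embedded torsos. Once this refined structural theorem is established, both the counterexample analysis of $(\Rightarrow)$ and the tree-decomposition-based Erdős--Pósa bootstrapping of $(\Leftarrow)$ proceed along well-understood lines.
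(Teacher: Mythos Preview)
Your $(\Rightarrow)$ direction has the right shape but the claim that $\mathscr{D}^{\Sigma}_k$ contains \emph{no two} disjoint $\Zcal$-hosts is too strong, and your curve-cutting argument does not justify it: two disjoint copies of $K_5$ embed in the torus (Euler-genus additivity gives $\eg(2K_5)=2=\eg(\Sigma^{(1,0)})$) and hence appear as minors of $\mathscr{D}^{(1,0)}_k$ for large $k$. The cut you propose would produce sub-surfaces that need not be $\preceq$-below $\Sigma$, so no contradiction with $\Sigma\in\sobs(\mathbb{S}_{\Zcal})$ arises. The paper only proves $\pack_{\Zcal}(\mathscr{D}^{\Sigma}_k)\le 1+\eg(\Sigma)-\eg(H)$ via genus additivity (\cref{@automobiles}), a bounded constant which is all that is needed; together with \cref{@predominantly} this yields the failure of the Erdős--Pósa property.

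The substantive gap is in $(\Leftarrow)$. The structural theorem you aim for --- a tree-decomposition whose torsos are \emph{honestly} embedded in surfaces from $\mathbb{S}_{\Zcal}$, with no apex vertices --- is neither what the paper proves nor, as far as one can tell, attainable from the hypotheses. The shallow-vortex assumption does kill vortices (\cref{@reciprocal}), but an apex set $A$ of bounded size survives in the final $\Sigma$-schema $(A,\delta,D)$ of \cref{@antiauthoritarian}. Crucially, you have misidentified the role of Kuratowski-connectivity: it does \emph{not} remove apices. Its actual function (\cref{@inteuigence}, \cref{host_model_core_emb}) is to guarantee that for every order-$\le 3$ separation of an $\nonplanar(H)$-host there is a unique ``core side'' whose complement is disk-embeddable; this is precisely what makes the notion of a \emph{$\Zcal$-red cell} --- a cell through which the non-planar core of some host invades --- well-defined. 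The local structure theorem does not assert ``no $H$-host in the torso'' (false, since apices remain) but rather ``no $\Zcal$-red cell'', obtained through the redrawing lemma (\cref{@horkheimer}) and a packing/carving procedure on railed flat vortices (\cref{@abstractness}--\cref{@translation}). What one extracts globally is a \emph{$\Zcal$-local cover} $(\beta(t),\alpha(t))$: any $\nonplanar(H)$-host meeting $\beta(t)$ must meet the small set $\alpha(t)$. Your intended bootstrap ``no torso contains an $H$-host, so every host crosses an adhesion'' therefore rests on a false premise; the correct argument (\cref{@contradictory}, \cref{@wrongfully}) works with $\Zcal$-local covers and an additional recursion over the planar components of the graphs in $\Zcal$.
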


For every antichain $\Zcal$ we define $h_{\Zcal}\coloneqq \max\{ |V(H)| \mid H\in\Zcal \}$ and $\gamma_{\Zcal}\coloneqq \max\{ \mathsf{eg}(H) \mid H \in \Zcal  \}.$
The engine that drives the proof of \cref{@admiration} and which represents our first main algorithmic result is the following. 

\begin{theorem}
\label{@indescribably}
There exists a function  $f_{\ref{@indescribably}}:\mathbb{N}^4\to\mathbb{N}$ such that, for every antichain $\Zcal \in\Hbbb^-,$ there exists an algorithm such  that, given $k,t\in\mathbb{N}$ and a graph $G,$ outputs one of the following:
\begin{itemize}
\item  a $\mathscr{D}^{Σ}_{t}$-\major in $G,$ for some $Σ\in \sobs(\mathbb{S}_{\Zcal}),$ or
\item  an  $\Zcal$-packing of size at least $k$ in $G,$ or
\item  an $\Zcal$-cover of size at most $f_{\ref{@indescribably}}(γ_{\Zcal},h_{\Zcal},t,k)$ in $G$.
\end{itemize}
Moreover, the algorithm runs in time $2^{2^{\Ocal_{γ_{\Zcal}}(\mathsf{poly}(t)) + \Ocal_{h_{\Zcal}}(k)}}\cdot |V(G)|^4 \cdot \log(|V(G)|)$
and $f_{\ref{@indescribably}}(γ_{\Zcal},h_{\Zcal},t,k) = 2^{\Ocal_{γ_{\Zcal}}(\mathsf{poly}(t)) + \Ocal_{h_{\Zcal}}(k)}.$
\end{theorem}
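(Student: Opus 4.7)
The plan is to prove this theorem by an iterative peeling procedure with a three-way win/win at each round: either I extract a new $\Zcal$-host that grows the packing, or I extract an obstruction Dyck-grid $\mathscr{D}^{\Sigma}_{t}$ for some $\Sigma\in\sobs(\mathbb{S}_{\Zcal})$ and terminate, or I identify a bounded-size apex set that I commit to the cover while simultaneously certifying that a large disk-embedded region can be safely contracted.

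First, using a constructive version of the Excluded Grid Theorem (Chuzhoy--Tan style), I either certify that $G$ has treewidth below a threshold $\tau=\tau(\gamma_{\Zcal},h_{\Zcal},t,k)$---in which case a direct bottom-up dynamic program solves the packing/covering problem exactly, since each $H\in\Zcal$ has at most $h_{\Zcal}$ vertices and both target values can be computed in $2^{\Ocal_{h_{\Zcal}}(\tw(G))}\cdot|V(G)|$ time---or I extract a large wall $W$. I then apply a constructive Flat Wall Theorem to $W$: either I obtain a clique minor of order sufficient to model every graph in $\Zcal$ (yielding a single $\Zcal$-host that I add to the packing before restarting with $k-1$), or I obtain an apex set $A\subseteq V(G)$ of size $\Ocal_{h_{\Zcal}}(1)$ together with a flat subwall $W'$ in $G\setminus A$.

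Second, I analyze the transactions running through the compass of $W'$. Using the surface-structure refinement for shallow-vortex minors from \cite{ThilikosW22Killingconf} together with a Robertson--Seymour-style classification of transactions into handle-type and crosscap-type, I argue a dichotomy: either I can assemble $t$ independent transactions in a pattern that---normalized via Dyck's Theorem---yields a $\mathscr{D}^{\Sigma}_{t}$-\major in $G$ for some $\Sigma\in\sobs(\mathbb{S}_{\Zcal})$, which I output; or the compass together with its transactions embeds in a surface $\Sigma'\in\mathbb{S}_{\Zcal}$. In the latter case, the deep interior of $W'$ is irrelevant: using Kuratowski-connectivity of every member of $\Zcal$, any $\Zcal$-host meeting the interior can be rerouted outside, so I delete (or contract) an interior vertex, strictly shrinking $G$ while preserving the packing and cover values on $G\setminus A$. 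I then add $A$ to the running cover and iterate. The iteration terminates after $\Ocal_{h_{\Zcal}}(k)$ rounds because once the accumulated cover reaches $2^{\Ocal_{\gamma_{\Zcal}}(\poly(t))+\Ocal_{h_{\Zcal}}(k)}$ vertices, no $\Zcal$-packing of size $k$ can survive deletion of the cover; the claimed running time follows by multiplying the per-round cost of the grid/flat-wall pipeline by the iteration count.

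The main obstacle is the surface-dichotomy step: showing that whenever no Dyck-grid of order $t$ for any $\Sigma\in\sobs(\mathbb{S}_{\Zcal})$ can be extracted from the compass of $W'$, the compass genuinely embeds in some $\Sigma'\in\mathbb{S}_{\Zcal}$, and that no $\Zcal$-host can exploit interior vertices. Both halves depend critically on the hypothesis $\Zcal\in\mathbb{H}^{-}$: without the shallow-vortex property, a vortex of unbounded depth could accommodate a member of $\Zcal$ even when the surrounding graph embeds in a tame surface; without Kuratowski-connectivity, a host could be glued across a $\leq 3$-separator in a way that the surface-embeddability of the two sides fails to witness. Executing the dichotomy requires a careful combinatorial accounting of transactions up to Dyck equivalence, a rerouting argument showing that in a Kuratowski-connected model at most one side of every small separator can be non-planar, and an invocation of the shallow-vortex structure theorem to tame vortices---this is where the novel technical work of the proof will concentrate.
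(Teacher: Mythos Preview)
Your proposal has a genuine gap in the cover-construction step, and it diverges substantially from the paper's route.

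The central problem is that you never establish why the ``running cover'' you accumulate is actually a $\Zcal$-cover. At each round you obtain an apex set $A$ from the Flat Wall Theorem; this set is chosen so that a subwall becomes flat in $G\setminus A$, not so that it meets $\Zcal$-hosts. You then claim an interior vertex $v$ is irrelevant, but only ``on $G\setminus A$''. That gives $\mathsf{cover}_{\Zcal}(G\setminus A)=\mathsf{cover}_{\Zcal}((G\setminus A)-v)$, not $\mathsf{cover}_{\Zcal}(G)=\mathsf{cover}_{\Zcal}(G-v)$. Consequently, a cover computed on the reduced graph is only a cover of $G\setminus A$, and lifting it to $G$ requires adding \emph{every} apex set encountered along the way. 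Since each round deletes a single vertex, the number of rounds is $\Theta(|V(G)|)$ in the worst case, not $\Ocal_{h_{\Zcal}}(k)$; the accumulated apex sets then have unbounded size. Your termination sentence (``once the accumulated cover reaches $2^{\Ocal_{\gamma_{\Zcal}}(\poly(t))+\Ocal_{h_{\Zcal}}(k)}$ vertices, no $\Zcal$-packing of size $k$ can survive'') is circular: it presupposes that the accumulated set already covers all $\Zcal$-hosts, which is exactly what must be shown. Moreover, the rerouting claim (``any $\Zcal$-host meeting the interior can be rerouted outside'') is not what Kuratowski-connectivity provides; it tells you that across any $\leq 3$-separation one side is disk-embeddable, which the paper exploits to redraw the disk-embeddable \emph{part} of a host inside a railed nest, not to push entire hosts off the interior.

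The paper proceeds quite differently. It does not use irrelevant vertices at all. Instead it proves a local structure theorem (\cref{@antiauthoritarian}): starting from a well-linked set, it either finds the Dyck grid or the packing, or it produces a $\Sigma$-schema $(A,\delta,D)$ with \emph{no $\Zcal$-red cell}. The absence of $\Zcal$-red cells, together with \cref{@primitives}, is precisely what certifies that $(\ground(\delta)\cup A,\,A)$ is a \emph{$\Zcal$-local cover}: every $\nonplanar(H)$-inflated copy touching the bag must touch $A$. This local statement is then globalized (\cref{@malcontents}) into a tree decomposition whose bags carry $\Zcal$-local covers of bounded order, and the global $\Zcal$-cover is extracted from that decomposition via \cref{@contradictory} and \cref{@wrongfully}. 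Finally, the disconnected case (graphs in $\Zcal$ with several components) requires a separate recursive argument (the procedure $\mathbf{Rec\text{-}EP}$ in \cref{@expurgated}), which your proposal does not address. The machinery you identify as ``where the novel technical work will concentrate'' is indeed the heart of the matter, but it feeds into a local-cover/tree-decomposition pipeline rather than an irrelevant-vertex loop.
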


By a recent result of Gavoille and Hilaire \cite{gavoille2023minoruniversal} (\cref{@headstrong}) it holds that there exists some constant $c$ such that for every $\Zcal \in\mathbb{H}^{-}$ and $Σ\in \mathsf{sobs}(\mathbb{S}_{\Zcal})$ there exists some $H\in\Zcal $ such that $H$ is a minor of $\mathscr{D}^{Σ}_{c\gamma_{\Zcal}^4h_{\Zcal}^2}.$
Moreover, as observed in \cite{thilikos2023excluding},  every Dyck-grid of big enough order contains a large half-integral packing of itself of smaller order (\cref{@monopolism}).
Combining these two results with \cref{@indescribably}, yields the following (constructive) parameterized approximation algorithm for  $\nicefrac{1}{2}\text{-}\mathsf{pack}_{\Zcal}.$

\begin{theorem}\label{@irrespective}
There exists a function  $f_{\ref{@irrespective}}:\mathbb{N}^2\to\mathbb{N}$ such that, for every antichain $\Zcal \in\Hbbb^-,$ there exists an algorithm such  that, given $k\in\mathbb{N}$ and a graph $G,$ outputs one of the following:
\begin{enumerate}
  \item a half-integral $\Zcal$-packing of size at least $k$ in $G,$ or
  \item an $\Zcal$-cover of size at most $f_{\ref{@irrespective}}(h_{\Zcal},k)$ in $G$.
\end{enumerate}
Moreover, the algorithm runs in time\footnote{Given two functions $\chi,\psi\colon \mathbb{N}\rightarrow \mathbb{N},$ we write $\chi(n)=\mathcal{O}_{x}(\psi(n))$ in order to denote that there exists a computable function $f\colon\mathbb{N} \rightarrow \mathbb{N}$
such that $\chi(n)=\mathcal{O}( f(x)\cdot \psi(n)).$ We also use 
$\chi(n)=\poly_{x}(ψ(n))$  instead of   $\chi(n)=\Ocal_{x}((ψ(n))^c),$ for some $c\in \Nbbb$.
} {$2^{2^{\poly_{h_{\Zcal}}(k)}}\cdot |V(G)|^4 \cdot \log(|V(G)|)$ and $f_{\ref{@irrespective}}(h_{\Zcal},k)=2^{\poly_{h_{\Zcal}}(k)}.$}
\end{theorem}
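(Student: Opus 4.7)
The plan is to bootstrap \cref{@indescribably} by choosing its trichotomy parameter $t$ large enough so that a Dyck-grid host in $G$ already carries a half-integral packing of many smaller Dyck-grids, each of which in turn carries a copy of some $H\in\Zcal$ as a minor via the Gavoille--Hilaire result.

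More precisely, let $c$ be the constant from \cref{@headstrong}, so that every $\Sigma\in\sobs(\mathbb{S}_{\Zcal})$ admits some $H_\Sigma\in\Zcal$ with $H_\Sigma$ a minor of $\mathscr{D}^{\Sigma}_{d}$ for $d\coloneqq c\gamma_{\Zcal}^{4}h_{\Zcal}^{2}$. Since $\gamma_{\Zcal}\le\binom{h_{\Zcal}}{2}$, the quantity $d$ is bounded by a polynomial in $h_{\Zcal}$. Next, let $g\colon\mathbb{N}^{2}\to\mathbb{N}$ be the function from \cref{@monopolism} such that $\mathscr{D}^{\Sigma}_{g(d,k)}$ contains a half-integral packing of $k$ copies of $\mathscr{D}^{\Sigma}_{d}$. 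Set $t\coloneqq \max_{\Sigma\in\sobs(\mathbb{S}_{\Zcal})} g(d,k)$; this is $\poly_{h_{\Zcal}}(k)$.

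Given $G$ and $k$, run the algorithm of \cref{@indescribably} on $(G,k,t)$. We analyze the three possible outputs:
\begin{itemize}
\item If it returns an $\Zcal$-packing of size at least $k$, then this is in particular a half-integral $\Zcal$-packing of size $k$, so we return it.
\item If it returns an $\Zcal$-cover of size at most $f_{\ref{@indescribably}}(\gamma_{\Zcal},h_{\Zcal},t,k)$, we return it. Since $t=\poly_{h_{\Zcal}}(k)$ and $\gamma_{\Zcal}=\Ocal_{h_{\Zcal}}(1)$, the bound collapses to $2^{\poly_{h_{\Zcal}}(k)}$, yielding $f_{\ref{@irrespective}}(h_{\Zcal},k)$.
\item Otherwise, we obtain a $\mathscr{D}^{\Sigma}_{t}$-host $H'\subseteq G$ for some $\Sigma\in\sobs(\mathbb{S}_{\Zcal})$. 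Extract the corresponding minor model of $\mathscr{D}^{\Sigma}_{t}$ inside $H'$; within this model, locate the half-integral packing of $k$ copies of $\mathscr{D}^{\Sigma}_{d}$ guaranteed by \cref{@monopolism}; finally, inside each copy select the minor model of $H_\Sigma$ provided by \cref{@headstrong}. The resulting $k$ subgraphs of $G$ are all $\Zcal$-hosts and no vertex of $G$ lies in more than two of them (the half-integrality is preserved because we only restrict inside each copy), so together they form a half-integral $\Zcal$-packing of size $k$.
\end{itemize}

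The running time is dominated by the call to \cref{@indescribably} with the chosen $t=\poly_{h_{\Zcal}}(k)$: substituting in $2^{2^{\Ocal_{\gamma_{\Zcal}}(\poly(t))+\Ocal_{h_{\Zcal}}(k)}}\cdot|V(G)|^{4}\log|V(G)|$ gives the claimed $2^{2^{\poly_{h_{\Zcal}}(k)}}\cdot|V(G)|^{4}\log|V(G)|$ bound, since the post-processing to extract the models of $\mathscr{D}^{\Sigma}_{d}$ and of each $H_\Sigma$ only involves a bounded-size structure and can be done in time depending on $k$ and $h_{\Zcal}$ alone. The main conceptual point, and the only place where any care is required, is to verify that the half-integrality is indeed preserved when passing from the packing of small Dyck-grids to the packing of $H_\Sigma$-models, but this is immediate since the $H_\Sigma$-model of each copy is a subgraph of that copy and vertex overlaps between copies are bounded by the original packing multiplicity of two.
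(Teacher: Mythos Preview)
Your proof is correct and follows essentially the same approach the paper sketches in the paragraph preceding \cref{@irrespective}: choose $t$ as a suitable polynomial in $h_{\Zcal}$ and $k$ via \cref{@headstrong} and \cref{@monopolism}, run \cref{@indescribably}, and in the Dyck-grid case extract a half-integral packing of $k$ copies of some fixed $H_\Sigma\in\Zcal$. The care you take in checking that half-integrality is preserved under restriction, and that all $k$ hosts are for the \emph{same} $H_\Sigma$ (as required by the definition of a half-integral $\Zcal$-packing), is appropriate and matches what the paper leaves implicit.
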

We wish to stress that, given the combinatorial bounds of \cref{@indescribably},  we may directly apply the minor-checking algorithm of \cite{KawarabayashiKR12Thedisjoint} for the two first outcomes of \cref{@indescribably} and the algorithm of \cite{MorelleSST23faste}  for its third outcome.
Both these algorithms are quadratic on $|V(G)|$ and this implies alternative quadratic algorithms to those in \cref{@indescribably} and \cref{@irrespective}. However, this would come with the cost of enormous parametric dependencies on $k$.

\subsection{Some implications of our results}

\paragraph{Half-integral Erdős-Pósa for linked pairs and knots.}
As mentioned above, $\Hbbb = \Kbbb \cap \Vbbb$ contains several antichains of particular interest.
A first example is the \textsl{Petersen family}, which is exactly the (minor) obstruction set\footnote{The obstruction set of some minor-closed class $\Gcal$ is the set $\obs(\Gcal)$ of the minor-minimal graphs that are not in $\Gcal.$} for the so-called \textsl{linklessly embeddable} graphs (in short, \emph{link-less} graphs).
Indeed, the origin of the definition of Kuratowski-connectivity comes from the paper of Robertson, Seymour, and Thomas \cite{robertson1995sachs}, where this  obstruction was found.
All obstructions for link-less graphs  
as well as those for knot-less  graphs are Kuratowski-connected.
Moreover, as the shallow-vortex minor $K_{6}$  (resp. $K_{7}$) is a member of the obstruction set of link-less  (resp. knot-less) graphs, we also have that both these obstruction sets 
belong to $\Hbbb^-.$
This insight allows us to apply \cref{@irrespective} to topological objects such as \textsl{links} and \textsl{knots}.

Let $G$ be a graph and let  $\Ccal=\{C_1,\ldots,C_{k}\}$ be a 
 collection of subgraphs of $G.$
 The \emph{intersection graph} of $\Ccal$ is the graph $I(\Ccal)=(\Ccal,E_{\Ccal})$ where $CC'\in E_{\Ccal}$ if and only if $C\cap C'$ is not the empty graph.  
 We say that $\Ccal$ is a \emph{collection of double cycles} (resp. \emph{cycles}) 
 if each $C_{i}$ is  union of two disjoint cycles (resp. a cycle).
 
Given a collection  $\Ccal$ of double cycles (resp. cycles) of $G,$ 
and some $\Rbbb^3$-embedding of $G,$ 
we say that $\Ccal$ is a \emph{$\nicefrac{1}{2}$-packing} of links (resp. knots)    if  
for every $i\in[k],$ the two components of $C_i$ are linked (resp. the cycle $C_{i}$ is knotted) in this particular embedding (see \cite{adams1997knot} for more on links and knots). 
The \emph{half-integral linked pair  (resp. knot) packing number of a graph} $G,$ denoted by 
$\nicefrac{1}{2}$-$\mathsf{lppack}(G)$ (resp. 
$\nicefrac{1}{2}$-$\mathsf{knpack}(G)$), is the maximum $k$ such that, 
for every $\Rbbb^3$-embedding of $G,$ 
there exists a $\nicefrac{1}{2}$-packing of links (resp. knots) in $G$ of size $k.$
Both $\nicefrac{1}{2}$-$\mathsf{lppack}(G)$ and $\nicefrac{1}{2}$-$\mathsf{knpack}(G)$ are minor-monotone parameters, therefore we know (non-constru\-ctively) that there is an algorithm that for checking whether $\nicefrac{1}{2}$-$\mathsf{lppack}(G) ≥ k$ ($\nicefrac{1}{2}$-$\mathsf{knpack}(G) ≥ k$) in time $f(k)\cdot |V(G)|^{2}.$
Up to now, no constructive (on $k$) algorithm is known for these problems.
Our results imply the following.

\begin{theorem}
\label{@inescapably}
There exists a function $f: \Nbbb\to\Nbbb$ and  algorithms that, given 
a graph $G$ and a $k\in\Nbbb,$ outputs either  that $\nicefrac{1}{2}$-$\mathsf{lppack}(G)≥k$
(resp. $\nicefrac{1}{2}$-$\mathsf{knpack}(G)≥k$)
or   a vertex set $A$
of at most $f(k)$ vertices such that  $G-A$ has a link-less (knot-less) $\Rbbb^3$-embedding. Moreover, both algorithms run in time $2^{2^{\poly(k)}}\cdot |V(G)|^3\cdot \big(\log(|V(G)|)\big)^2$ and $f(k)=2^{\poly(k)}.$
\end{theorem}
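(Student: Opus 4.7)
The plan is to deduce \cref{@inescapably} as a direct corollary of \cref{@irrespective}, by applying it to two carefully chosen antichains---the minor-obstruction sets for link-less and for knot-less embeddability---and using the fact that any host of such an obstruction is, under any $\mathbb{R}^{3}$-embedding, a topological witness of the corresponding forbidden pattern.

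\textbf{Choice of antichains.} Let $\Zcal_{\mathsf{lp}}$ denote the Petersen family, which by Robertson, Seymour, and Thomas \cite{robertson1995sachs} is precisely the minor-obstruction set for link-less embeddable graphs, and let $\Zcal_{\mathsf{kn}}$ denote the minor-obstruction set for knot-less embeddable graphs (finite by the Graph Minor Theorem). As noted in the discussion preceding the statement of \cref{@inescapably}, every member of both sets is Kuratowski-connected, and each set contains a shallow-vortex minor ($K_{6}$ and $K_{7}$ respectively); hence $\Zcal_{\mathsf{lp}}, \Zcal_{\mathsf{kn}} \in \mathbb{H}^{-}$. Finiteness of both antichains forces $h_{\Zcal_{\mathsf{lp}}}$ and $h_{\Zcal_{\mathsf{kn}}}$ to be absolute constants, so \cref{@irrespective} applies with $h_{\Zcal}$ absorbed into the parametric factor.

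\textbf{Main reduction.} On input $(G,k)$ we run \cref{@irrespective} with $\Zcal \in \{\Zcal_{\mathsf{lp}}, \Zcal_{\mathsf{kn}}\}$, according to the case being proved. If the algorithm returns an $\Zcal$-cover $A$ of size at most $f_{\ref{@irrespective}}(h_{\Zcal},k) = 2^{\poly(k)}$, then $G-A$ is $\Zcal$-minor-free and therefore link-less (resp. knot-less) embeddable in $\mathbb{R}^{3}$, and we output $A$. Otherwise, the algorithm returns a half-integral $\Zcal$-packing $\{H_{1},\ldots,H_{k}\}$ in $G$, i.e.\ subgraphs each containing some graph of $\Zcal$ as a minor, such that every vertex of $G$ lies in at most two of them. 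Fix now any $\mathbb{R}^{3}$-embedding of $G$; it restricts to an $\mathbb{R}^{3}$-embedding of each $H_{i}$. Since $H_{i}$ contains an obstruction as a minor, its embedding is not link-less (resp. not knot-less), so by the defining property of the obstruction set it contains a pair of linked cycles (resp. a knotted cycle) $C_{i}$. The collection $\Ccal := \{C_{i}\}_{i\in[k]}$ is a $\nicefrac{1}{2}$-packing of links (resp. knots) in the fixed embedding: as $C_{i}\subseteq H_{i}$ and at most two of the $H_{i}$'s share any vertex of $G$, the same holds for the $C_{i}$'s. The embedding being arbitrary, this certifies $\nicefrac{1}{2}\text{-}\mathsf{lppack}(G) \geq k$ (resp. $\nicefrac{1}{2}\text{-}\mathsf{knpack}(G) \geq k$), and we output $\{H_{1},\ldots,H_{k}\}$ as a combinatorial witness.

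\textbf{Running time and main obstacle.} The size bound $f(k) = 2^{\poly(k)}$ and the double-exponential parametric factor $2^{2^{\poly(k)}}$ are inherited from \cref{@irrespective} with $h_{\Zcal} = \mathcal{O}(1)$ absorbed into the constants. To obtain the sharper polynomial factor $|V(G)|^{3}(\log |V(G)|)^{2}$ we use, in place of (or alongside) the base algorithm of \cref{@irrespective}, the quadratic minor-checking and minor-hitting algorithms cited just after \cref{@irrespective}, paying a further (tolerable) blow-up in the double-exponential factor. The only non-routine ingredient in the whole argument is the passage from an $\Zcal$-host to a topological witness, but this is precisely where the choice of $\Zcal$ as an obstruction set pays off: non-embeddability is built into the definition, and the half-integrality of the witnesses transfers for free from that of the hosts.
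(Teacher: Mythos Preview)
Your reduction to \cref{@irrespective} via the obstruction sets $\Zcal_{\mathsf{lp}}$ (the Petersen family) and $\Zcal_{\mathsf{kn}}$ is exactly the derivation the paper has in mind; the theorem is stated as an implication in Section~1.4 without a separate proof, and the surrounding discussion spells out precisely the ingredients you use (membership in $\mathbb{H}^{-}$, the host-to-topological-witness step, and the half-integrality transfer from hosts to the $C_i$'s). Your combinatorial argument is correct.

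One loose end worth flagging: your justification of the polynomial factor $|V(G)|^{3}(\log|V(G)|)^{2}$ is hand-wavy. \cref{@irrespective} gives $|V(G)|^{4}\log|V(G)|$, while the remark following it promises $|V(G)|^{2}$ via the algorithms of \cite{KawarabayashiKR12Thedisjoint} and \cite{MorelleSST23faste} at the price of much worse parametric dependence; neither directly yields the intermediate $|V(G)|^{3}(\log|V(G)|)^{2}$ stated in \cref{@inescapably}, and the paper does not derive this bound explicitly either. So this is not a defect of your proof relative to the paper---the paper leaves the same gap---but you should not claim to have \emph{obtained} that specific polynomial factor without an actual derivation. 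A cleaner statement would be that either the $|V(G)|^{4}\log|V(G)|$ bound from \cref{@irrespective} or the quadratic bound from the alternative route is available. Also note, as the paper remarks immediately after \cref{@inescapably}, that for the knot-less case no explicit bound on $h_{\Zcal_{\mathsf{kn}}}$ is known, so the resulting algorithm is non-constructive in $k$; your ``absolute constant'' phrasing is technically correct but glosses over this.
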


 \cref{@inescapably}  implies
that both parameters $\nicefrac{1}{2}$-$\mathsf{lppack}$ and $\nicefrac{1}{2}$-$\mathsf{knpack}$
admit \FPT-approximation algorithms with exponential 
approximation gap. Moreover, in case the output is that  $\nicefrac{1}{2}$-$\mathsf{lppack}(G)≥k$ (resp. $\nicefrac{1}{2}$-$\mathsf{knpack}(G)≥k$), 
the algorithms output a $\nicefrac{1}{2}$-packing of $k$ graphs
certifying that,   every  $\Rbbb^3$-embedding of $G$  contains  a $\nicefrac{1}{2}$-packing $\Ccal$ of $k$ links (resp. knots)  such that  $I(\Ccal)$ is either edgeless or a clique. We stress that the above algorithms become constructive (on $k$)
of we know the obstructions of link-less/knot-less graphs
or at least of some upper bound to their size.
For the later class not such bound is known.

Other implications of our results, related to canonical approimate characterizations  of  the parameters we study, are discussed in the conclusion section (\cref{@regressing}).

\subsection{Outline of the proof}

We begin the  description of the main ideas of our proof with the definition of a tree decomposition.

\paragraph{Tree decompositions.}
Let $G$ be a graph. A \emph{tree decomposition} of a graph $G$ is a pair $(T, β)$ where $T$ is a tree and $β : V(T) \to 2^{V(G)}$ is a function, whose images are called the \emph{bags} of $\mathcal{T},$ such that  $\bigcup_{t \in V(T)} β(t) = V(G),$  for every $e=xy \in E(G),$ there exists $t \in V(T)$ with $\{x,y\} \subseteq β(t),$ and
 for every $v \in V(G),$ the set $\{ t \in V(T) \mid v \in β(t) \}$ induces a subtree of $T.$
We refer to the vertices of $T$ as the \emph{nodes} of the tree decomposition $\mathcal{T}.$ The \emph{width} of $\mathcal{T}$ is the value $\max_{t \in V(T)} |β(t)| - 1.$ The \emph{treewidth} of $G,$ denoted by $\tw(G),$ is the minimum width over all tree decompositions of $G.$

\paragraph{The classic approach.}
In order to facilitate the presentation of our proof, let us briefly explain the two main ideas of the proof that planar graphs enjoy the Erdős-Pósa property in the set of all graphs.
The key ingredient is that every planar graph is a minor of a graph of sufficiently large treewidth. The proof follows in two steps.

\smallskip
\noindent
\textbf{Step 1.} Assuming that $\pack_{H}(G) \leq k$, based on the grid theorem by Robertson and Seymour, we may assume that the treewidth of $G$ is bounded by some function of $k.$

\smallskip
\noindent
\textbf{Step 2.} With the tree decomposition $(T,β)$ of $G$ at hand, we build an  $H$-cover $A$ of $G$  by adding to it an adhesion (if any)  $D_{xy}=β(x) \cap β(y)$ such that both $G_{x}\coloneqq G[β(V(T_{x}))\setminus D_{xy}]$ and 
$G_{y}\coloneqq G[β(V(T_{y}))\setminus D_{xy}]$  contain $H$ as a minor
(here $T_{x}$ and $T_{y}$  are the two components of $T-xy$)
and then recursing on the corresponding tree decompositions of $G_{x}$ and $G_{y}.$ If $\pack_{H}(G)≤k,$ eventually  this procedure returns an $H$-cover of  size at most $k\cdot (\tw(G)+1)$.

\smallskip
Throughout the present outline we describe arguments that can be paralleled to the two steps above.
Moreover, in each step we explain the challenges that are met and the way we deal with them in our proof.

For simplicity, instead of an antichain $\Zcal,$ we consider a non-planar graph $H$ that is Kuratowski-connected and a shallow-vortex minor.
We denote by $\Sbbb_{H}$ the set of all surfaces where $H$ cannot be embedded and by $\Sbbb'_{H}\coloneqq\sobs(\Sbbb_{H})$ the corresponding surface obstruction set. 
We stress that the graphs in $\frak{D}_{H}=\{\mathscr{D}^{Σ}\mid Σ\in  \Sbbb'_{H}\}$  can be seen as ``generators of half-integrality''.
Indeed, it is possible to prove that, for every $t \in \Nbbb$, $\pack_{H}(\mathscr{D}_{t}^{Σ}) \leq 1,$ and  $\cover_{H}(\mathscr{D}_{t}^{Σ})=Θ(\nicefrac{1}{2}\mbox{-}\pack_{H}(\mathscr{D}_{t}^{Σ}))=Ω(t).$
This already proves the easy direction of \cref{@admiration}.

Let $\mathcal{T} = (T, β)$ be a tree decomposition of a graph $G.$ 
For each $t \in V(T),$ we define the \emph{adhesions} of $t$ as the sets in $\{ β(t) \cap β(d) \mid \textrm{$d$ adjacent with $t$}\}$ and the maximum size of them is called the \emph{adhesion} of $t.$
The \emph{adhesion} of $\mathcal{T}$ is the maximum adhesion of a node of $\mathcal{T}.$ 
The \emph{torso} of $\mathcal{T}$ on a node $t$ is the graph, denoted by $G_{t},$ obtained by adding edges between every pair of vertices of $\beta(t)$ which belongs to a common adhesion of $t.$
  
We now consider a graph $G$ where $\pack_{H}(G) \leq k$ and we assume that $G$ excludes as a minor the Dyck grid $\mathscr{D}_{t}^{Σ},$ for every $Σ\in  \Sbbb'_{H}.$
Under these circumstances, our aim is to find an $H$-cover whose size is bounded by some function of $t$ and $k.$
  
\paragraph{Graphs excluding Dyck grids.}
As a first step, we need a deeper understanding of how the graphs excluding $\mathscr{D}_{t}^{Σ}$ look like.
In general, the structure of graphs excluding a given graph as a minor is given by the Graph Minors Structure theorem (in short GMST).
However, the formal definition of GMST involves complicated concepts which we prefer not to introduce in this brief outline.
Instead we give a more compact statement, proved in \cite{thilikos2023excluding}.

Given a graph $H$ and a set $A\subseteq V(G),$ we say that $H$ is an   \emph{$A$-minor of $G$}
if there is a collection $\mathcal{S}=\{S_{v}\mid v\in V(H))\}$ of pairwise vertex-disjoint {connected}\footnote{A set $X\subseteq V(G)$ is \emph{connected} in $G$ if the induced subgraph $G[X]$ is a connected graph.} subsets of $V(G),$ each containing at least one vertex of $A$ and such that, for every edge $xy\in E(H),$ the set $S_{x}\cup S_{y}$ is connected in $G.$
Given an annotated graph $(G,A)$ where $G$ is a graph and $A\subseteq V(G),$ we define $\tw(G,A)$ as the maximum treewidth of an $A$-minor of $G.$ A streamlined way to restate the GMST is the following. 

\begin{proposition}[\!\! \cite{thilikos2023excluding}]\label{@mutilation}
There exists a function $f:\Nbbb\to \Nbbb$ such that 
every graph  $G$ excluding a graph on $k$ vertices as a minor, 
has a tree decomposition $(T,β)$ where, for every $t\in V(T),$ 
the torso $G_{t}$ contains some set $A_{t}$ where $\tw(G_{t},A_{t})≤f(k)$
and such that $G_{t}-A_{t}$ can be embedded in a surface of Euler genus at most $f(k).$ 
\end{proposition}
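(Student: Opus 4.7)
The approach is to derive \cref{@mutilation} from the classical Graph Minors Structure Theorem (GMST) of Robertson and Seymour by absorbing the \textsl{irregular} pieces of each torso into the annotated set $A_t$. The GMST asserts, for every graph $G$ excluding $K_k$ as a minor, the existence of a tree decomposition $(T,\beta)$ of bounded adhesion such that every torso $G_t$ is \textsl{$h$-almost-embeddable} in a surface $\Sigma_t$ of Euler genus at most $h=h(k)$: there is an \emph{apex} set $X_t\subseteq V(G_t)$ with $|X_t|\leq h,$ and $G_t-X_t$ embeds in $\Sigma_t$ with at most $h$ exceptional faces, each accommodating a \emph{vortex} of pathwidth at most $h$ attached along its boundary cycle.

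For each node $t$ I set $A_t$ to be the union of the apex set $X_t$ with the vertex sets of all vortices of the embedding. With this choice $G_t-A_t$ is precisely the clean part of the almost-embedding, and hence embeds in the surface $\Sigma_t$ of Euler genus at most $h(k),$ which disposes of the second requirement. It therefore remains to bound $\tw(G_t,A_t).$

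To control this quantity, I consider an arbitrary $A_t$-minor $M$ of $G_t$ witnessed by a family of pairwise disjoint connected branch sets $\{S_v : v\in V(M)\},$ each intersecting $A_t.$ I would partition $V(M)$ according to where the branch sets meet $A_t$: at most $|X_t|\leq h$ branch sets meet $X_t$ and can be treated as a bounded ``apex'' of $M,$ while every other branch set intersects the vertex set of some vortex $V_i.$ Using the linear decomposition of $V_i$ of pathwidth at most $h$ along the cycle bounding its face, I would argue that the sub-minor of $M$ supported on branch sets touching $V_i$ inherits a path-decomposition whose width is bounded in terms of $h$ alone; gluing these at most $h$ decompositions together through the apex vertices then produces a tree decomposition of $M$ whose width depends only on $k.$

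The principal obstacle is the last step, namely converting the pathwidth structure of each vortex into a treewidth bound on the portion of $M$ it supports. A single vortex may carry arbitrarily many branch sets, and one has to exploit both the bounded width of its path decomposition and the fact that each branch set is explicitly required to \emph{touch} $A_t$ in order to prevent long ``transversal'' paths in $M$ slipping unanchored through the surface region and blowing up the treewidth. This repackaging of the GMST, where the non-surface content is bundled entirely into the annotation $A_t$ at the cost of bounding treewidth only on $A_t$-minors rather than on the torso itself, is precisely the content of \cite{thilikos2023excluding}.
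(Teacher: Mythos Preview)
The paper does not prove \cref{@mutilation}; it is stated as a citation from \cite{thilikos2023excluding} in the outline section and used only as motivation for the parameter $\Sbbb\mbox{-}\tw$. There is therefore no proof in the paper to compare your proposal against.

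Your sketch is a reasonable outline of how one might derive the statement from the classical GMST, and you correctly identify the crux: the apex set and the surface part are immediate, while bundling the vortex vertices into $A_t$ and bounding the treewidth of $A_t$-rooted minors is the real work. You also correctly note that this last step is exactly what is carried out in \cite{thilikos2023excluding}; your proposal is thus more an annotated pointer to that reference than a self-contained argument. One remark on the sketch itself: the claim that ``at most $|X_t|\leq h$ branch sets meet $X_t$'' is fine, but the subsequent step---that the branch sets touching a single vortex $V_i$ inherit a bounded-width path decomposition---needs care, since a branch set may run through the surface part and touch several vortices, and the interaction between the vortex path-decompositions and the surface embedding is precisely where the annotated-treewidth bound has to be earned rather than asserted.
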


To deal with the exclusion of Dyck graphs (corresponding to surfaces), we need a more refined version of \cref{@mutilation} that works for every (closed and proper) set of surfaces $\Sbbb.$  In this direction,  Thilikos and Wiederrecht defined in \cite{thilikos2023excluding}  an extension of treewidth, namely 
$\Sbbb\mbox{-}\tw,$ where for a graph $G,$ 
\begin{eqnarray}
\begin{minipage}{13cm}
\textsl{$\Sbbb\mbox{-}\tw(G)$ is the minimum $k$ for which $G$ 
has a tree decomposition $(T,β)$ where, for every $t\in V(T),$ 
the torso   $G_{t}$ contains some set $A_t$ where $\tw(G_t,A_t)≤k$
and $G_t-A_t$ is embeddable in a surface in $\Sbbb.$ }
\end{minipage}\label{@translated}
\end{eqnarray} 

The main result of \cite{thilikos2023excluding} is that in order to  exclude the graphs in $\frak{D}_{H}=\{\mathscr{D}_{t}^{Σ}\mid Σ\in  \cobs(\Sbbb)\},$ we have to fix  the surface of 
\cref{@mutilation} to be one of the surfaces in $\Sbbb.$
\begin{proposition}
\label{@industrialism}
For every closed and proper set of surfaces $\Sbbb,$ there exists some function $f:\Nbbb\to\Nbbb$ such that, for every graph $G,$ 
if $G$ excludes all graphs in $\{\mathscr{D}_{t}^{Σ}\mid Σ\in  \sobs(\Sbbb)\}$ as minors, then $\Sbbb\mbox{-}\tw(G)≤f(t).$
\end{proposition}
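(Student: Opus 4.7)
I would obtain the bound by invoking Proposition~\ref{@mutilation} and then arguing topologically to force every surface produced by it to lie in $\Sbbb$. Since $|\sobs(\Sbbb)|\leq 2$ and each Dyck-grid $\mathscr{D}_{t}^{Σ}$ has $\Ocal(t^{2})$ vertices (the hidden constant depending only on the Euler genus of $Σ$), the exclusion of $\{ \mathscr{D}_{t}^{Σ}\mid Σ\in\sobs(\Sbbb)\}$ forces $G$ to exclude a single graph $H$ on $\Ocal(t^{2})$ vertices, namely the disjoint union of the Dyck-grids indexed by $\sobs(\Sbbb)$. Applying \cref{@mutilation} then yields a tree decomposition $(T,β)$ of $G$ in which every torso $G_{u}$ contains a set $A_{u}$ with $\tw(G_{u},A_{u})\leq f_{1}(t)$ and such that $G_{u}-A_{u}$ embeds in some surface $Σ_{u}$ of Euler genus at most $f_{1}(t)$.

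The crucial step will be to force $Σ_{u}\in\Sbbb$ for every $u$. If this fails, then, by closedness of $\Sbbb$ and the definition of $\sobs$, there is some $Σ\in\sobs(\Sbbb)$ with $Σ\preceq Σ_{u}$. Here I would invoke a topological lemma asserting that a graph embedded in a surface $Σ'\succeq Σ$ with sufficiently large representativity must contain $\mathscr{D}_{t}^{Σ}$ as a minor. If the embedding of $G_{u}-A_{u}$ has representativity above the required threshold, we obtain $\mathscr{D}_{t}^{Σ}\preceq G$, contradicting the hypothesis. Otherwise, a short non-contractible curve witnessing the low representativity meets the graph in a small vertex set, which I would absorb into $A_{u}$ and cut along, thereby reducing $Σ_{u}$ to a topologically simpler surface. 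Iterating this surgery terminates after at most $f_{1}(t)$ rounds per torso (bounded by the Euler genus of $Σ_{u}$) and produces an enlarged apex set $A_{u}^{*}$ with $\tw(G_{u},A_{u}^{*})\leq f_{2}(t)$ and $G_{u}-A_{u}^{*}$ embeddable in a surface of $\Sbbb$. This matches the definition in~\eqref{@translated} and gives $\Sbbb\text{-}\tw(G)\leq f_{2}(t)$, as claimed.

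The hard part is the topological lemma underpinning the second step: extracting a $\mathscr{D}_{t}^{Σ}$-minor from a highly representative embedding on a surface $Σ'\succeq Σ$. Recall that $\mathscr{D}_{t}^{Σ}$ prescribes a \emph{specific} cyclic concatenation of an annulus grid with $h$ handle grids and $c$ cross-cap grids matching the decomposition $Σ=Σ^{(h,c)}$. Realizing this cyclic pattern inside an arbitrary embedding on $Σ'$ requires selecting, among the many non-separating curves of $Σ'$, those that witness precisely the handles and cross-caps of $Σ$, then routing concentric non-separating cycles around each chosen feature and pasting them together along a common annulus. This is a genuine surface-flat-wall-type argument, and it is where the bulk of the technical work, and of the final bound $f(t)$, will be concentrated.
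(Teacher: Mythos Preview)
The paper does not prove \cref{@industrialism}; it is quoted as the main result of \cite{thilikos2023excluding} and invoked only as a black box in the introductory outline. There is therefore no in-paper argument to compare your proposal against.

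On your sketch itself: the overall strategy—apply the GMST via \cref{@mutilation}, then for each torso iterate ``cut along a short non-contractible curve or find a Dyck-grid minor'' until the hosting surface lands in $\Sbbb$—is a standard and plausible route. One point needs care: a minor found in $G_{u}-A_{u}$ is not automatically a minor of $G$, because torsos carry virtual clique edges on the adhesions; you must arrange the decomposition from \cref{@mutilation} so that every such virtual edge is realised by a path through the far side of its adhesion (a routine refinement, but one that has to be stated before the contradiction $\mathscr{D}_{t}^{Σ}\leq G$ goes through). You correctly flag the ``topological lemma''—that high representativity on $Σ'\succeqΣ$ forces a $\mathscr{D}_{t}^{Σ}$-minor—as the real work; this is essentially the content of \cite{thilikos2023excluding}, whose local form appears in the present paper as \cref{@duplicating}. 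Your reduction thus shows that this lemma is equivalent to \cref{@industrialism} modulo routine packaging, but it does not shortcut the hard part.
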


Notice that the above already gives us the grid theorem when applied 
for the set $\Sbbb_{\emptyset}$ containing  the empty surface $Σ^{\varnothing}.$ It is easy to verify that $\tw+1=\Sbbb_{\emptyset}\mbox{-}\tw.$ As  $\sobs(\Sbbb_{\emptyset})=\{Σ^{(0,0)}\},$ \cref{@industrialism} implies that   graphs excluding  $\mathscr{D}_{t}^{Σ^{(0,0)}}=\mathscr{A}_{t}$ have bounded treewidth (see \cref{@exorbitant} for an example of an annulus grid).

\paragraph{From small treewidth modulators to small size modulators.}
\cref{@industrialism} gives valuable information on the structure of the graphs that exclude  the ``half-integrality generators'' in $\frak{D}_{H}=\{\mathscr{D}_{t}^{Σ}\mid Σ\in  \sobs(\Sbbb_{H})\}.$ Therefore, we can assume that $\Sbbb\mbox{-}\tw(G) \leq f(k),$ which provides a  tree decomposition as the one in \eqref{@translated}.
In order to make progress, we need to further refine this decomposition as small treewidth modulators are not particularly helpful in finding an $H$-cover of small size.
For this we exploit the assumption that $H$ is a shallow-vortex minor.

To elaborate, we need some additional information, analogous to the exclusion of a planar graph in \textbf{Step 1}.
This corresponds to the assumption that $H$ is a minor of the shallow-vortex grid $\mathscr{V}_{h'}$ for some $h'$ depending on $H.$
One can observe that $\mathscr{V}_{3(k+1)h'}$ contains a $\mathscr{V}_{h'}$-packing of size $(k+1)$.
Therefore, the assumption that $H$-$\pack(G) \leq k$ gives us the right to additionally assume that $G$ also excludes the shallow-vortex minor $\mathscr{V}_{3(k+1)h'}.$
Using this and the fact that $\Sbbb\mbox{-}\tw(G) \leq f(k),$ we are able to further restrict the decomposition of \eqref{@translated}.
To quantify this, we introduce a new graph parameter $\Sbbb\mbox{-}\tw_{\mathsf{apex}}$ defined as follows.
\begin{eqnarray}
\begin{minipage}{13cm}
\textsl{$\Sbbb\mbox{-}\tw_{\mathsf{apex}}(G)$  is the minimum $k$ for which $G$ 
has a tree decomposition $(T,β)$ where, for every $t\in V(T),$ 
the torso   $G_{t}$ contains some set $A_t$ where $|A_t|≤k$
and $G_t-A_t$ is embeddable in a surface in $\Sbbb.$ }
\end{minipage}\label{@ambivalent}
\end{eqnarray} 

Notice that the only difference between \eqref{@translated} and \eqref{@ambivalent} is the measure defined on the ``modulator'' $A_{t}.$
While in  \eqref{@translated} it is the treewidth of the annotated graph $(G_{t},A_{t}),$ in \eqref{@ambivalent} it is the \textsl{size} of $A_{t}.$
The first ingredient of our proof is that, 
under the absence of some shallow-vortex minor, the two parameters  $\Sbbb\mbox{-}\tw_{\mathsf{apex}}$ and  $\Sbbb\mbox{-}\tw_{\mathsf{}}$ are equivalent. This is proved in \cref{@reciprocal} by combining the 
results of \cite{thilikos2023excluding} with the results of \cite{ThilikosW22Killingconf} on the structure of the graphs 
excluding a shallow-vortex grid.

As a consequence, we may now assume that we have a tree decomposition $(T,β)$ as the one in \eqref{@ambivalent}.
This decomposition is not yet in position to play the role of the tree decomposition in \textbf{Step 2}, as its torsos may have unbounded size.
To circumvent this issue, in \cref{@reciprocal} we instead prove a local structure theorem for the exclusion of $\frak{D}_{H}\cup\{\mathscr{V}\}$  in the form of \cref{@enlightened}, that can be extended to a global one (the desired tree decomposition), using standard balanced separator arguments.
In the main part of our proof (\cref{@ilemigoddesscs}) we only use the local version of this structural result, while we comment on its global version in \cref{@persecuting}.
The general approach is to consider some big enough wall $W_{t}$ and locally focus on a torso $G_{t}$ that contains most of the essential part of $W_{t}.$

\paragraph{Torsos with Dyck walls.}

According to \cref{@industrialism}, \eqref{@ambivalent}, and the equivalence of $\Sbbb\mbox{-}\tw_{\mathsf{apex}}$ and $\Sbbb\mbox{-}\tw_{\mathsf{}},$  $G_{t}$ comes together with a set $A_{t}$ such that the graph $G_{t}'\coloneqq G_{t}-A_{t}$  is accompanied by some $Σ$-embedding for a surface $Σ\in\Sbbb_{H},$ where $H$ cannot be embedded.
However, we require some additional infrastructure in $G_{t}$ that will come in the form of a large wall-like object that is controlled by our $\Sigma$-embedding.

Notice that  every adhesion $β(t)\cap β(t')$ of $t$ defines 
a {separation $(X_{t'},Y_{t'})$ of $G-A_{t}$ of order at most $3$ where $G[X_{t'}\cap Y_{t'}]$  is drawn in $Σ$ as a clique}.
We fix the orientation $(X_{t'},Y_{t'})$ such that $V(G_t)\subseteq Y_{t'}$, thereby indicating that $Y_{t'}$ is the ``important'' part of the separation.
Due to the results in \cite{thilikos2023excluding},  $G_t$  contains a $(Σ;d)$-\emph{Dyck wall}%
\footnote{Here a $(Σ;d)$-Dyck wall is certifying the existence of the Dyck grid $\mathscr{D}_{d}^{Σ}$ as a minor (see \cref{@reexpressing}). For the precise definition, see \cref{@crystallize}.} 
$D_t,$ which is highly linked to the wall $W_t$ above.
Here $d$ will be chosen ``big enough'' so to ensure the applicability of the next steps of our proof.  
Also, we may assume that the ``essential'' part of $D_{t}$ is drawn ``inside'' $G_{t}$ in the sense that, for each $(X_{t'},Y_{t'}),$ at most one branch vertex of $D_{t}$ is in $Y_{t'}\setminus X_{t'}.$
The wall $W_{t}$ is chosen large enough to represent some \textsl{tangle}, that is an orientation of the separations of $G$ of some suitably bounded order.  
The way to algorithmically detect such a big wall $W_{t}$ is given in \cite{thilikos2023excluding} in the form of \cref{thm_algogrid}.

\paragraph{The role of Kuratowki-connectivity.}

We next make some observations on how hosts of $H$ can behave with respect to the $\Sigma$-embedding of $G_{t}$.
These observations will play a key role in understanding how to ``attack'' and later ``kill'' copies of $H$ in our graph.

The first comes from the non-$Σ$-embeddability property of $H$:
``minimal'' $H$-hosts in $G,$ called \emph{$H$-inflated copies}, \textsl{cannot be entirely inside $G_{t}$}, otherwise we would be able to embed $H$ in a surface where it cannot be embedded.
Another important feature comes from the fact that $H$ is Kuratowski-connected: every $H$-inflated copy $M$ in $G$ is ``well oriented'' with respect to the adhesions of $t$ in the sense that,  when $M$  traverses 
some adhesion  $X_{t'}\cap Y_{t'}=β(t)\cap β(t')$ of $G,$ \textsl{exactly one} of the two parts  of $M$ induced by $X_{t'}$ and $Y_{t'}$ should \textsl{not} be embeddable in the disk bounding $β(t)\cap β(t')$ with the vertices of $β(t)\cap β(t')$ on its boundary. 
This implies that the ``non-disk embeddable'' part will always lie inside the set $X_{t'}$ of the separation $(X_{t'},Y_{t'})$ above.
Given now some adhesion $β(t)\cap β(t'),$ we say that it is \emph{$H$-red} if it is intersected by the (unique, due to Kuratowski-connectivity) non-disk embeddable part of some $H$-inflated copy $M$ in $G.$ 
That way, it is convenient to visualize $H$-red adhesions as the ``entrances'' from which the $H$-inflated copies of $G$ ``invade'' $G_{t}.$

\paragraph{Updating the $Σ$-embedding.}

From our previous observations it follows that to eliminate all copies of $H$ locally in $G_{t}$ it suffices to deal with all $H$-inflated copies that invade $G_{t}$ through $H$-red adhesions.
Therefore, our next objective is to update  $A_{t},$ $G_{t}'=G_{t}-A_{t},$  and the $Σ$-embedding of $G_{t}'$ in a way that the remaining part of $G_{t}'$ will not contain any $H$-red adhesions, i.e., in a way that no invading $H$-inflated copy survives.

During our proof, this updating procedure will focus on some closed disk $Δ$ containing some collection of $H$-red adhesions (these disks will be gathered together in what we call \textsl{$H$-red railed flat vortices}) and detect some  separation $(X,Y)$ of $G$ where $X\setminus Y$ contains the vertices of the Dyck wall $D_{t}$ and $Y$ contains all $H$-red adhesions in $Δ$. We call such a separation a \emph{carving separation}.
Each time we find such a separation, we move $X\cap Y$ to $A$ and also move $Y\setminus X$ ``outside'' $G_{t}.$ As the set $X\cap Y$  adds up to the size of $A$ we also need that $X\cap Y$ has ``small'' order.
We refer to this operation as \textsl{taking a carving} of our $Σ$-embedding at the carving separation $(X,Y).$ 
When the whole procedure terminates,  none of the adhesions of the updated $G_{t}'$ is $H$-red. This implies that  $(V(G_{t}),A_{t})$ is what we call an \emph{$H$-local cover} of $G,$ that is: \textsl{if  the non-disk embeddable  
part of some $H$-inflated copy in $G$ intersects $V(G_{t})$ then it also intersects $A_{t}$}.

\medskip
To achieve the previously described objective we adopt the following strategy.
Recall that in the $Σ$-embedding of $G_{t},$ $H$-red adhesions are cliques of size at most three that may be drawn all around $Σ.$
Our first step is to show that $H$-red adhesions can be cornered in the ``interior'' of less than $k$ pairwise-disjoint territories of $Σ,$ each maintaining a large enough ``buffer''  around a disk where the $H$-red adhesions reside.
This step is materialized in \cref{@abstractness}.
Afterwards we refine these territories in order to bound their complexity in the sense that there is no large flow in $G_{t}$ that crosses through these territories.
This step is the subject of \cref{@conjecture}.
Through this refinement step we obtain some some additional structural information so that in the last part of the proof that is formalized in \cref{@translation}, these territories along with their infrastructure will allow us to finally eliminate all $H$-inflated copies by removing a bounded number of vertices from their interiors.

\paragraph{Redrawing $H$-inflated copies inside a railed flat vortex.}

To formalize the aforementioned territories that will encapsulate the $H$-red adhesions of our embedding, we utilize the concept of a \emph{railed nest} $(\Ccal,\Pcal)$ of $G$ \emph{around} some closed disk $Δ^{\rm int}$ of $Σ.$
Here $\Ccal=\langle C_{1},\ldots,C_{\ell}\rangle$ is a sequence of $\ell$ disjoint cycles of $G_{t},$ where each $C_{i}$ bounds some closed disk $Δ_{i}$ in $Σ,$ where $Δ^{\rm int} \subseteq Δ_{1} \subsetneq   \dots \subsetneq Δ_{\ell},$ 
along with a set of paths $\Pcal=\langle P_{1},\ldots,P_{\ell}\rangle,$ drawn in $Δ^{\rm ext}\coloneqq Δ_{\ell},$ not traversing the interior of $Δ^{\rm int},$ joining vertices of $C_{1}$ with  vertices of $C_{\ell},$  and traversing the cycles in $\Ccal$ \emph{orthogonally}, that is $P_{i}\cap C_{j}$ is connected for every $(i,j)\in[\ell]^2.$
We refer to such a railed nest, as a \emph{railed flat vortex} and  we refer to the disk $Δ^{\rm int}$ (resp. $Δ^{\rm ext}$) as its \emph{internal} (resp. \emph{external disk}).   
Moreover, if all $H$-red adhesions drawn in $Δ^{\rm ext}$ are also drawn inside $Δ^{\rm int},$ then we call it an \emph{$H$-red railed flat vortex}.
An important ingredient of our proof is to show that we may use the infrastructure of the cycles and the paths in $(\Ccal,\Pcal)$ in order to \textsl{redraw} inside  $Δ^{\rm ext}$ every $H$-inflated copy $M$ that invades $G_{t}$ via  an  $H$-red adhesion of $Δ^{\rm ext}.$
Even if the part of $M$ that is embedded inside $Δ^{\rm ext}$ is not necessarily a disk embedding, we can make this redrawing possible by using disk embedability properties emerging from the Kuratowski-connectivity of $H$ and the ``linkage combing'' lemma from \cite{GolovachST19Hitting,GolovachST22Combing,GolovachST20Hitting}.
We refer to this as \textsl{the redrawing lemma}  that is \cref{@horkheimer} in \cref{@enthroning}.

\paragraph{Gathering $H$-adhesions in railed flat vortices.}

The next step of our strategy, is to corner all $H$-red adhesions in the interior of less than $k$ $H$-red railed flat vortices.
Towards this, we take advantage of the infrastructure provided by the $(Σ;d)$-\emph{Dyck wall} $D_{t}.$ A brick of $D_{t}$ is called $H$-red if it ``contains'' an $H$-red adhesion.
More precisely, this is formalized by the notion of the \textsl{influence} of a brick defined in \cref{@abstractness}, which roughly corresponds to a set of $H$-red adhesions that are intersected or contained by a closed disk in $\Sigma$ that bounds the ``area'' that is enclosed by the corresponding brick.
This assigns each $H$-red adhesion to the influence of at most three neighbouring $H$-red bricks and defines a notion of distance between $H$-red adhesions expressed by the distance of the corresponding $H$-red bricks in $D_{t}.$
Next, we prove that under this distance notion, no scattered enough set of $H$-red bricks of size $k$ can exist.
For this, we use the fact that each $H$-red brick $B$ implies the existence of an $H$-inflated copy in $G$ that, due to the aforementioned ``redrawing lemma'', can be redrawn in a small radius around $B.$
This radius is bounded but also big enough so as to permit the redrawing. 
Likewise, we prove that there are few $H$-red bricks away from the exceptional and the simple cycle of $D_{t}$ (see \cref{@reexpressing} for a visualization of these two cycles).
Next we use a greedy procedure in order to group together this bounded number of bricks and maintain enough railed nest infrastructure around them to cluster them into less than $k$ railed flat vortices.
The construction is completed by creating two more railed flat vortices, one for the simple cycle of $D_{t}$ and one for the exceptional one.

\paragraph{Refining $H$-red railed flat vortices.}

We are now in the position where we have defined a set of less than $k$ many $H$-red railed flat vortices whose internal disks contain all $H$-red adhesions and whose external disks are pairwise disjoint.
The next step is to further refine these flat vortices which is done in \cref{@conjecture}.

In our proof, we treat what is drawn in the external disk $Δ^{\rm ext}$ as a vortex in the classic sense and our goal is to bound their \textsl{depth}, that is to ensure that no large \textsl{transaction} goes through the society defined by each railed flat vortex.
Each of them consists of a subgraph $G_{Δ^{\rm ext}}$ of $G$ (the one that is drawn in $Δ^{\rm ext}$) where the vertices in the boundary of the external disk $Δ^{\rm ext}$ are  arranged in some cyclic ordering $Ω_{Δ^{\rm ext}}.$
A \emph{segment} of~$Ω_{Δ^{\rm ext}}$ is a set~${S \subseteq V(\Omega_{Δ^{\rm ext}})}$ such that there do 
not exist~${s_1,s_2 \in S}$ and~${t_1,t_2 \in V(\Omega_{Δ^{\rm ext}}) \setminus S}$ such that~${s_1,t_1,s_2,t_2}$ occur in~$\Omega_{Δ}$ in the order listed. 
A \emph{transaction} in~${(G_{Δ^{\rm ext}},\Omega_{Δ^{\rm ext}})}$ is a set of pairwise disjoint paths, drawn in $Δ,$ between two disjoint  segments~$A,B$ of~$\Omega_{Δ^{\rm ext}}.$ 
The \emph{depth} of~${(G_{Δ^{\rm ext}},\Omega_{Δ^{\rm ext}})}$ is the maximum size of a transaction in~${(G_{Δ^{\rm ext}},\Omega_{Δ^{\rm ext}})}.$
Our next objective is to refine each of our $H$-red railed flat vortices so that, in the end, some disk $Δ'\subseteq Δ^{\rm ext}$  defines a vortex $(G_{Δ'},Ω_{Δ'})$ of \textsl{bounded} depth and, moreover, the vertices in the boundary of $Δ'$ are all connected with disjoint paths to the  boundary of the external disk $Δ^{\rm ext}.$
We do this as follows: If there is no transaction in~${(G_{Δ^{\rm ext}},\Omega_{Δ^{\rm ext}})}$  where a big part of its paths also traverse $Δ^{\rm int},$ we make use of the ``nest tightening''-lemma from  \cite{ThilikosW22Killingconf}   in order to either update the nest to a ``tighter'' one (which allows us to recurse), or find the disk $Δ'$ claimed above, or find a small-order carving separation $(X,Y)$ (again defined by some closed disk) at which we may take a carving of our $Σ$-embedding.
If there is a transaction in~${(G_{Δ^{\rm ext}},\Omega_{Δ^{\rm ext}})}$  where a big enough part of its paths also traverse $Δ^{\rm int},$ then we use this transaction in order to split the vortex into two vortices and recurse.
This split is performed using the path infrastructure offered by the transaction, along with the cycles of the railed nest and may result in either a ``tighter'' $H$-red railed flat vortex around $Δ^{\rm int}$ or in two $H$-red railed flat vortices.
In both cases, this allows us to recurse.
As we know by the redrawing lemma, that $k$ such $H$-red railed flat vortices may give an $H$-packing, this procedure will end and will produce less than $k$ $H$-red railed flat vortices, each with some closed disk $Δ'$ defining a bounded depth vortex, as above.

\paragraph{Killing $H$-red flat  vortices.}

In the next and final step we exploit all the additional structure we obtained via the refinement step and ``attack''  each of the obtained $H$-red railed flat vortices separately. For each of them we ``kill'' all $H$-red adhesions residing in its internal disk $Δ^{\rm int} \subseteq Δ'$ by identifying a bounded set of vertices drawn within $Δ^{\rm int}$.
This is performed in \cref{@translation}.

Towards this, recall that the refinement step ensures that the vortex $(G_{Δ'},Ω_{Δ'})$ has bounded depth. 
Using a known result of \cite{kawarabayashi2020quickly}, we  construct a \textsl{bounded width} \textsl{linear decomposition} of $G_{Δ'},$ that is a path decomposition $\langle X_1,X_2,\dots,X_n\rangle$ where every bag $X_{i}$ contains some vertex $x_{i}$ of the boundary of $Δ'$ in a way that these $x_{1},\ldots,x_{n}$ are the vertices of $V(Ω_{Δ'}),$ appearing in the same order as they appear in $Ω_{Δ'}.$
We next partition $\langle X_1,X_2,\dots,X_n\rangle$ into $r$ segments $\{\langle X_{p_{i-1}},\ldots,X_{p_i-1},X_{p_i}\rangle, i\in[r]\}$ each ``minimally capable'' to host some $H$-red adhesion from which an $H$-inflated copy invades $G_{t}.$
Likewise, we find equally many $H$-inflated copies in $G$ where the parts drawn inside $Δ'$ are disjoint.
Then we bound the number of these segments by proving that they may be extended to an  $H$-packing of size $r,$  inside $Δ^{\rm ext}.$
For this, we use the full power of the redrawing lemma (\cref{@horkheimer}) along with the infrastructure offered by the railed nest.
As long as there are less than $k$ segments in $\{\langle X_{p_{i-1}},\ldots,X_{p_i}\rangle, i\in[r]\}$ we define  a carving separation $(X,Y)$ of $G$ where $Y$ contains the union of all $X_{p_{i-1}}\cup X_{p_{i}},$ $i\in[r]$ and $X\cap Y$ contains the union of all $(X_{p_{i-1}}\cap X_{p_{i}})\cup (X_{p_{i}}\cap X_{p_{i+1}}),$ $i\in[r].$
As the size of $X\cap Y$ depends on $k$ and the width of the decomposition (that is bounded), we have that $(X,Y)$ has bounded order.
Therefore, we may {take a carving} of our $Σ$-embedding at the carving separation $(X,Y).$ When this is done for all 
$H$-red flat vortices, we know that what remains from $G_{t}'$ has a $Σ$-embedding that has no $H$-red adhesions.

\paragraph{From local to global.}

Recall that all above steps were applied to an initial torso $G_{t}$ and, in particular, to the corresponding $Σ$-embedding of $G_{t}' = G_{t} - A_{t}.$
In the end, what we obtained is a new $G'_{t}$ and $A_{t}$ and a $Σ$-embedding of  $G'_{t}$ with no $H$-red adhesions.
The elimination of $H$-red adhesions was done by taking successive carvings of the $Σ$-embedding of $G_{t}'$ at a bounded number of carving separations $(X,Y),$ each of bounded order.
This came at some cost: By taking these carvings, we added all $X \cap Y$'s  to $A_t$ and, moreover, removed all $Y \setminus X$'s from $G'$.
As we already mentioned above, the resulting pair $(V(G_{t}),A_{t})$ is an \emph{$H$-local cover} of $G,$ which means that  if the non-disk-embeddable part of some $H$-inflated copy in $G$ intersects $V(G_{t}),$  then it also intersects $A_{t}.$ 
At this point we should forget the initial tree decomposition and just keep in mind that we started with a wall $W_{t}$ of some torso $G_{t}$ and we finally computed an $H$-local cover $(X_t,A_t)$ of $G$ where $X_{t}$ still maintains a big part of the Dyck grid $D_{t}$ that is the ``essential'' part of $W_{t}.$ 
This  constitutes the proof of a ``local structure theorem'' that, apart from excluding the Dyck grids in $\{\mathscr{D}_{t}^{Σ}\mid \Sigma\in  \sobs(\Sbbb_{H})\},$ assumes that $G$ has no $H$-packing of size $k,$ and, given a big enough wall $W,$ returns an $H$-local cover $(X,A)$ of $G$ where the ``essential part'' of $W$ is intact in $X.$
What we need now is to bring this result to the form of a global structure theorem, that is a new tree decomposition $(T,β)$ where each node $t$ is accompanied by a set $α(t)\subseteq β(t)$ where $(β(t),α(t))$ is an $H$-local cover of $G.$
This decomposition may serve as the analogue of the tree decomposition in \textbf{Step 2}.
The proof, presented in \cref{@liabilities}, follows along standard balanced separator arguments.

\paragraph{From connected to disconnected.}
Given  the decomposition $(T,β)$ from above, we may now delete adhesions, as it was performed in \textbf{Step 2}.
After this, we obtain a $\Zcal$-local cover $(X,A)$ of $G$ such that $G-X$ is $H$-minor-free and $|A|$ is bounded.
With some more preprocessing, explained in \cref{@analogistic} and \cref{@restrained}, this decomposition may be used to obtain a separation $(X,Y)$ of $G$ of bounded order where $(X,X\cap Y)$ is an $\Zcal$-local cover and {$G[Y\setminus X]$} is $H$-minor-free.
Notice that at this point, with $H$ being connected, we may conclude.
However, in order to obtain a fully general result, in \cref{@expurgated}, we deal with the case that $H$ is not connected by setting up a recursive algorithm which uses the connected case as the base case and each time it is called, it is called for the union of a smaller number of connected components of $H.$
The final outcome is an $H$-cover of $G$ whose size depends single-exponentially on the size of the excluded Dyck grids from $\frak{D}_{H}$ and the size of the maximum $H$-packing of $G.$ 

\subsection{Organization of the paper}

In \cref{@reformation} we present all concepts of graphs minors that are necessary for our proof. 
This includes the presentation of the local structure theorem on the exclusion of the Dyck wall~\cite{thilikos2023excluding}.
Preliminary results are presented in \cref{@deprivation}. 
They concern two sets of results. 
The first concerns the notion of a $\Zcal$-red cell and a series of lemmata about how an $H$-inflated copy may be drawn relatively to it (\cref{@proclaimed}). 
The second one contains the redrawing lemma and its proof (\cref{@enthroning}). 
\cref{@ilemigoddesscs} contains the main part of the proofs that aims at the proof of our local structure theorem.
This includes the additional exclusion of shallow-vortex minors (\cref{@reciprocal}), the gathering of the railed flat vortices (\cref{@abstractness}), the refining of flat red vortices (\cref{@conjecture}), and finally the ``elimination'' of the flat red vortices (\cref{@translation}).
The last section of the proof, \autoref{@persecuting}, is dedicated to the proof of \cref{@indescribably}. 
This includes the proof of a global structure theorem, 
the extraction from the derived tree decomposition of the three outcomes of \cref{@indescribably} for the case of a connected graph in $\Zcal$ (\cref{@analogistic} and  \cref{@restrained}), and, finally, the treatment of the disconnected case (\cref{@expurgated}). 
The proof of the final result is given in \cref{@homerische} and includes the lower bound \cref{@instigated} and the final proof of \cref{@admiration} (\cref{@prophetesses}).
In \autoref{@regressing}, we give some conclusions, consequences, open problems, and conjectures.

\section{Preliminaries}\label{@reformation}

In this section we introduce most of the required notions we need to proceed with the proofs in the following sections.

\subsection{Basic concepts}

In this subsection we present some basic concepts about sets, integers, and graphs.

\paragraph{Sets and integers.} We denote by $\mathbb{N}$ the set of non-negative integers, by $\Nbbb_{\geq n}$, $n > 1$, to be the set $\Nbbb \setminus \{ m \in \Nbbb \mid m < n \}$, and by $\Nbbb^\mathsf{even}$ the set of even numbers in $\Nbbb.$
Given two integers $p, q,$ where $p \leq q,$ we denote by $[p, q]$ the set $\{p, \dots, q\}.$
For an integer $p \geq 1,$ we set $[p] = [1, p]$ and $\mathbb{N}_{\geq p} = \mathbb{N} \setminus [0, p - 1].$
For a set $S,$ we denote by $2^{S}$ the set of all subsets of $S$ and by $\binom{S}{2}$ the set of all subsets of $S$ of size $2.$
If $\mathcal{S}$ is a collection of objects where the operation $\cup$ is defined, then we denote $\cupall S = \bigcup_{X \in \mathcal{S}} X.$
Also, given a function $f\colon A\to B$ we always consider its extension $f:2^A\to B$ such that for every $X\subseteq A,$ $f(X)=\{f(a)\mid a \in X\}.$

\paragraph{Basic concepts on graphs.} A graph $G$ is a pair $(V, E)$ where $V$ is a finite set and $E \subseteq \binom{V}{2},$ i.e. all graphs in this paper are undirected, finite, and without loops or multiple edges. When we denote 
an edge $\{x,y\},$ we use instead the simpler notation $xy$ (or $yx$). 
We write $\gall$ for the set of all graphs.
We also define $V(G) = V$ and $E(G) = E.$

We say that a pair $(A, B) \in 2^{V(G)} \times 2^{V(G)}$ is a \emph{separation} of $G$ if $A \cup B = V(G)$ and there is no edge in $G$ between $A \setminus B$ and $B \setminus A.$
We call $|A \cap B|$ the \emph{order} of $(A, B).$

If $H$ is a subgraph of $G,$ that is $V(H)\subseteq V(G)$ and $E(H)\subseteq E(G)$, we denote this by $H\subseteq G.$
Given two graphs $G_{1}$ and $G_{2},$ we denote $G_{1}\cup G_{2}=(V(G_{1})\cup V(G_{2}),E(G_{1})\cup E(G_{2})).$ We also use $G_{1}+G_{2}$ to denote the disjoint union of $G_{1}$ and $G_{2}.$
Also, given a $t \in \Nbbb,$ we denote by $t\cdot G$ the disjoint union of $t$ copies of $G.$

Given a vertex $v \in V(G),$ we denote by $N_{G}(v)$ the set of vertices of $G$ that are adjacent to $v$ in $G.$
Also, given a set $S \subseteq V(G),$ we set $N_{G}(S) = \bigcup_{v \in S} N_{G}(v) \setminus S.$

For $S \subseteq V(G),$ we set $G[S] = (S, E \cap \binom{S}{2})$ and use $G - S$ to denote $G[V(G) \setminus S].$ We say that $G[S]$ is an \emph{induced (by $S$) subgraph} of $G.$

Given an edge $e = uv \in E(G),$ we define the \emph{subdivision} of $e$ to be the operation of deleting $e,$ adding a new vertex $w,$ and making it adjacent to $u$ and $v.$ 
Given two graphs $H$ and $G,$ we say that $H$ is  \emph{a subdivision} of $G$ if $H$ can be obtained from $G$ by subdividing edges.

The \emph{contraction} of an edge $e = uv \in E(G)$ results in a graph $G'$ obtained from $G \setminus \{ u, v \}$ by adding a new vertex $w$ adjacent to all vertices in the set $(N_{G}(u) \cup N_{G}(v)) \setminus \{ u, v \}.$
A graph $H$ is a \emph{minor} of a graph $G$ if $H$ can be obtained from a subgraph of $G$ after a series of edge contractions.
We denote this relation by $\leq.$
Given a set $\Hcal$ of graphs, we use $\Hcal≤G$ in order 
to denote that at least one of the graphs in $\Hcal$ is a minor of $G.$
We refer the reader to~\cite{diestel2016graph} for any undefined terminology on graphs.

\subsection{Drawings in surfaces}

In this subsection we introduce a series of notions on surfaces and the ways to draw and decompose graphs with respect to them.
We largely use the notation from \cite{kawarabayashi2020quickly}. 

\paragraph{Surfaces.}
Given a pair $(\mathsf{h},\mathsf{c}) \in \mathbb{N} \times [0,2]$ we define $\Sigma^{(\mathsf{h}, \mathsf{c})}$ to be the two-dimensional surface without boundary created from the sphere by adding $\mathsf{h}$ handles and $\mathsf{c}$ crosscaps (for a more detailed definition see~\cite{MoharT01Graphs}).
If $\mathsf{c} = 0,$ the surface $\Sigma^{(\mathsf{h},\mathsf{c})}$ is an \emph{orientable} surface, otherwise it is a \emph{non-orientable} one.
By Dyck's theorem \cite{Dyck1888Beitrage,Francis99ConwayZIP}, two crosscaps are equivalent to a handle in the presence of a (third) crosscap.
This implies that the notation $\Sigma^{(\mathsf{h}, \mathsf{c})}$ is sufficient to denote all two-dimensional surfaces without boundary.

\paragraph{Societies.}

Let $\Omega$ be a cyclic permutation of the elements of some set which we denote by $V(\Omega).$
A \emph{society} is a pair $(G,\Omega),$ where $G$ is a graph and $\Omega$ is a cyclic permutation with $V(\Omega)\subseteq V(G).$
A \emph{cross} in a society $(G,\Omega)$ is a pair $(P_1,P_2)$ of disjoint paths\footnote{When we say two paths are \emph{disjoint} we mean that their vertex sets are disjoint.} in $G$ such that $P_i$ has endpoints $s_i,t_i\in V(\Omega)$ and is otherwise disjoint from $V(\Omega),$ and the vertices $s_1,s_2,t_1,t_2$ occur in $\Omega$ in the order listed.

\paragraph{Drawing a graph in a surface.}

Let $\Sigma$ be a surface, possibly with boundary.
A \emph{drawing} (with crossings) in $\Sigma$ is a triple $\Gamma=(U,V,E)$ such that
\begin{itemize}
\item $V$ and $E$ are finite, 
\item $V\subseteq U \subseteq \Sigma,$ 
\item $V\cup\bigcup_{e\in E}e=U$ and $V\cap (\bigcup_{e\in E}e)=\emptyset,$ 
\item for every $e\in E,$ either $e=h((0,1)),$ where $h\colon[0,1]_{\mathbb{R}}\to U$ is a homeomorphism onto its image with $h(0),h(1)\in V,$ or $e = h(\mathbb{S}^{2} - (1, 0)),$ where $h : \mathbb{S}^{2} \to U$ is a homeomorphism onto its image with $h(0,1) \in V,$ and
\item if $e,e'\in E$ are distinct, then $|e\cap e'|$ is finite.
\end{itemize}
We call the set $V,$ sometimes denoted by $V(\Gamma),$ the \emph{vertices of $\Gamma$} and the set $E,$ denoted by $E(\Gamma),$ the \emph{edges of $\Gamma$}. We also denote $U(Γ)=U.$
If $G$ is a graph and $\Gamma=(U,V,E)$ is a drawing with crossings in a surface $\Sigma$ such that $V$ and $E$ naturally correspond to $V(G)$ and $E(G)$ respectively, we say that $\Gamma$ is a \emph{drawing} of $G$ in $\Sigma$ (possibly with crossings).
In the case where no two edges in $E(\Gamma)$ have a common point, we say that $\Gamma$ is a \emph{drawing} of $G$ in $Σ$ \emph{without crossings}.
In this last case, the connected components of $Σ\setminus U,$ are the \emph{faces} of $\Gamma.$

\paragraph{$Σ$-decompositions.}
Let $\Sigma$ be a surface, possibly with boundary.
If $\Sigma$ has a boundary, then we denote it by $\bd(Σ).$ Also we refer to $Σ\setminus\bd(Σ)$ as the \emph{interior} of $Σ.$
A \emph{$Σ$-decomposition} of a graph $G$ is a pair $δ=(\Gamma,\mathcal{D}),$ where $\Gamma$ is a drawing of $G$ in $Σ$ with crossings, and $\mathcal{D}$ is a collection of closed disks, each a subset of $Σ$ such that
\begin{enumerate}
\item the disks in $\mathcal{D}$ have pairwise disjoint interiors, 
\item the boundary of each disk in $\mathcal{D}$ intersects $\Gamma$ in vertices only, 
\item if $Δ_1,Δ_2\in\mathcal{D}$ are distinct, then $Δ_1\capΔ_2\subseteq V(\Gamma),$ and 
\item every edge of $\Gamma$ belongs to the interior of one of the disks in $\mathcal{D}.$ 
\end{enumerate} 
  
A $Σ$-\emph{embedding} of a graph $G,$ is a $Σ$-decomposition $δ = (Γ,\Dcal)$ where  $\Dcal$ is a collection of closed disks such that, for any disk in $\Dcal,$ only a single edge of $Γ$ is drawn in its interior.  
For simplicity, we make the convention, that when we refer to a $Σ$-embedding we just refer to the drawing of $Γ,$ as the choice of $\Dcal$ is obvious in this case.

For a $Σ$-decomposition $δ = (Γ, \Dcal)$, let $N$ be the set of all vertices of $\Gamma$ that do not belong to the interior of the disks in $\mathcal{D}.$ 
We refer to the elements of $N$ as the \emph{nodes} of $δ.$
If $Δ \in \mathcal{D},$ then we refer to the set $Δ - N$ as a \emph{cell} of $δ.$
We denote the set of nodes of $δ$ by $N(δ)$ and the set of cells by $C(δ).$

For a cell $c \in C(δ)$ the set of nodes that belong to the closure of $c$ is denoted by $\tilde{c}.$
Given a cell $c \in C(δ),$ we define the \emph{disk} of $c$ as $\Delta_{c} \coloneqq \bd(c) \cup c.$
For a cell $c \in C(δ)$ we define the graph $\sigma_{\delta}(c),$ or $\sigma(c)$ if $\delta$ is clear from the context, to be the subgraph of $G$ consisting of all vertices and edges drawn in $Δ_{c}.$ 
We define $\pi_{δ}\colon N(δ)\to V(G)$ to be the mapping that assigns to every node in $N(δ)$ the corresponding vertex of $G.$
We also define the set of \emph{ground vertices} in $\delta$ as $\ground(\delta) \coloneqq \pi_{\delta}(N(\delta)).$

Let $G$ be a graph, $\Sigma$ be a surface, and $\delta = (\Gamma, \mathcal{D})$ be a $\Sigma$-decomposition of $G.$
A cell $c \in C(δ)$ is called a \emph{vortex} if $|\tilde{c}| \geq 4.$
Moreover, we call $\delta$ \emph{vortex-free} if no cell in $C(\delta)$ is a vortex.
  
Given a set $X \subseteq V(G),$ we denote by $\delta - X$ the $\Sigma$-decomposition $δ' = (\Gamma', \mathcal{D}')$ of $G - X$ where $\Gamma'$ is obtained by the drawing $Γ$ after removing all points in $\pi_{\delta}^{-1}(X)$ and all drawing of edges with an endpoint in $X.$
For every point $x \in \pi_{\delta}^{-1}(X)$ we pick $\Delta_{x}$ to be an open disk containing $x$ and not containing any point of some remaining vertex or edge and such that no two such disks intersect.
We also set $\Delta_{X} = \bigcup_{x \in \pi_{\delta}^{-1}(X)} \Delta_x$ and we define $\mathcal{D}' = \{ D \setminus \Delta_{X} \mid D \in \Dcal \}$.
Clearly, there is a one to one correspondence between the cells of $\delta$ and the cells of $\delta'.$
If a cell $c$ of $\delta$ corresponds to a cell $c'$ of $\delta',$ then we call $c'$ the \emph{heir} of $c$ in $\delta'$ and we call $c$ the \emph{precursor} of $c'$ in $\delta.$

\paragraph{$\delta$-aligned disks.}

Let $\delta = (\Gamma, \mathcal{D}),$ $\Gamma = (U,V,E),$ be a $\Sigma$-decomposition of a graph $G.$
We say that closed disk $\Delta$ in $\Sigma$ is $\delta$-\emph{aligned} if its boundary intersects $\Gamma$ only in nodes of $\delta$.
We denote by $\Omega_{\Delta}$ one of the cyclic orderings of the vertices on the boundary of $\Delta.$
We define the \emph{inner graph} of a $\delta$-aligned disk $\Delta$ as 
$$\inG_{δ}(Δ) \coloneqq \bigcup_{\textrm{$c \in C(δ)$ and $c \subseteq Δ$}} σ(c)$$ 
and the \emph{outer graph of $Δ$} as 
$$\outG_{δ}(Δ) \coloneqq \bigcup_{\textrm{$c \in C(δ)$ and $c \cap Δ \subseteq \ground(δ)$}} σ(c).$$

Given an arc-wise connected set $\Delta \subseteq \Sigma$ such that $U \cap \Delta \subseteq N$, we use $G \cap \Delta$ to denote the subgraph of $G$ consisting of the vertices and edges of $G$ that are drawn in $\Delta.$
Note that, in particular, the above definition applies in the case that $\Delta$ is a $\delta$-aligned disk of $\Sigma$ and $G \cap \Delta = \inG_{\delta}(\Delta).$
We also define $\Gamma\cap \Delta \coloneqq (U \cap \Delta, V \cap \Delta, \{ e \in E \mid e \subseteq \Delta \})$ and observe that $\Gamma \cap \Delta$ is a drawing of $G \cap \Delta$ in $\Delta$.

\paragraph{Renditions.}

A \emph{rendition} of a society $(G, \Omega)$ in a disk $\Delta$ is a $\Delta$-decomposition $\rho$ of $G$ such that such that $\pi_{\rho}(N(\rho)\cap \bd(Δ)) = V(\Omega)$, mapping one of the two cyclic orders (clockwise or counter-clockwise) of $\bd(Δ)$ to the order of $\Omega.$

Given a $\Sigma$-decomposition $\delta$ of a graph $G$ and a $\delta$-aligned disk $\Delta$ of $\Sigma,$ we denote by $\delta \cap \Delta$ the rendition $(\Gamma \cap \Delta, \{ \Delta_{c} \in \mathcal{D} \mid \Delta_{c} \subseteq \Delta \})$ of the society $(\inG_{\delta}(\Delta), \Omega_{\Delta})$ in $\Delta.$

\paragraph{Traces.}

Let $δ$ be a $Σ$-decomposition of a graph $G$ in a surface $Σ.$ 
For every cell $c \in C(δ)$ with $|\tilde{c}| = 2$ we select one of the components of $\bd(c) - \tilde{c}.$  
This selection is called a \emph{tie-breaker} in $δ,$ and we assume every $Σ$-decomposition to come equipped with a tie-breaker. 
Let $Q \subseteq G$ be either a cycle or a path that uses no edge of $σ(c)$ for every vortex $c \in C(δ).$ 
We say that $Q$ is \emph{grounded in $δ$} if either $Q$ is a non-zero length path with both endpoints in $π_{δ}(N(δ)),$ or $Q$ is a cycle, and it uses edges of $σ(c_{1})$ and $σ(c_{2})$ for two distinct cells $c_{1}, c_{2} \in C(δ).$
A $2$-connected subgraph $H$ of $G$ is said to be \emph{grounded in $δ$} if every cycle in $H$ is grounded in $δ.$

If $Q$ is grounded in $δ$, we define the \emph{trace} of $Q$ as follows.
Let $P_{1}, \dots, P_{k}$ be distinct maximal subpaths of $Q$ such that $P_{i}$ is a subgraph of $σ(c)$ for some cell $c.$ Fix an index $i.$
The maximality of $P_{i}$ implies that its endpoints are $π_δ(n_{1})$ and $π_δ(n_{2})$ for distinct $δ$-nodes $n_{1}, n_{2} \in N(δ).$
If $|\tilde{c}| = 2,$ define $L_{i}$ to be the component of $\bd(c) - \{ n_{1}, n_{2} \}$ selected by the tie-breaker, and if $|\tilde{c}| = 3,$ define $L_{i}$ to be the component of $\bd(c) - \{ n_{1}, n_{2} \}$ that is disjoint from $\tilde{c}.$
Finally, we define $L'_{i}$ by slightly pushing $L_{i}$ to make it disjoint from all cells in $C(δ).$ We define such a curve $L'_{i}$ for all $i$ while ensuring that the curves intersect only at a common endpoint.
The \emph{trace} of $Q$ is defined to be $\bigcup_{i \in [k]} L'_{i}.$
So the trace of a cycle is the homeomorphic image of the unit circle, and the trace of a path is an arc in $Σ$ with both endpoints in $N(δ).$

\paragraph{Grounded graphs.}
Let $\delta$ be a $\Sigma$-decomposition of a graph $G.$ 
Let $Q \subseteq G$ be either a cycle or a path that uses no edge of a vortex cell in $\delta.$
We say that $Q$ is $\delta$-\emph{grounded} if either $Q$ is a non-trivial path with both endpoints in $\pi_{\delta}(N(\delta))$ or $Q$ is a cycle that contains edges of $\sigma(c_{1})$ and $\sigma(c_{2})$ for at least two distinct cells $c_{1}, c_{2} \in C(\delta).$
A $2$-connected subgraph $H$ of $G$ is said to be $\delta$-\emph{grounded} if every cycle in $H$ is grounded in $\delta.$

\subsection{More about vortices}

We now introduce several concepts and results around vortices. 
Among them, the most important is the one of a linear decomposition of a vortex and its width and the notion of a (railed) nest around a disk that will be important 
in the proofs of the lemmata of \cref{@ilemigoddesscs}.

\paragraph{Paths.}
If~$P$ is a path and~$x$ and~$y$ are vertices on~$P,$ we denote by~${xPy}$ the subpath of~$P$ with endpoints~$x$ and~$y.$
Moreover, if~$s$ and~$t$ are the endpoints of~$P,$ and we order the vertices of~$P$ by traversing~$P$ from~$s$ to~$t,$ then~${xP}$ denotes the path~${xPt}$ and~${Px}$ denotes the path~${sPx}.$
Let~$P$ be a path from~$s$ to~$t$ and~$Q$ be a path from~$q$ to~$p.$
If~$x$ is a vertex in~${V(P) \cap V(Q)}$ such that~$Px$ and~$xQ$ intersect only in $x$, then~${PxQ}$ is the path obtained from the union of~$Px$ and~$xQ.$
Let~${X,Y \subseteq V(G)}.$
A path is an \emph{$X$-$Y$-path} if it has one endpoint in $X$ and the other in $Y$ and is internally disjoint from~${X \cup Y},$
Whenever we consider~$X$-$Y$-paths we implicitly assume them to be ordered starting in $X$ and ending in $Y,$ except if stated otherwise.
An \emph{$X$-path} is an $X$\nobreakdash-$X$\nobreakdash-path of length at least one.
In a society~$(G,\Omega),$ we write~$\Omega$-path as a shorthand for a~$V(\Omega)$-path.

\paragraph{Segments.}
Let~$(G,\Omega)$ be a society.
A \emph{segment} of~$\Omega$ is a set~${S \subseteq V(\Omega)}$ such that there do not exist~${s_1,s_2 \in S}$ and~${t_1,t_2 \in V(\Omega) \setminus S}$ such that~${s_1,t_1,s_2,t_2}$ occur in~$\Omega$ in the order listed. 
A vertex~${s \in S}$ is an \emph{endpoint} of the segment~$S$ if there is a vertex~${t \in V(\Omega) \setminus S}$ which immediately precedes or immediately succeeds~$s$ in the order~$\Omega.$
For vertices~${s,t\in V(\Omega)},$ if~$t$ immediately precedes~$s$,
 we define~$s\Omega t$ to be the \emph{trivial segment}~$V(\Omega),$
and otherwise we define~$s\Omega t$ to be the uniquely determined segment with first vertex~$s$ and last vertex~$t.$

\paragraph{Linkages.}\label{@mediterranean}
Let~$G$ be a graph.
A \emph{linkage} in~$G$ is a set of pairwise vertex-disjoint paths.
In slight abuse of notation, if~$\mathcal{L}$ is a linkage, we use~$V(\mathcal{L})$ and~$E(\mathcal{L})$ to denote~${\bigcup_{L\in\mathcal{L}}V(L)}$ and~${\bigcup_{L\in\mathcal{L}}E(L)}$ respectively.
Given two sets~$A$ and~$B$, we say that a linkage~$\mathcal{L}$ is an \emph{$A$-$B$-linkage} if every path in~$\mathcal{L}$ has one endpoint in~$A$ and one endpoint in~$B.$
We call $|\mathcal{L}|$ the \emph{order} of $\mathcal{L}.$
  
\paragraph{Transactions.} Let~${(G,\Omega)}$ be a society. 
A \emph{transaction} in~${(G,\Omega)}$ is an $A$-$B$-linkage for disjoint segments~$A,B$ of~$\Omega.$ 
We define the \emph{depth} of~${(G,\Omega)}$ as the maximum order of a transaction in~${(G,\Omega)}.$

Let $\mathcal{T}$ be a transaction in a society $(G, \Omega).$ 
We say that $\mathcal{T}$ is \emph{planar} if no two members of $\mathcal{T}$ form a cross in $(G, \Omega).$ 
An element $P \in \mathcal{T}$ is \emph{peripheral} if there exists a segment $X$ of $\Omega$ containing both endpoints of $P$ and no endpoint of another path in $\mathcal{T}.$ 
A transaction is \emph{crooked} if it has no peripheral element.

\paragraph{Vortex societies and linear decompositions.}
  Let~$Σ$ be a surface and~$G$ be a graph.
  Let~${δ = (\Gamma,\mathcal{D})}$ be a $Σ$-decomposition of~$G.$
  Every vortex~$c$ defines a society~${(σ(c),\Omega)},$ called the \emph{vortex society} of~$c,$ by saying that~$\Omega$ consists of the vertices in $\pi_{δ}(\tilde{c})$   in the order given by~$\Gamma.$
  (There are two possible choices of~$\Omega,$ namely~$\Omega$ and its reversal. 
  Either choice gives a valid vortex society.)

Let $(G,\Omega)$ be a society.
A \emph{linear decomposition} of $(G,\Omega)$ is a sequence  $$\mathcal{L}= \langle X_1,X_2,\dots,X_n,v_1,v_2,\dots,v_n \rangle$$ where $v_1,v_2,\dots,v_n$ is a labelling $V(\Omega)$ such that $v_1,v_2,\dots,v_n$ occur in that order on $\Omega$ and $X_1,X_2,\dots,X_n$ are subsets of $V(G)$ such that
  \begin{itemize}
   
    \item  for all $i\in[n],$ $v_i\in X_i,$
    \item $\bigcup_{i\in[n]}X_i=V(G),$ 
    \item for every $uv\in E(G),$ there exists $i\in [n]$ such that $u,v\in X_i,$ and 
    \item for all $x\in V(G),$ the set $\{ i\in[n] \mid x\in X_i \}$ forms an interval in $[n].$
  \end{itemize}
The \emph{adhesion} of a linear decomposition is $\max_{i\in [n-1]}|X_i\cap X_{i+1}|,$ its \emph{width} is $\max_{i\in [n]}|X_i|.$

\begin{proposition}[\!\! \cite{kawarabayashi2020quickly}]
\label{@prefascist}
If a society $(G,Ω)$ has depth at most $θ,$ then it has 
a linear decomposition of adhesion at most $2θ.$
Moreover there exists an algorithm that either finds a transaction of order $>θ$ or outputs such a decomposition in time $\Ocal(θ|V(G)|^2).$
\end{proposition}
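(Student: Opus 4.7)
The plan is to build the linear decomposition bag-by-bag from a family of small vertex cuts, one for each position in the cyclic order $\Omega$. I would enumerate $V(\Omega)$ as $v_1, \ldots, v_n$ in the order prescribed by $\Omega$, and for each $i \in [n-1]$ let $A_i = \{v_1, \ldots, v_i\}$ and $B_i = \{v_{i+1}, \ldots, v_n\}$, which are disjoint segments of $\Omega$. The first step is a Menger-style dichotomy: for each $i$, I run up to $\theta+1$ rounds of augmenting-path search looking for an $A_i$-$B_i$-linkage of order $\theta+1$. If such a linkage exists for some $i$, it is a transaction of order $> \theta$ by the disjointness of $A_i$ and $B_i$, and the algorithm outputs it as the first alternative. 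Otherwise, for each $i$ the search terminates with a minimum $A_i$-$B_i$-vertex-separator $S_i \subseteq V(G)$ of size at most $\theta$.

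The heart of the argument is the uncrossing of $S_1, \ldots, S_{n-1}$. Writing $P_i$ for the union of $S_i$ with the $A_i$-side components of $G - S_i$, the goal is to replace the $S_i$ by equally-sized separators such that $\emptyset = P_0 \subseteq P_1 \subseteq \cdots \subseteq P_{n-1} \subseteq P_n = V(G)$. Submodularity of vertex cuts does this pairwise: if $S_i, S_j$ with $i < j$ are not nested, the ``inner'' cut separating $A_i$ from $B_j$ and the ``outer'' cut separating $A_j$ from $B_i$ have total size at most $|S_i|+|S_j|$, so we may replace them by nested versions each of size at most $\theta$. Iterating — algorithmically, in a left-to-right sweep that warm-starts each max-flow from the residual graph of the previous one — enforces nestedness globally without blowing up the cost. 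I would then set, with the convention $S_0 = S_n = \emptyset$,
\[
X_i \coloneqq (P_i \setminus P_{i-1}) \cup S_{i-1} \cup S_i \cup \{v_i\},
\]
and check the four axioms of a linear decomposition: $v_i \in X_i$ is immediate; the layers $P_i \setminus P_{i-1}$ cover $V(G)$; every edge stays inside some $X_i$ because $S_i$ separates consecutive layers; and each vertex lies in a contiguous interval of bags because crossing a layer boundary requires passing through some $S_i$. For the adhesion, $X_i \cap X_{i+1}$ is contained in $S_i$ together with the repeated boundary contributions carried over from $S_{i-1}$ and $S_{i+1}$, which by nestedness reduce to at most two copies of a $\leq \theta$-sized set, yielding $|X_i \cap X_{i+1}| \leq 2\theta$.

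The timing follows because each of the $n-1$ max-flow computations costs $\Ocal(\theta \cdot |V(G)|)$ — its value is capped at $\theta+1$ and each augmenting path is found in linear time — giving the claimed $\Ocal(\theta \cdot |V(G)|^2)$ total, with the sweep-style uncrossing adding no asymptotic overhead. The step I expect to be the main obstacle is precisely the algorithmic uncrossing: a naive pairwise correction between $\binom{n-1}{2}$ separator pairs would be far too slow, so one must run the max-flows in an order that automatically enforces nestedness — essentially a one-dimensional Gomory--Hu argument — and verify that the bound $|S_i| \leq \theta$ is preserved at every individual step of the sweep rather than only in the limit. A secondary subtlety is the constant $2$ in the adhesion bound: it is what forces one to absorb both $S_{i-1}$ and $S_i$ into $X_i$, which is the minimum needed to simultaneously keep edges inside bags and maintain the interval property, and hence cannot be improved by this style of construction.
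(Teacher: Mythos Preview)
The paper does not supply its own proof of this proposition; it is quoted as a black box from \cite{kawarabayashi2020quickly}. Your sketch follows the standard argument from that source: split $V(\Omega)$ into prefix/suffix segments, invoke Menger at each split to either produce a large transaction or a small separator, uncross the separators via submodularity to obtain a nested family, and read off the bags from consecutive layers. This is essentially the proof in the cited reference, so there is nothing to compare against here.

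One remark on your write-up: the adhesion computation as stated is slightly loose. With $X_i=(P_i\setminus P_{i-1})\cup S_{i-1}\cup S_i\cup\{v_i\}$ and the $P_i$ nested, the intersection $X_i\cap X_{i+1}$ is contained in $S_i\cup\big((S_{i-1}\cup\{v_i\})\cap(S_{i+1}\cup\{v_{i+1}\}\cup(P_{i+1}\setminus P_i))\big)$; to get the clean $2\theta$ bound one typically argues directly that any vertex in $X_i\cap X_{i+1}$ lies in $S_{i-1}\cup S_i$ (or symmetrically $S_i\cup S_{i+1}$), using that $S_{i-1}\subseteq P_i$ and the layer $P_{i+1}\setminus P_i$ is disjoint from $P_i\setminus S_i$. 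Your instinct that this is where the factor $2$ enters is correct, but the justification you give (``two copies of a $\leq\theta$-sized set'') should be replaced by the explicit containment $X_i\cap X_{i+1}\subseteq S_{i-1}\cup S_i$, each of size at most $\theta$.
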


Let $δ = (\Gamma,\mathcal{D})$ be a $Σ$-decomposition of a graph $G$ in a surface $Σ.$
Let $C$ be a cycle in $G$ that is grounded in $δ,$ such that the trace $T$ of $C$ bounds a closed disk $Δ_{C}$ in $Σ.$
We define the \emph{outer} (resp. \emph{inner}) \emph{graph} of $C$ in $δ$ as the graph $\outG_{δ}(C) := \outG_{δ}(Δ_C)$ (resp. $\inG_{δ}(C) := \inG_{δ}(Δ_C)$).

\paragraph{Nests and rails.} \label{@fingerprint} 
Let $δ = (\Gamma,\mathcal{D})$ be a $Σ$-decomposition of a graph $G$ in a surface $Σ$ and let $Δ^{*} \subseteq Σ$ be an arcwise connected set.
A \emph{nest} in $δ$ around $Δ^{*}$ of order $s$ is a sequence $\mathcal{C} = \langle C_1, \dots,C_s \rangle$ of disjoint cycles in $G$ such that each of them is grounded in $δ$ and the trace of $C_i$ bounds a closed disk $Δ_{i}$ in such a way that $Δ^{*} \subseteq Δ_{1} \subsetneq Δ_{2} \subsetneq \dots \subsetneq Δ_{s} \subseteq Σ$ and $Δ_{s}$ does not intersect the boundary of $Σ$.
We call $C_{1}$ (resp. $C_{s}$) the \emph{internal} (resp. \emph{external}) cycle of $\Ccal.$
We call the sequence $\langle Δ_{1}, \dots,  Δ_{s} \rangle$ the \emph{disk sequence} of $\mathcal{C}$.

Let $\rho$ be a rendition of a society $(G, \Omega)$ in a disk $\Delta,$ let $\Delta^* \subseteq \Delta$ be an arcwise connected set and let $\mathcal{C} = \langle C_1, \dots, C_s \rangle$ be a nest in $\rho$ around $\Delta^*$ of order $s.$
We say that a family of pairwise vertex-disjoint paths $\Pcal$ in $G$ is a \emph{radial linkage} if each path in $\mathcal{P}$ has at least one endpoint in $V(\Omega)$ and the other endpoint of $\mathcal{P}$ is drawn in $\Delta^*.$
Moreover, we say that $\mathcal{P}$ is \emph{orthogonal} to $\mathcal{C}$ if for every $P \in \mathcal{P}$ and every $i \in [s],$ $C_i \cap P$ consists of a single component.
Similarly, if $\mathcal{P}$ is a transaction in $(G, \Omega)$ then $\mathcal{P}$ is said to be \emph{orthogonal} to $\mathcal{C}$ if for every $i \in [s]$ and every $P \in \mathcal{P},$ $C_i \cap P$ consists of exactly two components.

Moreover, given a radial linkage $\Pcal$ of size $r$, let $\Qcal$ contain the minimal $A$-$B$ subpaths of the paths in $\Pcal$, where $A = V(C_{1}) \cap π_{δ}(N(δ)),$ $B = V(C_{s}) \cap π_{δ}(N(δ)).$
We call the pair $(\Ccal, \Pcal)$ a \emph{railed nest} in $δ$ around $Δ^{*}$ of order $\min\{s, r\}$.
Notice that $\cupall \Pcal$ is disjoint from $\inG_{δ}(C_{1}) - V(C_{1})$ and $\outG_{δ}(C_{s}) - V(C_{s}).$

\paragraph{Renditions of nests.}

Let $(G, Ω)$ be a society, $ρ = (Γ, \mathcal{D})$ be a rendition of $(G, Ω)$ in a disk, and let $c_{0} \in C(ρ)$ be such that no cell in $C(ρ) \setminus \{ c_{0} \}$ is a vortex.
We say that the triple $(Γ, \mathcal{D}, c_{0})$ is a \emph{cylindrical rendition} of $(G, Ω)$ around $ c_{0}.$

Let $(G, Ω)$ be a society and $\rho = (\Gamma, \mathcal{D}, c_{0})$ be a cylindrical rendition of $(G, Ω)$ around $c_{0}$.
Let $\Delta$ be the disk of $c_{0}$ in $\rho.$
Let $\mathcal{C} = \langle C_{1}, \dots, C_{s} \rangle$ be a nest in $\rho$ around $\Delta$ of order $s \geq 1.$
We say that $(ρ, G, Ω)$ is the \emph{rendition of $\mathcal{C}$ around $c_{0}$ in $ρ$}.

\subsection{Grids, walls, and shallow-vortex minors}
\label{@crystallize}

In this subsection we introduce all necessary definitions around grids, walls, and shallow-vortex minors.
This includes the formal description of Dyck grids, Dyck walls and shallow-vortex grids.
Moreover, we present a local structure result, proved in~\cite{thilikos2023excluding}, that will form the basis of our results.

\paragraph{Walls.}
An \emph{$(n\times m)$-grid} is the graph $G_{n,m}$ with vertex set $[n]\times[m]$ and edge set \[\{(i,j)(i,j+1) \mid i\in[n],j\in[m-1]\}\cup\{(i,j)(i+1,j) \mid i\in[n-1],j\in[m]\}.\]
We call the path where vertices appear as $(i,1),(i,2),\dots, (i,m)$ the \emph{$i$th row} and the path where vertices appear as $(1,j),(2,j),\dots, (n,j)$ the \emph{$j$th column} of the grid.
An \emph{elementary $k$-wall}~$W_k$ for~${k \geq 3},$ is obtained from the ${(k\times 2k)}$-grid $G_{k,2k}$ by deleting every odd edge in every odd column and every even edge in every even column, and then deleting all degree-one vertices.
The \emph{rows} of~$W_k$ are the subgraphs of~$W_k$ induced by the rows of~$G_{k,2k},$ while the \emph{$j$th column} of~$W_k$ is the subgraph induced by the vertices of columns~${2j-1}$ and~${2j}$ of~$G_{k,2k}.$
We define the perimeter of $W_k$ to be the subgraph induced by~${\{ (i,j) \in V(W_k) \mid j \in \{1,2,2k,2k-1\} \text{ and } i \in [k] \textnormal{, or } i \in \{1,k\} \text{ and } j \in [2k] \}}.$
A \emph{$k$-wall}~$W$ is a graph isomorphic to a subdivision of~$W_k.$
The vertices of degree three in $W$ are called the \emph{branch vertices}.
In other words, $W$ is obtained from a graph $W'$ isomorphic to $W_k$ by subdividing each edge of~$W'$ an arbitrary (possibly zero) number of times.
The \emph{perimeter} of~$W,$ denoted by~$\Perimeter(W),$ is the subgraph isomorphic to the subgraph of~$W'$ induced by the vertices of the perimeter of~$W_k$ together with the subdivision vertices of the edges of the perimeter of~$W_k.$
We define rows and columns of $k$-walls analogously to their definition for elementary walls.
A \emph{wall} is a $k$-wall for some~$k.$

An $h$-wall $W'$ is a \emph{subwall} of some $k$-wall $W$ where $h\leq k$ if every row (column) of $W'$ is contained in a row (column) of $W.$

Notice that, as $k \geq 3,$ an elementary $k$-wall is a planar graph that has a unique (up to topological isomorphism) embedding in the plane $\mathbb{R}^{2}$ such that all its finite faces are incident to exactly six edges. The perimeter of an elementary $r$-wall is the cycle bounding its infinite face, while the cycles bounding its finite faces are called \emph{bricks}. A cycle of a wall $W,$ obtained from the elementary wall $W',$ is a \emph{brick} (resp. the perimeter) of $W$ if its branch vertices are the vertices of a brick (resp. the perimeter) of $W'.$ A brick of $W$ is \emph{internal} if it is disjoint from $\Perimeter(W).$

\paragraph{Dyck grids.} The next step is to introduce several classes of grid-like parametric graphs that capture the behaviour of surfaces.

Given $k\in\Nbbb_{≥1}$ and $h,c\in\Nbbb\times[0,2],$ consider the \emph{$(k,4k(1+h+c))$-cylindrical grid}, $\mathscr{A}_{k,4k(1+2h+c)},$ i.e.,
 the Cartesian product of a path on $k$ vertices and a cycle on $4k(1+h+c)$ vertices. We can see $\mathscr{A}_{k,4k(1+h+c)}$ as the union of
 $k$ cycles $C_i,i\in[k]$ and $4k(1+h+c)$ paths $P_j,j\in[4k(1+h+c)]$
where, for $i\in[k],$ the vertices in $C_{i}$ appear, in order, as $v^i_1,\dots,v^i_{4k(1+h+c)}$ and, for $j\in[4k(1+h+c)],$ the vertices of $P_{j}$ 
appear in order as  $v^1_j,\dots, v^{4k(1+h+c)}_j.$
$\mathscr{A}_{k,4k(1+h+c)}$ will be a common spanning subgraph of most parametric graphs that we define. In all of them, we refer to the paths $P_j$, where $j \in [4k(1+h+c)]$, as \emph{tracks}.

Let $p\in [1+h+c].$ 
By \emph{adding a handle at position $p$}, we mean adding the edges
\begin{align*}
\{ v^1_{4k(p-1)+h}v^1_{4k(p-1)+3k-h+1} \mid h\in[k] \}\cup\{ v^1_{4k(p-1)+k+h}v^1_{4k(p-1)+4k-h+1} \mid h\in[k]\},
\end{align*}
and by \emph{adding a cross-cap at position $p$}, we mean adding the edges
\begin{align*}
\{ v^1_{4k(p-1)+h}v^1_{4k(p-1)+2k+h} \mid h\in[2k] \}.
\end{align*}
  
  The   
    \emph{$(h,c)$-Dyck grid} of order $k\in \mathbb{N}$  is the graph  $\mathscr{D}_{k}^{(h,c)}$
    obtained from $\mathscr{A}_{k,4k(1+h+c)}$
after  adding a handle at every position $i\in [2,h+1]$ 
and adding a cross-cap at every position in $[h+2,h+1+c].$
We define the parametric graph of Dyck grids as $\mathscr{D}^{(h,c)}=\langle\mathscr{D}^{(h,c)}_{k}\rangle_{k\in\Nbbb_{≥3}}.$

Given a surface $Σ = Σ^{(h,c)},$ we define the parametric graph $\mathscr{D}^{Σ} = \langle \mathscr{D}^{Σ}_{k} \rangle_{k \in \mathbb{N}}$ as the Dyck grid $\mathscr{D}^{(h,c)}.$ Notice that $\mathscr{D}^{(h,c)}_{k}$ has a drawing in $Σ^{(h,c)}$ where all faces are squares expect from the face bounded by the 
cycle $C_{k},$ of length $4k(1+h+c)$ that we call \emph{simple} cycle of $\mathscr{D}^{(h,c)}_{k}$
and a face bounding a cycle of length $4(2h+c)+4k$ that we call  \emph{exceptional} cycle of $\mathscr{D}^{(h,c)}_{k}.$

For a drawing of $\mathscr{D}_{8}^{(1,2)}$ see \autoref{@freemasonry}. 
$\mathscr{D}_{8}^{(1,2)}$ can be seen as a cyclic concatenation of one copy 
of the annulus grid $\mathscr{A}_{k},$ $h$ copies of the handle grid 
$\mathscr{H}_{k}$ and $c$ copies of the cross-cap grid $\mathscr{C}_{k}.$

The following result indicates that there is a one to one correspondence between graphs embeddable in surfaces and minors of Dyck-grids.

\begin{proposition}[\!\! \cite{gavoille2023minoruniversal}]
\label{@headstrong}
There exists a function $f_{\ref{@headstrong}}:\Nbbb^2\to\Nbbb$
such that a graph $H$ on $h$ vertices is embeddable in a surface $Σ$ if and only if it is a minor of $\mathscr{D}^{Σ}_{f_{\ref{@headstrong}}(\eg(Σ),h)}.$
Moreover $f_{\ref{@headstrong}}(\eg(Σ),h)=\Ocal((\eg(Σ))^4h^2)$
\end{proposition}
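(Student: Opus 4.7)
The backward direction is immediate: $\mathscr{D}^{\Sigma}_k$ is by construction cellularly embedded in $\Sigma$, and the class of $\Sigma$-embeddable graphs is closed under minors (edge-deletion, vertex-deletion, and edge-contraction all preserve embeddability on a fixed surface). So the content lies entirely in the forward direction. The plan is to take a $\Sigma$-embedding of $H$, cut $\Sigma$ along a standard system of simple closed curves realised as cycles of (a subdivision of) $H$, host the resulting planar piece as a minor of the annulus subgraph $\mathscr{A}_{k}$ of $\mathscr{D}^{\Sigma}_k$, and re-identify the boundary using the attached handle grids $\mathscr{H}_{k}$ and crosscap grids $\mathscr{C}_{k}$.

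In detail, write $\Sigma = \Sigma^{(h',c)}$ with $g = 2h' + c = \mathsf{eg}(\Sigma)$. First, augment the $\Sigma$-embedding of $H$ by triangulating each face, obtaining a cellularly embedded supergraph $\widetilde{H}$ with $O(h+g)$ vertices that still has $H$ as a minor. Next, fix a canonical cut system of $\Sigma$ consisting of $h'$ meridian circles (one per handle) and $c$ core circles (one per crosscap), all passing through a common base vertex of $\widetilde{H}$. By a small general-position perturbation and by subdividing each edge that a cutting curve crosses, reroute every cutting curve so that it traces a cycle of $\widetilde{H}$; this inserts $O(gh)$ subdivision vertices, since in the worst case each curve crosses $O(h)$ edges. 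Cutting $\Sigma$ along the resulting cycles converts $\widetilde{H}$ into a planar graph $\widehat{H}$ embedded in a closed disk $D$ whose boundary is partitioned into $2g$ arcs, paired so that the identification of each pair recovers $\Sigma$.

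Now host $\widehat{H}$ inside $\mathscr{A}_{k}$ using the classical fact that every $n$-vertex planar graph is a minor of the $O(n)\times O(n)$ cylindrical grid: $\widehat{H}$ sits as a minor of $\mathscr{A}_{k}$ for $k = O(gh)$, placed so that the $2g$ boundary arcs of $D$ land on the $2g$ attachment segments of the handle and crosscap grids along the inner cycle of $\mathscr{A}_{k}$. Each handle grid $\mathscr{H}_{k}$ supplies, through its transaction edges, a family of $\Omega(k)$ pairwise vertex-disjoint paths between its two attachment segments in matching order, while each crosscap grid $\mathscr{C}_{k}$ supplies the same family in reversed order; routing the required boundary identifications through these disjoint paths and contracting realises exactly the identifications needed, producing $H$ as a minor of $\mathscr{D}^{\Sigma}_{k}$. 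The bound $k = O(g^4 h^2)$ arises from combining the $\Omega(gh)$ lower bound from the planar-to-grid step with the extra widths needed to thread the $g$ simultaneous handle/crosscap routings in parallel without collision. The main obstacle is precisely this last piece of accounting: allocating disjoint radial channels in the annulus so that all $g$ handle/crosscap routings can proceed without interfering with one another or with the minor embedding of $\widehat{H}$, and it is this interference that drives the polynomial dependence on $g$ in the final bound.
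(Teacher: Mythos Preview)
The paper does not prove this proposition; it is imported from \cite{gavoille2023minoruniversal} and stated without argument, so there is no in-paper proof to compare against. Your sketch follows the natural strategy---cut $\Sigma$ along a curve system, host the resulting planar graph in the annulus block $\mathscr{A}_k$, and use the handle and crosscap blocks to re-glue the boundary---and this is essentially how the Dyck grid is designed to be used. Two issues remain, one minor and one substantive.

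The minor issue concerns the cut system. One meridian per handle, all through a common basepoint, does not cut a genus-$h'$ orientable piece to a disk; the standard polygon schema needs \emph{two} curves per handle. Correspondingly, if you inspect the edge set defining $\mathscr{H}_k$ in the paper, each handle block carries two interleaved transactions realising the $aba^{-1}b^{-1}$ identification on four segments of the inner cycle, not a single matching between ``two attachment segments.'' Your routing description should be adjusted to match this.

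The substantive issue is that the bound $f_{\ref{@headstrong}}(\eg(\Sigma),h)=\mathcal{O}(g^4h^2)$ is part of the statement, and you do not derive it. Your own estimates produce a planar piece on $O(gh)$ vertices and hence an annulus of order $O(gh)$, after which you simply assert that channel-allocation overhead inflates this to $O(g^4h^2)$. Nothing in the argument as written generates the exponent $4$ on $g$ or the exponent $2$ on $h$; as it stands the sketch supports the qualitative ``if and only if'' but leaves the quantitative claim unproved.
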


We moreover require the following observation from~\cite{thilikos2023excluding}, which certifies that a Dyck-grid half-integrally packs itself.

 \begin{proposition}[\!\! \cite{thilikos2023excluding}]
 \label{@monopolism}
For every surface $Σ$ and every $t,k\in\Nbbb_{≥1},$ 
${\mathscr{D}}_{tk}^{Σ}$ contains a half-integral
${\mathscr{D}}_{t}^{Σ}$-packing of size $k.$ 
 \end{proposition}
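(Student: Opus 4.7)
The plan is to exhibit $k$ subgraphs $H_1, \dots, H_k$ of $\mathscr{D}_{tk}^{\Sigma}$, each containing $\mathscr{D}_t^{\Sigma}$ as a minor, so that every vertex of $\mathscr{D}_{tk}^{\Sigma}$ lies in at most two of them. I would first view $\mathscr{D}_{tk}^{\Sigma}$ as a $k$-fold refinement of $\mathscr{D}_t^{\Sigma}$: each vertex $v^{i'}_{l'}$ of $\mathscr{D}_t^{\Sigma}$ corresponds to the $k \times k$ block $B_{i',l'} = \{v^i_l : i \in [(i'-1)k+1,\, i'k],\ l \in [(l'-1)k+1,\, l'k]\}$ of $\mathscr{D}_{tk}^{\Sigma}$. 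For copy $j \in [k]$, I would take the branch set of $v^{i'}_{l'}$ to be the \emph{cross} inside $B_{i',l'}$ formed by row $(i'-1)k + j$ of $B_{i',l'}$ together with column $(l'-1)k + j$ of $B_{i',l'}$; such a cross is connected, and crosses in adjacent blocks are linked by the obvious row or track edge of $\mathscr{D}_{tk}^{\Sigma}$, realizing all row and track edges of $\mathscr{D}_t^{\Sigma}$. Half-integrality is immediate because a vertex $v^i_l \in B_{i',l'}$ belongs to copy $j$'s cross if and only if either $i - (i'-1)k = j$ or $l - (l'-1)k = j$, which holds for at most two $j \in [k]$.

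The delicate step is the handle edges. From the formula for handle edges in $\mathscr{D}_k^{(h,c)}$, a handle edge $v^1_a v^1_b$ of $\mathscr{D}_t^{\Sigma}$ (with, say, $a = 4t(p-1) + y$ and $b = 4t(p-1) + 3t - y + 1$ for $y \in [t]$) is realized in $\mathscr{D}_{tk}^{\Sigma}$ by the edges $v^1_{(a-1)k + m} v^1_{(b-1)k + (k - m + 1)}$ for $m \in [k]$; in particular the left-side column index $m$ in $B_{1,a}$ is forced to pair with the right-side index $k - m + 1$ in $B_{1,b}$. To make copy $j$ use the handle edge for $m = j$, I would modify its branch sets for every \emph{right-side} vertex $v^{i'}_{l'}$ (that is, $l'$ lies in the third or fourth quadrant of some handle position) by replacing the cross-column by $(l'-1)k + (k - j + 1)$, while keeping the row component as $(i'-1)k + j$. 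The modification is applied uniformly along the whole right-side track $l' = \mathrm{const}$, $i' \in [t]$, so the $t-1$ track edges on that track are realized by the common modified column, and the cross remains connected via its intersection with the row component. Cross-cap edges need no modification because their two endpoints in $\mathscr{D}_{tk}^{\Sigma}$ always share the same column residue modulo $k$.

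The main obstacle I expect is to check simultaneously that after this modification all adjacencies of $\mathscr{D}_t^{\Sigma}$ are still realized in $H_j$ and that half-integrality is preserved. Row edges of $\mathscr{D}_t^{\Sigma}$ (including those across the boundary between a left-side and a right-side track) are realized by row-$((i'-1)k+j)$ edges of $\mathscr{D}_{tk}^{\Sigma}$ regardless of the column modification, since the row component of the cross is untouched. Track edges are realized within either a left-side or a right-side track by the common (natural or modified) column. For a vertex $v^i_l$ in a right-side block $B_{i',l'}$, the condition to lie in copy $j$'s modified cross becomes $i - (i'-1)k = j$ or $k - (l - (l'-1)k) + 1 = j$, again at most two values of $j$, so half-integrality survives. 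The same analysis applies symmetrically to the second family of handle edges (pairing second and fourth quadrants), and the case of the sphere is the special case in which no modification is needed at all.
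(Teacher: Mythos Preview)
The paper does not actually prove this proposition; it is stated as an observation imported from \cite{thilikos2023excluding} and used as a black box. So there is no proof in the present paper to compare against.

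Your construction is correct. The ``cross'' trick (row $(i'-1)k+j$ together with column $(l'-1)k+j$ inside each $k\times k$ block) is the standard way to obtain a half-integral grid packing, and your observation that the handle edges of $\mathscr{D}_{tk}^{\Sigma}$ pair column residue $j$ on one side with residue $k-j+1$ on the other is exactly the point that forces a modification. Reflecting the column index to $(l'-1)k+(k-j+1)$ on the third and fourth quadrants of each handle is the right fix: cycle edges (the ``row'' direction) are insensitive to the column choice and therefore still realise all adjacencies across the left/right boundary; track edges within a fixed track see a constant column and hence survive; and the half-integrality count is unchanged because in any block the cross for copy $j$ is still determined by a single row index and a single column index. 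Your remark that cross-cap edges preserve the column residue modulo $k$, so no modification is needed there, is also correct.

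One small point worth making explicit when you write this up: the classification of tracks into ``left-side'' and ``right-side'' is well-defined because every track index $l'$ of $\mathscr{D}_t^{\Sigma}$ lies in exactly one position $p\in[1,1+h+c]$ (annulus, handle, or cross-cap), so there is no ambiguity about which rule to apply. With that clarified, the argument is complete.
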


\paragraph{Dyck walls.} Next, we define \textsl{Dyck walls} from Dyck grids in a way similar to the one we used to define walls from grids. Let $h\in\Nbbb, c \in [0, 2],$ and $t\in\Nbbb_{≥1}$ and let $Σ=Σ^{(c,h)}.$ The \emph{elementary} $(Σ;t)$-\emph{Dyck wall} is obtained from $\mathscr{D}^{h, c}_{2t}$ by deleting the cycles $C_{t+1}, \dots, C_{2t}$ and by deleting the edge $v^{i}_{j}v^{i+1}_{j}$ for every odd $i \in [t-1]$ and every odd $j \in [8t]$ and for every even $i \in [t-1]$ and even $j \in [8t].$ Moreover, for each handle and cross-cap that was added to create $\mathscr{D}^{h, c}_{2t}$, delete every edge starting on $v_{l}^{1}$ when $l$ is even. That is, we delete every second edge of a handle or a cross-cap. See~\autoref{@reexpressing} for an illustration of the elementary $(Σ^{1,1};6)$-Dyck wall.

\begin{figure}[ht]
  \begin{center}
  \scalebox{0.8}{\includegraphics{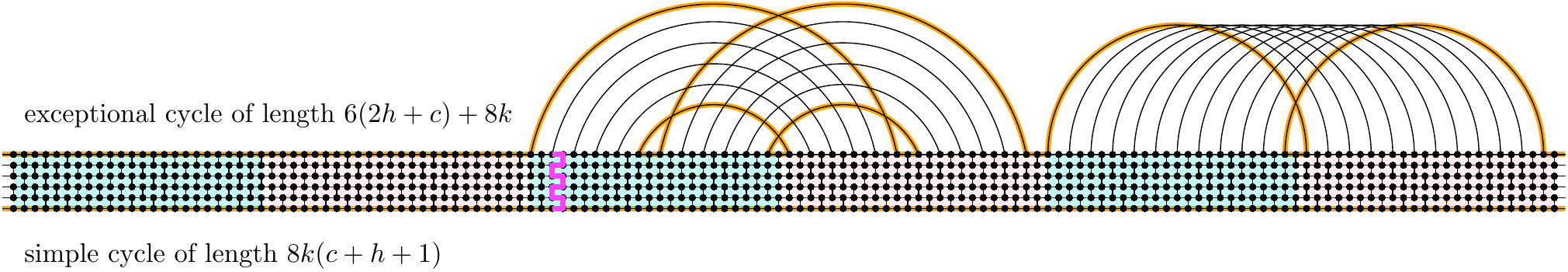}}
  \end{center}
    \caption{The elementary $(h,c;k)$-Dyck wall, where $h=1,$ $c=1,$ and $k=6.$ We draw in \magenta{magenta} one of the tracks of the $(h,c;k)$-Dyck wall.}
  \label{@reexpressing}
\end{figure}

A $(Σ;t)$-\emph{Dyck wall} $D$ is a subdivision of the elementary $(Σ;t)$-Dyck wall $D'.$ 
The vertices of degree three in $D$ are called the \emph{branch} vertices. 
Consider the $Σ$-embedding of $D$.
We call a cycle of $D$ a \emph{face-cycle} if it is the boundary of a face of the $Σ$-embedding of $D$.
We classify the face-cycles of $D$ as follows. If a face-cycle has length six, then it is a \emph{brick} of $D.$
There are exactly two face-cycles that are not bricks,
the one bounded by $C_{t}$ is called the \emph{simple face} and has length $8k(c+h+1).$
We denote by $\simple(D)$ the cycle $C_{t}.$
The other is called the \emph{exceptional face} and has length $6(2h+c)+8k$ (see~\cref{@reexpressing}). 
We denote by $\exceptional(D)$ the cycle of $D$ that bounds this face.
A brick of $D$ is \emph{internal} if it is disjoint from $\simple(D)$ and $\exceptional(D).$ The \emph{cycles} of $D$ are the (subdivided) cycles 
of the  cycles of the underlying Dijk grid, each containing $8k(c+h+1)$
branch vertices. Notice also that these cycles are crossed by $8k(c+h+1)$
pairwise disjoint paths each on $2k$ vertices (one of them is depicted in  \magenta{magenta} in \cref{@reexpressing}) that will we call \emph{tracks} of $D.$

If $D$ is a $(Σ;d)$-Dyck wall for some surface $Σ$ and some integer $d,$ we say that a $(Σ,d')$-Dyck wall $D'\subseteq D$ is a \emph{Dyck subwall} of $D$ if every cycle of $D'$ is a cycle of $D$ and every track of $D'$ is contained in a single track of $D.$

The following is a statement of the Grid Theorem.
While we will not explicitly use the Grid Theorem, we will, later on, make use of the existence of a function that forces the existence of a large wall in a graph with large enough treewidth.

\begin{proposition}[Grid Theorem \cite{robertson1986graph,chuzhoy2021towards}]\label{@reestablish}
	There exists a universal constant $c\geq 1$ such that for every $k\in\mathbb{N}$ and every graph $G,$ if $\mathsf{tw}(G)\geq ck^{10},$ then $G$ contains the $(k\times k)$-grid as a minor. 
	\end{proposition}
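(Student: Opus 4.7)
The plan is to follow the line of work culminating in the polynomial bound by Chuzhoy and collaborators, combining tangle theory with iterative path routing. First I would invoke the standard duality between treewidth and highly-connected substructures: a graph with $\tw(G) \geq w$ admits a tangle (equivalently a bramble) of order roughly $w$, which in turn yields a well-linked set $W \subseteq V(G)$ of size polynomial in $k$, meaning that no separator of small order can split $W$ into roughly balanced parts. This reduction is classical and costs only a constant factor in the treewidth bound.

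Next, I would upgrade this well-linked set into a structured object called a \emph{path-of-sets system} of length $\Omega(k^2)$ and width $\Omega(k)$: a sequence of pairwise vertex-disjoint connected subgraphs $S_1, \dots, S_m$ together with disjoint $S_i$--$S_{i+1}$-linkages whose endpoints at each interface are themselves well-linked within each $S_i$. Given such a system, a $(k \times k)$-grid minor can be extracted by routing $k$ ``horizontal'' paths inside the concatenation of the $S_i$'s and using the connecting linkages to produce the $k$ ``vertical'' strands, with well-linkedness at each interface guaranteeing that the horizontal routes can be realized vertex-disjointly.

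The main obstacle, and the reason the bound matters, is the iterative construction of the path-of-sets system from a well-linked set while keeping the loss in parameters polynomial. The natural recursive strategy is to split the current well-linked set via a bounded-order separator, recursively construct path-of-sets systems for each side, and splice them using a linkage routed across the separator. Done naively, the parameters degrade exponentially with the recursion depth, which is what produced the tower-type bounds in the original Robertson--Seymour proof. The key technical step will be to argue, using Menger-type rerouting and careful flow arguments tailored to the tangle, that at each splitting one can preserve a constant fraction of the well-linked set and of the routing infrastructure, so that after $\mathcal{O}(\log k)$ levels of recursion only a polynomial factor is lost. Pinning down the exponent $10$ specifically requires the refined accounting of Chuzhoy's construction, tracking how well-linkedness, path length, and separator order trade off at each recursive step; I expect this bookkeeping to be the most delicate part of the argument.
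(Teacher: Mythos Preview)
The paper does not prove this proposition at all: it is stated as a known result with citations to \cite{robertson1986graph,chuzhoy2021towards} and used as a black box (in fact only the existence of the function is invoked, in \cref{@horklieimer}). Your sketch is a plausible high-level outline of the Chuzhoy--Tan approach, but no proof is expected here, and reproducing one would be far outside the scope of this paper.
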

Notice that any graph that contains a $(2k\times 2k)$-grid as a minor contains a $k$-wall as a subgraph.

\paragraph{Shallow-vortex minors.}

The next parametric graph that we defined is the \emph{shallow-vortex grid}  $\mathscr{V}=\langle\mathscr{V}_{k}\rangle_{k\in \Bbb{N}_{≥1}}$
where  $\mathscr{V}_{k}$ is the  \emph{shallow-vortex grid of order} $k$ and is  obtained by $\mathscr{D}_{k}^{(0,0)}$
(that is the $(k,4k)$-cylindrical grid)
after adding the edges 
$$\{v^{1}_{4(i-1)+1}v^{1}_{4(i-1)+3},v^{1}_{4(i-1)+2}v^{1}_{4(i-1)+4}\mid i\in [k]\}.$$
See \cref{@bankruptcy} for a partial drawing of the shallow-vortex grid $\mathscr{V}_{8}.$

\begin{figure}[ht]
  \begin{center}
  \scalebox{1.26}{\includegraphics{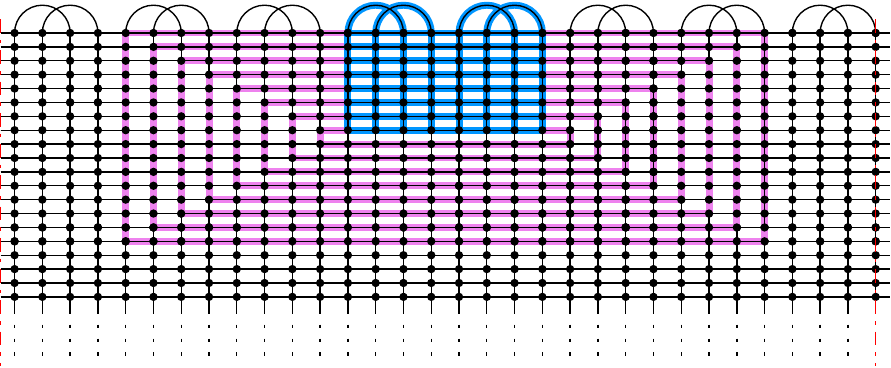}}
  \end{center}
    \caption{The (partial) drawing of the shallow-vortex grid $\mathscr{V}_{8}$ 
   where  20 or its 32 cycles are drawn along with 
    the drawing of a subdivision of $\mathscr{V}_{2}$ in it. Notice that $\mathscr{V}_{2}$  is a minor of $\mathscr{V}_{6}.$  In general, $\mathscr{V}_{k}$ can be seen as a minor of $\mathscr{V}_{3k},$ using (parts of)  $6\times 4$ tracks, because we need paths from $k$ more tracks on the left along with paths from 
   $k$ tracks on the right,  and paths from $k$
    more cycles below (all drawn in violet) in order to simulate the $k$ missing edges completing the $k$ cycles of $\mathscr{V}_{2}.$}
  \label{@bankruptcy}
\end{figure}

We say that a graph is a \emph{shallow-vortex minor} if it is the minor of some shallow-vortex grid $\mathscr{V}_{k},$ for some $k\in\Nbbb_{≥1}.$
We need the following easy observation.
See \cref{@bankruptcy}  for an illustration of the construction.

\begin{observation}\label{@childishness}
For every $k,t\in\Nbbb_{≥1},$  $\mathscr{V}_{3kt}$ contains a $\mathscr{V}_{k}$-packing of size $t.$ 
\end{observation}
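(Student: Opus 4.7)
The plan is to carve $\mathscr{V}_{3kt}$ circumferentially into $t$ vertex-disjoint angular sectors and to realise one copy of $\mathscr{V}_k$ as a minor inside each sector, mimicking the minor embedding $\mathscr{V}_k\le\mathscr{V}_{3k}$ pictured in the figure preceding the observation.

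Concretely, for $\ell\in[t]$ I take $H_\ell$ to be the subgraph of $\mathscr{V}_{3kt}$ induced by the vertices $v^i_j$ with $i\in[2k]$ and $12k(\ell-1)+1\le j\le 12k\ell$, together with the crossing edges of $\mathscr{V}_{3kt}$ whose endpoints lie in this range. The column intervals being pairwise disjoint, the subgraphs $H_1,\dots,H_t$ are pairwise vertex-disjoint, and each of them inherits exactly $3k$ of the $3kt$ crossing patterns on the inner cycle of $\mathscr{V}_{3kt}$. To produce a $\mathscr{V}_k$-minor inside a single $H_\ell$, I use the middle $k$ of its $3k$ crossings as the crossings of the target $\mathscr{V}_k$, realise the $k$ concentric cycles of $\mathscr{V}_k$ as $k$ nested rectangles inside the $2k\times 12k$ block of $H_\ell$ (the $i$-th rectangle uses rows $i$ and $2k+1-i$ as its horizontal sides and two vertical detour columns just outside the crossing strip, a distinct pair for each $i$), and realise the $4k$ radial rungs as the $4k$ vertical paths through the crossing columns.

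The main obstacle is a routine but fiddly disjointness check: the $k$ rectangles and their rungs must all fit inside a single $H_\ell$ without conflict. Different rectangles use distinct row pairs $(i,2k+1-i)$ and distinct detour columns, drawn from the $k$ columns immediately to the left and the $k$ columns immediately to the right of the crossing strip, which play exactly the role of the ``$k$ more tracks on the left'' and the ``$k$ more tracks on the right'' mentioned in the figure caption. Thus the $k$ rectangles are pairwise vertex-disjoint by inspection; the $4k$ rungs occupy the $4k$ columns of the chosen crossings and pick out one vertex of each rectangle in the expected cyclic order, realising the $(k,4k)$-cylindrical grid structure of $\mathscr{V}_k$; and the row-$1$ crossing edges of $H_\ell$ sit on the innermost rectangle as the $k$ required crossings. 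Assembling the minor models inside all $t$ sectors then yields the claimed $\mathscr{V}_k$-packing of size $t$.
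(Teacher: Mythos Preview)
Your construction is correct and is exactly the one indicated by the paper's figure and its caption: split the annulus of $\mathscr{V}_{3kt}$ into $t$ disjoint angular sectors of $12k$ tracks each, and inside each sector close up the $k$ target cycles using $k$ spare tracks on either side together with the $k$ extra rows below, keeping the middle $k$ crossings. One terminological slip worth fixing: the host crossings lie on row~$1$, which belongs to your rectangle~$1$ (rows $1$ and $2k$), the \emph{outermost} of your nested rectangles; this rectangle correctly plays the role of the target's cycle $C_1$, so the construction is sound, but calling it the ``innermost rectangle'' is misleading.
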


\paragraph{Ring blowups.} A handy way to prove that a graph is a shallow-vortex minor 
is to see it as a minor of a \textsl{ring blowup} graph.
Consider a graph $G$ and a cycle $C\subseteq G.$
We say that $C$ is a \emph{facial} cycle of $G$ if the 
graph obtained from $G$ 
after making a new vertex $v_{\rm new}$ adjacent with 
all the vertices in $C$ is planar.
A \emph{facial pair} $(G,C)$ be a pair where $G$ is a graph and $C$ is a facial cycle of $G.$
The \emph{ring blowup} of a facial pair $(G,C)$ is the graph obtained from $G-V(C)$ if we add the vertices in $\{v^1,v^2\mid v\in V(C)\}$ and the edges in
$$\{v^1v^2\mid v\in V(C)\} \cup$$
$$\{v^1u^1,v^2u^1,v^1u^2,v^2u^2\mid vu\in E(C)\}\cup$$
$$\{xv^1,xv^2\mid x\in V(G)\setminus V(C)\text{~and~}xv\in E(C)\}.$$
A \textsl{ring blowup graph} is any ring blowup of some facial pair.
As observed in \cite{curticapean2022parameterizing,ThilikosW22Killingconf, thilikos2022killing}, every ring blowup graph is a shallow-vortex minor. For instance, 
$K_{7}$ is a ring blowup of $(K_{4},K_{3})$ where $K_{3}$ is any triangle of $K_{4}.$ It is also easy to see that every  ring blowup is  a Kuratowski-connected graph. Therefore, all the results of our paper 
are applicable for every $\Zcal$ containing a ring blowup graph.

\subsection{Tangles and $\Sigma$-schemata}

In this subsection we introduce tangles, balanced separators, and well-linked sets, as well as the notion of a $\Sigma$-schema, that will be a central concept in the proofs that follow.

\paragraph{Tangles.} Let $G$ be a graph and $k$ be a positive integer. 
We denote by $\mathcal{S}_{k}$ the collection of all separations $(A, B)$ of order less than $k$ in $G$.
An \emph{orientation} of $\mathcal{S}_{k}$ is a set $\mathcal{O}$ such that for all $(A, B) \in \mathcal{S}_{k}$ exactly one of $(A, B)$ and $(B, A)$ belongs to $\mathcal{O}.$
A \emph{tangle} of order $k$ in $G$ is an orientation $\mathcal{T}$ of $\mathcal{S}_{k}$ such that for all $(A_{1}, B_{1}), (A_{2}, B_{2}), (A_{3}, B_{3}) \in \mathcal{T}$, 
we have $A_{1} \cup A_{2} \cup A_{3} \neq \emptyset.$

Let $D$ be a $(Σ; k)$-Dyck wall in a graph $G$, for some surface $Σ$, and $(A, B)$ be a separation of order $< k$. 
Then exactly one of $A$ or $B$ contains a cycle of $D.$
Let $\mathcal{T}_{D}$ be the orientation of $\mathcal{S}_{k}$ where $(A, B) \in \mathcal{T}_{D}$ if and only if $B\setminus A$ contains a cycle and a track of $D.$
Then it is easy to observe that $\mathcal{T}_{D}$ is a tangle, which we call the \emph{tangle induced by $D$}.
In the same way we may also define tangles \emph{induced by} walls.

Let $\mathcal{T}'\subsetneq\mathcal{T}$ be a tangle which is {properly contained} in the tangle $\mathcal{T}.$
We say that $\mathcal{T}'$ is a \emph{truncation} of $\mathcal{T}.$

\paragraph{Well-linked sets and tangles.} Let $α \in [2/3, 1),$ $G$ be a graph and $S \subseteq V(G)$ be a vertex set. 
A set $X \subseteq V(G)$ is said to be an \emph{$α$-balanced separator} for $S$ if for every component $C$ of $G - X$ it holds that $|V(C) \cap S| \leq α|S|.$ 
Let $q \in \Nbbb.$ We say that $S$ is a \emph{$(q, α)$-well-linked set} of $G$ if there is no $α$-balanced separator of size at most $q$ for $S$ in $G.$ 
Given a $(q, α)$ well-linked set of $G$ we define $$\mathcal{T}_{S} := \{ (A, B) \in \mathcal{S}^{q+1} \mid |S \cap B| > α|S| \}.$$ 
It is not hard to see that $\mathcal{T}_{S}$ is a tangle of order $q$ in $G$.

Given an $(s,\alpha)$-well-linked set $S$ for large enough $s$ we can find a large wall $W$ in FPT-time such that $\mathcal{T}_W$ is a truncation of $\mathcal{T}_S.$
This will give us the right to equip our local structure theorems with an additional property that will allow us to localise it with respect to some tangle which will be encoded using the set $S$ instead of the collection of separations which can be very large.

\begin{theorem}[\cite{thilikos2023excluding}]\label{thm_algogrid}
Let $k\geq 3$ be an integer, $\alpha\in [2/3,1).$
There exist universal constants $c_1,c_2\in\mathbb{N}\setminus\{ 0\},$ and an algorithm that, given a graph $G$ and an $(36c_1k^{20}+3,\alpha)$-well-linked set $S\subseteq V(G)$ computes in time $2^{\mathcal{O}(k^{c_2})}|V(G)|^2|E(G)|\log(|V(G)|)$ a $k$-wall $W\subseteq G$ such that $\mathcal{T}_W$ is a truncation of $\mathcal{T}_S.$
\end{theorem}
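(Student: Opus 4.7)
My plan is to reduce the statement to the constructive Grid Theorem (\cref{@reestablish}) and then upgrade the wall that it produces to one whose induced tangle is a truncation of $\mathcal{T}_S$, via a Menger-type linkage argument that is driven by the well-linkedness of $S$.

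\emph{Step 1: well-linkedness forces large treewidth.} Since $S$ is $(36c_1k^{20}+3,\alpha)$-well-linked with $\alpha\geq 2/3$, there is no $\alpha$-balanced separator for $S$ in $G$ of size at most $36c_1k^{20}+2$. A classical tree-decomposition lemma states that any tree decomposition of width $w$ admits a $2/3$-balanced separator of size $\leq w+1$ for every vertex subset (pick a bag whose removal balances $S$ among the surviving components). Consequently $\mathsf{tw}(G)\geq 36c_1k^{20}+2$.

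\emph{Step 2: constructive extraction of the wall.} With this treewidth bound in hand, and $c_1$ chosen to absorb the universal constant in \cref{@reestablish}, I would apply a constructive polynomial-time Grid Theorem (in the spirit of Chuzhoy--Tan and successors) to extract, within time $2^{\mathcal{O}(k^{c_2})}|V(G)|^2|E(G)|\log(|V(G)|)$, a $(2k\times 2k)$-grid minor of $G$, and hence a $k$-wall $W$ as a subgraph. The factor of $36$ in the well-linkedness bound buys the slack between the treewidth consumed by the grid theorem and the order of the target wall.

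\emph{Step 3: tangle truncation via Menger.} It remains to verify that $\mathcal{T}_W\subseteq\mathcal{T}_S$. Using Menger's theorem together with the well-linkedness of $S$, I would locate $k$ pairwise vertex-disjoint paths from the branch vertices of $W$ to $S$; the slack built into $36c_1k^{20}+3$ ensures that such a linkage fits inside the separator budget. Now suppose $(A,B)\in\mathcal{T}_W$, so $B\setminus A$ contains a cycle and a track of $W$ and $|A\cap B|<k$. If one had $|S\cap B|\leq\alpha|S|$, then $|S\setminus B|\geq(1-\alpha)|S|>0$, and since all $k$ linked branch vertices of $W$ lie on the $B$-side of the separation while at least one endpoint in $S$ lies on the $A$-side, each of the $k$ disjoint paths would have to cross $A\cap B$, contradicting $|A\cap B|<k$. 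Hence $(A,B)\in\mathcal{T}_S$, so $\mathcal{T}_W$ truncates $\mathcal{T}_S$.

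\emph{Main obstacle.} The hard part is producing the wall \emph{and} its linkage to $S$ simultaneously within the stated running time. Modern constructive Grid Theorems can be rendered tangle-aware, and the well-linkedness of $S$ yields, via Menger, the required linkage; the real work lies in interleaving these computations while respecting the $2^{\mathcal{O}(k^{c_2})}|V(G)|^2|E(G)|\log(|V(G)|)$ bound. Once the wall and its linkage are secured, the combinatorial verification of the truncation property above is essentially immediate.
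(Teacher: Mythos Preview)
The paper does not prove this theorem; it is imported verbatim from \cite{thilikos2023excluding} and used as a black box, so there is no proof in the paper to compare your proposal against.

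That said, your argument has a genuine gap in Step~3. Well-linkedness of $S$ does \emph{not} imply that an arbitrary wall produced by the Grid Theorem is linked to $S$ by $k$ disjoint paths. Consider two large grids $G_1,G_2$ joined by a single edge (or even disjoint), and let $S\subseteq V(G_2)$ be $(q,\alpha)$-well-linked in $G_2$; then $S$ is $(q,\alpha)$-well-linked in $G$ as well, yet the constructive Grid Theorem, applied blindly to $G$, may return a wall sitting entirely in $G_1$. No amount of slack in the well-linkedness parameter produces even two disjoint $W$--$S$ paths in this situation, so the Menger step cannot be carried out and the truncation claim fails. Even granting such a linkage, the concluding counting argument is shaky: the $k$ endpoints in $S$ are fixed before the separation $(A,B)$ is chosen, so you cannot conclude that more than $|A\cap B|$ of them lie in $A\setminus B$.

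What is actually needed---and what you gesture at in your ``Main obstacle''---is to run the grid-extraction procedure \emph{inside the tangle $\mathcal{T}_S$} from the outset, so that every branching decision respects the orientation dictated by $S$. This tangle-aware extraction is precisely the content of the theorem as proved in \cite{thilikos2023excluding}; the reduction you outline (treewidth bound $\Rightarrow$ some wall $\Rightarrow$ link it to $S$ afterwards) does not work without this.
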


\paragraph{$Σ$-schemata and their carvings.}
Let $G$ be a graph, $Σ$ be a surface, and $D\subseteq G$ be a $(Σ;d)$-Dyck wall.
We say that the triple $(Α,δ,D)$ is a \emph{$Σ$-schema} of $G$ if 
\begin{enumerate}
\item $A\subseteq V(G),$
\item $δ=(Γ,\Dcal)$ is a vortex-free $Σ$-decomposition of $G' \coloneqq G-A,$ and
\item $D$ is grounded in $δ.$
\end{enumerate}

Given some separation $(X_{1},X_{2})\in \Tcal_{D}$,
we define the \emph{carving} of $(Α,δ,D)$ by $(X_{1},X_{2})$
as the $Σ$-schema $(Α',δ',D')$ defined as follows. We first set  
$A'=A\cup(X_{1}\cap X_{2}).$ Then, we define the $Σ$-decomposition $δ'$ of $G-A'$ by taking the $Σ$-decomposition  $(Γ^*, \Dcal'):=δ-X_{1}$  of $G-(A\cup X_{1})$ and then  completing it to a $Σ$-decomposition  $δ'=(Γ', \Dcal')$ of $G-A'$
by considering any drawing $Γ^*$
of $G[X_1\setminus X_{2}]$ inside  any of the cells of $\Dcal'$ and taking $Γ':=Γ'\cup Γ^*.$ Finally, let $D'$ be a maximum order $(Σ;d)$-Dyck wall that is a subgraph of $D-X_{1}.$

\subsection{Excluding Dyck grids} 

In this subsection, we present a local structure theorem, proven in~\cite{thilikos2023excluding}, that will be the departing point for the proof of our local structure theorem (see \cref{@antiauthoritarian}), whose proof is the subject of \cref{@ilemigoddesscs}.

\medskip
Before we progress any further we formally discuss ``minimal'' hosts.

\paragraph{Expansions.}
An \emph{expansion} is a pair $(M,T)$ such that $M$ is a graph, $T\subseteq V(M)$ and all vertices of $V(M)\setminus T$ have degree two.
Given a graph $H,$ an expansion $(M,T)$ is an $H$-\emph{expansion} if the graph obtained if we dissolve all vertices not in $T$ contains $H$ as a minor. 
An $H$-\emph{expansion} $(M,T)$ is \emph{minimal} if there is no $H$-expansion $(M',T)$ where $M'$ is a subgraph of $M.$
We say that a graph $M$ is an \emph{$H$-inflated copy} if there exists a choice of vertices $T\subseteq V(M)$ such that $(M,T)$ is a minimal $H$-expansion.

Note that if $(M,T)$ is a minimal $H$-expansion, then  $|V(T)|≤|V(H)|^{2}.$
Given a graph $G,$ an ($H$-)expansion $(M,T)$ is an ($H$-)expansion \emph{in} $G$ if $M$ is a subgraph of $G.$
In this case we also say that $M$ is an ($H$-)inflated copy \emph{in} $G.$

\begin{proposition}[\!\! \cite{thilikos2023excluding}]
\label{@duplicating}
Let $\mathbb{S}$ be a closed set of surfaces, each of Euler genus at most $\darkmagenta{γ}.$
There exist functions 
$f_{\ref{@duplicating}}^{1} : \mathbb{N}^3 \to \mathbb{N},$ 
$f_{\ref{@duplicating}}^{2} : \mathbb{N}^{4} \to \mathbb{N},$ 
$f_{\ref{@duplicating}}^{3} : \mathbb{N}^{3} \to \mathbb{N},$ 
$f_{\ref{@duplicating}}^{4} : \mathbb{N}^{2} \to \mathbb{N},$ 
$f_{\ref{@duplicating}}^{5} : \mathbb{N}^{3} \to \mathbb{N}$ 
and an algorithm that,  given a graph $G,$  {four} integers $t,q,d,r\in \mathbb{N},$ where $q≥ f_{\ref{@duplicating}}^{1}(\darkmagenta{γ},t,d),$ and an $f_{\ref{@duplicating}}^{2}(\darkmagenta{γ},t,q,r)$-wall $W$ of $G,$ outputs one of the following:
\begin{itemize}
   
\item Either an inflated copy  $M'$ of the Dyck grid $\mathscr{D}_{t}^{Σ'},$ for some $Σ'\in\sobs(\mathbb{S})$ or
\item a triple $(A,δ,D)$ such that
\begin{itemize}
   
\item $A$ is a vertex set of $G$ where $|A|≤f_{\ref{@duplicating}}^{3}(\darkmagenta{γ},t,q),$ 
\item $δ$ is a $Σ$-decomposition   of $G'\coloneqq G-A,$ for some $Σ\in\mathbb{S},$  such that 
\begin{itemize}
\item[$\triangleright$] $δ$  has at most $f_{\ref{@duplicating}}^{4}(\darkmagenta{γ},t)$ vortex cells each of depth at most $f_{\ref{@duplicating}}^{5}(\darkmagenta{γ},t,q),$
\item[$\triangleright$] for every  vortex cell $c$ of $δ,$ there exists a nest $(C_{1}^c,\ldots,C_{q}^c)$ or order $q$ around the disk of $c,$  with disk sequence $\langle Δ_{C^c_{1}}, \dots,  Δ_{C^c_{q}}\rangle,$
such that,  for each pair of distinct vortex cells $c,c'$ of $d,$ it holds  that $Δ_{C^c_{q}}\cap Δ_{C^{c'}_{q}}=\emptyset,$ 
\end{itemize}
\item  $D$ is a $(Σ;d)$-Dyck wall that is grounded in $δ,$
\item there exists an $r$-subwall $W'\subseteq W$ which is vertex-disjoint from $D,$ grounded in $δ,$ drawn entirely within the disk defined by the trace of the simple face of $D,$
\item no vertex of $D$ or $W'$ belongs to $\bigcup_{c\in \vcells(δ)} Δ_{c},$ and
\item $\mathcal{T}_D$ and $\mathcal{T}_{W'}$ are truncations of $\mathcal{T}_W.$
\end{itemize}
\end{itemize}
Moreover, it holds that 
\begin{eqnarray*}
f_{\ref{@duplicating}}^{1}(\darkmagenta{γ},t,d) & = &  2^{Θ(γ)}(t+d), \\
f_{\ref{@duplicating}}^{2}(\darkmagenta{γ},t,q) & = & 2^{\poly(2^{Θ(γ)}t)}\cdot q+ \mathsf{poly}(2^{Θ(γ)}t)\cdot r, \\
f_{\ref{@duplicating}}^{3}(\darkmagenta{γ},t,q) & = & 2^{\poly(2^{Θ(γ)}t)}\cdot q, \\
f_{\ref{@duplicating}}^{4}(\darkmagenta{γ},t) & = & 2^{Θ(γ)}t^2, \\
f_{\ref{@duplicating}}^{5}(\darkmagenta{γ},t,q)&  = &  2^{\poly(2^{Θ(γ)}t)}\cdot q  \end{eqnarray*}
 and $\mathbf{A}_\darkmagenta{γ}$ runs in time $q^2\cdot  2^{\mathsf{poly}(t\cdot 2^{\Ocal(γ)})}|V(G)|^{2}.$
\end{proposition}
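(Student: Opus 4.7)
The plan is to build on the local version of the Graph Minors Structure Theorem (the ``flat wall theorem''-style statement of Robertson--Seymour in Graph Minors XVII, as refined by Kawarabayashi--Thomas--Wollan~\cite{kawarabayashi2020quickly}). Given the input wall $W$ of sufficient size, the first step is to apply this local GMST to obtain an initial apex set $A_0$, a $\Sigma_0$-decomposition $\delta_0$ of $G-A_0$ for some surface $\Sigma_0$ (\emph{a priori} of unrestricted Euler genus), a bounded number of vortex cells of bounded depth, and a subwall of $W$ that is grounded in $\delta_0$ inside the ``flat'' region of $\delta_0$. The tangle-truncation property for this intermediate subwall follows from a standard argument: since $A_0$ and each vortex contribute only a bounded number of vertices to the part of $W$ that they can ``see'', a large subwall of $W$ can be located entirely inside a single flat cell, and by the choice of $W$'s size this subwall still orients all small separations the same way as $\mathcal{T}_W$.

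Next I would split according to whether $\Sigma_0 \in \mathbb{S}$. If $\Sigma_0 \notin \mathbb{S}$, then by closedness of $\mathbb{S}$ there exists $\Sigma' \in \sobs(\mathbb{S})$ with $\Sigma' \preceq \Sigma_0$. Using the $\Sigma_0$-embedding together with the large grounded subwall I would ``read off'' a $\mathscr{D}_t^{\Sigma'}$-minor as follows. The annulus grid part of $\mathscr{D}_t^{\Sigma'}$ comes from concentric cycles and crossing tracks taken from the flat wall; for each handle, respectively crosscap, of $\Sigma'$ I would locate in $\Sigma_0$ a corresponding pair of topologically non-contractible curves meeting the embedding in short grounded paths, and use these paths together with the wall infrastructure to realise the required handle/crosscap gadget. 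The size bound $f^2_{\ref{@duplicating}}$ is tuned to guarantee that enough concentric cycles and tracks remain after each gadget is extracted; the double-exponential dependence on $\gamma$ is forced here because the number of available handle/crosscap gadgets in $\Sigma_0$ must be bounded in terms of $2^{\Theta(\gamma)}$.

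If $\Sigma_0\in\mathbb{S}$, the task is to refine the decomposition so that it meets all of the listed structural demands: a single surface $\Sigma\in \mathbb{S}$, at most $f^4_{\ref{@duplicating}}(\gamma,t)$ vortex cells of depth at most $f^5_{\ref{@duplicating}}(\gamma,t,q)$, an order-$q$ nest around each vortex with pairwise-disjoint outermost disks, the $(\Sigma;d)$-Dyck wall $D$ grounded in $\delta_0$, and the $r$-subwall $W'$ drawn inside the simple face of $D$. The bound on the number and depth of vortices comes from a packing argument: any transaction of length exceeding $f^5_{\ref{@duplicating}}$ inside a vortex, respectively any collection of more than $f^4_{\ref{@duplicating}}$ vortices, could be combined with the flat portion of the embedding to build a $\mathscr{D}_t^{\Sigma'}$-minor for some $\Sigma'\succ \Sigma_0$, triggering the first outcome. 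For each surviving vortex I would apply the nest-extraction procedure of~\cite{kawarabayashi2020quickly,ThilikosW22Killingconf} around its cell to peel off $q$ concentric grounded cycles, ensuring pairwise-disjoint outermost disks by a greedy nested-disk argument. Finally, the Dyck wall $D$ is assembled from the remaining flat wall together with the handle- and crosscap-gadgets already identified in $\Sigma$, and $W'$ is carved out of whatever portion of $W$ survives inside the simple face of $D$.

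The main obstacle is twofold. First, one must maintain the tangle-truncation property $\mathcal{T}_D,\mathcal{T}_{W'} \preceq \mathcal{T}_W$ throughout the many reductions (apex extractions, wall shrinkings, rerouting of radial linkages to build the nests, gadget insertions), which forces the choice of $q$ to dominate every local modification and requires a careful bookkeeping lemma showing that every truncation step preserves the orientation of all separations of order below $q$. Second, the quantitative dependencies $f^1,\dots,f^5$ must be threaded through a chain of reductions, each of which roughly multiplies the required wall size by a $\mathsf{poly}(2^{\Theta(\gamma)}t)$ factor; the total blow-up must match the claimed $2^{\mathsf{poly}(2^{\Theta(\gamma)}t)}\cdot q$ bound, which is tight only because the handle/crosscap gadgets can be reused across vortices. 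The algorithmic statement follows by replacing each existential invocation of the flat-wall theorem, the nest-extraction, and the wall-in-surface recognition by its constructive counterpart, yielding the quoted $q^{2}\cdot 2^{\mathsf{poly}(t\cdot 2^{\Ocal(\gamma)})}|V(G)|^{2}$ runtime.
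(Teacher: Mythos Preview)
This proposition is not proved in the present paper: it is quoted verbatim as a result from \cite{thilikos2023excluding} and used as a black box. There is therefore no in-paper proof to compare your proposal against. Your sketch is a plausible high-level outline of how such a local structure theorem for excluded Dyck grids might be established, and it correctly identifies the main ingredients (the Kawarabayashi--Thomas--Wollan flat-wall machinery, a genus dichotomy, nest extraction, and tangle bookkeeping), but the paper itself offers no argument here for you to be measured against.
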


\section{Preliminary results}
\label{@deprivation}

In this section we present a series of definitions and results regarding the behaviour of hosts of connected non-planar Kuratowski-connected graphs in a $\Sigma$-decomposition of a given graph (\cref{@proclaimed}).
A key result in this section is the ``Redrawing Lemma'' (\cref{@enthroning}), that shows how we can exploit the property of being Kuratowski-connected in order to redraw ``minimal'' hosts of such graphs ``invading'' through cells deep within the interior of a railed nest of sufficiently large order in our decomposition, ``locally'' within the are bounded by the outermost cycle of the railed nest.
This result is one of the key components that allows for obtaining large packings locally in different steps throughout our proof or otherwise allows us to further refine our initial $\Sigma$-decomposition until we reach our final local structure theorem.

\subsection{Hosts of non-planar Kuratowski-connected graphs}
\label{@proclaimed}

In this subsection we investigate the different ways hosts (and therefore also inflated copies) of a connected non-planar Kuratowski-connected graph $H$ may interact with a vortex-free $\Sigma$-decomposition of a given graph.
We introduce the notion of a $c$-\textsl{invading} $H$-host for a given cell $c$ of our $\Sigma$-decomposition.
We moreover introduce the notion of $H$-\textsl{red} cells in relation to $c$-invading $H$-hosts and we argue that with respect to a fixed $\Sigma$-decomposition of our graph it is sufficient to only cover such $H$-hosts in our local structure theorem in order to later globally cover all $H$-hosts in $G.$

\paragraph{Minimal separations and Kuratowski-connectivity.}
Consider a separation $(A, B)$ of a graph $G.$
We call $(A, B)$ a \emph{minimal separation} if there exists a component $C$ of $G[A \setminus B]$ and a component $D$ of $G[B \setminus A],$ such that every vertex in $A \cap B$ has a neighbour in $V(C)$ and a neighbour in $V(D).$

Given a graph $G$ and a set $X \subseteq V(G)$ of vertices of $G,$ we say that $G$ is $X$-\emph{disk embeddable} if $G$ can be embedded in a disk $\Delta$ with the vertices of $X$ drawn in the boundary of $\Delta.$

We repeat the definition of Kuratowski-connectivity, introduced in~\cref{@hollingdale}, following the terminology introduced in this section.
We say that a graph $G$ is \emph{Kuratowski-connected} if for every minimal separation $(A,  B)$ of $G$ of order at most three, one of $G[A]$ or $G[B]$ is $A \cap B$-disk embeddable.

Let $H$ be a non-planar Kuratowski-connected graph.
Let $(A, B)$ be a minimal separation of $H$ of order at most three.
Observe that since $H$ is non-planar and Kuratowski-connected exactly one of $H[A]$ and $H[B]$ is not $A \cap B$-disk embeddable.
Indeed, otherwise we would be able to combine the two corresponding disk embeddings of $H[A]$ and $H[B]$ into a plane embedding of $H.$

\paragraph{Boundaried graphs.}

A \emph{boundaried graph} is a tuple $\mathbf{G} = \langle G, v_{1}, \ldots, v_{r} \rangle,$ $r \in \Nbbb,$ where $G$ is a graph and $\{ v_{1}, \ldots, v_{r} \} \subseteq V(G).$
We say that two boundaried graphs $\mathbf{G} = \langle G, v_{1}, \ldots,v_{r} \rangle$ and $\mathbf{G}' = \langle G', v_{1}',\ldots,v_{r'}'\rangle$ are \emph{compatible} if $r = r'.$
Given two compatible boundaried graphs $\mathbf{G} = \langle G, v_{1}, \ldots, v_{r} \rangle$ and $\mathbf{G}' = \langle G', v_{1}', \ldots, v_{r}' \rangle$ we define $\mathbf{G} \oplus \mathbf{G}'$ as the graph obtained if we take the disjoint union of $G$ and $G'$ and then, for every $i \in [r],$ we identify vertices $v_{i}$ and $v_{i}'$ (by identifying $v_{i}$ and $v_{i}'$ the newly created vertex is adjacent to all neighbours of $v_{i}$ in $G$ and of $v_{i}'$ in $G'$).

\medskip
To study the behaviour of an $H$-host for a connected non-planar Kuratowski-connected graph $H$ within a $\Sigma$-decomposition $\delta$ of a graph $G,$ we need to look at how minor models of $H$ within an $H$-host traverse cells of $\delta.$
Any cell of $\delta$ corresponds to a separation of $G$ of order at most three and may naturally induces a separation of $H$ of order at most three, given a minor model of $H$ in an $H$-host in $G$ that traverses a given cell.
However this separation is not necessarily a minimal separation of $H$ and hence we cannot directly evoke the properties of Kuratowski-connected graphs.
The following lemma deals with non-minimal separations of order three in connected non-planar Kuratowski-connected graphs.

\medskip
Let $H$ be a connected non-planar Kuratowski-connected graph.
We say that a separation $(A, B)$ of $H$ is \emph{trivial} if either $A \setminus B$ or $B \setminus A$ is empty.
Given a non-trivial separation $(A, B)$ of $H$ of order at most three we define $\Scal_{(A, B)}$ to be the set of all minimal separations $(X, Y)$ of $H$ of order at most three such that $X \cap Y \subseteq A \cap B.$
Moreover, we call a connected component $C$ of $H - (A \cap B)$ the \emph{core component} of $(A, B)$ if the intersection of the non-embeddable sides for all $(X, Y) \in \Scal_{(A, B)}$ minus $A \cap B$ is exactly $V(C).$
As we demonstrate in the following lemma the core component of a non-trivial separation $(A, B)$ of $H$ order at most three always exists and moreover is unique.

\begin{figure}[ht]
  \begin{center}
  \scalebox{0.1920}{\includegraphics{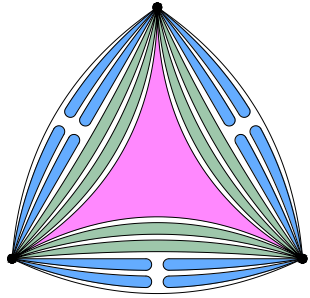}}
  \end{center}
    \caption{An illustration for the proof of \cref{@inteuigence}. The $3$ vertices are the vertices in $A \cap B$.}
  \label{@backwoodsmen}
\end{figure}

\begin{lemma}\label{@inteuigence} Let $H$ be a connected non-planar Kuratowski-connected graph and $(A, B)$ be a non-trivial separation of $H$ of order at most three.
Then there exists a connected component $C$ of $H - (A \cap B)$ such that $C$ is the unique core component of $(A, B)$ and $H \setminus C$ is $(N_{H}(C) \cap (A \cap B))$-disk embeddable.
\end{lemma}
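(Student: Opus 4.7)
Let $S := A \cap B$, so $|S| \leq 3$, and let $C_{1}, \dots, C_{k}$ be the connected components of $H - S$; since $H$ is connected and $(A,B)$ is non-trivial, $k \geq 2$. For each $i \in [k]$ set $S_{i} := N_{H}(V(C_{i})) \cap S$, so $|S_{i}| \leq 3$ and $\bigcup_{i} S_{i} = S$. Call $C_{i}$ \emph{peripheral} if $H[V(C_{i}) \cup S_{i}]$ is $S_{i}$-disk embeddable. The plan is to show that exactly one $C_{i}$ fails to be peripheral, and that this component is the desired $C$.

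For \emph{at least one} non-peripheral component, suppose every $C_{i}$ is peripheral. Since $|S| \leq 3$, the graph $H[S]$ has at most three edges and admits a planar drawing. Iteratively, for each $i$, one inserts the $S_{i}$-disk embedding of $H[V(C_{i}) \cup S_{i}]$ into a face of the current partial drawing, identifying the boundary copies of $S_{i}$ with the corresponding vertices of $S$. This yields a planar embedding of $H$, contradicting non-planarity.

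For \emph{at most one}, suppose $C_{1}$ and $C_{2}$ are both non-peripheral. One reduces the (possibly non-minimal) separation $(V(C_{1}) \cup S_{1}, V(H) \setminus V(C_{1}))$ to a minimal separation $(X, Y)$ of $H$ with $V(C_{1}) \subseteq X \setminus Y$ and $X \cap Y \subseteq S_{1}$ by absorbing appropriate components of $H - V(C_{1}) - S_{1}$ into $X$ to restore minimality on the $Y$-side. Applying Kuratowski-connectivity, and using that $H[X]$ inherits the non-disk-embeddability of $H[V(C_{1}) \cup S_{1}]$, forces $H[Y]$ to be $(X \cap Y)$-disk embeddable. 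Inside this planar drawing of $H[Y]$, the subgraph induced by $V(C_{2})$ is connected, its neighbourhood in $H[Y]$ is exactly $S_{2}$, and tracing a simple closed curve through $S_{2}$ that encloses the drawing of $V(C_{2})$ and its incident edge-ends produces an $S_{2}$-disk embedding of $H[V(C_{2}) \cup S_{2}]$, contradicting the non-peripherality of $C_{2}$.

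Let $C$ denote the unique non-peripheral component. Gluing the $S_{i}$-disk embeddings of $H[V(C_{i}) \cup S_{i}]$ for $C_{i} \neq C$ onto a planar drawing of $H[S]$, exactly as in the existence step, yields an $(N_{H}(V(C)) \cap S)$-disk embedding of $H \setminus C$, which is the second conclusion of the lemma. For the core-component property, observe that for every $(X, Y) \in \Scal_{(A,B)}$ the connected set $V(C)$ lies entirely on one side; if it lay on the disk-embeddable side, combining that embedding with the disk embedding of $H \setminus C$ would produce a planar embedding of $H$, and hence $V(C)$ belongs to the non-disk-embeddable side for every such $(X, Y)$. The specific separation $(V(C) \cup S_{C}, V(H) \setminus V(C))$, where $S_{C} := N_{H}(V(C)) \cap S$, has non-disk-embeddable side $V(C) \cup S_{C}$, whose intersection with $V(H) \setminus S$ is exactly $V(C)$, so the intersection over all of $\Scal_{(A,B)}$ equals $V(C)$, establishing that $C$ is the unique core component. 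The principal obstacle I anticipate is the bookkeeping behind the minimality reduction in the uniqueness step together with the topological ``disk extraction'' inside $H[Y]$; both hinge on the connectedness of the component being extracted and on its neighbourhood in the enclosing graph being exactly the corresponding $S_{i}$-set.
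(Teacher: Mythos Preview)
Your ``peripheral component'' route is appealing, but the existence step---\emph{if every $C_i$ is peripheral then $H$ is planar}---is false, and not for bookkeeping reasons. Take $H=K_{3,3}$ with parts $\{a_1,a_2,a_3\}$ and $\{b_1,b_2,b_3\}$, put $S=\{a_1,a_2,a_3\}$, and use any non-trivial $(A,B)$ with $A\cap B=S$ (this $H$ is Kuratowski-connected: its only $\leq 3$-separators are the two colour classes, and for every minimal separation one side is $K_{1,3}$). The components of $H-S$ are the singletons $\{b_j\}$; each $H[\{b_j\}\cup S]\cong K_{1,3}$ is $S$-disk embeddable, so \emph{every} component is peripheral, yet $H$ is non-planar. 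Your gluing breaks exactly here: after two components with full attachment $S_i=S$ are inserted on the two sides of the boundary triangle, no face incident with all of $a_1,a_2,a_3$ remains for a third. Equivalently, $3$-sums of planar graphs along a common triangle need not be planar. The same example also undermines your final paragraph: for no $C=\{b_j\}$ is $H\setminus C\cong K_{2,3}$ disk-embeddable with $\{a_1,a_2,a_3\}$ on the boundary, so the ``combine the two disk embeddings'' step there cannot go through either.

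The paper does not reduce to planarity of individual component-plus-boundary pieces. It works directly with the whole family $\Scal_{(A,B)}$ of minimal separations whose separator lies inside $A\cap B$: first it shows any two such separations \emph{agree} (their non-disk-embeddable sides meet outside $A\cap B$), and then it analyses the inclusion structure of the separators of the $\sigma$-minimal members of $\Scal_{(A,B)}$ to force the global intersection $\Phi$ of all non-embeddable sides to be non-empty and to coincide with a single component. Kuratowski-connectivity is invoked repeatedly in that analysis, whereas in your argument it appears only in the uniqueness step; that is why your existence step, which tries to get by on the planarity of the pieces alone, cannot succeed. If you want to salvage the peripheral-component viewpoint you will need a notion of ``peripheral'' that already encodes interaction between components (for instance, that the \emph{complement} of the component, not just the component itself, is disk-embeddable), and proving the existence of a non-peripheral component in that stronger sense is essentially the content of the paper's $\Phi\neq\emptyset$ claim.
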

\begin{proof}
Let $\Ccal_{1}$ (resp. $\Ccal_{2}$) be the set of connected components in $H[A \setminus B]$ (resp. $H[B \setminus A]$).
By assumption both $\Ccal_{1}$ and $\Ccal_{2}$ are non-empty.
Clearly the vertex set of any connected component in $\Ccal_{1}$ and $\Ccal_{2}$ is a subset of either $X$ or $Y$ for any $(X, Y) \in \Scal_{(A, B)}.$

First notice that as we have already observed, by definition of Kuratowski-connectivity and since $H$ is non-planar, for every $(X, Y) \in \Scal_{(A, B)}$ exactly one of $G[X]$ or $G[Y]$ is $X \cap Y$-disk embeddable.
Let $(X, Y)$ and $(Z, W)$ be two minimal separations in $\Scal_{(A, B)}.$
We say that $(X, Y)$ and $(Z, W)$ \emph{agree} if their non $X \cap Y$-disk embeddable sides minus $A \cap B$ have a non-empty intersection and that they \emph{disagree} otherwise.
We first claim that all separations in $\Scal_{(A, B)}$ agree.

\begin{claim} All separations in $\Scal_{(A, B)}$ agree.
\end{claim}
\begin{cproof} Assume that there exists $(X, Y)$ and $(Z, W)$ in $\Scal_{(A, B)}$ that disagree.
Assume without loss of generality that $H[X]$ and $H[Z]$ are the non-disk embeddable sides of $(X, Y)$ and $(Z, W)$ respectively.
Since $(X, Y)$ and $(Z, W)$ disagree we have that $X \cap Z \subseteq A \cap B.$
This implies that $Z \setminus (A \cap B) \subseteq Y \setminus (A \cap B)$ which in turn, since $(X, Y)$ is a minimal separation of $H,$ implies that $Z \cap W \subseteq X \cap Y$ and therefore that $Z \subseteq Y.$
Now, since $H[Y]$ is $X \cap Y$-disk embeddable, it is implied that $H[Z]$ is $Z \cap W$-disk embeddable, since $Z \cap W \subseteq X \cap Y$ and $Z \subseteq Y,$ which contradicts our assumption.
\end{cproof}

Now, let $\sigma \colon \Scal \to V(H)$ be a function mapping every minimal separation in $\Scal_{(A, B)}$ to its non-disk embeddable side.
Let $\Phi \coloneqq (\bigcap_{S \in \Scal} \sigma(S)) \setminus (A \cap B).$
We next claim that $\Phi$ is a non-empty subset of vertices of $H$ which induces the unique core component of $(A, B).$

\begin{claim} $\Phi$ is non-empty.
\end{claim}
\begin{cproof} We call a component $C \in \Ccal_{1} \cup \Ccal_{2}$ a \emph{trivial} component of $(X, Y) \in \Scal_{(A, B)}$ if every vertex in $X \cap Y$ has a neighbour in $C$ while no vertex of $(A \cap B) \setminus (X \cap Y)$ does.
Observe that for every trivial component $C$ of $(X, Y) \in \Scal_{(A, B)}$ the pair $(C, (X \cup Y) \setminus C)$ is also in $\Scal_{(A, B)}.$
Then, since all separations in $\Scal_{(A, B)}$ agree, if $H[C]$ is the non-disk embeddable side of the separation $(C, (X \cup Y) \setminus C),$ it must be that $\Phi = C.$
Therefore, we assume that every trivial component of a separation in $\Scal_{(A, B)}$ belongs to its disk embeddable side.

Let $(X, Y)$ and $(Z, W)$ be two minimal separations in $\Scal_{(A, B)}.$
We say that $(X, Y)$ and $(Z, W)$ are \emph{parallel} if either $X \subseteq Z$ and $W \subseteq Y$ or $Z \subseteq X$ and $Y \subseteq W.$
We define a subset $\Scal^{\mathsf{min}}_{(A, B)}$ of $\Scal$ that contains all separations $S$ in $\Scal_{(A, B)}$ that are inclusion-minimal with respect to $\sigma(S).$
First note that $\Scal^{\mathsf{min}}_{(A, B)} \neq \emptyset.$
Observe that, for any two parallel separations $(X, Y)$ and $(Z, W)$ in $\Scal_{(A, B)}$, since $(X, Y)$ and $(Z, W)$ agree, we have that either $\sigma((Z, W)) \subseteq \sigma((X, Y))$ or $\sigma((X, Y)) \subseteq \sigma((Z, W)).$
Furthermore, if $X \cap Y \subseteq Z \cap W$ we have that $\sigma((Z, W)) \subseteq \sigma((X, Y)).$
Moreover notice that, since all separations in $\Scal_{(A, B)}$ agree, if $\Scal_{(A, B)}$ contains at most two minimal separations, we can immediately conclude that $\Phi$ is non-empty.
It follows that there is one remaining case to examine where $\Scal^{\mathsf{min}}_{(A, B)} = \{ (I, J), (K, L), (M, N) \},$ none of the separations in $\Scal^{\mathsf{min}}_{(A, B)}$ are parallel, $I \cap J,$ $K \cap L,$ and $M \cap N$ are pairwise incomparable, and $\sigma((I, J)) \setminus (I \cap J)$ (resp. $\sigma((K, L)) \setminus (K \cap L)$) (resp. $\sigma((M, N)) \setminus (M \cap N)$) consists of the vertices of the unique non-trivial component of $(I, J)$ (resp. $(K , L)$) (resp. $(M, N)$).
Moreover, observe that every trivial component of any of the three separations in $\Scal^{\mathsf{min}}_{(A, B)}$ is either a trivial component or a subset of of the non-trivial component of one of the other two separations.
To conclude, assume towards contradiction that $\sigma((I, J)) \cap \sigma((K, L)) \cap \sigma((M, N)) = \emptyset.$
It follows that every component in $\Ccal_{1} \cup \Ccal_{2}$ it a trivial component of one of the three separations in $\Scal^{\mathsf{min}}_{(A, B)}.$
This implies that the entire graph $H$ is $A \cap B$-disk embeddable which contradicts our assumptions (see \cref{@backwoodsmen} for an illustration of how to embed all trivial components in disks bounded by their corresponding neighbourhoods in $A \cap B$).
\end{cproof}

Now, we argue that the vertices in $\Phi$ correspond to the vertices of a single component in $\Ccal_{1} \cup \Ccal_{2}.$
Indeed, assume that $\Phi$ contains two vertices $x$ and $y$ from distinct components $C_{x}$ and $C_{y}$ in $\Ccal_{1} \cup \Ccal_{2}$ respectively.
It is easy to see that there exists a minimal separation $(X, Y)$ in $\Scal_{(A, B)}$ such that without loss of generality $x \in X$ and $y \in Y.$
However, this contradicts the definition of $\Phi.$

Now, by definition, $H[\Phi]$ is the unique core component of $(A, B).$
To conclude with our proof, it follows by definition of the core component $\Phi,$ that the graph $H - \Phi$ is $(N_{H}(\Phi) \cap (A \cap B))$-disk embeddable.
To see this, observe that the connected components of $H - \Phi$ are all components that belong to the disk embeddable side of some separation in $\Scal^{\mathsf{min}}_{(A, B)}.$
\end{proof}

We require one more lemma which states that given two non-trivial ``parallel'' separations of a connected non-planar Kuratowski-connected graph of order three, with some additional properties, the core component of one is a subgraph of the core component of the other.

\begin{lemma}\label{core_comp_agree} Let $H$ be a connected non-planar Kuratowski-connected graph.
Let $(X, Y)$ be a non-trivial separation of $H$ of order at most three and $C$ be the core component of $H - (X \cap Y)$ where $V(C) \subseteq X \setminus Y.$
Let $(Z, W)$ be a non-trivial separation of $H$ of order at most three such that $X \subseteq W$ and $D$ be the core component of $H - (Z \cap W).$
Then $C \subseteq D.$
\end{lemma}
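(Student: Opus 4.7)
I would argue by contradiction, assuming $V(C) \not\subseteq V(D)$ and producing a planar embedding of $H,$ contradicting non-planarity. Write $B_C := N_H(V(C)) \cap (X \cap Y)$ and $B_D := N_H(V(D)) \cap (Z \cap W);$ each has at most three vertices, and $N_H(V(C)) = B_C$ since $C$ is a component of $H - (X \cap Y).$ \cref{@inteuigence} applied to $(X, Y)$ and $(Z, W)$ yields a $B_C$-disk embedding of $H - V(C)$ and a $B_D$-disk embedding of $H - V(D);$ let $\Delta_D$ be the disk hosting the latter. A standard ``3-cut'' argument in plane graphs will be used repeatedly: in any plane embedding of a graph $G,$ a connected subgraph $K$ whose neighbours in $G - V(K)$ form a set $B$ of at most three vertices can be enclosed by a simple closed curve through $B$ bounding a subdisk with $V(K) \cup B$ inside and $B$ on the boundary. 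Gluing any two $B$-disk embeddings along $B$ with $|B| \leq 3$ is always possible after at most a reflection, so disk-embeddability of both $H - V(C)$ and $H[V(C) \cup B_C]$ with respect to $B_C$ would give planarity of $H.$

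First, I would show $V(D) \subseteq W \setminus Z.$ Suppose instead $V(D) \subseteq Z \setminus W.$ Since $X \subseteq W,$ both $V(C) \subseteq X$ and $B_C \subseteq X \cap Y$ are disjoint from $V(D),$ so $V(C) \cup B_C$ is drawn inside $\Delta_D.$ The 3-cut argument produces a subdisk of $\Delta_D$ whose boundary contains $B_C$ and whose interior contains $V(C),$ proving $H[V(C) \cup B_C]$ is $B_C$-disk embeddable. Gluing this embedding along $B_C$ with the $B_C$-disk embedding of $H - V(C)$ produces a planar embedding of $H,$ the desired contradiction. Hence $V(D) \subseteq W \setminus Z.$

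With $V(D) \subseteq W \setminus Z$ in hand, I split on whether $V(C) \cap (Z \cap W)$ is empty. If $V(C) \cap V(D) \neq \emptyset$ while $V(C) \not\subseteq V(D),$ then connectedness of $C$ and the fact that the neighbours of $V(D)$ outside $V(D)$ all lie in $Z \cap W$ force some vertex of $V(C)$ to be in $Z \cap W;$ so two cases remain. In the first, $V(C) \cap (Z \cap W) = \emptyset,$ and $V(C),$ being connected, lies in a single component $D' \neq D$ of $H - (Z \cap W);$ moreover $B_C \cap V(D) = \emptyset,$ because an edge from a vertex of $B_C \cap V(D)$ to $V(C)$ would have to leave $V(D)$ through $Z \cap W,$ which $V(C)$ avoids. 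So $V(C) \cup B_C \subseteq V(H) \setminus V(D)$ embeds inside $\Delta_D,$ and the 3-cut argument on $\Delta_D$ again yields the contradiction. The remaining, harder case is $V(C) \cap (Z \cap W) \neq \emptyset.$ Here I would select a minimal sub-separation $(Z', W') \in \Scal_{(Z, W)}$ whose non-disk-embeddable side, say $W',$ contains $V(D)$ (such exists by the intersection characterisation of $V(D)$ proved inside \cref{@inteuigence}). By Kuratowski-connectivity applied to the minimal separation $(Z', W'),$ the side $H[Z']$ is $(Z' \cap W')$-disk embeddable. I would then analyse how $V(C)$ distributes across $Z' \setminus W',$ $Z' \cap W'$ and $W' \setminus Z',$ leveraging \emph{both} disk embeddings (of $H[Z']$ and of $H - V(D)$ in $\Delta_D$) to again exhibit a $B_C$-disk embedding of $H[V(C) \cup B_C],$ closing with the usual planarity contradiction via gluing with $H - V(C).$

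The principal obstacle is precisely this final case: controlling how a connected subgraph $V(C),$ with only at most three attachments $B_C \subseteq X \cap Y,$ can cross the three-vertex separator $Z \cap W$ without collapsing the non-planarity of $H.$ The analysis must combine the global Kuratowski-connectivity of $H$ (every minimal separation of order at most three has a disk-embeddable side) with the strict bound of three on the sizes of the separators $X \cap Y$ and $Z \cap W,$ together with the intersection characterisation of the core component developed inside the proof of \cref{@inteuigence}.
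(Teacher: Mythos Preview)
Your approach diverges from the paper's, and the place where you get stuck is precisely where the paper's different idea pays off.

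The paper never tries to produce a planar embedding of $H.$ Having reduced (via the hypothesis $X\subseteq W$) to the situation $V(D)\subseteq Z\setminus W,$ it \emph{contracts} $D$ to a single vertex $x_D$: since $H\setminus D$ is $B_D$-disk embeddable by \cref{@inteuigence}, the graph $H'\coloneqq H/D$ is planar. Because $V(D)\cap X=\emptyset,$ the set $X\cap Y$ separates $C$ from $x_D$ in $H'$; take a \emph{minimal} such separator $S\subseteq X\cap Y$ and the associated minimal separation $(A_1,B_1)$ of $H'$ with $V(C)\subseteq A_1\setminus B_1$ and $x_D\in B_1\setminus A_1.$ The standard realisation of a minimal separator of size at most three in a plane graph by a simple closed curve shows $H'[A_1]$ is $S$-disk embeddable. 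Since $x_D\notin A_1,$ we have $H'[A_1]=H[A_1],$ and lifting back gives a minimal separation in $\Scal_{(X,Y)}$ whose side containing $C$ is $S$-disk embeddable --- contradicting that $C$ is the core component of $(X,Y).$

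Two features of this argument matter against yours. First, the contradiction is \emph{local}, against the definition of ``core component of $(X,Y)$'', not global (``$H$ is planar''); you never have to glue two disk embeddings along $B_C.$ Second, working inside the single planar graph $H'$ replaces your case split on how $V(C)$ meets $Z\cap W$ by a single minimal-separator step. Your ``harder case'' $V(C)\cap(Z\cap W)\neq\emptyset$ is a genuine gap: controlling simultaneously the $B_D$-disk embedding of $H-V(D)$ and an auxiliary $(Z'\cap W')$-disk embedding, while $C$ crosses a three-vertex separator, is exactly the kind of delicate two-embedding surgery that the contraction trick renders unnecessary. That trick is the missing idea.
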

\begin{proof} First, since $X \subseteq W,$ every connected component of $H - (X \cap Y)$ whose vertex set is a subset of $X \setminus Y$ is a subgraph of some connected component of $H - (Z \cap W)$ that is a subset of $W \setminus X.$
Assume towards contradiction that our claim is false.
From our previous observation, it must be that $V(D) \subseteq Z \setminus W.$

Now, observe that there exists a minimal separator $S$ in $H$ that separates $C$ and $D$ such that $S \subseteq X \cap Y.$
Also, since $D$ is the core component of $(Z, W),$ by an application of \autoref{host_model_core_emb}, it follows that the graph $H \setminus D$ is $(N_{H}(V(D)) \cap (Z \cap W))$-disk embeddable.
Let $H'$ be the graph obtained from $H$ by contracting $D$ into a single vertex $x_{D}.$
Since $H \setminus D$ is $(N_{H}(V(D)) \cap (Z \cap W))$-disk embeddable it follows that $H'$ is a planar graph.
It also follows that $S$ is a minimal separator in $H'$ between $C$ and $x_{D}.$
Now, notice that since $S$ is a minimal separator in $H',$ there exists a minimal separation $(A_{1}, B_{1})$ in $H'$ such that $A_{1} \cap B_{1} = S$ and moreover, without loss of generality, $V(C)$ is a subset of $A_{1} \setminus B_{1}$ and $x_{D}$ belongs to $B_{1} \setminus A_{1}.$
Also, since $S \subseteq X \cap Y,$ by definition of $H',$ there exists a separation $(A_{2}, B_{2}) \in \Scal_{(X, Y)}$ in $H$ such that $A_{2} = A_{1}$ and $A_{2} \cap B_{2} = S.$
However, by standard arguments about minimal separators in planar graphs, there mush exist a closed curve in an embedding of $H'$ intersecting $H'$ only in the vertices in $S$ which witnesses that $H'[A_{1}] = H[A_{2}]$ is $S$-disk embeddable.
This contradicts the fact that $C$ is the core component of $H - (X \cap Y)$ and we conclude.
\end{proof}

We are now in the position to examine the behaviour of separations of order at most three of graphs that contain a given connected non-planar Kuratowski-connected graph $H$ as a minor.
To do this we need to consider $H$-\textsl{minor models} within
graphs that contain $H$ as a minor.

\paragraph{Models.}

Let $H$ and $G$ be two graphs.
Let $\Xcal = \{ X_{u} \mid u \in V(H) \},$ where
\begin{itemize}
\item  for every $u \in V(H),$ $X_{u} \subseteq V(G),$ $G[X_{u}]$ is connected,
\item  for every distinct $u,v \in V(G),$ $X_{u} \cap X_{v} = \emptyset,$ and \item for every edge $uv \in E(H)$ there is an edge $xy \in E(G)$ such that $x \in X_{u}$ and $y \in X_{v}.$
\end{itemize}
We call $\Xcal$ an $H$-\emph{minor model in} $G$ or, simply, an $H$-\emph{model} in $G.$
We write $V(\Xcal)$ for the set of vertices $\cupall \Xcal.$
Moreover, we call the sets $X_v,$ $v\in V(H),$ the \emph{branch sets} of $\mathcal{X}.$ 
We write $G[\Xcal]$ instead of $G[V(\Xcal)].$ 
Observe that $H$ is a minor of $G$ if and only if there exists an $H$-model in $G.$
Moreover notice that for any $H$-minor-model $\Xcal$ in an $H$-inflated copy $M$ it holds that $M[V(\Xcal)] = M.$

\medskip
We now extend the previous result to graphs that contain a given connected non-planar Kuratowski-connected graph as a minor.
More specifically, let $H$ be a connected non-planar Kuratowski-connected graph and let $M$ be a graph that contains $H$ as a minor.
Let $\Xcal = \{ X_{u} \mid u \in V(H) \}$ be an $H$-model in $M$ and $(A, B)$ be a separation of $M$ of order at most three.

We define the sets $A_{\Xcal} \coloneqq \{ u \in V(H) \mid X_{u} \cap A \neq \emptyset \}$ and $B_{\Xcal} \coloneqq \{ u \in V(H) \mid X_{u} \cap B \neq \emptyset \}.$
Observe that $(A_{\Xcal}, B_{\Xcal})$ is a separation of $H$ where $A_{\Xcal} \cap B_{\Xcal}$ contains every vertex $u \in V(H)$ such that $X_{u} \cap (A \cap B) \neq \emptyset.$
Notice that the order of $(A_{\Xcal},B_{\Xcal})$ is at most the order of $(A,B).$

Then, we say that $A$ (resp. $B$) is the $\Xcal$-\emph{core side} of $(A, B)$ if either
\begin{itemize}
\item $(A_{\Xcal}, B_{\Xcal})$ is a trivial separation of $H$ and $B_{\Xcal} \setminus A_{\Xcal}$ (resp. $A_{\Xcal} \setminus B_{\Xcal}$) is empty or
\item $(A_{\Xcal}, B_{\Xcal})$ is a non-trivial separation of $H$ and $A_{\Xcal}$ (resp. $B_{\Xcal}$) contains the core component of $(A_{\Xcal}, B_{\Xcal}).$
\end{itemize}

Moreover, let $A \cap B = \{ m_{1}, \ldots, m_{\ell}\},$ $\ell \leq 3,$ and $A_{\Xcal} \cap B_{\Xcal} = \{ s_{1}, \ldots, s_{\ell'} \},$ where $\ell' \leq \ell.$
We say that $M[A]$ (resp. $M[B]$) has an $(A \cap B)$-\emph{disk embeddable complement} $\langle Q, \tilde{Q} \rangle$ if there exists a graph $Q$ on at most $|V(H)|$ vertices and a set $\tilde{Q} = \{ q_{1}, \ldots, q_{\ell} \} \subseteq V(Q)$ such that $Q$ is $\tilde{Q}$-disk embeddable and $M' = \langle M[A], m_{1}, \ldots, m_{\ell} \rangle \oplus \langle Q, q_{1}, \ldots, q_{\ell} \rangle$ (resp. $M' = \langle M[B], m_{1}, \ldots, m_{\ell} \rangle \oplus \langle Q, q_{1}, \ldots, q_{\ell} \rangle$) contains $H$ as a minor.

Also, let $(A', B')$ be the separation of $M'$ such that $A' = A$ (resp. $B' = B$) and $A' \cap B' = A \cap B.$
We moreover say that $M[A]$ (resp. $M[B]$) has an $\Xcal$-respectful $\{ m_{1}, \ldots, m_{\ell} \}$-\emph{disk embeddable complement} if there exists an $H$-model $\Xcal'$ in $M'$ such that one of the following statements is true:
\begin{itemize}
\item If $(A_{\Xcal}, B_{\Xcal})$ is trivial then
\begin{itemize}
\item $(A'_{\Xcal'}, B'_{\Xcal'})$ is trivial and
\item $\Xcal' = \{ X_{u} \cap A' \mid X_{u} \in \Xcal \}$ (resp. $\Xcal' = \{ X_{u} \cap B' \mid X_{u} \in \Xcal \}$);
\end{itemize}
\item If $(A_{\Xcal}, B_{\Xcal})$ is non-trivial then
\begin{itemize}
\item $(A'_{\Xcal'}, B'_{\Xcal'})$ is non-trivial and the core component $C$ of $(A'_{\Xcal'}, B'_{\Xcal'})$ is the same as of $(A_{\Xcal}, B_{\Xcal})$ and
\item $\Xcal' = \{ X'_{u} \mid u \in V(H) \},$ where for every $u \in V(C),$ $X_{u} = X'_{u}.$
\end{itemize}
\end{itemize}
Notice that then, $A'$ (resp. $B'$) is the $\Xcal'$-core side of $(A', B').$

\begin{lemma}\label{host_model_core_emb}
Let $H$ be a connected non-planar Kuratowski-connected graph and $M$ be a graph that contains $H$ as a minor.
Then for every separation $(A, B)$ of $M$ of order at most three and every $H$-model $\Xcal$ in $M,$ if $A$ (resp. $B$) is the $\Xcal$-core side of $(A, B)$ then $M[A]$ (resp. $M[B]$) has an $\Xcal$-respectful $(A \cap B)$-disk embeddable complement.
\end{lemma}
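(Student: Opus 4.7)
The plan is to proceed by case distinction on whether $(A_\Xcal,B_\Xcal)$ is trivial, relying throughout on a preliminary observation: whenever a branch set $X_u$ meets both $A\setminus B$ and $B\setminus A,$ the connectivity of $X_u$ combined with the fact that $(A,B)$ is a separation forces $X_u$ to contain a vertex of $A\cap B;$ moreover, every connected component of $M[X_u\cap A]$ must itself meet $A\cap B,$ since otherwise it would be disconnected from $X_u\cap(B\setminus A).$

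In the trivial case we have $B_\Xcal\setminus A_\Xcal=\emptyset,$ so every branch set meets $A$ and the target model $\Xcal'=\{X_u\cap A\mid u\in V(H)\}$ is fixed by the definition. We construct $Q$ on the vertex set $\tilde{Q}=\{q_1,\dots,q_\ell\}$ alone (so $|V(Q)|=\ell\leq 3$), adding two kinds of edges: first, for each $u$ whose branch set straddles the separation, we insert edges $q_iq_j$ joining the boundary vertices in $X_u\cap(A\cap B)$ so as to reconnect the components of $M[X_u\cap A]$ through $Q;$ second, for each edge $uv\in E(H)$ whose only $M$-witness lies in $E(M[B])\setminus E(M[A]),$ we add an edge $q_iq_j$ with $m_i\in X_u$ and $m_j\in X_v$ (the preliminary observation provides both such boundary vertices, as both endpoints must then belong to $B_\Xcal\cap A_\Xcal$). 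Since $|V(Q)|\leq 3,$ any such $Q$ is trivially $\tilde{Q}$-disk-embeddable, and a direct check confirms that $\Xcal'$ is an $H$-model in $M'=M[A]\oplus Q$ and that $(A'_{\Xcal'},B'_{\Xcal'})$ is trivial.

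In the non-trivial case we invoke \cref{@inteuigence} to obtain a disk embedding $\Gamma$ of $H-V(C)$ in a disk $\Delta_0,$ and then build $Q$ by placing $\tilde{Q}$ on the boundary of a disk $\Delta\supseteq\Delta_0$ and routing each $q_i$ in the annulus $\Delta\setminus\Delta_0$ to the vertex $q^{s_i}$ of $\Gamma$ representing the unique $s_i\in V(H)$ with $m_i\in X_{s_i}$ (when $s_i\in V(C)$ no edge is needed, since $m_i$ will lie in $X'_{s_i}=X_{s_i}$ after gluing, and when no such $s_i$ exists $q_i$ is left isolated). The new model is defined by $X'_u=X_u$ for $u\in A_\Xcal\setminus B_\Xcal$ (which contains $V(C)$), $X'_u=(X_u\cap A)\cup\{q^u\}$ for $u\in A_\Xcal\cap B_\Xcal,$ and $X'_u=\{q^u\}$ for $u\in B_\Xcal\setminus A_\Xcal.$ This choice forces $(A'_{\Xcal'},B'_{\Xcal'})=(A_\Xcal,B_\Xcal)$ as separations of $H,$ so the core-component condition is automatically preserved; connectivity of each $X'_u$ follows from the preliminary observation (each component of $X_u\cap A$ meets $A\cap B$ and is therefore tied to $q^u$ via the edge $q_iq^u$), and every edge of $H$ is realised either by its original $M[A]$-witness (for edges inside $V(C),$ and for edges joining $V(C)$ to $S=N_H(V(C)),$ where $x\in X_u\subseteq A\setminus B$ forces the other endpoint of the witness into $A$ and hence into $X'_v$) or by the embedded edge $q^uq^v$ of $\Gamma$ (for edges with both endpoints outside $V(C)$).

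The main technical obstacle is that when some $s_i$ lies in $A_\Xcal\cap B_\Xcal\setminus S,$ the vertex $q^{s_i}$ is in the interior of $\Gamma$ and the edge $q_iq^{s_i}$ cannot be drawn in the annulus without crossings; we overcome this by refining the disk embedding produced by \cref{@inteuigence} so that all of $A_\Xcal\cap B_\Xcal$ (at most three vertices) lies on $\partial\Delta_0,$ which is possible by re-inspecting the inductive gluing in the proof of \cref{@inteuigence} and, where necessary, invoking \cref{core_comp_agree} to re-orient the contributions of the non-core components of $H-(A_\Xcal\cap B_\Xcal)$ so as to migrate the missing boundary vertices onto the outer face. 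Finally, identifying each $q_i$ with the corresponding $q^{s_i}$ whenever $s_i\in V(H)\setminus V(C)$ keeps the total size $|V(Q)|$ within the required bound $|V(H)|,$ which completes the construction.
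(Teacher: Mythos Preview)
Your overall strategy (case split on triviality of $(A_\Xcal,B_\Xcal)$, and invoking \cref{@inteuigence} in the non-trivial case) is the same as the paper's, and your treatment of the trivial case is correct.  The non-trivial case, however, has a genuine gap.

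Your model insists on $X'_u=(X_u\cap A)\cup\{q^u\}$ for \emph{every} $u\in A_\Xcal\cap B_\Xcal$, which forces you to draw an edge $q_iq^{s_i}$ whenever $s_i\in(A_\Xcal\cap B_\Xcal)\setminus N_H(V(C))$.  This is exactly the obstacle you flag, and your proposed fix—``re-inspecting the inductive gluing'' in \cref{@inteuigence} and ``invoking \cref{core_comp_agree}''—is not a proof.  \cref{@inteuigence} only guarantees that $H-V(C)$ is $N_H(V(C))$-disk embeddable; the stronger claim that it is $(A_\Xcal\cap B_\Xcal)$-disk embeddable does not follow from anything you cite, and \cref{core_comp_agree} concerns containment of core components, not the location of vertices on faces of planar embeddings.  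The paper avoids this entirely: for every vertex $w\in V(H)\setminus(V(C)\cup N_H(V(C)))$ it takes the branch set $X'_w$ to be the singleton $\{w_Q\}$ inside $Q$, discarding $X_w\cap A$.  Then only the vertices of $N_H(V(C))$ need to be accessed from $\tilde{Q}$, and these are already on the boundary of the embedding supplied by \cref{@inteuigence}.  The price is that $(A'_{\Xcal'},B'_{\Xcal'})$ is no longer equal to $(A_\Xcal,B_\Xcal)$—its separator becomes $N_H(V(C))$—but the definition of $\Xcal$-respectful only asks that the core component be the same, and $C$ remains a component of $H-N_H(V(C))$ with the required property.

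There is a second, smaller problem in your final step.  Identifying each $q_i$ with $q^{s_i}$ is ill-defined when two boundary vertices $m_i,m_j\in A\cap B$ lie in the same branch set $X_s$: you would identify both $q_i$ and $q_j$ with $q^s$, hence with each other, and then the $\oplus$-operation would be asked to glue two distinct vertices $m_i,m_j$ of $M[A]$ to a single vertex of $Q$.  Even without this collision, your $Q$ has $\ell+|V(H)\setminus V(C)|$ vertices before identification, which can exceed $|V(H)|$ by up to two.  The paper's construction removes not only $V(C)$ but also $N_H(V(C))$ from the copy of $H$ inside $Q$ (replacing the latter by the $q_i$'s via an injection $\phi$), giving $|V(Q)|=\ell+|V(H)|-|V(C)|-|N_H(V(C))|\leq|V(H)|$ directly, since $|V(C)|+|N_H(V(C))|\geq 4>\ell$ is forced by the non-disk-embeddability of the core side.
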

\begin{proof} Let $\Xcal$ be an $H$-model in $M.$
Recall the definition of the separation $(A_{\Xcal}, B_{\Xcal})$ of $H.$
Clearly one of $A_{\Xcal} \setminus B_{\Xcal}$ or $B_{\Xcal} \setminus A_{\Xcal}$ is non-empty.
Assume without loss of generality that is $A_{\Xcal} \setminus B_{\Xcal}.$
There are two cases.

If $B_{\Xcal} \setminus A_{\Xcal}$ is empty then we are in the case where $(A_{\Xcal}, B_{\Xcal})$ is trivial and therefore by assumption $A$ is the $\Xcal$-core side of $(A, B).$
Then, we can simply conclude with $Q$ being a graph on $\ell$ vertices $\{ q_{1}, \ldots, q_{\ell} \},$ where $q_{i}$ and $q_{j}$ are adjacent if and only if $s_{i}$ and $s_{j}$ belong to the same branch set of $\Xcal.$
Then, by construction $Q$ is $\{ q_{1}, \ldots, q_{\ell} \}$-disk embeddable and $\langle M[A], m_{1}, \ldots, m_{\ell} \rangle \oplus \langle Q, q_{1}, \ldots, q_{\ell} \rangle$ contains $H$ as a minor, i.e. $M[A]$ has an $(A \cap B)$-disk embeddable complement which is trivially $\Xcal$-respectful.
Therefore we may assume that $(B_{\Xcal} \setminus A_{\Xcal})$ is non-empty.

Let $A \cap B = \{ m_{1}, \ldots, m_{\ell}\},$ where $\ell \leq 3,$ and $A_{\Xcal} \cap B_{\Xcal} = \{ s_{1}, \ldots, s_{\ell'} \}$ where $\ell' \leq \ell.$
By calling upon \cref{@inteuigence}, we obtain the unique core component $C$ of $(A_{\Xcal}, B_{\Xcal})$ and we moreover have that the graph $H \setminus C$ is $N_{H}(V(C)) \cap (A_{\Xcal} \cap B_{\Xcal})$-disk embeddable.
Now, we define a graph $H'$ by starting from a fresh copy of the graph $H \setminus C,$ adding the vertices $\{ t_{1}, \ldots, t_{\ell'} \},$ and then for each vertex $s_{i}$ in $N_{H}(V(C)) \cap \{ s_{1}, \ldots, s_{\ell'} \},$ we identify $t_{i}$ to the vertex that corresponds to the copy of $s_{i}$ in $H \setminus C.$

Assume without loss of generality that $V(C) \subseteq A_{\Xcal} \setminus B_{\Xcal}.$
Observe that by definition of $H',$ we have that $\langle H[A_{\Xcal}], s_{1}, \ldots, s_{\ell'} \rangle \oplus \langle H', t_{1}, \ldots, t_{\ell'} \rangle$ contains $H$ as a minor.
It also follows by definition that $A$ is the $\Xcal$-core side of $(A, B).$
We show that in this case $M[A]$ has an $\Xcal$-respectful $\{ m_{1}, \ldots, m_{\ell} \}$-disk embeddable complement.
Let $\{ q_{1}, \ldots, q_{\ell} \}$ be a fresh set of vertices and let $Q^{-} \coloneqq H' - \{ t_{1}, \ldots, t_{\ell'}\} + \{ q_{1}, \ldots, q_{\ell} \}.$
We get the desired $Q$ from $Q^{-}$ as follows.
We make $q_{i}$ and $q_{j}$ adjacent if any only if $m_{i}$ and $m_{j}$ belong to the same branch set of $\Xcal.$
Moreover we define an injection $\phi \colon \{ t_{1}, \ldots, t_{\ell'}\} \to \{ q_{1}, \ldots, q_{\ell} \}$ such that $\phi(t_{j}) = q_{i}$ if $m_{j}$ is one of the vertices that belongs to the branch set of $\Xcal$ that models the vertex $s_{j}$ of $H.$
Then, for every edge $ut_{j}$ of $H',$ we add the edge $u\phi(t_{j}).$
It follows by construction that $Q$ is $\{ q_{1}, \ldots, q_{\ell} \}$-disk embeddable.

Now, let $M' = \langle M[A], m_{1}, \ldots, m_{\ell} \rangle \oplus \langle Q, q_{1}, \ldots, q_{\ell} \rangle$ and $(A', B')$ be the separation of $M'$ such that $A' = A$ and $A' \cap B' =  \{ m_{1}, \ldots, m_{\ell} \}.$
Notice that for every vertex $u \in V(C)$ the branch set $X_{u} \in \Xcal$ that models $u$ is a subset of $A.$
Let $\Xcal'$ contain the sets $X_{u} \in \Xcal,$ where $u \in V(C),$ the sets $X_{v} \cap A,$ where $v \in N_{H}(V(C)) \cap (A_{\Xcal} \cap B_{\Xcal}),$ and a singleton $\{ v \},$ for every vertex $v \in V(Q) \setminus \{ q_{1}, \ldots, q_{\ell} \}.$
It follows by construction that $\Xcal'$ is an $H$-model in $M'$ that satisfies the desired properties.
Therefore $M[A]$ has an $\Xcal$-respectful $\{ m_{1}, \ldots, m_{\ell} \}$-disk embeddable complement.
\end{proof}

We require one more lemma which lifts the result of \autoref{core_comp_agree} to graphs that contain a connected non-planar Kuratowski-connected graph as a minor.

\begin{lemma}\label{invading_host_parallel_sep} Let $H$ be a connected non-planar Kuratowski-connected graph.
$M$ be a graph that contains $H$ as a minor and $\Xcal$ be an $H$-model in $M.$
Let $(X, Y)$ be a separation of $M$ of order at most three such that $X$ is the $\Xcal$-core side of $(X, Y).$
Let $(Z, W)$ be a separation of $M$ of order at most three such that $X \subseteq W$ and $Z \cap W = \{ m_{1}, \ldots, m_{\ell} \}.$
Then $M[W]$ has an $(X \cap Y)$-disk embeddable complement $\langle Q, q_{1}, \ldots, q_{\ell} \rangle.$

Moreover let $M' = \langle M[W], m_{1}, \ldots, m_{\ell} \rangle \oplus \langle Q, q_{1}, \ldots, q_{\ell} \rangle$ and $(X', Y')$ be the separation of $M'$ such that $X' = X.$
Then there exists an $H$-model $\Xcal'$ of $M'$ such that $X'$ is the $\Xcal'$-core side of $(X', Y').$
\end{lemma}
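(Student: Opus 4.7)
The plan is to reduce the proof to a single application of \cref{host_model_core_emb} on the separation $(Z,W)$ with the model $\Xcal$, after first establishing that $W$ is the $\Xcal$-core side of $(Z,W)$. Once this is in place, \cref{host_model_core_emb} immediately yields both the disk embeddable complement $\langle Q, q_1, \ldots, q_\ell\rangle$ of $M[W]$ and an $\Xcal$-respectful $H$-model $\Xcal'$ in the resulting graph $M'$, which will be used to verify the second assertion.

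To show that $W$ is the $\Xcal$-core side of $(Z,W)$, I first observe that the inclusion $X \subseteq W$ in $M$ transfers to the inclusion $X_{\Xcal} \subseteq W_{\Xcal}$ of the induced separations in $H$, since any branch set of $\Xcal$ intersecting $X$ necessarily also intersects $W$. I then split according to whether the induced separations $(X_{\Xcal},Y_{\Xcal})$ and $(Z_{\Xcal},W_{\Xcal})$ of $H$ are trivial or non-trivial. The central case is when both are non-trivial: denoting by $C$ and $D$ the core components of $(X_{\Xcal},Y_{\Xcal})$ and $(Z_{\Xcal},W_{\Xcal})$, respectively, \cref{core_comp_agree} applied in $H$ gives $C \subseteq D$; together with $V(C) \subseteq X_{\Xcal} \setminus Y_{\Xcal} \subseteq W_{\Xcal}$ and the fact that $D$ lies entirely on one side of the separator $Z_{\Xcal} \cap W_{\Xcal}$, this forces $V(D) \subseteq W_{\Xcal} \setminus Z_{\Xcal}$. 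The case where $(X_{\Xcal},Y_{\Xcal})$ is trivial forces $V(H) = X_{\Xcal} \subseteq W_{\Xcal}$, so $(Z_{\Xcal},W_{\Xcal})$ is automatically trivial with $Z_{\Xcal} \setminus W_{\Xcal} = \emptyset$. The remaining mixed subcase, where $(X_{\Xcal},Y_{\Xcal})$ is non-trivial while $(Z_{\Xcal},W_{\Xcal})$ is trivial, is handled by using that each vertex of $V(C)$ must then have its branch set meet some $m_i$, which by the disjointness of branch sets bounds $|V(C)| \leq \ell \leq 3$ and lets us invoke Kuratowski-connectivity of $H$ through \cref{@inteuigence} to produce the required disk embeddable $Q$ by hand.

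For the second assertion, I take $\Xcal'$ to be the model of $M'$ given by the $\Xcal$-respectful clause of \cref{host_model_core_emb}, which satisfies $X'_u = X_u$ for every $u \in V(D) \supseteq V(C)$. Since in the main case $V(C) \cap Z_{\Xcal} = \emptyset$, the branch sets $X_u$ for $u \in V(C)$ meet none of the glue vertices $m_1,\ldots,m_\ell$ and lie entirely in $X \setminus Y \subseteq X' \setminus Y'$, so $V(C) \subseteq X'_{\Xcal'} \setminus Y'_{\Xcal'}$. A routine verification shows that the induced separations $(X_{\Xcal},Y_{\Xcal})$ and $(X'_{\Xcal'},Y'_{\Xcal'})$ agree on the vertex set $V(D)$, so the connected component of $H - (X'_{\Xcal'} \cap Y'_{\Xcal'})$ containing $V(C)$ is contained in $X'_{\Xcal'}$; a second invocation of \cref{core_comp_agree}, now applied within $M'$, identifies this component as the core component of $(X'_{\Xcal'},Y'_{\Xcal'})$, certifying that $X'$ is the $\Xcal'$-core side of $(X',Y')$.

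The main obstacle is the mixed case in establishing that $W$ is the $\Xcal$-core side of $(Z,W)$: while the pure non-trivial/non-trivial case reduces cleanly to \cref{core_comp_agree}, the scenario in which $(Z_{\Xcal},W_{\Xcal})$ is trivial with $W_{\Xcal} \subsetneq Z_{\Xcal}$ is delicate and must exploit both the $\leq 3$ bound on $|V(C)|$ forced by disjoint branch sets and the disk embedding guaranteed by Kuratowski-connectivity in order to manufacture a valid $Q$. A second subtlety is the coherence check underlying the second claim: one must ensure that the branch sets of $\Xcal'$ on $V(H) \setminus V(D)$ (some of which now live in $Q$ rather than in $M$) cannot shift the core component of $(X'_{\Xcal'},Y'_{\Xcal'})$ away from the component containing $V(C)$, a point that ultimately rests on the $\Xcal$-respectful property combined with the fact that $V(D)$ is shielded from $Y'$ by the very construction of $M'$.
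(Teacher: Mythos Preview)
Your overall strategy coincides with the paper's: establish that $W$ is the $\Xcal$-core side of $(Z,W)$, then invoke \cref{host_model_core_emb} to obtain an $\Xcal$-respectful complement, and read off the second assertion from the respectful property together with $C\subseteq D$. Where you diverge is in the case analysis for the first step.

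The ``mixed subcase'' you single out---$(X_\Xcal,Y_\Xcal)$ non-trivial while $(Z_\Xcal,W_\Xcal)$ is trivial with $W_\Xcal\setminus Z_\Xcal=\emptyset$---is in fact \emph{vacuous}, and the paper (tersely) bypasses it rather than handling it. The clean way to see this: you already observe $X_\Xcal\subseteq W_\Xcal$, and in that subcase $|W_\Xcal|=|W_\Xcal\cap Z_\Xcal|\le 3$, so $|X_\Xcal|\le 3$. But if $X$ is the $\Xcal$-core side in the non-trivial sense, then with $C$ the core component and $N\coloneqq N_H(V(C))\cap(X_\Xcal\cap Y_\Xcal)$, \cref{@inteuigence} makes $H-V(C)$ $N$-disk embeddable; since $H$ is non-planar and $|N|\le 3$, the graph $H[V(C)\cup N]$ cannot be $N$-disk embeddable, forcing $|V(C)\cup N|\ge 5$ (every graph on at most four vertices with at most three prescribed boundary vertices is disk embeddable). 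As $V(C)\cup N\subseteq X_\Xcal$, this contradicts $|X_\Xcal|\le 3$. So $W_\Xcal\setminus Z_\Xcal\neq\emptyset$ always, and whenever $(Z_\Xcal,W_\Xcal)$ is trivial it is trivial in the direction that makes $W$ the core side.

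Consequently, your ``by hand'' construction of $Q$ via \cref{@inteuigence} is unnecessary; moreover, as sketched it is too vague to stand as a proof had the case been genuine (you would need to exhibit an $H$-model in $M'$ and verify that $X'$ is its core side, and it is not clear the pieces you describe assemble to this). For the second assertion, your proposed second application of \cref{core_comp_agree} inside $M'$ is more machinery than the paper uses: once $C\subseteq D$ and $X'_u=X_u$ for $u\in V(D)$ are in hand, the conclusion follows directly.
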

\begin{proof}
First, since $X$ is the $\Xcal$-core side of $(X, Y)$ we have that $X_{\Xcal} \setminus Y_{\Xcal} \neq \emptyset$ and therefore since $X \subseteq W$ we also have that $W_{\Xcal} \setminus Z_{\Xcal}$ is not empty.
Therefore, in the case that $(Z, W)$ is a trivial separation of $M,$ clearly $W$ is the $\Xcal$-core side of $(Z, W).$
Otherwise, if $(Z, W)$ is non-trivial, it is implied that $(X, Y)$ is non-trivial, and then an application of \autoref{core_comp_agree} implies that $C \subseteq D$ and therefore that $W$ is the $\Xcal$-core side of $(Z, W),$ where $C$ is the core component of $(X_{\Xcal}, Y_{\Xcal})$ and $D$ is the core component of $(Z_{\Xcal}, W_{\Xcal}).$
In any case, \autoref{core_comp_agree} implies that $M[W]$ has an $\Xcal$-respectful $(X \cap Y)$-disk embeddable complement $\langle Q, q_{1}, \ldots, q_{\ell} \rangle.$

Now, in the case that $(Z, W)$ is trivial, the second scale of our claim follows directly from the fact that $\langle Q, q_{1}, \ldots, q_{\ell} \rangle$ is $\Xcal$-respectful.
Therefore assume that $(Z, W)$ and therefore also $(X, Y)$ is non-trivial.
Then, since $\langle Q, q_{1}, \ldots, q_{\ell} \rangle$ is $\Xcal$-respectful, there exists an $H$-model $\Xcal'$ in $M'$ such that $(X'_{\Xcal'}, Y'_{\Xcal'})$ is non-trivial and the core component of $(X'_{\Xcal'}, Y'_{\Xcal'})$ is $D$ and $\Xcal' = \{ X'_{u} \mid u \in V(H) \},$ where for every $u \in V(C),$ $X_{u} = X'_{u}.$
Then, since $C \subseteq D$ and $X' = X$ our claim follows.
\end{proof}

Having set the lemmas above in place we are now ready to establish the notions of invading hosts and red cells in a $\Sigma$-decomposition of our graph of interest.

\medskip
Let $H$ be a graph. We denote by $\nonplanar(H)$ the union of all its non-planar components.
Observe that if $H$ is Kuratowski-connected, then $\nonplanar(H)$ is connected and moreover $\nonplanar(H)$ is a Kuratowski-connected graph itself.

Let $H$ be a (not necessarily connected) non-planar Kuratowski-connected graph and $\delta$ be a vortex-free $\Sigma$-decomposition of a graph $G$ in a surface $\Sigma.$

For every cell $c \in C(\delta)$ let $(A^{c}, B^{c})$ be the separation of $G$ of order at most three such that $A^{c} \coloneqq V(\sigma_{\delta}(c))$ and $B^{c} \coloneqq V(G) \setminus (V(\sigma_{\delta}(c)) \setminus \pi_{\delta}(\tilde{c})).$
Furthermore, for every $\nonplanar(H)$-host $M$ in $G$ and every cell $c \in C(\delta),$ we define the separation $(A^{c}_{M}, B^{c}_{M})$ of $M$ of order at most three, where $A^{c}_{M} \coloneqq A^{c} \cap V(M)$ and $B^{c}_{M} \coloneqq B^{c} \cap V(M).$
Given a cell $c \in C(\delta)$ we say that an $\nonplanar(H)$-host $M$ is $c$-\emph{invading} if $V(M) \cap \pi_{\delta}(\tilde{c}) \neq \emptyset$ and there exists an $\nonplanar(H)$-model $\Xcal$ in $M$ such that $A^{c}_{M}$ is the $\Xcal$-core side of $(A^{c}_{M}, B^{c}_{M}).$

Moreover, we say that a cell $c \in C(\delta)$ is $H$-\emph{red} if there exists a $c$-invading $\nonplanar(H)$-inflated copy in $G.$

\medskip
We are now ready to prove the main lemma of this subsection which states that every $\nonplanar{H}$-host in a graph $G$ is either $c$-invading or does not contain any ground vertex of a vortex-free $\Sigma$-decomposition of $G$ in a surface $\Sigma,$ assuming that $\Sigma$ belongs to $\Sbbb_{H}.$

\begin{lemma}\label{@primitives} Let $H$ be a non-planar Kuratowski-connected graph.
Let $\delta$ be a vortex-free $\Sigma$-decomposition of a graph $G$ in a surface $\Sigma \in \Sbbb_{H}.$
Then for every $\nonplanar(H)$-host $M$ in $G$ either $V(M) \cap \ground(\delta)$ is empty or there exists a cell $c \in C(\delta)$ such that $M$ is $c$-invading.
\end{lemma}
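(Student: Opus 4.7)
The plan is to argue by contradiction. Suppose $V(M) \cap \ground(\delta) \neq \emptyset$ but $M$ fails to be $c$-invading for every cell $c \in C(\delta)$. Set $H' \coloneqq \nonplanar(H)$, which by the preceding observation is connected, non-planar, and Kuratowski-connected. Fix any $H'$-model $\Xcal$ in $M$. Since $H'$ has at least five vertices, at most one of $A^c_M, B^c_M$ can be the $\Xcal$-core side of $(A^c_M, B^c_M)$; combined with the failure of $M$ to be $c$-invading, this forces $B^c_M$ to be the $\Xcal$-core side for every cell $c$ with $V(M) \cap \pi_\delta(\tilde{c}) \neq \emptyset$. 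As a preliminary reduction, I first exclude the degenerate case where $V(M)$ is entirely contained in the interior of a single cell, since such a configuration would yield a disk-embedding and hence a planar embedding of $H'$, contradicting its non-planarity; combined with the hypothesis $V(M) \cap \ground(\delta) \neq \emptyset$, this allows me to assume that every cell $c$ with $V(M) \cap V(\sigma_\delta(c)) \neq \emptyset$ also satisfies $V(M) \cap \pi_\delta(\tilde{c}) \neq \emptyset$.

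The heart of the argument is to iteratively ``flatten'' $M$ cell by cell. At each step I pick some unprocessed cell $c$ and apply \cref{host_model_core_emb} to replace the current ``$A^c$-side'' of the current graph by an $\Xcal$-respectful, $\pi_\delta(\tilde{c})$-disk-embeddable graph $Q^c$, drawn inside the disk $\Delta_c$; this returns a new $H'$-model of the modified graph. Because the cells of $\delta$ have pairwise disjoint interiors, for any two distinct cells $c, c' \in C(\delta)$ one has $A^c \subseteq B^{c'}$: the replacement at $c$ modifies only vertices lying strictly in the interior of $c$ and leaves the portion of the current graph sitting in $V(\sigma_\delta(c'))$ essentially intact, up to identifications at shared ground vertices. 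Iteratively invoking \cref{invading_host_parallel_sep} exploits this parallel-separation structure in order to propagate the invariant ``for every yet-unprocessed cell $c'$, $B^{c'}$ is the core side of the current model'' from one iteration to the next, so that each subsequent application of \cref{host_model_core_emb} is legitimate.

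Once every relevant cell has been processed, the resulting graph $M^\star$ still contains $H'$ as a minor, inherited from the chain of $\Xcal$-respectful models, and by construction admits a crossing-free drawing in $\Sigma$: each $Q^c$ fits inside its disk $\Delta_c$ coming from $\delta$, while the remaining vertices of $M^\star$ are ground vertices of $\delta$ drawn on cell boundaries. Hence $H'$ embeds in $\Sigma$, contradicting $\Sigma \in \Sbbb_H$. The main technical difficulty will be precisely this invariant propagation in the iterative step: the notion of core side depends on the currently chosen $H'$-model, which is replaced at every step, so careful bookkeeping based on \cref{invading_host_parallel_sep} together with the disjoint-interior structure of cells in $\delta$ is needed to push the invariant through the entire iteration.
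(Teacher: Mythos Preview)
Your proposal is correct and follows essentially the same approach as the paper: argue by contradiction, use the failure of $c$-invading to conclude that $B^c_M$ is the $\Xcal$-core side for each relevant cell, then iteratively replace the content of each cell by its disk-embeddable complement via \cref{host_model_core_emb}, using \cref{invading_host_parallel_sep} to propagate the core-side invariant across iterations, and finally derive a $\Sigma$-embedding of $\nonplanar(H)$, contradicting $\Sigma\in\Sbbb_H$. The only cosmetic difference is that the paper fixes the model $\Xcal$ anew at each step rather than tracking a single initial one, but the bookkeeping is the same.
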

\begin{proof} Assume towards contradiction that there exists an $\nonplanar(H)$-host $M$ that does not satisfy our claim.
Let $I$ be the set of all cells $c \in C(\delta)$ such that $V(M) \cap \pi_{δ}(\tilde{c}) \neq \emptyset.$
Then, by assumption, it must be that $I$ is not empty and moreover, for every cell $c \in I,$ it must be that $M$ is not $c$-invading.
Our goal it to show that, under these assumptions, we can find an artificial graph that still contains $\nonplanar(H)$ as a minor that can be embedded in $\Sigma$ and therefore obtain our contradiction.
Notice that this is possible since $\Sbbb_{H} = \Sbbb_{\nonplanar(H)}$ as all other components of $H$ are planar and therefore have Euler genus zero.

Consider $c \in I$ and let $V(M) \cap \pi_{\delta}(\tilde{c}) = \{ m^{c}_{1}, \ldots, m^{c}_{\ell_{c}}\},$ $\ell_{c} \leq |\tilde{c}|.$
Since $M$ is not $c$-invading, it must be that $A^{c}_{M}$ is not the $\Xcal$-core side of $(A^{c}_{M}, B^{c}_{M})$ and therefore $B^{c}_{M}$ is the $\Xcal$-core side of $(A^{c}_{M}, B^{c}_{M}),$ for every $\nonplanar(H)$-model $\Xcal$ in $M.$
Fix an $\nonplanar(H)$-model $\Xcal$ in $M.$
Let $A^{c}_{\Xcal} \cap B^{c}_{\Xcal} = \{ s^{c}_{1}, \ldots, s^{c}_{\ell'_{c}}\},$ where $\ell'_{c} \leq \ell_{c}.$
By an application of \autoref{host_model_core_emb} is must be that $M[B^{c}_{M}]$ has an $\Xcal$-respectful $\{ m^{c}_{1}, \ldots, m^{c}_{\ell_{c}} \}$-disk embeddable complement.
Then, by definition, there exists a graph $Q^{c}$ and a set $\{ q^{c}_{1}, \ldots, q^{c}_{\ell_{c}} \} \subseteq V(Q^{c})$ such that $Q^{c}$ is $\{ q^{c}_{1}, \ldots, q^{c}_{\ell_{c}} \}$-disk embeddable, $\langle M' = M[B^{c}_{M}], m^{c}_{1}, \ldots, m^{c}_{\ell_{c}} \rangle \oplus \langle Q^{c}, q^{c}_{1}, \ldots, q^{c}_{\ell_{c}} \rangle$ contains $\nonplanar(H)$ as a minor.

We now consider the graph $G' = G[B^{c}], m^{c}_{1}, \ldots, m^{c}_{\ell_{c}} \rangle \oplus \langle Q^{c}, q^{c}_{1}, \ldots, q^{c}_{\ell_{c}} \rangle$ and the $\Sigma$-decomposition $\delta'$ of $G'$ which we obtain from $\delta$ by replacing 
$\sigma_{\delta}(c) - \{ m_{1}, \ldots, m_{\ell}\}$ and its drawing by the $\{ m^{c}_{1}, \ldots, m^{c}_{\ell_{c}} \}$-disk embeddable complement of $M[B^{c}_{M}].$
By construction, $M'$ is an $\nonplanar(H)$-host in $G'.$
Now, let $I' = I \setminus \{ c \}.$
Since, the $\{ m^{c}_{1}, \ldots, m^{c}_{\ell_{c}} \}$-disk embeddable complement of $M[B^{c}_{M}]$ is also $\Xcal$-respectful, it is implied that there exists an $\nonplanar(H)$-model $\Xcal'$ in $M'$ satisfying the definition of $\Xcal$-respectful.
By an application of (the proof of) \autoref{invading_host_parallel_sep}, it now follows that, for every cell $c \in I',$ $B^{c}_{M'}$ is the $\Xcal'$-core side of $(A^{c}_{M'}, B^{c}_{M'})$ (where $(A^{c}, B^{c})$ is interpreted as a separation in $G'$).
It now follows that, if we iteratively repeat the process above for every cell, whose boundary is intersected by $M$, we can construct a graph that contains $\nonplanar(H)$ as a minor than can be embedded in $\Sigma,$ which contradicts our assumptions.
\end{proof}

Finally, for every $\Zcal \in \Kbbb^{-},$ if $\delta$ is a vortex-free $\Sigma$-decomposition of a graph $G$ in a surface $\Sigma,$ then we say that a cell $c \in C(\delta)$ is $\Zcal$-red if it is $H$-red for some $H \in \Zcal.$

\medskip
The next and final observation of this subsection follows by definition of $\Zcal$-red cells and the minor-checking algorithm of Kawarabayashi, Kobayashi, and Reed \cite{KawarabayashiKR12Thedisjoint}  that takes $\Ocal_{h_{\Zcal}}(|V(G)|^{2})$ time.

\begin{observation}\label{@calculations} For every $\Zcal \in \Kbbb^{-},$ there exists an algorithm that, given a graph $G$ and a $Σ$-schema $(A, δ, D)$ of $G,$ detects all $\Zcal$-red cells of $δ.$
Moreover this algorithm runs in time $\Ocal_{h_{\Zcal}}(|V(G)|^{3}).$
\end{observation}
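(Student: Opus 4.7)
The plan is to reduce, for each cell $c \in C(\delta)$ and each $H \in \Zcal$, the question ``is $c$ an $H$-red cell?'' to a bounded number of constrained (rooted) minor-testing queries, each of which is handled by the minor-checking algorithm of Kawarabayashi, Kobayashi, and Reed~\cite{KawarabayashiKR12Thedisjoint}. Let $(A^{c},B^{c})$ be the separation of order at most three associated with $c$ and write $\pi_{\delta}(\tilde c) = \{m_{1},\ldots,m_{\ell}\}$ with $\ell \leq 3.$ By definition, $c$ is $H$-red if and only if $G$ contains a $c$-invading $\nonplanar(H)$-inflated copy, and by \cref{host_model_core_emb} this is equivalent to the existence of an $\nonplanar(H)$-model $\Xcal$ in $G$ whose core component (in the sense of \cref{@inteuigence}) lies on the $A^{c}$-side of the induced separation and at least one of whose branch sets meets $\{m_{1},\ldots,m_{\ell}\}.$

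First I would enumerate, for each cell $c$, all possible ``boundary signatures'': a signature records which subset of $\{m_{1},\ldots,m_{\ell}\}$ is used by a would-be model, an injection mapping each used boundary vertex to a vertex of $\nonplanar(H),$ and the identity (as a connected subgraph of $\nonplanar(H)$) of the designated core component on the $A^{c}$-side. Since $|V(\nonplanar(H))| \leq h_{\Zcal}$ and $\ell \leq 3,$ the number of signatures is $\Ocal_{h_{\Zcal}}(1).$ Each signature is then translated into a minor-testing query on an auxiliary graph obtained from $G$ by attaching, at each designated boundary vertex, a small $\{q_{1},\ldots,q_{\ell}\}$-disk-embeddable gadget of size $\Ocal_{h_{\Zcal}}(1)$ that forces both the prescribed root assignment and the core-side requirement; this uses standard reductions from rooted minor testing to unrooted minor testing. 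Each such test runs in time $\Ocal_{h_{\Zcal}}(|V(G)|^{2})$ via the algorithm of Kawarabayashi, Kobayashi, and Reed, and the cell $c$ is declared $\Zcal$-red if at least one test (over all $H \in \Zcal$ and all signatures) succeeds. Aggregating over the $\Ocal(|V(G)|)$ cells of $\delta$ yields the claimed running time of $\Ocal_{h_{\Zcal}}(|V(G)|^{3}).$

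The main obstacle is verifying the correctness of the signature-to-query translation: namely, that a positive minor-test in the auxiliary graph produces a genuine $c$-invading $\nonplanar(H)$-host in $G,$ and conversely that every such host is detected by at least one signature. The forward direction is essentially \cref{host_model_core_emb}: from a $c$-invading host one extracts a disk-embeddable complement of bounded size which realises the prescribed signature. For the backward direction, the disk-embeddability of the attached gadget guarantees, via the argument in the proof of \cref{@inteuigence}, that the non-planar core of any witnessing minor model must sit on the $G[A^{c}]$-side, so that the restriction of that model to $G$ furnishes the required $c$-invading inflated copy. Once this equivalence is in place, the bound on the number of signatures and the quadratic runtime of the KKR algorithm directly yield the $\Ocal_{h_{\Zcal}}(|V(G)|^{3})$ bound.
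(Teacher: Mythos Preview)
Your overall strategy matches the paper's (very terse) justification: both iterate over the $\Ocal(|V(G)|)$ cells of $\delta$ and, for each cell, reduce the question ``is $c$ $\Zcal$-red?'' to a bounded number of minor-containment queries answered by the Kawarabayashi--Kobayashi--Reed algorithm in $\Ocal_{h_{\Zcal}}(|V(G)|^{2})$ time each. In outline the approaches coincide, and your write-up is considerably more explicit than the paper's one-line reference to ``the definition of $\Zcal$-red cells and the minor-checking algorithm''.

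There is, however, a genuine gap in your reduction. You write that a gadget attached to $G$ at the boundary vertices ``forces both the prescribed root assignment and the core-side requirement''. The root assignment is indeed standard, but a gadget sitting at $A^{c}\cap B^{c}$ cannot enforce the core-side condition: that condition demands that the branch sets of the core component lie in $A^{c}$ rather than $B^{c}$, and no gadget glued only to the boundary distinguishes the two sides. Consequently your backward direction fails as stated: a positive test in $G\cup(\text{gadget})$ may come from a model whose core component lives entirely in $B^{c}\setminus A^{c}$, which does not certify that $c$ is $H$-red, and ``restriction of that model to $G$'' does not repair this. One clean fix is to run two tests per signature instead of one: first check that the enumerated disk-embeddable complement $Q$ is a boundaried minor of $(G[B^{c}],m_{1},\ldots,m_{\ell})$; second check that $G[A^{c}]\oplus Q$ contains $\nonplanar(H)$ as a minor. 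The second test forces the core into $G[A^{c}]$ via the \cref{@inteuigence} argument you invoke, and the first lets you substitute an actual submodel in $G[B^{c}]$ for $Q$, producing a genuine $c$-invading host in $G$. Both are rooted-minor tests of cost $\Ocal_{h_{\Zcal}}(|V(G)|^{2})$, so your running-time analysis survives unchanged.
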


\subsection{The redrawing lemma}
\label{@enthroning}

In this subsection we prove the ``Redrawing Lemma'' which as we have previously discussed will allow us to ``locally'' redraw the disk-embeddable part of inflated copies of connected non-planar Kuratowski-connected graphs that invade through red cells that reside deep within a well insulated area of our $\Sigma$-decomposition.

\medskip
We first introduce mixed packings of graphs for a specific antichain in a given graph.
Let $G$ be a graph and $\Zcal$ be an antichain of graphs.
A \emph{mixed} (\emph{half-integral}) $\Zcal$-{packing} in $G$ is a collection $M_{1}, \ldots, M_{k}$ of pairwise disjoint $\Zcal$-\majors in $G.$

\begin{observation}
Given an antichain $\Zcal$ of graphs, if a graph $G$ has a (half-integral) mixed $\Zcal$-packing of size $k \cdot |\Zcal|,$ then it also has a (half-integral) $\Zcal$-packing of size $k.$
\end{observation}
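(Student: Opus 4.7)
The plan is to prove this by a direct pigeonhole argument on the labels of the hosts in the mixed packing. Let $M_{1},\ldots,M_{k\cdot|\Zcal|}$ be the members of the (half-integral) mixed $\Zcal$-packing in $G$. By definition of a $\Zcal$-host, each $M_{i}$ is an $H_{i}$-host for at least one $H_{i}\in\Zcal$; fix one such choice for each $i$, which gives a map $\phi\colon[k\cdot|\Zcal|]\to\Zcal$ defined by $\phi(i)=H_{i}$.

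By the pigeonhole principle, there exists some $H^{\star}\in\Zcal$ with $|\phi^{-1}(H^{\star})|\geq k$. Let $I\subseteq[k\cdot|\Zcal|]$ be a subset of size $k$ with $\phi(I)=\{H^{\star}\}$, and consider the subcollection $\{M_{i}\mid i\in I\}$. Since every $M_{i}$ with $i\in I$ is an $H^{\star}$-host, this subcollection is a collection of $H^{\star}$-hosts, hence a collection of $\Zcal$-hosts all realising the \emph{same} member of $\Zcal$.

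In the integral case, the $M_{i}$'s are pairwise vertex-disjoint, so $\{M_{i}\mid i\in I\}$ is an $H^{\star}$-packing of size $k$, which is by definition a $\Zcal$-packing of size $k$. In the half-integral case, no vertex of $G$ lies in more than two of the $M_{i}$'s, and this property is inherited by the subcollection, so $\{M_{i}\mid i\in I\}$ is a half-integral $H^{\star}$-packing of size $k$, which is a half-integral $\Zcal$-packing of size $k$. There is no real obstacle here: the argument is a one-line pigeonhole and uses only the definitions of $\Zcal$-host, (half-integral) mixed $\Zcal$-packing, and (half-integral) $\Zcal$-packing given immediately above.
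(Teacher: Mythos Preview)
Your proof is correct and is exactly the intended argument; the paper states this as an observation without proof, and the pigeonhole on the labels of the hosts is the obvious justification.
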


We next prove the ``Redrawing Lemma'' which we will be used several times throughout our proofs.
It receives a $\Sigma$-schema $(A, \delta, D)$ of a graph $G$ in a surface $\Sigma$ and a collection of inflated copies in $G - A$ of non-planar components of the graphs in $\Zcal,$ invading through a set of $\Zcal$-red cells in $G - A$ and explains that, under certain circumstances, this collection can be rerouted an enhanced with the planar components that are missing so as to form a mixed $\Zcal$-packing.
The existence of a railed nest around these cells will provide the infrastructure for this rerouting.
\cref{@horkheimer} is used for $k=1$ in \cref{@abstractness} and \cref{@conjecture} and is applied in its full generality in \cref{@translation}.

\begin{lemma}[Redrawing lemma]\label{@horkheimer} There exists a function $f_{\ref{@horkheimer}} : \Bbb{N}^2 \to \Bbb{N}$ such that, for every $\Zcal \in \Kbbb^{-},$ if 
\begin{itemize}
\item $(A, \delta, D)$ is a $Σ$-schema of a graph $G$ in a surface $\Sigma,$
\item $(\Ccal, \Pcal)$ is a railed nest of $G - A$ around an arcwise connected set in $\Sigma$ of order at least $f_{\ref{@horkheimer}}(k, h_{\Zcal}),$
\item $\Delta^{*}$ (resp. $\Delta$) is the disk bounded by the trace of the internal (resp. external) cycle of $\Ccal$, and
\item $\Mcal = \{ M_{i} \mid i \in [k] \}$ is a set of subgraphs of $G - A$ such that, for every $i \in [k],$ $M_{i}$ is an $\nonplanar(H_{i})$-inflated copy in $G - A,$ where $H_{i} \in \Zcal,$ $M_{i}$ is $c_{i}$-invading, where $c_{i} \in C(\delta)$ is a subset of $\Delta^{*},$ and the graphs in $\{ M_{i} \cap \inG_{\delta}(\Delta) \mid i \in [k] \}$ are pairwise disjoint,
\end{itemize}
then $\inG_{δ}(Δ)$ contains a mixed $\Zcal$-packing of size $k.$
Moreover, for every $\Zcal \in \Kbbb^{-},$ there exists an algorithm that, given the above, computes such a mixed $\Zcal$-packing in time $2^{\Ocal(k) \cdot \poly(h_{\Zcal})} |V(G)|.$
Also $f_{\ref{@horkheimer}}$ is of order $2^{\Ocal(k) \cdot \poly(h_{\Zcal})}.$
\end{lemma}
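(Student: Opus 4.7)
The plan is to first replace, for each $M_i,$ the portion of $M_i$ lying outside the disk $\Delta$ by a \emph{disk-embeddable surrogate} drawn inside $\Delta$ using the infrastructure of the railed nest $(\Ccal,\Pcal),$ and then to append the planar components of $H_i$ that are missing from $\nonplanar(H_i),$ all while keeping the $k$ resulting $\Zcal$-hosts pairwise vertex-disjoint. The order $f_{\ref{@horkheimer}}(k,h_{\Zcal})$ of the nest will be chosen so that a budget of $\poly(h_{\Zcal})$ consecutive cycle/path slices is available to each $M_i$ with a further multiplicative factor absorbing a Ramsey-style cleaning step.

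\textbf{Step 1 (Core side identification).} Fix $i\in[k].$ Because $M_i$ is $c_i$-invading and $c_i\subseteq\Delta^{*}\subseteq\Delta,$ the cell $c_i$ witnesses an $\nonplanar(H_i)$-model $\Xcal_i$ in $M_i$ whose core side lies on the $c_i$-interior. Starting from this model and applying \cref{invading_host_parallel_sep} iteratively along the successive separations of $G-A$ induced by the cells of $\delta$ traversed by $M_i$ between $c_i$ and the trace of the external cycle $C_\ell$ of $\Ccal,$ we conclude that, letting $(X,Y)$ be the separation of $G-A$ with $X=V(\inG_{\delta}(\Delta))$ and $Y\supseteq V(\outG_{\delta}(\Delta)),$ the side $X$ is the core side of $(X,Y)$ for some $\nonplanar(H_i)$-model $\Xcal_i'$ in $M_i.$

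\textbf{Step 2 (Disk-embeddable complement).} \cref{host_model_core_emb} then provides an $\Xcal_i'$-respectful $(X\cap Y)$-disk embeddable complement $\langle Q_i,q_1^i,\ldots,q_{\ell_i}^i\rangle$ with $|V(Q_i)|\leq h_{\Zcal}$ and $\ell_i\leq 3,$ such that $\langle M_i\cap\inG_{\delta}(\Delta),m_1^i,\ldots,m_{\ell_i}^i\rangle\oplus\langle Q_i,q_1^i,\ldots,q_{\ell_i}^i\rangle$ still contains $\nonplanar(H_i)$ as a minor. Here the boundary vertices $m_1^i,\ldots,m_{\ell_i}^i$ lie on the trace of $C_\ell$ and are precisely the at-most-three vertices through which $M_i$ leaves $\Delta.$

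\textbf{Step 3 (Realising $Q_i$ inside the nest).} The disk-embeddable graph $Q_i$ will be realised as a subdivision inside the annular region of $\Delta$ between $C_\ell$ and $C_1,$ using a combination of a bounded number of paths from $\Pcal$ and arcs along the cycles of $\Ccal.$ The key point is that, because $Q_i$ has a drawing in a disk with $\{q_1^i,\dots,q_{\ell_i}^i\}$ on the boundary, a sufficiently long prefix of $\Pcal$ together with a sufficient number of consecutive cycles of $\Ccal$ provides a planar ``grid of slots'' into which the bounded-size graph $Q_i$ can be embedded; the boundary vertices $m_j^i$ on $C_\ell$ are connected to their avatars $q_j^i$ via disjoint arcs along three appropriately chosen paths of $\Pcal.$ This uses only $\Ocal(h_{\Zcal})$ cycles and $\Ocal(h_{\Zcal})$ paths of the nest, where the constants come from the number of vertices and edges of $Q_i.$

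\textbf{Step 4 (Disjointness via combing).} The principal difficulty is coordinating the $k$ redrawings so that they are pairwise disjoint and simultaneously disjoint from the already-disjoint graphs $\{M_i\cap\inG_{\delta}(\Delta)\}_{i\in[k]}.$ The plan is to use the ``linkage combing'' lemma from~\cite{GolovachST19Hitting,GolovachST22Combing,GolovachST20Hitting} to straighten the radial linkage $\Pcal$ with respect to the traces of the $M_i$ inside $\Delta,$ so that, after rerouting, each $M_i$ interacts with a bounded zone of the nest and the remaining annular ``territory'' decomposes into $k$ pairwise disjoint sub-annuli, each still carrying a railed sub-nest of order $\poly(h_{\Zcal}).$ Assigning the $i$th sub-annulus to the $i$th host, Step~3 is then performed inside it, guaranteeing disjointness. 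This combing step, applied $k$ times in succession, is the source of the $2^{\Ocal(k)\cdot\poly(h_{\Zcal})}$ blow-up in $f_{\ref{@horkheimer}}.$

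\textbf{Step 5 (Restoring planar components).} After Step~4 every $M_i$ has been extended, inside its own sub-annulus, to an $\nonplanar(H_i)$-host $M_i'.$ To obtain a $\Zcal$-host (a host of $H_i$), we must still realise the planar components of $H_i$ missing from $\nonplanar(H_i).$ Since these are planar graphs of total size at most $h_{\Zcal},$ and since each sub-annulus still contains a railed sub-nest of order $\poly(h_{\Zcal})$ after Step~4, we can carve out an additional disk-portion of the sub-annulus disjoint from $M_i'$ and realise the missing planar components there using standard planar-minor-in-grid arguments. The result is a $\Zcal$-host inside the $i$th sub-annulus; the disjointness of the sub-annuli yields the claimed mixed $\Zcal$-packing of size $k.$

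\textbf{Algorithmic aspect.} All individual steps are effective: Step~1 is a model-chasing computation along the cells of $\delta,$ Step~2 is a bounded-size brute-force search for $Q_i$ (only $2^{\poly(h_{\Zcal})}$ candidates), Step~3 is a bounded-size subgraph embedding, Step~4 invokes the algorithmic version of the combing lemma, and Step~5 is a grid-minor search of bounded size. Composing these yields the claimed running time $2^{\Ocal(k)\cdot\poly(h_{\Zcal})}\cdot|V(G)|.$ The main obstacle I anticipate is a clean bookkeeping of the combing step so that the sub-nests produced in Step~4 are simultaneously wide enough and pairwise nest-separated; it is here that the exponential dependence on $k$ in $f_{\ref{@horkheimer}}$ genuinely seems unavoidable.
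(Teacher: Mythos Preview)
Your overall architecture matches the paper's, but two concrete steps do not go through as stated.

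First, in Step~2 you assert that the boundary vertices ``are precisely the at-most-three vertices through which $M_i$ leaves $\Delta$.'' This is not justified: an inflated copy $M_i$ can cross the external cycle $C_\ell$ arbitrarily many times (its degree-two paths may weave back and forth across $\bd(\Delta)$), so the separation $(V(\inG_\delta(\Delta)),V(\outG_\delta(\Delta)))$ restricted to $M_i$ need not have order $\le 3$, and \cref{host_model_core_emb} does not apply to it. The paper instead exploits that $|\tilde{c}_i|\le 3$ (the schema is vortex-free) to find a \emph{minimum cut} $S_i$ of size $\le 3$ in $M_i$ between $\pi_\delta(\tilde{c}_i)$ and $V(\Omega_\Delta)$, together with an $S_i$--$V(\Omega_\Delta)$ linkage $\mathcal{L}_i$ inside $\inG_\delta(\Delta)$; the disk-embeddable complement $Q_i$ is then taken at the internal cut $S_i$ via \cref{invading_host_parallel_sep}, not at the boundary of $\Delta$.

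Second, your Step~4 invokes the combing lemma directly on the $M_i$ inside $\Delta$, but combing requires a planar host. The portions $M_i\cap\inG_\delta(\Delta)$ may traverse cells $c$ of $\delta$ where $M_i\cap\sigma(c)$ is \emph{not} $\pi_\delta(\tilde c)$-disk-embeddable, so there is no planar drawing to comb against. The paper handles this with two further moves you are missing: it first builds an auxiliary \emph{planar} graph $\hat G'$ (replacing each $M'$-proper cell by a minimal connector and removing the interiors of improper cells), then locates an annulus $A_{\zeta,\eta}$ between two nest cycles containing no improper cell, and applies a \emph{single} combing of the combined expansion through the middle cycle $C_\mu$. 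Afterwards it repairs the finitely many ``bad'' cells (the set $B$) in $\overline{\Delta}_\mu$ by a further round of \cref{invading_host_parallel_sep} before carrying out the actual redrawing (and the addition of the planar components $\overline{H}$) in the annulus beyond $C_\mu$. Your scheme of $k$ successive combings with private sub-annuli would still need this planarisation, and would additionally require an argument that the cores $M_i[Z^{\mathsf{in}}_i]$ can be confined to disjoint angular sectors \emph{before} any combing is done; the paper sidesteps this entirely by combing the union once.
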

\begin{proof} Let $ξ = |\Ccal|,$ $\Ccal = \langle C_{1}, \ldots, C_{\xi} \rangle,$ and $\Pcal = \langle P_{1}, \ldots, P_{\xi} \rangle.$

Since $M_{i}$ is an $\nonplanar(H_{i})$-inflated copy, by definition there is a set $T_{i}$ of at most $|V(\nonplanar(H_{i}))|^{2}$ vertices of $M_{i}$ such that $(M_{i}, T_{i})$ is a minimal $\nonplanar(H_{i})$-expansion in $G$.
Moreover, since $M_{i}$ is $c_{i}$-invading, let $\Xcal_{i}$ be an $\nonplanar(H_{i})$-model in $M_{i}$ such that $A^{c_{i}}_{M_{i}}$ is the $\Xcal_{i}$-core side of the separation $(A^{c_{i}}_{M_{i}}, B^{c_{i}}_{M_{i}})$ of $M_{i}.$

For every $i \in [k],$ we define the graph $\tilde{M}_{i} = M_i \cap (\inG_{\delta}(\Delta) \cap \outG_{\delta}(c_{i})).$
Notice that $G - A$ contains a minimal cut $S_{i} = \{s^i_{1}, \ldots, s^i_{\ell_i} \}$ in $\tilde{M}_{i},$ between $V(M_{i}) \cap \pi_{\delta}(\tilde{c_{i}})$ and $V(M_{i}) \cap V(Ω_{\Delta}),$ where $\ell_{i} \leq 3,$ and with the property that there is an $S_{i}$-$V(Ω_{\Delta})$ linkage $\Lcal_{i}$ in $G - A$ of order $\ell_{i}$ in $\inG_{\delta}(\Delta)$ and a separation $(Z^{\mathsf{in}}_{i}, Z^{\mathsf{out}}_{i})$ of $M_i$ such that $A^{c_{i}}_{M_{i}} \subseteq Z^{\mathsf{in}}_{i}$ and $Z^{\mathsf{in}}_{i} \cap Z^{\mathsf{out}}_{i} = S_{i}.$
Moreover, by the minimality of $S_{i},$ we may also assume that all vertices in $S_{i}$ are ground vertices in $\delta.$
Let $\{d_{1}^{i}, \ldots, d_{\ell_{i}}^{i} \}$ be the endpoints of the paths in $\Lcal_{i}$ such that, for every $j \in [\ell_{i}],$ $s_{i}$ and $d_{i}$ are endpoints of the same path in $\Lcal_{i}.$

Since $A^{c_{i}}_{M_{i}} \subseteq Z^{\mathsf{in}}_{i}$ and $A^{c_{i}}_{M_{i}}$ is the $\Xcal_{i}$-core side of $(A^{c_{i}}_{M_{i}}, B^{c_{i}}_{M_{i}}),$ let $\langle Q_{i}, q_{1}^{i}, \ldots, q_{\ell_{i}}^{i} \rangle$ be the $S_{i}$-disk embeddable complement of $M_{i}[Z^{\mathsf{in}}_{i}]$ as implied by \cref{invading_host_parallel_sep}.
Let $\tilde{Q_{i}} = \{ q_{1}^{i}, \ldots, q_{\ell_{i}}^{i} \} \subseteq V(Q_{i}).$
In particular, we have that
\begin{itemize}
\item $Q_i$ is $\tilde{Q_i}$-disk embeddable, $|V(Q_i)| ≤ |V(H_i)|,$ and $M''_{i} = \langle M_{i}[Z^{\mathsf{in}}_{i}], s_1^i, \ldots, s_{\ell_i}^i \rangle \oplus \langle Q_i, q_1^i, \ldots, q_{\ell_i}^i \rangle$ contains $\nonplanar(H_{i})$ as a minor and
\item there exists an $\nonplanar(H_{i})$-model $\Xcal''_{i}$ in $M''_{i}$ such that $A^{c_{i}}_{M''_{i}}$ is the $\Xcal''_{i}$-core side of $(A^{c_{i}}_{M''_{i}}, B^{c_{i}}_{M''_{i}}),$ where $(A^{c_{i}}_{M''_{i}}, B^{c_{i}}_{M''_{i}})$ is the separation of $M''_{i}$ where $A^{c_{i}}_{M''_{i}} = A^{c_{i}}_{M_{i}}.$
\end{itemize}

We define
$$M^*_{i} \coloneqq \langle M_{i}[Z^{\mathsf{in}}_{i}], s_1^i, \ldots, s_{\ell_i}^i \rangle \oplus \langle \cupall \Lcal_{i}, s_{1}^i, \ldots, s_{\ell_i}^i \rangle.$$
Observe that $M_{1}^*, \ldots, M_{k}^*$ are pairwise disjoint subgraphs of $G.$
Moreover, for every $i \in [k],$ we define $M_{i}' \coloneqq \langle M_{i}^*, d_1^i, \ldots, d_{\ell_i}^i \rangle \oplus \langle Q_i, q_1^i, \ldots, q_{\ell_i}^i \rangle.$
Let 
\begin{eqnarray*}
G' & = & \langle (G - A) \cap Δ, d_{1}^1, \ldots, d_{\ell_{1}}^1,\ \ldots\ , d_{1}^k, \ldots, d_{\ell_{k}}^k \rangle \oplus \langle \bigcup_{i\in[k]} Q_{i}, q_{1}^1, \ldots, q_{\ell_{1}}^1, \ \ldots\ , q_{1}^k, \ldots, q_{\ell_{k}}^k \rangle
\end{eqnarray*}
In other words, the graph $G'$ is obtained from $(G-A) \cap Δ$ after considering its disjoint union with the graphs $Q_{1}, \ldots, Q_{k}$ and, for every $i \in [k],$ identifying, for every $h \in [\ell_{i}],$ the $h$-th vertex of $\{d_1^i,\ldots, d_{\ell_i}^i \}$ with the $h$-th vertex of $\{q_1^i, \ldots, q_{\ell_i}^i\}.$
Notice now that, for every $i \in [k],$ $(M^{\prime}_{i}, (T_{i} \cap Z^{\mathsf{in}}_{i}) \cup S_{i} \cup V(Q_{i}))$ is an $\nonplanar(H_{i})$-expansion in $G'.$

Keep in mind that the graph $G'$ where we have found the set $\Mcal' = \{ M'_{i} \mid i \in [k] \}$ of pairwise disjoint $\nonplanar(H_{i})$-hosts in $G',$ $i \in [k],$ is an ``artificial'' graph and neither $G'$ is a subgraph of $G$ nor the graphs in $\Mcal'$ are subgraphs of $G.$
Our objective is to use the graph $G'$ and $\Mcal'$ to find a $\Zcal$-mixed packing in $G-A.$
This will be done by revising $\Mcal'$ in a way that every graph in $\Mcal'$ is a subgraph of $G' \cap \Delta_{\xi},$ where $\Delta_{\xi}$ is the disk bounded by the trace of $C_{\xi},$ (that is a subgraph of $G$), i.e., avoids the additional artificial vertices of $G'$ that are drawn outside $\Delta_{\xi}$ and the completing it to a $\Zcal$-mixed packing in $G' \cap \Delta_{\xi}$ by showing how to also find the ``missing'' planar components in $G' \cap \Delta_{\xi},$ that are necessary to complete each $M'_{i}$ into an $H_{i}$-host, $i \in [k].$

\medskip
As a next step, we modify the surface $Σ$ and the decomposition $δ$ as follows: we first consider the sphere $\Sigma^{(0,0)}$ obtained if we glue the boundary of $\Delta$ with the boundary of another closed disk $\Delta^{\mathsf{out}}.$
We then obtain a $Σ^{(0,0)}$-decomposition $δ'$ from $δ$ by removing from it all cells that are not subsets of $\Delta$ as well as the drawing of vertices and edges that are contained in these cells, except from the vertices in $V(Ω_{\Delta}).$
Next we add $k$ cells $c^+_{1}, \ldots, c^{+}_{k}$ that are subsets of $\Delta^{\mathsf{out}}$ so that, for each $i \in [k],$ $\pi_{\delta'}(\tilde{c}^+_{i}) \cap \Delta^{\mathsf{out}} = \{d_{1}^i, \ldots, d_{\ell_{i}}^i\}.$
The construction of $\delta' = (\Gamma', \Dcal')$ is completed by drawing inside each $c_{i}^{+}$ the graph $Q_{i}.$
This is possible as $Q_{i}$ is $\tilde{Q}_i$-disk embeddable.

By construction, $\delta'$ is a $\Sigma^{(0,0)}$-decomposition of the graph $G'.$
Furthermore, notice that it is straightforward to define an $\nonplanar(H_{i})$-model $\Xcal'_{i}$ in $M'_{i}$ from the $\nonplanar(H_{i})$-model $\Xcal''_{i}$ in $M''_{i},$ that preserves the additional property of $\Xcal''_{i}$, namely such that $A^{c_{i}}_{M'_{i}}$ is the $\Xcal'_{i}$-core side of $(A^{c_{i}}_{M'_{i}}, B^{c_{i}}_{M'_{i}}),$ where $(A^{c_{i}}_{M'_{i}}, B^{c_{i}}_{M'_{i}})$ is the separation of $M'_{i}$ where $A^{c_{i}}_{M'_{i}} = A^{c_{i}}_{M_{i}}.$
Notice that $(A^{c_{i}}_{M'_{i}}, B^{c_{i}}_{M'_{i}}) = (A^{c_{i}} \cap V(M'_{i}), B^{c_{i}} \cap V(M'_{i})),$ where $c_{i}$ is interpreted as a cell in $\delta'$ and $(A^{c_{i}}, B^{c_{i}})$ as the corresponding separation in $G'.$
Therefore, for every $i \in [k],$ $M'_{i}$ remains $c_{i}$-invading (with respect to $\delta'$).
We set $T_{i}' = (T_{i} \cap Z^{\mathsf{in}}_{i}) \cup S_{i} \cup V(Q_{i}), i \in [k].$

Given a cell $c$ of $\delta'$ and an $i \in [k],$ we say that $c$ is $M'_i$-\emph{proper} if $V(\inG_{\delta'}(c)) \setminus \pi_{\delta'}(\tilde{c})$ does not contain any vertex of $T'_i.$
If the cell $c$ is not $M'_i$-proper, then we call it $M'_i$-\emph{improper}.
We define
$$\mbox{$M' = \bigcup_{i \in [k]} M'_{i},$ ~~ ${T}' = \bigcup_{i \in [k]} T_{i}',$ ~~ $H=\bigcup_{i\in[k]} \nonplanar(H_i)$}$$
and notice that $(M', T')$ is an $H$-expansion of $G'$ where
$$|T'| ≤ k\bar{h},$$
agreeing that $\bar{h} = (h_{\Zcal}^2 + h_\Zcal + 3).$

Given a cell $c$ of $\delta',$ we say that it is $M'$-\emph{proper} if it is $M'_{i}$-\emph{proper} for some $ i \in [k]$ or, equivalently, if $V(\inG_{\delta'}(c)) \setminus \pi_{\delta'}(\tilde{c})$ does not contain any vertex of $T'.$
If the cell $c$ is not $M'$-proper, then we call it $M'$-\emph{improper}.
Clearly, the cells $c_{1}, \ldots, c_{k}$ are all $M'$-improper and every $M'$-improper cell should contain at least one vertex in $T'.$ 
Therefore there at most $|T'| ≤ k\bar{h}$ $M'$-improper cells in $\delta'.$

We now define an auxiliary graph $\hat{G}'$ from $G'$ as follows:
First, for every $M'$-improper cell $c$ of $\delta',$ we remove from $G'$ all vertices in $\inG_{\delta'}(c)\setminus \pi_{\delta'}(\tilde{c}).$
Next, for every $M'$-proper cell $c$ of $\delta',$ we consider the minimal subgraph $R_{c}$ of $\inG_{\delta'}(c)$ that contains the vertices in $\pi_{\delta'}(\tilde{c})$ and has the property that for every $x, x' \in \pi_{\delta'}(\tilde{c}),$ $x, x'$ are connected by a path in $\inG_{\delta'}(c)$ if and only if they are also connected in $R_{c}.$
Notice that $R_{c}$ is either the graph $(\tilde{c}, \emptyset)$ or is a graph consisting of two or three paths each joining some vertices of $\pi_{\delta'}(\tilde{c})$ with some common vertex $v_{c}.$

Notice that the choice of $R_{c}$ can be done so that $\hat{G}'$ is a subgraph of $G'$ and that, for each $M'$-proper cell of $δ',$ the graph $M' \cap \inG_{\delta'}(c)$ is a subgraph of $R_{c}.$ 
Moreover, we may consider a $\Sigma^{(0,0)}$-decomposition $\hat{\delta}' = (\hat{\Gamma}', \hat{\Dcal}')$ of $\hat{G}',$ where $U(\hat{\Gamma}') \subseteq U(\Gamma'),$ where $\hat{\Dcal}'$ is the same as $\Dcal,$ and where every cell of $\hat{\delta}'$ that corresponds to an $M'$-proper cell of $\delta',$ contains $R_{c}.$
We may also insist, by possibly revising the drawing of the cells of $\delta,$ that the drawings of each $R_{c}$ inside $c,$ are without crossings.
These adaptations do not alter the fact that $(M', T')$ is an $H$-expansion of $G'.$

Similarly, to $\hat{G}',$ we define 
\begin{eqnarray*}
\hat{M}'& = & M' - \cupall\{V(\inG_{\delta'}(c)) \setminus \pi_{\delta'}(\tilde{c})\mid \text{$c$ is an $M'$-improper cell of $\delta'$}\}\mbox{~and~}\\
\hat{T}' & = & (T' \cap V(\hat{M}')) \cup \{\pi_{\delta'}(\tilde{c})\mid \text{$c$ is an $M'$-improper cell of $\delta'$}\}.
\end{eqnarray*}
Observe that $(\hat{M}',\hat{T}')$ is an expansion in $\hat{G}'$ and therefore in $G'$ as well.
Also, since for every $M'$-proper cell $c$ of $\delta',$ in its corresponding cell $c$ of $\hat{\delta}',$ $R_{c}$ is drawn without crossings, we deduce that $\hat{δ}' = (\hat{Γ}', \Dcal')$ is a $Σ^{(0,0)}$-embedding, therefore $\hat{G}'$ is a planar graph.
Also it holds that
$$|\hat{T}'| ≤ |T'| + |T'| ≤ 2k \bar{h}.$$

Let $\hat{H} = (\hat{T}', E_{\hat{H}})$ be the graph obtained if in $\hat{M}'$ we dissolve all vertices that do not belong to $\hat{T}'.$
As $(\hat{M}', \hat{T}')$ is a $\hat{H}$-expansion of $\hat{G}',$ there is a function $\sigma$ mapping each edge $e = xy \in E_{\hat{H}}$ to a path $\sigma(e)$ of $\hat{G}'$ such that $\hat{M}' = \bigcup_{e \in E_{\hat{H}}} \sigma(e).$

\medskip
Notice now that we may detect two cycles $C_{\zeta}$ and $C_{\eta}$ of $\Ccal$ where $\zeta \leq \eta$ and such that the annulus $A_{\zeta, \eta}: = (\Delta_{\eta} \setminus \Delta_{\zeta}) \cup \bd(\Delta_{C_{\zeta}}),$ where $\Delta_{\eta}$ (resp. $\Delta_{\zeta}$) is the disk bounded by the trace of $C_{\eta}$ (resp. $C_{\zeta}$), does not contain any $M'$-improper cell of $\delta'.$
We may also assume that $\eta - \zeta$ is even and we set $\mu = (\eta - \zeta) / 2.$
All these are possible if $\eta - \zeta + 1 = 2 \mu + 1 \leq \xi / (|\hat{T}'|+1)$ or, equivalently, 
\begin{eqnarray}
\xi & \geq & (2 \mu + 1) \cdot (2k \bar{h} + 1).\label{@occasionally}
\end{eqnarray}
We also denote $\overline{A}_{\zeta, \eta} = (\Sigma^{(0,0)} \setminus A_{\zeta, \eta}) \cup \bd(\Delta_{\zeta}) \cup \bd(\Delta_{\eta}).$

From \cite[Theorem 2.1]{GolovachST19Hitting} (see also \cite{GolovachST20Hitting}) there are two functions $f_{1} \colon \Nbbb \to \Nbbb,$ $f_{2} \colon \Nbbb \to \Nbbb,$ where $f_{1}(x) = \Ocal((f_{2}(x))^2),$ and a function $\sigma^*$ mapping each edge $e = xy \in E_{\hat{H}}$ to a path $\sigma^*(e)$ of $\hat{M}'$ such that, given that $2\mu + 1 \geq f_1(|\hat{T}'|),$ 
\begin{itemize}
\item $M^* \coloneqq \bigcup_{e \in E_{\hat{H}}} \sigma^*(e)$ is a subgraph of $\hat{G}',$
\item $C_{\mu} \cap M^*$ is a subgraph of $C_{\mu} \cap \cupall \{P_{1}, \ldots, P_{f_2(|\hat{T}'|)} \},$ and 
\item $\overline{A}_{\zeta, \eta} \cap M^*$ is a subgraph of $\overline{A}_{\zeta, \eta} \cap \hat{M}'.$
\end{itemize}
The result of \cite[Theorem 2.1]{GolovachST19Hitting} follows directly from the ``linkage combing lemma'' of \cite[Theorem 5]{GolovachST22Combing}, according to which the growth of $f_{2}(x)$ (where $x$ is the linkage size) depends linearly on the dependency of the ``linkage theorem'' proved in \cite{RobertsonS12GraphMinorsXXII} (see also \cite{KawarabayashiW10Ashorter}).
This dependency is quite high for general graphs (see e.g., \cite{KawarabayashiW10Ashorter} for the best known bounds), however for planar graphs, that is the case of $\hat{G}',$ we may assume that $f_{2}(x) = 2^{\Ocal(x)}$ because of the main result of \cite{AdlerKKLST17Irrelevant}.
Therefore, in order to make the above result applicable, we may pick any $\xi$ such that
\begin{eqnarray}
\xi & \geq & (2k \bar{h} + 1) \cdot f_{1}(2k \bar{h})\label{@industrially}
\end{eqnarray}
as this will guarantee that $\xi \geq (|\hat{T}'| + 1) \cdot f_{1}(|\hat{T}'|).$

\medskip
We now use $M^*$ and $\sigma^*$ in order to modify $M'$ by replacing, for every edge $e \in E(\hat{H}),$ the path $\sigma(e)$ by the path $\sigma^*(e).$
Notice that this modification does not alter the fact that $(M',T')$ is an $H$-expansion of $G'$ nor the fact that every updated $M'_{i},$ $i \in [k],$ is $c_{i}$-invading.
However, if $U \coloneqq V(M') \cap V(Ω_{Δ_{\mu}}),$ then  $|U| ≤ f_2(|\hat{T}'|) ≤ f_2(2k \bar{h})$ and, moreover, we also have that the vertices of $U$ are also vertices of the paths $P_{1}, \ldots, P_{\xi},$ and that $U \cap T' = \emptyset.$

We define $\overline{\Delta}_{\mu} = (\Sigma^{(0,0)} \setminus {\Delta}_{\mu}) \cup \bd(\Delta_{\mu})$ and we set $G'_{\mu} = G' \cap \overline{\Delta}_{\mu}.$
We consider the $\overline{\Delta}_{\mu}$-decomposition $\delta'_{\mu}$ of $G' \cap {\overline{\Delta}}_{\mu},$ defined by removing from $\delta'$ all cells that are not subsets of $\overline{\Delta}_{\mu}.$

\medskip
Notice that the $\overline{\Delta}_{\mu}$-decomposition $\delta'_{\mu}$ and the corresponding drawing of $G' \cap \inG_{\delta'}({\overline{\Delta}}_{\mu})$ do not necessarily imply that the drawing of $M' \cap \inG_{\delta'}({\overline{\Delta}}_{\mu}),$ according to $\delta'_{\mu},$ is a $\overline{\Delta}_{\mu}$-embedding.
Even worse, it does not even imply that $M' \cap \inG_{\delta'}({\overline{\Delta}}_{\mu})$ is $\overline{\Delta}_{\mu}$-embeddable and this is due to the fact that there might exist a cell $c$ of $\delta'_{\mu}$ where $M' \cap \inG_{\delta'}(c)$ is not $(\pi_{\delta'}(\tilde{c}) \cap V(M'))$-disk embeddable.
Our next step is to further modify the part of $M'$ that is drawn inside $\overline{\Delta}_{\mu},$ according to $\delta',$ so that its new version, say $M^*,$ can substitute $M'$ as the graph of some $H$-expansion in $G'$ in a way that $M^* \cap \inG_{\delta'}(c)$ will be $(\pi_{\delta'}(\tilde{c}) \cap V(M^*))$-disk embeddable, for every cell $c$ of $\delta'_{\mu}.$

\medskip
For $i \in [k],$ let $B_i$ be the set of the cells $c$ of $\delta'_{\mu}$ where $\inG_{\delta'}(c) \cap M_{i}'$ is not $(\pi_{\delta'}(\tilde{c}) \cap V(M_{i}'))$-disk embeddable.
Similarly, we denote by $B$ be the set of all cells $c$ of $\delta'_{\mu}$ where $\inG_{\delta'}(c) \cap M'$ is not $(\pi_{\delta'}(\tilde{c}) \cap V(M_{i}'))$-disk embeddable.
Notice that $B = \bigcup_{i \in [k]} B_{i}.$
Notice also that every cell in $B$ is an $M'$-improper cell, therefore $|B| ≤ |T'|.$

Consider now some $c\in B$ and assume that $I^{c} \subseteq [k]$ is the set of indices for which $c$ is a member of $B_{i},$ $i \in I.$
Clearly $|I| \leq 3.$
For every $i \in I^{c},$ we denote $S^{c, i} = \{ s^{c, i}_{1}, \ldots s^{c, i}_{\ell_{c. i}} \} = A^{c}_{M'_{i}} \cap B^{c}_{M'_{i}}.$
Since, for every $i \in I^{c},$ $M'_{i}$ is $c_{i}$-invading and $A^{c_{i}}_{M'_{i}} \subseteq A^{c}_{M'_{i}},$ we may apply \cref{invading_host_parallel_sep} to obtain an $S^{c, i}$-disk embeddable complement $\langle Q^{c, i}, q^{c, i}_{1}, \ldots, q^{c, i}_{\ell_{c, i}} \rangle$ for $M'_{i}[A^{c}_{M'_{i}}].$
Let $\tilde{Q}^{c, i} = \{ q^{c, i}_{1}, \ldots, q^{c, i}_{\ell_{c, i}} \}.$
In particular, we have that
\begin{itemize}
\item $Q^{c, i}$ is $\tilde{Q}^{c, i}$-disk embeddable, $|V(Q^{c, i})| ≤ |V(H_i)|,$ and $M'''_{i} = \langle M'_{i}[A^{c}_{M'_{i}}], s^{c, i}_{1}, \ldots s^{c, i}_{\ell_{c. i}} \rangle \oplus \langle Q^{c, i}, q^{c, i}_{1}, \ldots, q^{c, i}_{\ell_{c, i}} \rangle$ contains $\nonplanar(H_{i})$ as a minor and
\item there exists an $\nonplanar(H_{i})$-model $\Xcal'''_{i}$ in $M''_{i}$ such that $A^{c_{i}}_{M'''_{i}}$ is the $\Xcal'''_{i}$-core side of $(A^{c_{i}}_{M'''_{i}}, B^{c_{i}}_{M'''_{i}}),$ where $(A^{c_{i}}_{M'''_{i}}, B^{c_{i}}_{M'''_{i}})$ is the separation of $M'''_{i}$ where $A^{c_{i}}_{M'''_{i}} = A^{c_{i}}_{M'_{i}}.$
\end{itemize}

Moreover, since for every $i \neq j \in I^{c},$ $M'_{i}$ and $M'_{j}$ are disjoint, we have that the graph $\bigcup_{i \in I^{c}} Q^{c, i}$ is $(\bigcup_{i \in I} \tilde{Q}^{c, i})$-disk embeddable.
Now, assume that $I^{c} = \{ i_{1}, \ldots, i_{|I^{c}|}\}.$
We now consider the graph
$$G'' = \langle G'[B^{c}], s^{c, i_{1}}_{1}, \ldots, s^{c, i_{1}}_{\ell_{c, i_{1}}}, \ \ldots \ , s^{c, i_{|I^{c}|}}_{1}, \ldots, s^{c, i_{|I^{c}|}}_{\ell_{c, i_{|I^{c}|}}} \rangle \oplus \langle \bigcup_{i \in I} Q^{c, i}, q^{c, i_{1}}_{1}, \ldots, q^{c, i_{1}}_{\ell_{c, i_{1}}}, \ \ldots \ , q^{c, i_{|I^{c}|}}_{1}, \ldots, q^{c, i_{|I^{c}|}}_{\ell_{c, i_{|I^{c}|}}} \rangle$$
and the $\Sigma$-decomposition $\delta''$ of $G''$ which we obtain from $\delta'$ by replacing $\sigma_{\delta'}(c) - \pi_{\delta'}(\tilde{c})$ and its drawing by a $(\bigcup_{i \in I^{c}} \tilde{Q}^{c, i})$-disk embedding of  $\bigcup_{i \in I^{c}} Q^{c, i}.$
By construction, for every $i \in I^{c},$ $M'''_{i}$ is an $\nonplanar(H_{i})$-host in $G''$ and $(M'''_{i}, T'''_{i})$ is an $\nonplanar(H_{i})$-expansion in $G'',$ where $T'''_{i} \coloneqq T' \cap V(M'''_{i}) \cup V(Q^{c, i}).$
Also, notice that the existence of the $\nonplanar(H_{i})$-model $\Xcal'''_{i}$ in $M'''_{i}$ implies that $M'''_{i}$ remains $c_{i}$-invading (with respect to $\delta''$).
Moreover, every $M'_{i},$ $i \in [k] \setminus I^{c}$ is unaffected in $G''$ and therefore retains all its properties.
Now, let $B' = B \setminus \{ c \}.$

\medskip
It follows that, if we iteratively repeat the process above for every cell in $B',$ we will eventually construct a graph $M^{*}$ and a set $T^{*} \subseteq V(M^{*})$ with the following properties.
We have that
$$|T^*| ≤ |T'| + \sum_{c \in B} |V(\bigcup_{i \in I^{c}} Q^{c, i})| ≤ k\bar{h}(1+ 3h_{\Zcal}).$$
Moreover, by construction, it follows that $(M^*,T^*)$ is an $H$-expansion and that there is a drawing of $M^*$ in $\Sigma^{(0,0)}$ such that $M^* \cap \overline{\Delta}_{\mu}$ is a $\overline{\Delta}_{\mu}$-embedding.
Also, keep in mind that $M^* \cap \inG_{\delta'}({\Delta}_{\mu})$ and $M' \cap \inG_{\delta'}({\Delta}_{\mu})$ are identical, as all modifications were applied to cells that are subsets of $\overline{\Delta}_{\mu}.$

\medskip
For the sake of simplicity, in the rest of the proof, we do not distinguish the embedding or the drawing of a graph in $\overline{\Delta}_{\mu}$ by the graph itself.
Similarly, we treat the drawing of a vertex/edge in $\overline{\Delta}_{\mu}$ as the vertex/edge itself.

We proceed with a final modification of $M^*$ by repetitively ``uncontracting'' every vertex $v$ of $T^* \cap \overline{\Delta}_{\mu}$ of degree at least four to an edge $x_v y_v,$ without harming the embeddability in $\overline{\Delta}_{\mu},$
until all vertices of (the updated) $T^*$ have degree $\leq 3.$
After each such operation we update $T^* := (T^* \setminus \{ v \}) \cup \{x_v,x_y\}.$
As all modifications were applied to the interior of $\overline{\Delta}_{\mu}$ (recall that $U \cap T' = \emptyset$), again we have that $M^* \cap \inG({\Delta}_{\mu})$ and $M' \cap \inG({\Delta}_{\mu})$ are identical also for the new $M^*.$
It is also easy to verify (using standard planarity arguments) that the new $T^*$ is at most three times the size of the old one.
Therefore
$$|T^*| ≤ 3k \bar{h}(1+ 3h_{\Zcal}).$$

Consider the graph $H^+$ created if we dissolve in $M^*$ all vertices not in $T^* \cup U.$
Recall that all vertices of $U$ have degree two in $M^{*},$ so they also have degree two in $H^{+}.$
Clearly $H≤H^+.$

Our next (and final) step is to find some $H^+$-expansion in $G'$ by redrawing $M^*\cap \overline{\Delta}_{\mu}$ inside $G' \cap \Delta_{\xi}.$
Let $Τ^*_{\mu} = T^* \cap \overline{\Delta}_{\mu}.$
Consider the graph $H_{\mu}^{+}$ obtained from $M^* \cap \overline{\Delta}_{\mu}$ if we dissolve all vertices not in $Τ^*_{\mu} \cup U.$
Notice that $H^+_{\mu}$ is a $\overline{\Delta}_{\mu}$-embedded graph on $|Τ^*_{\mu} \cup U|$ vertices where all vertices in $V(H^+_{\mu}) \setminus U$ have degree three and all vertices in $U$ have degree one. Observe that
$$|Τ^*_{\mu} \cup U| ≤ 3k\bar{h}(1 + 3h_{\Zcal}) + f_2(2k\bar{h}).$$

Now, let $\overline{H}$ be the graph obtained by taking the graph $H_{1} \setminus \nonplanar(H_{1}) + \ldots + H_{k} \setminus \nonplanar(H_{k}),$ which is clearly a planar graph, and then repeatedly ``uncontracting'' every vertex of degree at least four until every vertex has degree at most three without harming planarity, as we did before in the case of $M^{*}.$
Clearly, the graph $H + \overline{H} = H_{1} + \ldots + H_{k}.$
Moreover, it must be that $|V(\overline{H})| \leq 3kh_{\Zcal}.$

As the vertices of $U$ are also vertices of the paths in $\Pcal,$ there exists a function $f_3 \colon \Nbbb\to\Nbbb$ that makes it possible to find an $(H^+_{\mu} + \overline{H})$-expansion $(\tilde{M}_{\mu}, \tilde{T}_{\mu} \cup U)$ in the graph
$$\cupall \{C_{\mu+1}, \ldots, C_{\mu + f_3(|\tilde{Τ}_{\mu} \cup U|)} \} \cup \cupall \{P_{1} \cap \overline{\Delta}_{\mu}, \ldots, P_{\xi} \cap \overline{\Delta}_{\mu} \}$$
such that $|\tilde{T}_{\mu}| \leq |T^{*}_{\mu}| + |V(\overline{H})|.$
The construction of $(\tilde{M}_{\mu}, \tilde{Τ}_{\mu} \cup U)$ can be done using standard graph drawing techniques (see e.g., \cite[Lemma 16]{BasteST19Hitting}) by ensuring that $f_{3}(x) = \Ocal(x^2).$
In order to make the redrawing above possible inside $G' \cap \Delta_{\xi},$  and therefore also in $G,$ we must ensure that $\xi \geq f_3(|\tilde{Τ}_{\mu} \cup U|).$
Therefore, we pick $\xi$ so that 
\begin{eqnarray}
\xi &\geq & f_3(3k \bar{h}(1 + 3h_{\Zcal}) + f_2(2k \bar{h}) + 3kh_{\Zcal}).\label{@automobile}
\end{eqnarray}
Notice now that, as $f_{1}(x) = \Ocal((f_{2}(x))^2),$ $f_{2}(x)=2^{\Ocal(x)},$ and $f_{3}(x)=\Ocal(x^2)$ it is possible to choose $\mu$ and $\xi$ so that \eqref{@occasionally}, \eqref{@industrially}, and \eqref{@automobile} are all satisfied in a way that $\xi = 2^{\Ocal(k)\cdot\poly(h_{\Zcal})}.$

Let $U = \{u_{1}, \ldots, u_{q}\}.$ 
Recall that $M^* \cap \Delta_{\mu}$ and $M' \cap \Delta_{\mu}$ are identical and the same holds also for the sets $T^* \cap \Delta_{\mu}$ and $T' \cap \Delta_{\mu}.$
Let 
\begin{eqnarray*}
M^{\bullet} & = & \langle G^* \cap \Delta_{\mu}, u_{1}, \ldots, u_{q} \rangle \oplus \langle \tilde{M}_{\mu}, u_{1}, \ldots, u_{q} \rangle \mbox{ \ and}\\
T^{\bullet} & = & (T^* \cap \Delta_{\mu}) \cup \tilde{Τ}_{\mu} \cup U
\end{eqnarray*}
and observe that $(M^{\bullet},T^{\bullet})$ is an $(H^+ + \overline{H})$-expansion of $G - A.$
As $H ≤ H^+$ and $M^{\bullet}$ is a subgraph of $G-A,$ we can see $M^{\bullet}$ as the disjoint union of $k$ $\Zcal$-hosts in $G - A.$
This yields a mixed $\Zcal$-packing of $G$ of size $k,$ as required.

According to the proof above, the running time of finding a mixed $\Zcal$-packing of size $k$ in $\inG_{\delta}(\Delta)$ is dominated by the computation of the substitute $\Zcal$-expansion $(M^*, T^*),$ where $|T^*| = \Ocal(k\cdot \poly(h_{\Zcal}))$ which is essentially a disjoint paths problem on $\Ocal(k \cdot \poly(h_{\Zcal}))$ pairs of terminals in the planar graph $\hat{G}'$ where  we discard all vertices of $C_{\mu},$ except from those that are also vertices of $\cupall\{P_{1}, \ldots, P_{f_2(|\hat{T}'|)} \}.$
As the planar disjoint paths problem on $q$ pairs of terminals can be solved in time $2^{\Ocal(q)}|V(G)|$ (using the result of Cho, Oh, and Oh in~\cite{cho2023parameterized}), the substitution graph $(M^*, T^*)$ can be computed in $2^{\Ocal(k)\cdot\poly(h_{\Zcal})}|V(G)|$ steps.
\end{proof}

\section{The local structure theorem}
\label{@ilemigoddesscs}

This section deals with the proof of our local structure theorem (see \cref{@antiauthoritarian}).
The proof is split in four subsections.
\cref{@reciprocal} produces a vortex-free version of \cref{@duplicating}.
\cref{@abstractness} finds in the resulting schema, a flat vortex collection containing all $\Zcal$-red cells of the schema.
\cref{@conjecture} further refines this collection.
\cref{@translation} shows how to eliminate all $\Zcal$-red cells, by exploiting the properties of the refined collection, and finally produces the free local structure theorem that is free of $\Zcal$-red cells.

\subsection{Killing vortices in Dyck-walls}
\label{@reciprocal}

The goal of this section is to produce a vortex-free version of \autoref{@duplicating}.
This is made possible by the results of Thilikos and Wiederrecht \cite{ThilikosW22Killingconf} which state that the local structure of graphs which exclude a large shallow-vortex grid is vortex-free.
Since we are only dealing with shallow-vortex minors $H,$ the shallow-vortex grid contains our graph $H$ as a minor.
Moreover, it is easy to see that a large enough such grid even contains a packing of $H.$
Hence, if we ever find a large enough shallow-vortex grid, we may immediately return a packing.

We first introduce the definition of \textsl{tight nests}.

\paragraph{Tight nests.} Let $θ$ be a positive integer. Let $(ρ, G, Ω)$ be a rendition of a nest $\Ccal$ around a cell $c \in C(\rho)$ in a disk $Δ.$ We say that $(ρ, G, Ω)$ is \emph{$θ$-tight} if one of the following is true
\begin{enumerate}
   
  \item there exists a set $Z \subseteq V(G)$ such that $|Z| \leq θ$ and every $V(Ω)\mbox{-}\tilde{c}$-path in $G$ intersects $Z,$ or
  \item there exists a $ρ$-aligned disk $Δ' \subseteq Δ$  with $X = \bd(Δ') \cap N(ρ)$ such that
  \begin{itemize}
   
  \item $Δ'$ contains $c,$
  \item there exists a family $\Pcal$ of pairwise disjoint $V(Ω)\mbox{-}π(X)$-paths with $|\Pcal| = |π(X)| \geq θ,$ and
  \item if $Ω'$ is an ordering of $π_{\rho}(X)$ induced by $Δ',$ then the society $(\inG_{ρ}(Δ'), Ω')$ has depth at most $3θ.$
  \end{itemize}
\end{enumerate}

Let $ρ = (Γ, \mathcal{D}, c_{0})$ be a cylindrical rendition of a society $(G, Ω)$ and let $\Ccal = \langle C_{1}, \ldots, C_{s} \rangle$ as well as $\Ccal' = \langle C'_{1}, \ldots, C'_{s} \rangle$ be two nests around $c_{0}$ in $ρ.$
Let also   $\langle Δ_{C^c_{1}}, \dots,  Δ_{C^c_{s}}\rangle$ and $\langle Δ_{C^c_{1}}', \dots,  Δ_{C^c_{s}}'\rangle$ be the  disk sequences of $\Ccal$ and $\Ccal'$ respectively. 
We associate a vector $\mathbf{v}_{\Ccal}=\langle\mathsf{v}_0,\ldots,\mathsf{v}_{s-1}\rangle \in \mathbb{N}^{s}$ with $\Ccal$ as follows.
For each $i \in [0, s-1]$ let $\mathsf{v}_{i}$ be the number of nodes of $ρ$ which are contained in the disk $Δ_{C^c_{1}}.$
The vector $\mathbf{v}_{\Ccal'}=\langle\mathsf{v}_0',\ldots,\mathsf{v}_{s-1}'\rangle$ is defined analogously.
We now write $\Ccal < \Ccal'$ if there exists some $j\in [s]$\
such that $\mathbf{v}_{j}<\mathbf{v}_{j}'$ and for every $i\in[s]\setminus\{j\},$ $v_{i}≤v_{i}'.$
\smallskip

We extract the following lemma from the proof of Lemma 33 in \cite{thilikos2022killing}.
The statement of the lemma below differs from Lemma 33 in \cite{thilikos2022killing} as follows:
We drop the assumption that the exceptional cell of the rendition of our nest is a vortex of bounded depth.
However, when dropping this assumption now a large transaction that can be found may invade the exceptional cell.
If many of the paths invade the cell we have obtained the third outcome.
Otherwise most of the transaction avoids the exceptional cell and thus the arguments from the proof of Lemma 33 may be applied.
For additional insight, we provide a sketch of the proof below.

\begin{lemma}\label{@pessimistic}
\label{@considered}
Let $G$ be a graph and $Ω$ be the cyclic ordering 
of a subset of vertices of $G.$ 
There exists a function $f_{\ref{@considered}} : \Nbbb^{2} \to \Nbbb$ such that for every $q \in \Nbbb,$ and every $θ \in \Nbbb,$
if $(ρ=(Γ, \mathcal{D}, c), G, Ω)$ is a rendition of a nest $\Ccal$ of order $q$ around $c,$ then
\begin{itemize}
   
\item either $\mathcal{C}$ is $f_{\ref{@considered}}(q, θ)$-tight, or
\item there exists a nest ${\cal C}'$ of order $q$ around $c$ within $G$ where $\mathcal{C}' < \mathcal{C},$ or
\item there is a transaction in $(G,Ω)$ where at least $f_{\ref{@considered}}(q, θ)$ of its paths intersect $V(\inG_{ρ}(Δ_{c}))\setminus π_{ρ}(\tilde{c}).$
\end{itemize}
Moreover, there exists an algorithm that, given $\mathcal{C},$ $(\rho,\mathcal{D},c),$ $q$ and $\theta$ as above, finds one of the three ouctomes above in time $\mathcal{O}(f_{\ref{@considered}}(q,\theta)|V(G)|^2).$
\end{lemma}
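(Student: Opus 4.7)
The plan is to follow the strategy of Lemma 33 from \cite{thilikos2022killing}, modified so that we no longer assume bounded depth of the exceptional cell, and instead detect the new third outcome whenever a large transaction threatens to invade that cell. Set $N = f_{\ref{@considered}}(q,θ)$, chosen to be a sufficiently large polynomial in $q$ and $θ$ (the same order of magnitude as in the original statement). First I would test tightness condition (1) directly: use a max-flow/min-cut routine between $V(Ω)$ and $\tilde{c}$ in~$G$. If the minimum cut has size at most $θ$, we have witnessed the first outcome and we are done. Otherwise, Menger's theorem gives a linkage $\mathcal{L}$ of order $θ+1$ from $V(Ω)$ to $\tilde{c}$, which, after rerouting through the cycles of $\Ccal$, we may assume to be orthogonal to $\Ccal$.

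Next, starting from the disk $Δ_{c}$ and working outward through the cycles $C_{1},\dots,C_{q}$ of $\mathcal{C}$, I would apply \cref{@prefascist} to the induced societies around $c$. Concretely, for a suitable innermost $ρ$-aligned disk $Δ'$ enclosing $c$, either the society $(\inG_{ρ}(Δ'),Ω')$ has depth at most~$3θ$, in which case together with the linkage just produced we verify all three items of condition (2) of tightness and we output outcome~1; or the algorithm returns a transaction~$\mathcal{T}$ of order larger than~$N$ in that inner society. This transaction lives inside the disk bounded by an outer cycle of~$\Ccal$ and so can be intersected with the infrastructure of $\Ccal$.

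The main case analysis is then performed on $\mathcal{T}$, and this is where the one genuine modification of the original proof occurs. Let $\mathcal{T}_{c}\subseteq\mathcal{T}$ be the subfamily consisting of those paths that intersect $V(\inG_{ρ}(Δ_{c}))\setminus π_{ρ}(\tilde{c})$, i.e.\ that enter the exceptional cell $c$. If $|\mathcal{T}_{c}|\geq N$, the paths of $\mathcal{T}_{c}$ form a transaction in the outer society $(G,Ω)$ whose elements invade $c$, which is exactly the third outcome of the lemma; we output it and stop. Otherwise $|\mathcal{T}\setminus\mathcal{T}_{c}|\geq N$, so we have a large transaction that is entirely contained in the cylindrical part of $ρ$ and avoids the interior of~$c$.

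At this point the proof of Lemma 33 in \cite{thilikos2022killing} applies verbatim: using the orthogonality with $\Ccal$ (which we ensured in the first paragraph by rerouting), we combine $\mathcal{T}\setminus\mathcal{T}_{c}$ with an innermost cycle $C_{i}$ whose trace is crossed by these transaction paths, and we exchange an arc of $C_{i}$ with a sub-path of a peripheral/crooked element of the transaction. Because the transaction lives strictly outside~$c$, the new cycle still bounds a disk in the cylindrical annulus and the argument there shows that this produces a new nest $\Ccal'$ of the same order $q$ around $c$ whose vector $\mathbf{v}_{\Ccal'}$ is strictly smaller than $\mathbf{v}_{\Ccal}$ in our ordering; this is the second outcome. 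The hardest point, as already in \cite{thilikos2022killing}, is verifying that the exchange keeps the family a nest and indeed decreases~$\mathbf{v}_{\Ccal}$; our modification only has to check that the new cycle does not cross into~$c$, which is precisely what is guaranteed by having isolated the sub-transaction $\mathcal{T}\setminus\mathcal{T}_{c}$. Finally, the running time bound $\mathcal{O}(f_{\ref{@considered}}(q,θ)|V(G)|^{2})$ is obtained by observing that every step above -- the min-cut test, the Menger linkage, the depth check via \cref{@prefascist}, and the rerouting -- is polynomial in $|V(G)|$ with at most quadratic overhead, and with parametric dependence on $q$ and $θ$ absorbed into~$f_{\ref{@considered}}$.
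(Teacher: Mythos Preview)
Your overall strategy mirrors the paper's: test tightness, and if a large transaction appears in an inner society, either many paths enter $c$ (outcome~3) or enough avoid $c$ to push a cycle of the nest inward (outcome~2). The case split on $\mathcal{T}_c$ versus $\mathcal{T}\setminus\mathcal{T}_c$ is exactly the modification the paper makes to Lemma~33 of \cite{thilikos2022killing}.

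There is, however, a real gap in how you set up the inner disk $\Delta'$ and the accompanying linkage. You take a linkage $\mathcal{L}$ of order only $\theta+1$ and then pick ``a suitable innermost $\rho$-aligned disk $\Delta'$'' without specifying it. For condition~(2) of $f_{\ref{@considered}}(q,\theta)$-tightness you must exhibit a family $\mathcal{P}$ with $|\mathcal{P}|=|\pi(X)|\geq f_{\ref{@considered}}(q,\theta)$, where $X$ is the set of boundary nodes of $\Delta'$; a linkage of size $\theta+1$ cannot certify this when $f_{\ref{@considered}}(q,\theta)=\max\{2q,\theta\}>\theta$, and for an arbitrary $\Delta'$ there is no reason the number of paths should equal the number of boundary nodes. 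The same deficiency bites again later: the transaction $\mathcal{T}$ you find lives in the inner society $(\inG_\rho(\Delta'),\Omega')$, not in $(G,\Omega)$. To route its paths out to $V(\Omega)$---which is needed both for outcome~3 and for the nest-tightening of outcome~2---you must concatenate with a linkage covering \emph{all} boundary nodes of $\Delta'$, which your $\mathcal{L}$ does not provide.

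The paper repairs this by computing a \emph{maximum} $V(\Omega)$-$\pi_\rho(\tilde c)$ linkage $\mathcal{Q}'$ together with a minimum cut $X$; the cut $X$ itself traces the curve defining the disk $\Delta_\zeta$, so by construction the boundary nodes of $\Delta_\zeta$ are exactly $X$ and $|\mathcal{Q}'|=|X|$. This single choice simultaneously certifies tightness whenever the inner depth is small (case~1 if $|X|\leq f_{\ref{@considered}}(q,\theta)$, case~2 otherwise) and supplies the full radial linkage $\mathcal{Q}$ needed to extend any inner transaction to one on $(G,\Omega)$ of order at least $2f_{\ref{@considered}}(q,\theta)+1\geq 2q+1$, after which the nest-tightening proceeds as you outline.
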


\begin{proof}[Proof sketch for \cref{@pessimistic}]
We set $f_{\ref{@considered}}(q, θ)\coloneqq \max\{ 2q,\theta\}.$

The proof first asks for a maximum collection $\mathcal{Q}'$ of pairwise vertex-disjoint $V(\Omega)$-$\pi_{\rho}(\widetilde{c})$-paths in $G$ together with a hitting set $X$ for all paths in $\mathcal{Q}'.$
Since $\rho$ is a rendition of $\mathcal{C}$ in a disk $Δ$ it follows that $X$ defines a closed curve $\zeta$ in $Δ$ that separates $c$ from the boundary.
Moreover, $\zeta$ defines a $\rho$-aligned disk $Δ_{\zeta}$ which contains $c.$
Let $\mathcal{Q}$ be the collection of all $V(\Omega)$-$V(\Omega_{Δ_{\zeta}})$-subpaths of the paths in $\mathcal{Q}'.$

In case $(\Omega_{Δ_{\zeta}},\mathsf{inner}_{\rho}(\Omega_{Δ_{\zeta}}))$ has depth at most $3f_{\ref{@considered}}(q,\theta)$ we are in the first outcome and may terminate.

Otherwise we continue from here and ask for a transaction of order $3f_{\ref{@considered}}(q,\theta)+1$ on $(\Omega_{Δ_{\zeta}},\mathsf{inner}_{\rho}(\Omega_{Δ_{\zeta}})).$
If at least $f_{\ref{@considered}}(q,\theta)$ of these paths contain vertices or edges drawn in the interior of $c$ we have reached the third ourcome of the lemma.
Hence, we may assume that at least $2f_{\ref{@considered}}(q,\theta)+1$ paths avoid the interior of the exceptional cell.
Let $\mathcal{L}\subseteq\mathcal{Q}$ be the collection of these paths.
Notice that we may combine the paths in $\mathcal{L}$ with the paths in $\mathcal{Q}$ to obtain a transaction $\mathcal{R}$ of order at least $2f_{\ref{@considered}}(q,\theta)+1$ on $(G,\Omega)$ which avoids the interior of $c$ completely.
Hence, $\mathcal{R}$ is grounded and a planar transaction.

Let $i\in[q]$ be the smallest integer such that there exists some $V(C_i)$-subpath $R'$ of some path in $\mathcal{R}$ which is drawn completely inside the $\rho$-aligned disk bounded by the trace of $C_i.$
As $|\mathcal{R}|\geq 2f_{\ref{@considered}}(q,\theta)+1$ such an $i$ must exist.
Finally, notice that $R'$ may be used to substitute a subpath of $C_i$ in order to form a new cycle $C$ whose trace defines a $\rho$-aligned disk that still contains $c.$
From here it is straightforward to check that $\langle C_1,\dots,C_{i-1},C,C_{i+1},\dots,C_q\rangle<\mathcal{C}$ which is the desired second outcome of the lemma.
\end{proof}

We will also need the following ``vortex-killing'' lemma that will allow us to remove all of the vortices we obtain as a result of applying \cref{@duplicating} or find a large packing of any shallow-vortex minor we like.
We present here an algorithmic version of the original lemma from \cite{thilikos2022killing,ThilikosW22Killingconf}.
To see a proof with very similar arguments we refer the reader to \cref{@translation}, in particular \cref{@repression}.
However, for the sake of completeness, we include a rough sketch of the proof below.

\begin{lemma}[\!\! \cite{ThilikosW22Killingconf}]\label{@rockefeller} 
Let $t\leq \theta$ be positive integers. 
There exists a positive universal constant $\mathsf{c}$ such that, if $(\rho=(\Gamma,\mathcal{D},c),G,\Omega)$ is a \hyperref[@maidservants]{$\theta$-tight} rendition of a nest $\mathcal{C},$ with $|\mathcal{C}|\geq 12t^2+\mathsf{c},$ around a vortex $c$ of depth at most $\theta$ in a disk $Δ,$ then one of the following holds.
\begin{enumerate}
   
\item There exists a separation $(A,B)$ of order at most $12\theta(t-1)$ such that, if $\Omega'$ is the restriction of $\Omega$ to $A\setminus B$ and $V(\Omega)\cap B\subseteq A\cap B,$ then $(G[A\setminus B],\Omega')$ has a vortex-free rendition in the disk, or 
\item $G$ contains the shallow-vortex grid of order $t$ as a minor.
\end{enumerate}
Moreover, there exists an algorithm that, given a $\theta$-tight rendition $\rho$ of a nest $\mathcal{C}$ and an integer $t,$ finds one of the two outcomes as above in time $\mathcal{O}(t\cdot \theta\cdot|V(G)|^2).$
\end{lemma}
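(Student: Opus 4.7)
The plan is to dichotomise on the two alternatives offered by the $\theta$-tightness of $(\rho,G,\Omega)$: either a small hitting set $Z$ exists, or there is a larger $\rho$-aligned disk $\Delta'\supseteq c$ whose society has depth at most $3\theta$ and which is reached from the outer boundary by a radial linkage of order at least $\theta$. In each case we aim to extract either the claimed separator of order at most $12\theta(t-1)$ or the shallow-vortex grid $\mathscr{V}_t$ as a minor by combining nest cycles, radial paths, and structural information about a bounded-depth vortex society. The overall scheme mirrors the vortex-killing argument of \cite{ThilikosW22Killingconf}; our task is to fit together the available pieces (the linear-decomposition result \cref{@prefascist}, planarity-type arguments, and planar disjoint-paths routings) and to verify algorithmic tractability.

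If $\theta$-tightness is witnessed by a hitting set $Z$ with $|Z|\leq\theta$, then in $G-Z$ the boundary $V(\Omega)$ is disconnected from the nodes $\tilde c$ of the vortex. Let $K$ be the union of all components of $G-Z$ meeting $\pi_{\rho}(\tilde c)$; then $(A,B):=((V(G)\setminus K)\cup Z,\, K\cup Z)$ is a separation of order at most $|Z|\leq\theta\leq 12\theta(t-1)$ (for $t\geq 2$; the case $t=1$ is trivial since $\mathscr{V}_1\leq\mathscr{A}_1$). The rendition of $G[A\setminus B]$ obtained from $\rho$ by deleting $K\cup Z$ and its drawing is vortex-free, because the interior of the cell $c$ lies entirely within $K$ and every other cell of $\rho$ has at most three boundary nodes.

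In the other, more involved case, $\theta$-tightness supplies a $\rho$-aligned disk $\Delta'\supseteq c$, a radial linkage $\mathcal{P}$ of order at least $\theta$ from $V(\Omega)$ to $\bd(\Delta')$ orthogonal to $\mathcal{C}$, and a society $(H,\Omega'):=(\inG_{\rho}(\Delta'),\Omega_{\Delta'})$ of depth at most $3\theta$. By \cref{@prefascist} we compute in time $\mathcal{O}(\theta|V(G)|^2)$ a linear decomposition $\mathcal{L}=\langle X_1,\dots,X_n,v_1,\dots,v_n\rangle$ of $(H,\Omega')$ of adhesion at most $6\theta$. Scanning along $\mathcal{L}$, we greedily collect pairwise vertex-disjoint crooked transactions of order one (``crosses'') whose four endpoints lie in disjoint segments of $\Omega'$. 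If we collect at least $t$ such crosses, we will assemble $\mathscr{V}_t$ as a minor; otherwise we assemble the small separator.

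When at most $t-1$ crosses are found, they partition $\mathcal{L}$ into at most $t$ planar intervals, separated by adhesions of size at most $6\theta$; a suitable choice of the $2(t-1)$ adhesions bounding these intervals yields a separator $A\cap B$ of size at most $12\theta(t-1)$, and the restriction of $\rho$ to $G[A\setminus B]$ can be made vortex-free by rerendering each planar interval in its natural planar way and splicing with the rendition of $G$ outside $\Delta'$. When at least $t$ crosses are found, we realise $\mathscr{V}_t$ by using $t$ concentric cycles of $\mathcal{C}$ as the cycles of the annulus part of $\mathscr{V}_t$, $4t$ suitably chosen paths of $\mathcal{P}$ as the vertical paths, and each cross as one of the $t$ crossings on the innermost cycle. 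The condition $|\mathcal{C}|\geq 12t^2+\mathsf{c}$ ensures that each cross can be routed through its own dedicated sector of $\mathcal{O}(t)$ nest cycles and $\mathcal{O}(t)$ radial paths, so that the $t$ crossings can be embedded simultaneously and disjointly in the correct cyclic order around the innermost cycle. The main obstacle is this last routing step, which uses the orthogonality of $\mathcal{P}$ with $\mathcal{C}$ together with a standard planar disjoint-paths argument to reroute the four terminals of each cross down to the innermost track without interference between distinct crosses. Algorithmically, combining the algorithmic version of \cref{@prefascist} with linear-time planar disjoint-paths routing and a simple enumeration over crosses gives the stated running time $\mathcal{O}(t\cdot\theta\cdot |V(G)|^2)$.
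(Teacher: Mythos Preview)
Your overall plan matches the paper's: dichotomise on tightness, and in the interesting case find either many disjoint crosses (yielding $\mathscr{V}_t$) or few (yielding a small separator). The paper's sketch, however, does not go through a linear decomposition. It works directly on consecutive segments $I_i$ of the boundary: for each, it computes a minimum $I_i$--$(V(\Omega_{\Delta'})\setminus I_i)$ separator $X_i$ inside $\inG_\rho(\Delta')$ and asks, via the Two Paths Theorem, whether a cross on $I_i$ survives after deleting $X_i$. The segments are grown minimally, so that if fewer than $t$ of them carry a cross, deleting the at most $t-1$ separators $X_i$ (each of size bounded through the depth $3\theta$) leaves a cross-free society; the Two Paths Theorem then furnishes the vortex-free rendition.

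Your linear-decomposition route (closer in spirit to \cref{@repression}) can be made to work, but as written the ``at most $t-1$ crosses'' branch has a real gap. You assert that the intervals between the collected crosses are ``planar'' and can be ``rerendered in their natural planar way'', yet nothing you have done forces this: an arbitrary greedy collection of $t-1$ pairwise vertex-disjoint crosses gives no reason for the remainder of the society to be cross-free. What is missing is (i) a \emph{minimality} discipline in the scan---extend each segment until it \emph{first} contains a cross, so that dropping its last bag leaves a cross-free sub-society---and (ii) an explicit appeal to the Two Paths Theorem, which is the only bridge from ``cross-free'' to ``has a vortex-free rendition in a disk''. Without both ingredients the conclusion does not follow. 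Two small points: a ``crooked transaction of order one'' is not a cross, since any single-path transaction is peripheral---you mean order two; and $\mathscr{V}_1=K_4\not\leq\mathscr{A}_1=C_4$, so your dismissal of the case $t=1$ needs a different justification.
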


\begin{proof}[Proof sketch of \cref{@rockefeller}]
As $(\rho,G,\Omega)$ is a $\theta$ tight rendition of the nest $\mathcal{C}$ in a disk $Δ,$ we may assume that we are given a $\rho$-aligned disk $Δ'\subseteq Δ$ together with a $V(\Omega)$-$V(\Omega_{Δ'})$-linkage $\mathcal{P}$ of order $|V(\Omega_{Δ'})|.$
We can also compute such a linkage and disk in time $\mathcal{O}(|V(G)|^2)$ by using \cref{@pessimistic}.

Next we fix a linear ordering $\lambda$ of $V(\Omega)$ that agrees with $\Omega.$
That is, the cyclic permutation obtained from $\lambda$ by setting $\lambda^{-1}(|V(\Omega)|+1)\coloneqq\lambda^{-1}(1)$ is equal to $\Omega.$

We then start collecting sets of vertices $I_i$ which form consecutive segments of $\lambda,$ each with the property that, if we remove a minimum $V(I_i)$-$V(\Omega)\setminus I_i$ separator $X_i$ in $\mathsf{inner}_{\rho}(Δ_{\Omega_{Δ'}}),$ there exists a \textsl{cross} on the vertices of $I_i$ in $\mathsf{inner}_{\rho}(Δ_{\Omega_{Δ'}})-X_i.$
By \textsl{cross} we mean two vertex disjoint paths $P_1$ and $P_2,$ each with endpoints $s_j$ and $t_j,$ $j\in[2],$ such that these endpoints appear in $I_i$ in the order $s_1,$ $s_2,$ $t_1,$ $t_2$ as induced by $\lambda,$ and none of the $P_j$ has internal vertices in $I_i.$

Notice that we can find $X_i$ in time $\mathcal{O}(\theta \cdot |V(G)|).$
Moreover, such a cross can be found using the so called Two Paths Theorem (see for example \cite{jung1970verallgemeinerung} or the explanation and a new proof in \cite{kawarabayashi2020quickly}) in time $\mathcal{O}(|V(G)|^2).$

In case we find $t$ such segments, each with their own cross, we can use $\mathcal{P}$ to route these crosses to $V(\Omega)$ and construct the desired shallow-vortex grid using the resulting paths and a part of the nest.

Otherwise, by the minimality of the $I_i$ and the bounded depth of $(\mathsf{inner}_{\rho}(Δ_{\Omega_{Δ'}}),\Omega_{Δ'})$ it suffices to delete at most $12\theta(t-1)$ vertices to remove all crosses on $(\mathsf{inner}_{\rho}(Δ_{\Omega_{Δ'}}),\Omega_{Δ'}).$
A consequence of the Two Paths Theorem then yields the desired vortex-free rendition of what remains.
\end{proof}

In order to link the vortex-free $Σ$-decomposition we obtain from \cref{@duplicating} by applying the lemmas above to remove the vortices, we need a way to encode that the tangle of the Dyck wall which is part of the $Σ$-schema we construct is a truncation of the well-linked set the local structure theorem arises from.
For this, first notice that, given some Dyck wall $D$ of order $d$ and a separation $(A,B)$ of order less than $d,$ we can check in linear time if $(A,B)\in\mathcal{T}_D$ or $(B,A)\in\mathcal{T}_D$ by checking which of the two sides fully contains a cycle and a track of $D.$
During our proofs in the following sections we will show that we can always maintain the following invariant property, given that we start out with a large enough Dyck wall in the beginning.
\medskip

Given a $Σ$-schema $(Α,δ,D)$ of $G,$ a real number $α\in[2/3,1),$ and a set $S\subseteq V(G),$ we say that $(A,δ,D)$ is $α$-\emph{anchored at} $S$ if
\begin{quotation}
  \noindent $\mathcal{T}^A_D$ is the tangle of $G-A$ induced by $D,$ and for every $(B_1,B_2)\in\mathcal{T}^A_D$ it holds that $$|(B_1\cup A)\cap S|<(1-\alpha)|S|.$$
\end{quotation}
Notice that $(A,δ,D)$ being $α$-anchored at a set $S,$ in some sense, resembles the properties of so-called \textit{respectful tangles} from \cite{robertson1994graphXI,robertson1995graphXII}.
That is, all separations of $G$ which involve the apex set $A$ and whose separator contains less than $d$ vertices outside of $A,$ where $d$ is the order of $D,$ inherit their properties from the graph where $A$ is deleted.
In particular, this means that these separations ``respect'' the surface which is represented by $D.$

The following lemma ensures that, if we choose $D$ to be large enough, any carving we apply to our anchored $Σ$-schema will remain anchored.

\begin{lemma}\label{@repetitive}
  Let $d\geq 4$ be an integer, $\alpha\in[2/3,1),$ and $Σ$ be a surface.
  Let $(A,δ,D)$ be a $Σ$-schema of a graph $G$ which is $\alpha$-anchored at a set $S,$ where $D$ is of order $d.$
  Moreover, let $(B_1,B_2)$ be a separation of order less than $d-3$ in $G-A$ and let $(A\cup(B_1\cap B_2),δ',D')$ be a $Σ$-schema of $G$ obtained from a carving $(A\cup(B_1\cap B_2),δ',D'')$ of $(A,δ,D)$ at $(B_1\cup A,A\cup B_2)$ by selecting $D'$ to be a $(Σ;d')$-Dyck subwall of $D''$ where $d'\coloneqq d-|B_1\cap B_2|.$
  Then $(A\cup(B_1\cap B_2),δ',D')$ is $\alpha$-anchored at $S.$
  \end{lemma}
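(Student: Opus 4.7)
The plan is to verify the anchoring condition for $(A\cup(B_1\cap B_2),\delta',D')$ by lifting each separation from $\mathcal{T}^{A'}_{D'}$ in $G-A'$ to a separation in $\mathcal{T}^A_D$ in $G-A$, and then invoking the hypothesis that $(A,\delta,D)$ is $\alpha$-anchored at $S$. Fix an arbitrary $(C_1,C_2)\in\mathcal{T}^{A'}_{D'}$ and define the lifted pair
\[
(\tilde C_1,\tilde C_2)\coloneqq (C_1\cup(B_1\cap B_2),\,C_2\cup(B_1\cap B_2)).
\]
Since $(C_1,C_2)$ is a separation of $G-A'$ and $A'=A\cup(B_1\cap B_2)$, reinserting $B_1\cap B_2$ on both sides yields a bona fide separation of $G-A$; its order is at most $|C_1\cap C_2|+|B_1\cap B_2|<d'+|B_1\cap B_2|=d$.

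The crux of the argument is to verify $(\tilde C_1,\tilde C_2)\in\mathcal{T}^A_D$, namely that $\tilde C_2\setminus\tilde C_1$ contains both a cycle and a track of $D$. From the membership $(C_1,C_2)\in\mathcal{T}^{A'}_{D'}$ we know that some cycle $Z$ of $D'$ lies in $C_2\setminus C_1$. By the definition of a Dyck subwall, $Z$ is also a cycle of $D$; moreover $D'\subseteq D''\subseteq D-B_1$ is vertex-disjoint from $B_1\cap B_2$, so $Z\subseteq\tilde C_2\setminus\tilde C_1$. To produce the required track witness I will exploit two structural facts about Dyck walls: the tracks of $D$ are pairwise vertex-disjoint, and each of them meets every cycle of $D$. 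Since the separator of $(\tilde C_1,\tilde C_2)$ has fewer than $d$ vertices, at most $d-1$ of the $8d(1+h+c)$ tracks of $D$ can intersect it; hence some track $T$ of $D$ is entirely disjoint from the separator. As $T$ necessarily meets the cycle $Z\subseteq\tilde C_2\setminus\tilde C_1$, it must lie wholly inside $\tilde C_2\setminus\tilde C_1$, confirming $(\tilde C_1,\tilde C_2)\in\mathcal{T}^A_D$.

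Applying the assumed $\alpha$-anchoring of $(A,\delta,D)$ at $S$ then yields $|(\tilde C_1\cup A)\cap S|<(1-\alpha)|S|$. Observing that $\tilde C_1\cup A = C_1\cup A\cup(B_1\cap B_2)= C_1\cup A'$, this is exactly the bound $|(C_1\cup A')\cap S|<(1-\alpha)|S|$ required for $\alpha$-anchoring of $(A\cup(B_1\cap B_2),\delta',D')$ at $S$, and since $(C_1,C_2)$ was arbitrary we conclude.

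The main obstacle I anticipate is the track half of the tangle-membership check: a $(\Sigma;d')$-Dyck subwall of $D$ has tracks that are strict subpaths of tracks of $D$, so one cannot directly transport a track witness from $D'$ back to $D$. The hypothesis $|B_1\cap B_2|<d-3$ (together with $d\geq 4$, ensuring $d'\geq 1$) is what keeps the order of the lifted separation strictly below $d$, which in turn makes the pigeonhole argument over the pairwise disjoint tracks of $D$ go through; everything else in the proof is a bookkeeping check on how $A'$ relates to $A$ and $B_1\cap B_2$.
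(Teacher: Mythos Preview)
Your proof is correct and follows essentially the same approach as the paper: lift a separation from $\mathcal{T}^{A'}_{D'}$ to $G-A$ by adding $B_1\cap B_2$ to both sides, verify it lands in $\mathcal{T}^A_D$, and apply the anchoring hypothesis. The only cosmetic difference is that the paper justifies tangle membership by noting that $D'$ contains a cycle of $D$ and hence admits $d$ pairwise disjoint paths to every cycle and track of $D$ (so $\mathcal{T}_{D'}\subseteq\mathcal{T}_D$), whereas you argue the cycle and track witnesses explicitly via the subwall property and a pigeonhole count on tracks; the underlying reasoning is the same.
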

  
  \begin{proof}
  Let $\mathcal{T}^A_D$ be the tangle of $G-A$ which is induced by $D.$
  Let $A'\coloneqq A\cup (B_1\cap B_2)$ and let us denote by $\mathcal{T}'$ the tangle $\mathcal{T}^{A'}_{D'}$ of $G-A'$ which is induced by $D'.$
  Notice that $D'$ contains a cycle $C$ of $D$ and thus, there exist $d$ pairwise vertex-disjoint paths from $D'$ to every cycle and every track of $D.$
  Hence, $\mathcal{T}_{D'}\subseteq \mathcal{T}_D.$
  Moreover, for every separation $(X_1,X_2)\in\mathcal{T}'$ it holds that $(X_1\cup(B_1\cap B_2),(B_1\cap B_2)\cup X_2)\in\mathcal{T}_D^A.$
  Hence, as $(A,δ,D)$ is $\alpha$-anchored at $S,$ it follows that $|(X_1\cup A')\cap S|<(1-\alpha)|S|$ which proves our claim.
  \end{proof}

\paragraph{Finding a clique minor.}

We will also need the following subroutine to ensure that the graphs we hand over to our more complicated subroutine have a linear, in the number of vertices, number of edges.
To do this, we make use of the following seminal result of Robertson and Seymour \cite{robertson1995graph} to find a clique minor in a dense graph.

\begin{proposition}[\!\! \cite{robertson1995graph}]\label{@enlighlermicut}
There exists an algorithm that, for every integer $k\geq 3$ and every graph $G,$ either
\begin{itemize}
  \item determines that $|E(G)|\leq 2^k|V(G)|,$ or
  \item finds a minimal $K_t$-minor model in $G.$
\end{itemize}
Moreover, this algorithm runs in time $2^{\mathcal{O}(k)}|V(G)|^2.$
\end{proposition}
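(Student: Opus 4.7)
The plan is to implement constructively the classical density-based argument, going back to Mader and refined by Robertson and Seymour, that sufficient edge density forces a clique minor. First, I would check in linear time whether $|E(G)| \leq 2^k |V(G)|$; if so, the first outcome applies and we stop. Otherwise, I would pass to a subgraph $H \subseteq G$ of minimum degree strictly greater than $2^k,$ which can be extracted in time $\mathcal{O}(|V(G)| + |E(G)|)$ by repeatedly deleting any vertex whose current degree is at most $2^k.$ The edge-counting hypothesis guarantees that $H$ is non-empty: throughout this pruning, at most $2^k |V(G)|$ edges are removed in total, whereas $|E(G)|$ is strictly larger, so at least one vertex must survive, and in fact enough survive to still carry many edges.

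Next, inside the min-degree $2^k$ graph $H,$ I would build a $K_k$-minor model by recursion on $k.$ Pick any vertex $v$ of $H$ and look at $H[N(v)].$ If the induced subgraph on $N(v)$ itself has average degree at least $2^{k-1},$ then by the recursive call it contains a $K_{k-1}$-minor model $\mathcal{X}',$ and attaching $\{v\}$ as the $k$-th branch set yields a $K_k$-model in $H$ (the adjacency of $v$ to every branch set of $\mathcal{X}'$ is automatic since $\mathcal{X}'$ lives inside $N(v)$). Otherwise $H[N(v)]$ is sparse, in which case I would contract a suitable edge incident to $v$ to merge $v$ with a neighbor $u$ whose $N(u)\setminus N(v)$ is large; this strictly decreases $|V(H)|$ while keeping the average degree above the threshold needed for the next iteration, because the dropped edges among $N(v)$ are compensated by the new neighbors absorbed from $u.$ Iterating this merging step until the dense-neighborhood case applies, and then recursing, produces the desired $K_k$-model.

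To obtain a \emph{minimal} minor model, as required in the statement, I would run a final pruning pass on the returned $K_k$-model: for each branch set, greedily discard any vertex or edge whose removal preserves both the internal connectivity of the branch set and the existence of at least one edge to every other branch set. This pass takes time linear in the size of the model, which is $\mathcal{O}(|V(G)|),$ and produces a subgraph $M \subseteq G$ with a distinguished partition into $k$ branch sets that is a minimal $K_k$-expansion in the sense of the paper.

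The running time is dominated by the recursion: each level performs $\mathcal{O}(|V(G)|^2)$ work for the minimum-degree reduction, the choice of $v,$ and the neighborhood-edge-count decision. The density thresholds halve at each recursive call, so the recursion depth in $k$ is at most $k,$ and the branching is controlled by a $2^{\mathcal{O}(k)}$ factor coming from the density constants, giving the claimed $2^{\mathcal{O}(k)}|V(G)|^2$ bound. The main obstacle is organizing the recursion so that neither the dense-subgraph extraction nor the neighbourhood recursion introduces an extra factor of $|V(G)|$: one must maintain degree counters and adjacency structures incrementally across contractions so that the overall dependence on $|V(G)|$ stays quadratic rather than cubic, and one must argue carefully that the merging step in the sparse-neighborhood case always exists and preserves the invariant of average degree above $2^{k-1}$ in the contracted graph.
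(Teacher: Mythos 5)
The paper gives no proof of this proposition---it is imported verbatim from Robertson--Seymour (Graph Minors XIII)---and your sketch is precisely the standard density argument behind that result: prune to minimum degree above the threshold, then alternate between recursing into a dense neighbourhood and contracting an edge whose endpoints have few common neighbours. This is correct in substance; the only imprecision is the justification of the contraction step: contracting $uv$ in a simple graph creates no new edges, it loses exactly $1+|N(u)\cap N(v)|$ edges and one vertex, so the density invariant is preserved \emph{because the common neighbourhood is small} (which is exactly what the sparseness of $H[N(v)]$ supplies for some $u\in N(v)$), not because new neighbours of $u$ ``compensate'' for lost edges. With that accounting fixed, the termination, correctness, and $2^{\mathcal{O}(k)}|V(G)|^2$ bound all go through as you describe.
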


We are now ready for the main statement of this section.

\begin{theorem}\label{@enlightened}
There exist functions $f^1_{\ref{@enlightened}} \colon \Nbbb^5 \to \Nbbb,$ $f^2_{\ref{@enlightened}} \colon \Nbbb^5 \to \Nbbb,$ and an algorithm that, given a $\Zcal \in\Vbbb,$ a graph $G,$ three positive integers $k, t, d$, and an $(s,α)$-well-linked set $S$ of $G,$ where $α \in [2/3,1)$ and $s ≥ f^1_{\ref{@enlightened}}(\gamma_{\Zcal},h_{\Zcal},t,d,k)$ as input, either outputs
\begin{itemize}
   
\item an inflated copy of the Dyck grid $\mathscr{D}_{t}^{Σ'},$ for some $Σ' \in \sobs(\mathbb{S}_{\Zcal})$, or 
\item a mixed $\Zcal$-packing of size $k$,
\end{itemize}
or determines that $|E(G)|\leq 2^{\max\{kh_{\Zcal},4t(\gamma_{\Zcal}+2)\}}|V(G)|,$ and outputs
\begin{itemize}
   
\item a triple $(A, δ, D)$ such that
\begin{itemize}
   
\item  $|A| ≤ f^2_{\ref{@enlightened}}(\gamma_{\Zcal},h_{\Zcal},t, d, k),$ 
\item $(Α,δ,D)$ is a $Σ$-schema of $G,$  for some $Σ \in \mathbb{S}_{\Zcal},$
\item $(A, δ,D)$ is $α$-anchored at $S,$ and
\item  $D$ is a $(Σ;d)$-Dyck wall of $G-A$ that is grounded in $δ.$
\end{itemize}
\end{itemize}

Moreover, this algorithm runs in time $2^{\mathsf{poly}(2^{\mathsf{poly(t\cdot 2^{\mathcal{O}(\gamma_{\Zcal})})}}d\cdot k\cdot h_{\Zcal})}|V(G)|^3\log(|V(G)|),$ and the functions $f^1_{\ref{@enlightened}}$ and $f^2_{\ref{@enlightened}}$ are of the following orders:
\begin{align*}
  f^1_{\ref{@enlightened}}(\gamma_{\Zcal},h_{\Zcal},t,d,k) &= 2^{\mathsf{poly}(t\cdot 2^{\mathcal{O}(\gamma_{\Zcal})})}d^{20}k^{40}\mathsf{poly}(h_{\Zcal})\text{, and}\\
  f^2_{\ref{@enlightened}}(\gamma_{\Zcal},h_{\Zcal},t,d,k) &= 2^{\mathsf{poly}(t\cdot 2^{\mathcal{O}(\gamma_{\Zcal})})}dk^3\mathsf{poly}(h_{\Zcal}).
\end{align*}
\end{theorem}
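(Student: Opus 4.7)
The plan is to combine \cref{@duplicating} (a local structure theorem that still admits bounded-depth vortex cells) with the vortex-killing lemmas \cref{@pessimistic} and \cref{@rockefeller} of \cite{ThilikosW22Killingconf}. The hypothesis $\Zcal\in\Vbbb$ is used through \cref{@childishness}: since some $H^{\star}\in\Zcal$ is a minor of $\mathscr{V}_{h^{\star}}$ for some $h^{\star}$ polynomial in $h_{\Zcal}$, a $\mathscr{V}_{3h^{\star}k}$-minor in $G$ produces a $\{H^{\star}\}$-packing of size $k$, i.e.\ a mixed $\Zcal$-packing of size $k$. Anchoring at $S$ will be preserved throughout via \cref{@repetitive}.

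I would first invoke \cref{@enlighlermicut} with $r\coloneqq\max\{kh_{\Zcal},4t(\gamma_{\Zcal}+2)\}$: either we conclude $|E(G)|\leq 2^{r}|V(G)|$ (matching the sparsity condition in the third outcome), or we obtain a $K_{r}$-minor. In the latter case, the choice of $r$ ensures that $K_{r}$ contains either $k$ disjoint $H^{\star}$-copies (yielding the mixed $\Zcal$-packing) or, via \cref{@headstrong}, a copy of $\mathscr{D}^{\Sigma'}_{t}$ for some $\Sigma'\in\sobs(\Sbbb_{\Zcal})$. Assuming sparsity, I would apply \cref{thm_algogrid} to $S$ to extract a large wall $W$ with $\mathcal{T}_{W}$ truncating $\mathcal{T}_{S}$, and then feed $W$ into \cref{@duplicating} with its Dyck-wall parameter raised to $d'\coloneqq d+N\cdot 12\theta(3h^{\star}k-1)$ (where $N$ and $\theta$ upper-bound the number of vortices and their depth, respectively, through $f^{4}_{\ref{@duplicating}}$ and $f^{5}_{\ref{@duplicating}}$). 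If \cref{@duplicating} returns a Dyck inflated copy we are done; otherwise we obtain a triple $(A_{0},\delta_{0},D_{0})$ with at most $N$ vortices surrounded by nests of a sufficiently large order $q$, and with $\mathcal{T}_{D_{0}}$ a truncation of $\mathcal{T}_{W}$, hence of $\mathcal{T}_{S}$, so that $(A_{0},\delta_{0},D_{0})$ is $\alpha$-anchored at $S$ by construction.

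The heart of the proof is the main loop that processes the vortices one by one; because the nests supplied by \cref{@duplicating} are pairwise disjoint, their treatments do not interfere. For a vortex $c$, I would iteratively tighten its nest via \cref{@pessimistic}: every non-terminating call strictly decreases the nest-vector, so this halts in polynomially many steps. If the third outcome of \cref{@pessimistic} is ever reached, the resulting large transaction through the vortex interior combines with the surviving portion of the nest to build a shallow-vortex grid of order $3h^{\star}k$ by the cross-collection argument appearing in the proof of \cref{@rockefeller}; via \cref{@childishness} this delivers the required $\Zcal$-packing. Once the nest is tight, \cref{@rockefeller} with target $t_{\mathrm{sv}}\coloneqq 3h^{\star}k$ either yields a shallow-vortex grid minor of that order (again finishing via \cref{@childishness}), or produces a separation $(X,Y)$ of order at most $12\theta(t_{\mathrm{sv}}-1)$ whose inner side admits a vortex-free rendition. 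We then carve the schema at $(X,Y)$, absorbing $X\cap Y$ into the apex set and selecting a $(\Sigma;d^{\ast})$-Dyck subwall $D^{\ast}$ of the residual Dyck wall (where $d^{\ast}$ drops by at most $|X\cap Y|$); \cref{@repetitive} certifies that $\alpha$-anchoring at $S$ persists. The inflation $d'=d+N\cdot 12\theta(t_{\mathrm{sv}}-1)$ ensures that, after all at most $N$ carvings, the final Dyck wall still has order $\geq d$, the apex set has size at most $|A_{0}|+N\cdot 12\theta(t_{\mathrm{sv}}-1)$, and the schema is vortex-free, yielding the third outcome.

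The principal obstacle is parameter bookkeeping: the nest order $q$ coming out of \cref{@duplicating}, the vortex-depth bound $\theta$, and the inflated Dyck-wall order $d'$ depend on one another through $q\geq 2^{\Theta(\gamma_{\Zcal})}(t+d')$, $\theta=2^{\poly(2^{\Theta(\gamma_{\Zcal})}t)}q$, and $d'=d+N\cdot 12\theta(t_{\mathrm{sv}}-1)$; these constraints must be solved consistently so that both \cref{@pessimistic} and \cref{@rockefeller} remain well-posed throughout the main loop. Chasing the dependencies gives $f^{1}_{\ref{@enlightened}}=2^{\poly(t\cdot 2^{\mathcal{O}(\gamma_{\Zcal})})}d^{20}k^{40}\poly(h_{\Zcal})$ (the $k^{40}$ arising from the $k^{20}$ exponent of \cref{thm_algogrid} applied to a wall itself polynomial in $k$) and $f^{2}_{\ref{@enlightened}}=2^{\poly(t\cdot 2^{\mathcal{O}(\gamma_{\Zcal})})}dk^{3}\poly(h_{\Zcal})$. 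A secondary subtlety is to verify that the third outcome of \cref{@pessimistic} really produces a shallow-vortex grid of the target order; this is a direct adaptation of the argument inside the proof of \cref{@rockefeller}. The running time aggregates the $\mathcal{O}(|V(G)|^{3}\log|V(G)|)$ cost of \cref{thm_algogrid} with the $\mathcal{O}(|V(G)|^{2})$ per-call costs of \cref{@duplicating}, \cref{@pessimistic}, and \cref{@rockefeller}, weighted by the exponential parametric factor listed in the statement.
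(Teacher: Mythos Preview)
Your overall strategy coincides with the paper's: density check via \cref{@enlighlermicut}, wall-finding via \cref{thm_algogrid}, then \cref{@duplicating} followed by a tighten-then-kill loop (\cref{@pessimistic}, \cref{@rockefeller}) on each vortex, extracting a mixed $\Zcal$-packing from any large shallow-vortex grid through \cref{@childishness}. The parameter calculus you sketch is also essentially the paper's.

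There is, however, a genuine gap in the anchoring step. You assert that $(A_0,\delta_0,D_0)$ is $\alpha$-anchored at $S$ ``by construction'' because $\mathcal{T}_{D_0}$ is a truncation of $\mathcal{T}_W\subseteq\mathcal{T}_S$. But anchoring concerns the tangle $\mathcal{T}^{A_0}_{D_0}$ of $G-A_0$: a separation $(B_1,B_2)\in\mathcal{T}^{A_0}_{D_0}$ lifts to $(B_1\cup A_0,\,A_0\cup B_2)$ in $G$, of order up to $d'+|A_0|-1$, which exceeds the order $d'$ of $\mathcal{T}_{D_0}$; the inclusion $\mathcal{T}_{D_0}\subseteq\mathcal{T}_S$ therefore does not orient it. Inflating $d'$ does not help, since the natural witness for the orientation in $G$---the original wall $W$---may meet $A_0$. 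The paper closes this hole by also requesting from \cref{@duplicating} the auxiliary $r$-subwall $W'\subseteq W$ with $r>d+|A_1|$, grounded in $\delta_0$ and disjoint from $D$ and from all vortex interiors (hence from $A_1$ after vortex-killing). Then every lifted separation has order $<r$ and, because $W'\subseteq G-A_1$, it is correctly oriented by $\mathcal{T}_{W'}\subseteq\mathcal{T}_S$. You omit the $r$-parameter of \cref{@duplicating} entirely; without $W'$ the initial anchoring is unproven, and \cref{@repetitive} cannot be invoked for the subsequent carvings.

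A smaller point: your handling of the third outcome of \cref{@pessimistic} (``combine the invading transaction with the nest to build a shallow-vortex grid via the cross-collection argument of \cref{@rockefeller}'') is not substantiated---paths through the vortex interior are not crosses, and the mechanism of \cref{@rockefeller} does not apply to them as stated. In the paper's application this branch is simply not needed: the vortices produced by \cref{@duplicating} already have bounded depth, which is precisely the hypothesis of the original two-outcome tightening lemma (Lemma~33 of \cite{thilikos2022killing}) from which \cref{@pessimistic} is extracted; the paper just iterates until the nest is tight without discussing the third outcome.
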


\begin{proof}
We begin by calling \cref{@enlighlermicut} to check whether $G$ contains a clique minor of size $\max\{ kh_{\Zcal},2t(\gamma_{\Zcal}+2)\}.$
If we find such a clique minor, then we have also found $\mathcal{D}_t^{Σ'}$ for any $Σ'\in\mathsf{sobs}(\mathbb{S}_{\Zcal})$ and $kH$ for every $H\in\Zcal $ as a minor in $G$ and can output either of those.
Otherwise, the algorithm will conclude that $|E(G)|\leq 2^{\max\{kh_{\Zcal},4t(\gamma_{\Zcal}+2)\}}|V(G)|.$
Moreover, this algorithm runs in time $2^{\mathcal{O}(\max\{kh_{\Zcal},4t(\gamma_{\Zcal}+2)\})}|V(G)|^2.$

Hence, from now on we may assume that $G$ only has a linear number of edges with respect to $\Zcal $ and $t.$

Next we discuss the functions $f^1_{\ref{@enlightened}}$ and $f^2_{\ref{@enlightened}}.$

Our goal is to find a $k$-packing of some shallow-vortex minor $H\in\Zcal $ whenever we find a shallow-vortex grid of large enough order as a minor.
Let $h^*$ be the minimum order of a shallow-vortex grid to host every shallow-vortex minor on $h_{\Zcal}$ vertices as a minor.
By \cref{@childishness} we need a shallow-vortex grid of order $3kh^*$ to find $k$ pairwise disjoint shallow-vortex grids of order $h^*.$
By using standard graph drawing techniques (as one can see, for example, in Lemma 16 of \cite{BasteST19Hitting}) one can observe that there exists a universal constant $c_1$ such that $h^*\leq c_1h_{\Zcal}^2.$
it follows that we are satisfied whenever we find a shallow-vortex grid of order
$$a\coloneqq 3c_1kh_{\Zcal}^2.$$

Notice that, in order to apply \cref{@rockefeller} and ensuring that we find our shallow-vortex grid of order $a$ we must require that the nests of our vortices have order at least $12a^2+c_2$ where $c_2$ is the constant from \cref{@rockefeller}.
Hence, by including the requirements from \cref{@duplicating} we obtain the following requirement on the size of the nests of our vortices:
$$b\coloneqq 2^{\mathcal{O}(\gamma_{\Zcal})}t+12\cdot9c_1^2k^2h_{\Zcal}^4 +c_2.$$
Notice that \cref{@duplicating} requires as input a wall of order at least
$$2^{\mathsf{poly}(t\cdot 2^{\mathcal{O}(\gamma_{\Zcal})})}\cdot b+ \mathsf{poly}(t\cdot 2^{\mathcal{O}(\gamma_{\Zcal})})r$$
to guarantee a nest of size $b.$
However, we will choose a much larger wall to also be able to accommodate the Dyck wall of order $d$ as required and to ensure that we guarantee that the $Σ$-schema we are constructing will be $\alpha$-anchored at $S.$

To be exact, by choosing $c_3$ to be the first constant from \cref{thm_algogrid}, that we may set $f^1_{\ref{@enlightened}}$ to be as follows.
\begin{align*}
  f^1_{\ref{@enlightened}}(\gamma_{\Zcal},h_{\Zcal},t,d,k)&\coloneqq 36c_3(2^{\mathsf{poly}(t\cdot 2^{\mathcal{O}(\gamma_{\Zcal})})}\cdot(2^{\mathcal{O}(\gamma_{\Zcal})}(t+d)+12\cdot 9c_1^2k^2h_{\Zcal}^4+c_2))^{20}+3+ \mathsf{poly}(t\cdot 2^{\mathcal{O}(\gamma_{\Zcal})})r\\
  & = 2^{\mathsf{poly}(t\cdot 2^{\mathcal{O}(\gamma_{\Zcal})})}d^{20}k^{40}\mathsf{poly}(h_{\Zcal})+ \mathsf{poly}(t\cdot 2^{\mathcal{O}(\gamma_{\Zcal})})r.
\end{align*}
We will determine the value for $r$ later.

We now prepare for calling the algorithm from \cref{@duplicating}.
To do this, let us fix the necessary numbers as follows (to not confuse the letters used until now but to ensure better readability, we take the letters from \cref{@duplicating} and augment them with a ``\ $'$\ '')
\begin{align*}
t' &\coloneqq t,\\
d' &\coloneqq d,\\
q' &\coloneqq 2^{\mathcal{O}(\gamma_{\Zcal})}(t+d)+12\cdot9c_1^2k^2h_{\Zcal}^4 +c_2\text{, and}\\
r' &\coloneqq r\coloneqq f^3_{\ref{@duplicating}}(\gamma_{\Zcal},t',q')+f^4_{\ref{@duplicating}}(\gamma_{\Zcal},t',q')\cdot f^5_{\ref{@duplicating}}(\gamma_{\Zcal},t',q')\cdot a+d+1\\
& = 2^{\mathsf{poly}(t\cdot 2^{\mathcal{O}(\gamma_{\Zcal})})}dk^3\mathsf{poly}(h_{\Zcal})
\end{align*}
It follows from our choice of $r$ that indeed
\begin{align*}
  f^1_{\ref{@enlightened}}(\gamma_{\Zcal},h_{\Zcal},t,d,k) = 2^{\mathsf{poly}(t\cdot 2^{\mathcal{O}(\gamma_{\Zcal})})}d^{20}k^{40}\mathsf{poly}(h_{\Zcal})
\end{align*}
as required.

This ensures that \cref{thm_algogrid} will find a wall $W$ of the required size, such that $\mathcal{T}_W$ is a truncation of $\mathcal{T}_S,$ in time $$ 2^{\mathsf{poly}(2^{\mathsf{poly(t\cdot 2^{\gamma_{\Zcal}})}}d\cdot k\cdot h_{\Zcal})}|V(G)|^3\log(|V(G)|). $$
Notice that we may replace the dependency of the running time of the algorithm from \cref{thm_algogrid} on $|E(G)|$ by $|V(G)|$ since we have determined, for this stage, that $G$ does not contain a large clique minor.
\medskip

With the wall $W$ at hand, we may now forward everything to the algorithm of \cref{@duplicating} together with the integers $t',$ $d',$ $q',$ and $r'.$
\cref{@duplicating} has a two different possible outcomes.
\begin{itemize}
  \item it might produce an inflated copy of the Dyck grid $D^{Σ'}_{t'}$ for some $Σ'\in\mathsf{sobs}(\mathbb{S}_{\Zcal})$ in which case we are done, or
  \item it outputs 
  \begin{itemize}
    \item an apex set $A_0$ of size at most $f^3_{\ref{@duplicating}}(\gamma_{\Zcal},t',q'),$
    \item a $Σ$-decomposition $δ_0$ of $G-A_0$ for some $Σ\in\mathbb{S}_{\Zcal}$ of depth at most $f^5_{\ref{@duplicating}}(\gamma_{\Zcal},t',q'),$ and breadth at most $f^4_{\ref{@duplicating}}(\gamma_{\Zcal},t')$ such that every vortex has a private nest of order $q',$
    \item a $(Σ;d')$-Dyck wall $D$ which is disjoint from the interiors of the societies of the nests of all vortices, grounded in $δ_0,$ and for which $\mathcal{T}_D$ is a truncation of $\mathcal{T}_W,$ and
    \item an $r$-subwall $W'\subseteq W$ which is vertex-disjoint from the interiors of the societies of the nests of the vortices, grounded in $δ_0,$ vertex-disjoint from $D,$ and for which $\mathcal{T}_{W'}$ is a truncation of $\mathcal{T}_W.$
  \end{itemize}
\end{itemize}
All of this happens in time $q'^22^{\mathsf{poly(t\cdot 2^{\mathcal{O}(\gamma_{\Zcal})})}}|V(G)|^2.$
By our choice of $q'$ we may now apply, for each vortex withing the cylindrical rendition of its nest, first at most $|V(G)|$ times the algorithm from \cref{@pessimistic} to produce a tight nest.
In total this takes at most $\mathcal{O}(|V(G)|^3)$ time (surpressing the parameters).
We may than call \cref{@rockefeller} to either find a shallow-vortex grid of order $a$ as a minor, which allows us to output a minor model of $kH$ for some shallow-vortex minor $H\in\Zcal $ by choice of $a,$ or we move at most $f^5_{\ref{@duplicating}}(\gamma_{\Zcal},t',q')\cdot a$ many vertices to the apex set in order to fully remove the vortex from $δ_0.$
As this happens at most $f^4_{\ref{@duplicating}}(\gamma_{\Zcal},t',q')$ many times, if we do not terminate, we end up with a new apex set $A_1$ of size at most
\begin{align*}
& |A_0|+f^4_{\ref{@duplicating}}(\gamma_{\Zcal},t',q')\cdot f^5_{\ref{@duplicating}}(\gamma_{\Zcal},t',q')\cdot a\\
 <~&r\\
=~ &2^{\mathsf{poly}(t\cdot 2^{\mathcal{O}(\gamma_{\Zcal})})}dk^3\mathsf{poly}(h_{\Zcal}),
\end{align*}
and a vortex-free $Σ$-decomposition $δ_1$ which is a carving of $δ_0,$ and in which $D$ still is grounded.
We set
\begin{align*}
  f^2_{\ref{@enlightened}}(\gamma_{\Zcal},h_{\Zcal},t,d,k) &\coloneqq 2^{\mathsf{poly}(t\cdot 2^{\mathcal{O}(\gamma_{\Zcal})})}dk^3\mathsf{poly}(h_{\Zcal}).
\end{align*}

To complete the proof we have to show that $(A_1,δ_1,D)$ is $\alpha$-anchored at $S.$
First observe that $r>d$ and, since both $\mathcal{T}_D$ and $\mathcal{T}_{W'}$ are truncations of $\mathcal{T}_W,$ it follows that $\mathcal{T}_D$ is a truncation of $\mathcal{T}_{W'}.$

Now let $\mathcal{T}_D^{A_1}$ be the tangle of $G-A_1$ induced by $D$ and let $(B_1,B_2)\in\mathcal{T}^{A_1}_D$ be any separation.
Since $r>|A_1|+d$ by our choice of $r',$ it follows that $(B_1\cup A_1,A_1\cup B_2)\in\mathcal{T}_{W'}.$
As $\mathcal{T}_{W'}\subseteq\mathcal{T}_S$ we obtain that $|(B_1\cup A_1)\cap S|<(1-\alpha)|S|$ from the definition of $\mathcal{T}_S.$
Since our choice of $(B_1,B_2)$ was arbitrary, this completes the proof.
\end{proof}

\subsection{Finding a flat vortex collection}
\label{@abstractness}

In this subsection, we show how given a $Σ$-schema $(A, δ, D)$ of a graph $G$ in a surface $\Sigma,$ we can group all $\Zcal$-red cells of $\delta$ in a bounded number of areas, each with their own sufficiently large enough railed nest around, that will be used later in the proof to simulate models intersecting the $\Zcal$-red cells, within the area bounded by the railed nest.
This is made possible by exploiting the large enough Dyck-wall $D$ that is grounded in $\delta,$ guaranteed to be contained in $G - A,$ by \cref{@enlightened}.

\paragraph{Red bricks.} We first demonstrate how from $\Zcal$-red cells of $δ,$ we define $\Zcal$-red bricks of $D.$
By definition, all cycles of $D$ are grounded in $δ.$ 
Let $C$ be a cycle of $D.$
We call $C$ \emph{contractible} if the trace of $C$ bounds a disk in $Σ.$
We denote this disk by $Δ_{C}.$ 
We define the \emph{influence} of $C,$ denoted by $\mathsf{influence}_{δ}(C)$ as the set of cells of $δ$ that contain at least one edge of $C$ and those that are subsets of $Δ_{C}.$ 
We say that a contractible cycle $C$ of $D$ is $\Zcal$-\emph{red} if at least one of the cells in $\mathsf{influence}_{δ}(C)$ is $\Zcal$-red.

\paragraph{Layers of a wall and central walls/bricks.} We proceed with the definition of central walls/bricks, a concept central to the proofs of this subsection.
Let $r \in \Nbbb^{\mathsf{even}}.$ The \emph{layers} of an $r$-wall $W$ are recursively defined as follows.
The first layer of $W$ is its perimeter.
For $i = 2, \ldots, r/2,$ the $i$-th layer of $W$ is the $(i-1)$-th layer of the subwall $W'$ obtained from $W$ after removing from $W$ its perimeter and removing recursively all occurring vertices of degree one.
Note that the $r/2$-th layer is a brick.
We call this brick the \emph{central brick} of $W.$

Let $W$ be an $r$-subwall of $D.$
Given $\ell \in \Nbbb,$ we say that $W$ is \emph{$\ell$-central} if there is a $2(\ell + 2) + r$-wall $W'$ 
that is a subwall of $D$ such that the perimeter of $W$ is the $\ell + 2$-th layer of $W'.$
In the case where $B$ is a brick of $D,$ we say that $B$ is $\ell$-central if $B$ is the central brick of a $2(\ell + 2)$-wall $W''.$
We call ($W'$) (resp. $W''$) the \emph{$\ell$-region} of $W$ (resp. $B$).

\medskip
For the proofs that follow, we moreover require the following folklore result.

\begin{proposition}\label{@instructive} 
There exists an algorithm that, given $k \in \Nbbb$ and a graph $G$ such that $Δ(G) \leq d$ for some non-negative integer $d,$ outputs either an independent set of size at least $k$ or a dominating set of size at most $k-1,$ in which case $|V(G)| \leq (k-1)d.$
Moreover the previous algorithm runs in time $\Ocal(kd).$
\end{proposition}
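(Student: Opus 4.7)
The plan is a standard greedy argument. I would maintain two sets throughout the procedure: a growing independent set $I$, initially empty, and a set $U$ of vertices not yet dominated by $I$, initially $U = V(G)$. At each step, while $U \neq \emptyset$ and $|I| < k$, pick an arbitrary vertex $v \in U$, add $v$ to $I$, and remove from $U$ the closed neighbourhood $N_G[v] = \{v\} \cup N_G(v)$. The invariant that $I$ is an independent set is preserved: whenever a vertex $v$ is added to $I$, every neighbour of $v$ is simultaneously evicted from $U$, so no vertex added later can be adjacent to $v$.

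When the loop terminates there are two cases. If $|I| = k$, then $I$ is the desired independent set of size at least $k$, and we output it. Otherwise the loop must have terminated because $U = \emptyset$, and at this point $|I| \leq k-1$. By construction, a vertex $u$ is removed from $U$ only when some element of its closed neighbourhood is placed in $I$, so each $u \in V(G)$ satisfies $u \in N_G[w]$ for some $w \in I$; that is, $I$ is a dominating set of $G$. Since $\Delta(G) \leq d$, each $w \in I$ contributes at most $d+1$ vertices to $\bigcup_{w \in I} N_G[w] = V(G)$, yielding $|V(G)| \leq |I|(d+1) \leq (k-1)(d+1)$, which is the claimed linear bound on $|V(G)|$ in $k$ and $d$ (the stated form $\Ocal((k-1)d)$ absorbs the additive constant).

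For the running time, each iteration of the loop does $\Ocal(d)$ work: reading a vertex from $U$, scanning its at most $d$ neighbours, and deleting them from $U$ (using any standard dictionary data structure supporting constant-time insertion, deletion, and membership). The loop executes at most $k$ times, since either $|I|$ reaches $k$ or we exhaust $V(G)$, whose size we just bounded by $(k-1)(d+1)$. Hence the total running time is $\Ocal(kd)$.

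There is no real obstacle here: the result is folklore and the argument is a one-line greedy procedure. The only subtlety worth explicitly recording is that removing \emph{all} of $N_G[v]$ (rather than only $v$) from $U$ at each step is what simultaneously guarantees (i) that $I$ remains independent and (ii) that $I$ is dominating upon termination; and that the degree bound $\Delta(G) \leq d$ is used exactly once, to convert ``$I$ is a small dominating set'' into the linear size bound on $V(G)$.
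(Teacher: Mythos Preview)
The paper does not supply a proof for this proposition; it is stated as a folklore result and used as a black box. Your greedy argument is exactly the standard proof one would expect, and it is correct.

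One small point: your bound $|V(G)|\leq (k-1)(d+1)$ is the correct one, whereas the paper's statement literally says $(k-1)d$. You note this and wave it away as being absorbed by an $\Ocal(\cdot)$, but the statement as written is not asymptotic. This is almost certainly a minor imprecision in the paper rather than a flaw in your argument; indeed, in the only place the proposition is invoked (the proof of \cref{@institutions}), the quantity playing the role of $d$ is already an upper bound on the closed-neighbourhood size rather than the degree, so the off-by-one is harmless there. It would be cleaner simply to say that the bound you obtain is $(k-1)(d+1)$ and that the paper's stated $(k-1)d$ appears to be a slip.
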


The following lemma shows that in a sufficiently large Dyck wall, we either find $k$ distinct $\ell$-central bricks where the areas around them defined by their $\ell$-regions are pairwise disjoint, or we get an upper bound on the total number of these bricks.

\begin{lemma}\label{@institutions} 
There exist functions $f^{1}_{\ref{@institutions}} : \Nbbb^{2} \to \Nbbb,$ $f^{2}_{\ref{@institutions}} : \Nbbb^{2} \to \Nbbb$ and an algorithm that, given
\begin{itemize}
\item $k, \ell \in \Nbbb,$
\item a $(Σ; q)$-Dyck wall $D,$ where $d \geq f^{1}_{\ref{@institutions}}(k, \ell),$ and
\item a set $\Bcal$ of $\ell$-central bricks of $D,$
\end{itemize}
outputs one of the following:
\begin{itemize}
 \item a set $\{ B_{1}, \ldots, B_{k} \} \subseteq \Bcal,$ where for $i \neq j \in [k],$ $B_{i}$ and $B_{j}$ have disjoint $\ell$-regions, or
 \item that $|\Bcal| \leq f^{2}_{\ref{@institutions}}(k, \ell).$
\end{itemize} 
Moreover the previous algorithm runs in time $\poly(k,\ell)$ and the functions $f^{1}_{\ref{@institutions}}$ and $f^{2}_{\ref{@institutions}}$ are of the following orders:
\begin{align*}
	&f^{1}_{\ref{@institutions}}(k, \ell) = \Ocal(k  \ell)\text{, and}\\
	&f^{2}_{\ref{@institutions}}(k, \ell) = \Ocal(k  \ell^{2}).
\end{align*}
\end{lemma}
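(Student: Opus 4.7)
\medskip
\noindent\textbf{Proof plan.}
The plan is to reduce the statement to a direct application of \cref{@instructive} on a suitably defined \emph{conflict graph}. Let $H_{\Bcal}$ be the graph with vertex set $\Bcal$ in which two $\ell$-central bricks $B,B' \in \Bcal$ are adjacent whenever their $\ell$-regions share at least one vertex. An independent set in $H_{\Bcal}$ is exactly a subset of $\Bcal$ whose members have pairwise disjoint $\ell$-regions, so it suffices to bound the maximum degree of $H_{\Bcal}$ by a function of $\ell$ alone and then invoke \cref{@instructive} with parameter $k$.

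The core step is therefore a geometric packing bound: for every $B \in \Bcal$, the number of bricks $B' \in \Bcal$ whose $\ell$-region intersects that of $B$ is $\Ocal(\ell^2)$. To see this, recall that the $\ell$-region of a brick is a $2(\ell+2)$-wall of $D$, and so contains only $\Ocal(\ell^2)$ bricks of $D$. Moreover, if the $\ell$-regions of $B$ and $B'$ share a brick, then the center $B'$ must lie within a $(4\ell+8)$-wall around $B$ (two $\ell$-regions ``back to back''), and this bigger wall also contains only $\Ocal(\ell^2)$ bricks of $D$. Since the $\ell$-region of a brick determines that brick, we get the desired degree bound $d=\Ocal(\ell^2)$. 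The lower bound $f^{1}_{\ref{@institutions}}(k,\ell)=\Ocal(k\ell)$ on the order of $D$ simply ensures that $D$ is wide enough that $\ell$-central bricks and their $\ell$-regions are well-defined (i.e.\ that $D$ contains a $2(\ell+2)$-subwall in the first place) and that there is geometric room in $D$ to potentially place $k$ pairwise disjoint $\ell$-regions before the bound becomes vacuous.

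Now we run the algorithm of \cref{@instructive} on $H_{\Bcal}$ with parameter $k$ and degree bound $d=\Ocal(\ell^2)$. In the first outcome we obtain an independent set $\{B_1,\dots,B_k\}$ of $H_{\Bcal}$, which by construction consists of $k$ bricks of $\Bcal$ with pairwise disjoint $\ell$-regions; this gives the first alternative of the lemma. In the second outcome we obtain a dominating set of $H_{\Bcal}$ of size at most $k-1$, from which \cref{@instructive} immediately yields $|\Bcal| \leq (k-1)\cdot d = \Ocal(k\,\ell^{2})$, matching $f^{2}_{\ref{@institutions}}(k,\ell)$.

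For the running time, the conflict graph $H_{\Bcal}$ can be built in time $\poly(k,\ell)$: each vertex has degree $\Ocal(\ell^2)$ and the $\Ocal(\ell^2)$ candidate neighbours of a fixed brick can be enumerated by listing the bricks contained in a $(4\ell+8)$-wall around it. The call to \cref{@instructive} then runs in time $\Ocal(k\,d)=\Ocal(k\,\ell^{2})$. Overall the algorithm runs in $\poly(k,\ell)$ time, as required. The main conceptual obstacle is pinning down the degree bound $\Ocal(\ell^2)$ cleanly; once that is in place, the rest is an immediate invocation of the auxiliary proposition.
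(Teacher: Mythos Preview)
Your proposal is correct and follows essentially the same approach as the paper: build the conflict graph on $\Bcal$ where two bricks are adjacent iff their $\ell$-regions intersect, bound its maximum degree by $\Ocal(\ell^{2})$ via the observation that any conflicting brick must lie inside a wall of radius roughly $2(\ell+2)$ centered at the given brick, and then invoke \cref{@instructive}. The paper's proof is identical in structure, only with explicit constants (it takes $r=(2(2(\ell+2)+1)-1)^{2}$ for the degree bound and $f^{1}_{\ref{@institutions}}(k,\ell)=k(2(2(\ell+2)+1))$).
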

\begin{proof} We define $f^{1}_{\ref{@institutions}}(k, \ell) = k(2(2(\ell + 2) + 1))$ and $f^{2}_{\ref{@institutions}}(k, \ell) = (k - 1)(2(2(\ell + 2) + 1) - 1)^{2}.$

From $D$ we define an auxiliary graph $G$ as follows. 
Each vertex $x$ of $G$ corresponds to a brick $B_{x} \in \Bcal.$ 
Moreover, for two vertices $x$ and $y$ of $G,$ $x$ and $y$ are adjacent if and only if the $\ell$-regions of $B_{x}$ and $B_{y}$ intersect. 
We claim that $Δ(G) \leq r,$ for some $r \in \Nbbb.$ 

Indeed, let $x$ be a vertex of $G$ and $B_{x}$ be the corresponding brick in $\Bcal.$
Let $W_{x}$ be the $2(\ell + 2)$-wall in $D$ that is the $\ell$-region of $B_{x}.$
Moreover, assuming that $W_{x}$ is also $\ell$-central, let $W_{x}'$ be the $2(\ell + 2) + 2(\ell + 2)$-wall in $D$ that is the $\ell$-region of $W_{x}.$
Observe then, that for any brick $B$ of $D,$ that is $\ell$-central and does not intersect $W_{x}',$ $W_{x}$ and the $\ell$-region of $B$ are disjoint.
Since, we require that $B$ does not intersect $W_{x}',$ we need to remove an additional layer outside of $W_{x}'.$
Then, by definition of $G,$ $x$ is adjacent only to vertices in $G$ that correspond to bricks of the $2(\ell + 2) + 2(\ell + 2) + 2$-wall that has $B_{x}$ as its central brick.
Since the number of bricks of this wall are $(2(2(\ell + 2) + 1) - 1)^{2},$ we conclude that $r = (2(2(\ell + 2) + 1) - 1)^{2}.$

We continue by applying \cref{@instructive} on $G.$
Assume that $\{ x_{1}, \ldots, x_{k} \}$ is an independent set of $G.$ 
Then, by definition of $G,$ for the set of bricks $\{ B_{x_{1}}, \ldots, B_{x_{k}} \}$ we have that for $i \neq j \in [k],$ $B_{x_{i}}$ and $B_{x_{j}}$ have disjoint $\ell$-regions.
Otherwise, $|V(G)| \leq (k - 1) \cdot r,$ which implies that $|\Bcal| \leq (k - 1)(2(2(\ell + 2) + 1) - 1)^{2}$.
\end{proof}

\paragraph{Simple (resp. Exceptional) layers of a Dyck-wall.} We continue with the definition of the simple (resp. exceptional) layers of a Dyck-wall, that will be of use in the proof of \cref{@custodians}.
Let $D$ be a $(Σ; q)$-Dyck wall, for some surface $Σ.$ 
Observe that in the graph $D - \exceptional(D),$ after the removal of all vertices of degree one, there exists a unique cycle that does not bound a face in $D.$
In a slight abuse of terminology, we consider this cycle to be the exceptional face of that graph.
We define the \emph{exceptional layers} of $D$ as follows. 
The first exceptional layer of $D$ is $\exceptional(D).$
For $i \in [2, q],$ the $i$-th exceptional layer of $D$ is defined recursively as the $(i - 1)$-th exceptional layer of the graph obtained from $D$ by removing $\exceptional(D)$ and afterwards, removing all vertices of degree one.
Observe that the maximum cycle of $D$ that the $i$-th exceptional layer intersects is $C_{i}.$
Moreover, we define the \emph{simple layers} of $D$ as follows.
For $i \in [1, q]$ the $i$-th simple layer of $D$ corresponds to the cycle $C_{q+1 - i}.$

\medskip
The following lemma shows that in sufficiently large Dyck-wall with a set of marked bricks $\Bcal,$ we either find $k$ distinct $\ell$-central bricks of $\Bcal$ where the areas around them defined by their $\ell$-regions are pairwise disjoint, or we find a collection of at most $k-1$ $\ell$-central subwalls, $\ell$ consecutive simple layers and $\ell$ consecutive exceptional layers, with sufficiently large areas around them that are pairwise disjoint, which collection contains all bricks of $\Bcal.$

\begin{lemma}\label{@custodians}
There exists a function $f_{\ref{@custodians}} : \Nbbb^{2} \to \Nbbb$ and an algorithm than, given
\begin{itemize}
\item $k, \ell \in \Nbbb,$
\item a $(Σ; d)$-Dyck wall $D,$ where $d \geq f_{\ref{@custodians}}(k, \ell),$ and
\item a set of bricks $\Bcal$ of $D,$
\end{itemize}
outputs one of the following:
\begin{itemize}
\item a set $\{ B_{1}, \ldots, B_{k} \} \subseteq \Bcal$ of $\ell$-central bricks of $D,$ where for $i \neq j \in [k],$ $B_{i}$ and $B_{j}$ have disjoint $\ell$-regions, or
\item a collection $\mathcal{W} = \{ W_{i} \mid i \in [k'] \},$ $k' \leq k - 1$ of $\ell$-central walls of $D,$ 
a collection $\mathcal{S} = \{ S_{1}, \ldots, S_{\ell} \}$ of $\ell$ consecutive simple layers, and
a collection $\mathcal{E} = \{ E_{1}, \ldots, E_{\ell} \}$ of $\ell$ consecutive exceptional layers, such that,
\begin{enumerate}
\item the disks bounded by the perimeters of the $\ell$-regions of all walls in $\mathcal{W},$ $Δ_{S_{\ell}}$ and $Δ_{E_{\ell}}$ are pairwise disjoint, and
\item for every brick $B \in \Bcal,$ either there is $i \in [k']$ such that $B$ is a brick of $W_{i},$ or the vertices of $B$ are drawn in the interior of $Δ_{S_{\ell}}$ or in the interior of $Δ_{E_{\ell}}.$
\end{enumerate}
\end{itemize}
Moreover the previous algorithm runs in time $\poly(k,\ell)$ and the function $f_{\ref{@custodians}}$ is of order $f_{\ref{@custodians}}(k, \ell) = \Ocal(k^{2}\ell^{5}).$
\end{lemma}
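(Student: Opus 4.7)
The plan is to first separate the bricks of $\Bcal$ that lie close to the simple and exceptional faces of $D$ (which will be absorbed by $\Delta_{S_\ell}$ and $\Delta_{E_\ell}$) from the ``bulk'' bricks that are $\ell$-central, and then to apply a packing-vs-covering dichotomy to the bulk. Directly from the definition, a brick $B$ of $D$ fails to be $\ell$-central iff $B$ lies at brick-distance at most $\ell+2$ from $\simple(D) \cup \exceptional(D),$ in which case $B$ is drawn in the interior of $\Delta_{S_{\ell+2}} \cup \Delta_{E_{\ell+2}}.$ By inflating $\ell$ in the definition of $f_{\ref{@custodians}}$ by an additive constant we may assume every non-$\ell$-central brick of $\Bcal$ is already drawn in $\Delta_{S_\ell} \cup \Delta_{E_\ell},$ and so is automatically captured by the strips of simple and exceptional layers in the second output. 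Let $\Bcal^{\star} \subseteq \Bcal$ be the set of remaining $\ell$-central bricks.

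We then invoke the algorithm of \cref{@institutions} on $\Bcal^{\star}$ with parameters $k$ and $\ell$: if it returns $k$ bricks of $\Bcal^{\star}$ with pairwise disjoint $\ell$-regions, we output them as the first outcome of \cref{@custodians}. Otherwise $|\Bcal^{\star}| \le f^2_{\ref{@institutions}}(k, \ell) = \Ocal(k\ell^2),$ so $\Bcal^{\star}$ is small. To cover $\Bcal^{\star}$ by at most $k-1$ walls we form a coarser conflict graph $G^+$ on $\Bcal^{\star},$ where two bricks are joined iff their brick-distance in $D$ is at most a threshold $R = \Theta(k\ell^2),$ which is chosen so that any two bricks at distance greater than $R$ admit enclosing $\ell$-central subwalls of $D$ with pairwise disjoint $\ell$-regions, also disjoint from $\Delta_{S_\ell}, \Delta_{E_\ell}.$ Applying \cref{@instructive} to $G^+$ with parameter $k$ returns either an independent set of size $k,$ which gives the first outcome immediately since $R\gg\ell,$ or a dominating set of size $k' \le k-1.$ In the latter case we greedily merge dominators whose $R$-balls overlap, obtaining $k'' \le k-1$ pairwise well-separated super-clusters of $\Bcal^{\star},$ each of brick-diameter $\Ocal(Rk);$ for each super-cluster we pick an enclosing $\ell$-central subwall $W_i$ of $D$ of order $\Ocal(Rk)$ as a wall of our output.

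The principal obstacle is to guarantee simultaneously (i) that each $W_i$ is $\ell$-central in $D,$ which requires a buffer of $\ell+2$ layers of $D$ around it, (ii) that the $\ell$-regions of $W_1, \dots, W_{k''}$ are pairwise disjoint, and (iii) that all of them are disjoint from $\Delta_{S_\ell}$ and $\Delta_{E_\ell}.$ All three conditions reduce to requiring $D$ to be sufficiently large; a direct accounting using $|\Bcal^{\star}| = \Ocal(k\ell^2),$ the bound $\Ocal(k^2\ell^2)$ on the order of each $W_i,$ and the presence of at most $k-1$ walls plus two strips of $\ell$ boundary layers, yields the stated bound $f_{\ref{@custodians}}(k, \ell) = \Ocal(k^2\ell^5).$ Since the whole procedure consists of a constant number of calls to the polynomial-time algorithms of \cref{@institutions} and \cref{@instructive} together with elementary manipulations on $D$ and $\Bcal,$ its total running time is $\poly(k, \ell).$
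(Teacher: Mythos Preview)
Your overall strategy—absorb non-$\ell$-central bricks into the strips $\Delta_{S_\ell}, \Delta_{E_\ell}$ and cluster the remaining $\ell$-central bricks into at most $k-1$ walls—is the right one, and your two-level clustering (a second call to \cref{@instructive} on a coarse conflict graph, then merging dominators) is a legitimate alternative to the paper's direct ``merge overlapping $\ell$-regions'' loop. The gap is in your treatment of conditions (i) and (iii): your claim that ``all three conditions reduce to requiring $D$ to be sufficiently large'' is false. Take a brick $B\in\Bcal^{\star}$ that is $\ell$-central but only barely so, say at distance exactly $\ell+3$ from $\simple(D)$. This brick lands in some super-cluster, whose enclosing wall $W_i$ has order $\Theta(k^2\ell^2)\gg\ell$; any subwall of $D$ of that order containing $B$ must reach to within $O(1)$ of the simple face, so $W_i$ cannot be $\ell$-central, and its $\ell$-region necessarily meets $\Delta_{S_\ell}$ for every choice of $\Scal$ that also captures the non-$\ell$-central bricks. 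Enlarging $D$ does not move $B$, and ``inflating $\ell$ by an additive constant'' only shifts the same obstruction to depth $\ell'+3$.

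The missing ingredient is a pigeonhole step, which is precisely what the paper supplies: since at most $f^{2}_{\ref{@institutions}}(k,\ell)=\Ocal(k\ell^2)$ bricks of $\Bcal$ are $\ell$-central and each occupies $O(1)$ simple (respectively exceptional) layers, among the first $\Ocal(k\ell^2\cdot w)$ such layers there is a window of $w$ consecutive layers meeting no brick of $\Bcal$, where $w=\Theta(k\ell^3)$ is chosen to exceed the final wall sizes. One places $\Scal$ (respectively $\Ecal$) at the start of this window; every brick on the boundary side of the window then lies in the interior of $\Delta_{S_\ell}$ (respectively $\Delta_{E_\ell}$), and only the bricks beyond the window—now guaranteed to sit at depth greater than $w$ from both faces—are fed into the clustering. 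With this step inserted before your clustering, your argument goes through and yields the stated $\Ocal(k^2\ell^5)$ bound.
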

\begin{proof} We define $f_{\ref{@custodians}}(k , \ell) = 2(k - 1)(\ell + 2)f^{2}_{\ref{@institutions}}(k, \ell) + 3(z + w)$, where $z = f^{2}_{\ref{@institutions}}(k, \ell) \cdot 2(\ell + 2) + \ell$ and $w = (\ell + 2) + (f^{2}_{\ref{@institutions}}(k, \ell) \cdot z + 1)$.

We begin by applying \cref{@institutions}. There are two outcomes. 
Either we get the desired set $\{ B_{1}, \ldots, B_{k} \} \subseteq \Bcal,$ in which case we are done, 
or we have that $|\Bcal'| \leq f^{2}_{\ref{@institutions}}(k, \ell).$
Given a non-negative integer $w \geq \ell,$
let $r$ be the smallest non-negative even integer such that there are no bricks of $\Bcal$ intersecting any of the simple and exceptional layers in the range $[r, r + w].$
Notice that, the layers of a $2(\ell + 2)$-wall in $D$ intersect at most $\ell + 2$ simple and/or exceptional layers.
Let $\Bcal'$ consist of the $\ell$-central bricks of $D$ in $\Bcal.$
Then, any brick of $\Bcal'$ is in the subgraph obtained from $D$ after the removal of the first $\ell + 2$ simple and exceptional layers.
This fact combined with the fact that $|\Bcal'| \leq f^{2}_{\ref{@institutions}}(k, \ell),$ 
implies that $r$ is at most $(\ell + 2) + (f^{2}_{\ref{@institutions}}(k, \ell) \cdot w + 1).$
We set $w \coloneqq f^{2}_{\ref{@institutions}}(k, \ell) \cdot 2(\ell + 2) + \ell.$

We define $\Bcal_{r} \subseteq \Bcal'$ as follows. $B \in \Bcal_{r}$ if $B$ does not intersect the first $r$ simple and exceptional layers of $D.$
Now, let $\Scal = \{ S_{1}, \ldots, S_{\ell} \}$ (resp. $\Ecal = \{ E_{1}, \ldots, E_{\ell} \}$) contain the first $\ell$ simple (resp. exceptional) layers in the range $[r, r + w].$
Notice that by definition of $r,$ the vertices of any brick in $\Bcal \setminus \Bcal_{r}$ are drawn either in the interior of the disk bounded by $S_{1}$ or by $E_{1}.$
We proceed with the description of an iterative scheme that gives us the desired set $\Wcal.$ 

\medskip
Set $\Wcal_{0}$ to be $\Bcal_{r}.$
By definition of an $\ell$-central brick, the $\ell$-regions of any brick in $\Wcal_{0}$ is even.
Set $i \coloneqq 1.$
As long as there exists an $x$-wall $W \in \Wcal_{i - 1}$ and a $y$-wall $W' \in \Wcal_{i-1}$, where both $x$ and $y$ are even,
such that $W$ and $W'$ intersect, do as follows.
Consider $W''$ to be the smallest even subwall in $D$ that contains both the $\ell$-region of $W$ and the $\ell$-region of $W',$ if it exists.
Notice that the size of $W''$ is at most $x + y.$
Moreover, observe that $W''$ is the $\ell$-region of $W''',$ where $W'''$ is a subwall of $D$ such that the perimeter of $W'''$ is the $\ell/2$ layer of $W''.$
Then, by definition it must be that $W'''$ contains all bricks of $\Bcal_{r}$ contained in $W$ and $W'.$
Set $\Wcal_{i} = (\Wcal_{i-1} \setminus \{ W, W' \}) \cup \{ W''' \},$ and $i \coloneqq i + 1.$
If there is no such pair $W$ and $W',$ we set $\Wcal \coloneqq \Wcal_{i - 1}.$

\medskip
Notice that the above scheme can be iterated at most $|\Bcal'| \leq f^{2}_{\ref{@institutions}}(k, \ell)$ times.
Hence the size of the $\ell$-region of any wall in $\Wcal$ is at most $f^{2}_{\ref{@institutions}}(k, \ell) \cdot 2(\ell + 2).$
Then, by definition of $\Bcal_{r},$ we have that in any iteration, $W''$ always exists.
Moreover, notice that, any brick in $\Bcal_{r}$ is a brick of some wall in $\Wcal,$ and that all walls in $\Wcal$ contain at least one brick of $\Bcal_{r}.$
Then, since by definition of $\Wcal,$ the $\ell$-regions of all walls in $\Wcal$ are disjoint, it holds that $|\Wcal| \leq k - 1.$
Finally, observe that by the choice of $w,$ the disks bounded by the perimeters of the $\ell$-regions of all walls in $\Wcal$ are disjoint from the disks bounded by $S_{\ell}$ and $E_{\ell}.$

For our previous arguments to work we have to guarantee that $d$ is large enough. 
Observe that for the $k-1$ disjoint $\ell$-regions of the walls in $\Wcal$ to be contained in $D$, it must be that $d \geq 2(k - 1)(\ell + 2)f^{2}_{\ref{@institutions}}(k, \ell).$
Moreover, by definition of $r,$ to ensure the existence of the $r + w$ simple (resp. exceptional) layers of $D$ we require, 
it must be that $d \geq r + w$ (resp. $d \geq 2(r+w)$ (since in $D - \exceptional(D),$ the maximum order of a Dyck subwall is reduced by two.)
Observe that by the assumptions on $d$ and the definition of $f_{\ref{@custodians}}$ this is achieved.
\end{proof}

\paragraph{Railed flat vortices.}
We continue with the definition of railed flat vortices, that aims to formalize the idea of the areas with a large railed nest around them that we briefly introduced earlier in the subsection.

\medskip
Let $\Delta = (\Gamma, \Dcal)$ be a vortex-free $\Sigma$-decomposition of a graph $G$ in a surface $\Sigma.$
Let $\Ccal = \langle C_{1}, \ldots, C_{q} \rangle$ be a nest of order $q,$ for some non-negative integer $q,$ around an arcwise connected set in $\Sigma$ and $\langle \Delta_{1}, \ldots, \Delta_{q} \rangle$ be the disk sequence of $\Ccal$.
Consider the society $(H \coloneqq G \cap \Delta_{q}, \Omega \coloneqq \Omega_{\Delta_{q}})$ and let $\rho = (\Gamma_{\rho}, \Dcal_{\rho}, c_{\rho})$ be a cylindrical rendition of $(H, \Omega)$ around a cell $c_{\rho}$ in $\Delta_{q}$ such that
\begin{itemize}
\item $\Gamma_{\rho} = \Gamma \cap \Delta_{q}$,
\item $\Dcal_{\rho}$ contains $c_{\rho}$ and all cells of $\delta$ contained in $\Delta_{q}$ that are not contained in $\Delta_{1}$, and
\item $c_{\rho}$ is the disk $\Delta_{1}$ minus the nodes of $\rho$ on the boundary of $\Delta_{1}$.
\end{itemize}

Additionally, consider a set of $q$ many paths $\Pcal$ in $G$ such that the pair $(\Ccal, \Pcal)$ forms a railed nest of order $q$ around $\Delta_{0}$ in $\delta.$
We call the pair $(Δ_{q}, ρ)$ a \emph{railed flat vortex} in $δ$ equipped with the railed nest $(\Ccal, \Pcal)$ of order $q \in \Nbbb.$
Additionally, we refer to the society $(H, \Omega)$ as the \emph{flat vortex society} of $(Δ_{q}, ρ)$ in $\delta$.
Moreover, given a $\Zcal \in \Kbbb^{-},$ we call $(Δ_{q}, ρ)$ $\Zcal$-\emph{progressive} if $H$ contains a $\Zcal$-host.

\medskip
A \emph{railed flat vortex collection} of $δ$ is a collection $\mathbf{V} = \{ (Δ_{i}, ρ_{i}) \mid i \in [k] \}$ where, for every $i \neq j \in [k],$ $Δ_{i}$ and $Δ_{j}$ are disjoint, and
for every $i \in [k],$ $(Δ_{i}, ρ_{i})$ is a railed flat vortex in $δ$ equipped with the railed nest $(\Ccal_{i}, \Pcal_{i})$ of order $q \in \Nbbb.$
We refer to $q$ as the \emph{order} of $\mathbf{V}.$
Also we refer to the collection of pairs $(\Ccal_{i}, \Pcal_{i}),$ $i \in [k],$ as \emph{the railed nests} of $\mathbf{V}.$
Given a $\Zcal \in \Kbbb^{-},$ we say that $\mathbf{V}$ is $\Zcal$-\emph{progressive} if
\begin{itemize}
   
\item for every $i \in [k],$ there is a $\Zcal$-red cell of $δ$ that is a subset of $c_{ρ_{i}},$ 
\item all $\Zcal$-red cells of $δ$ are subsets of $\bigcup_{i \in [k]} c_{ρ_{i}},$ 
\item for every $i \in [k],$ $(Δ_{i}, ρ_{i})$ is $\Zcal$-progressive.
\end{itemize}

\begin{observation}\label{@sprachphilosophischen}
Let $δ$ be a vortex-free $Σ$-decomposition of a graph $G$ in a surface $\Sigma$. 
Let $\Zcal \in \Kbbb^{-}$ and $\mathbf{V}$ be a $\Zcal$-progressive railed flat vortex collection in $δ.$ 
Then $G$ contains a mixed $\Zcal$-packing of size $|\mathbf{V}|.$
\end{observation}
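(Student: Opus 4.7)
The plan is a straightforward invocation of the Redrawing Lemma (\cref{@horkheimer}), applied once to each railed flat vortex in the collection, followed by a disjointness argument to combine the resulting $\Zcal$-hosts.

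Write $\mathbf{V} = \{(Δ_i, ρ_i) \mid i \in [k]\}$ with $k = |\mathbf{V}|$, and let $(\Ccal_i, \Pcal_i)$ be the railed nest of $(Δ_i, ρ_i)$ of order $q$, which we assume to be at least $f_{\ref{@horkheimer}}(1, h_{\Zcal})$ (this is the regime in which $\Zcal$-progressive railed flat vortex collections will be produced in the subsequent sections). First, for each $i \in [k]$, I would use the $\Zcal$-progressiveness of $\mathbf{V}$ to fix a $\Zcal$-red cell $c_i$ of $δ$ contained in $c_{ρ_i}$. By definition of $\Zcal$-redness, there exists some $H_i \in \Zcal$ and an $\nonplanar(H_i)$-inflated copy $M_i$ in $G$ that is $c_i$-invading. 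Since $c_{ρ_i}$ is precisely the internal disk of the railed nest $(\Ccal_i, \Pcal_i)$ (minus its boundary nodes), $c_i$ lies in the interior of that internal disk, exactly as the Redrawing Lemma requires.

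Next, for each $i \in [k]$ independently, I would invoke \cref{@horkheimer} with the parameter (also called) $k$ set to $1$, with the trivial $\Sigma$-schema $(\emptyset, δ, D)$ obtained from $δ$ by choosing any $(Σ;d)$-Dyck wall $D$ grounded in $δ$ (which is used only to satisfy the statement of the Redrawing Lemma; its proof uses only the decomposition and the railed nest), with the railed nest $(\Ccal_i, \Pcal_i)$ around the internal disk of $(Δ_i, ρ_i)$, and with the singleton collection $\Mcal = \{M_i\}$. The hypotheses are satisfied: $M_i$ is an $\nonplanar(H_i)$-inflated copy that is $c_i$-invading with $c_i$ inside the internal disk, and the disjointness condition in $\{M_i \cap \inG_{δ}(Δ_i)\}$ is vacuous for a singleton. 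The lemma thus outputs a mixed $\Zcal$-packing of size $1$ inside $\inG_{δ}(Δ_i)$, that is, a single $\Zcal$-host $N_i \subseteq \inG_{δ}(Δ_i)$.

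Finally, since by the definition of a railed flat vortex collection the external disks $Δ_i$ are pairwise disjoint, the graphs $\inG_{δ}(Δ_i)$ are pairwise vertex-disjoint, and hence so are the hosts $N_1, \ldots, N_k$. Collecting them yields a mixed $\Zcal$-packing of size $k = |\mathbf{V}|$ in $G$, as claimed. There is no serious obstacle here: the content of the observation is that all structural and quantitative work is already absorbed into the hypotheses of a $\Zcal$-progressive railed flat vortex collection and into the Redrawing Lemma, so that combining the per-vortex outputs is merely a bookkeeping step enabled by the pairwise disjointness of the external disks.
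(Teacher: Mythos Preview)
Your argument is correct, but it takes a detour that the paper avoids. You use only the \emph{first} bullet of the definition of a $\Zcal$-progressive railed flat vortex collection (each $c_{\rho_i}$ contains a $\Zcal$-red cell) and then rebuild a $\Zcal$-host inside $\inG_{\delta}(\Delta_i)$ via the Redrawing Lemma. But the \emph{third} bullet of that same definition already says that each $(\Delta_i,\rho_i)$ is $\Zcal$-progressive, which by definition means the flat vortex society $H_i = G\cap\Delta_i$ contains a $\Zcal$-host $N_i$ outright. Since the $\Delta_i$ are pairwise disjoint, the $H_i$ and hence the $N_i$ are pairwise vertex-disjoint, giving the mixed $\Zcal$-packing immediately. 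This is why the paper records the statement as an Observation with no proof.

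What you have effectively done is re-prove \cref{flat_vortex_contains_host} (which derives the third bullet from the first via \cref{@horkheimer}) inside your argument. That is not wrong, but it imports the extra hypothesis $q \geq f_{\ref{@horkheimer}}(1,h_{\Zcal})$ and the need for a $\Sigma$-schema with a Dyck wall, neither of which the Observation actually requires once one reads off the third bullet directly.
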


By definition of railed flat vortices and the redrawing lemma, it follows almost immediately that the presence of a railed flat vortex of large enough order in the $\Sigma$-decomposition of some graph whose cell in the middle contains a $\Zcal$-red cell implies the existence of a ``local'' model of a graph in $\Zcal$ drawn within the flat vortex.
This is formalized in the next lemma.

\begin{lemma}\label{flat_vortex_contains_host} Let $\delta$ be a vortex-free $\Sigma$-decomposition of a graph $G$ in a surface $\Sigma$.
Let $\Zcal \in \Kbbb^{-}$ and $(\Delta, \rho = (\Gamma_{\rho}, \Dcal_{\rho}, c_{\rho}))$ be a railed flat vortex of order at least $f_{\ref{@horkheimer}}(1, h_{\Zcal})$ such that there exists a $\Zcal$-red cell in $\delta$ that is a subset of $c_{\rho}$.
Then $(\Delta, \rho)$ is $\Zcal$-progressive.
\end{lemma}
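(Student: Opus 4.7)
The plan is to directly apply the Redrawing Lemma (\cref{@horkheimer}) with $k=1$. By hypothesis, there exists a $\Zcal$-red cell $c$ of $\delta$ with $c \subseteq c_{\rho}$. Unfolding the definition of a $\Zcal$-red cell, there exist a graph $H' \in \Zcal$ and a $c$-invading $\nonplanar(H')$-inflated copy $M$ in $G$. The definition of a railed flat vortex guarantees that $c_{\rho} \subseteq \Delta_{1}$, where $\Delta_{1}$ is the disk bounded by the trace of the internal cycle $C_{1}$ of the railed nest $(\Ccal, \Pcal)$ equipping $(\Delta, \rho)$; hence $c \subseteq \Delta_{1}$.

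With this in place, I would apply \cref{@horkheimer} to the $\Sigma$-schema $(\emptyset, \delta, D_{0})$ of $G$, where $D_{0}$ is any $(\Sigma; d)$-Dyck wall grounded in $\delta$ (which may be chosen arbitrarily since $D_{0}$ plays no active role in the conclusion of the Redrawing Lemma, only the decomposition $\delta$, the railed nest, and the invading inflated copy are used); the railed nest $(\Ccal, \Pcal)$ has order at least $f_{\ref{@horkheimer}}(1, h_{\Zcal})$, its internal disk is $\Delta^{*} = \Delta_{1}$, its external disk is $\Delta$; and $\Mcal = \{M\}$, where $M$ is $c$-invading with $c \subseteq \Delta^{*}$. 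All hypotheses of \cref{@horkheimer} are therefore met (the family $\{M \cap \inG_{\delta}(\Delta)\}$ being a singleton, the disjointness condition is trivial).

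The conclusion of \cref{@horkheimer} then yields a mixed $\Zcal$-packing of size $1$ inside $\inG_{\delta}(\Delta)$, which is by definition a $\Zcal$-host. Since $\inG_{\delta}(\Delta) = G \cap \Delta = H$ is exactly the graph of the flat vortex society $(H, \Omega)$ of $(\Delta, \rho)$, this shows that $H$ contains a $\Zcal$-host and therefore that $(\Delta, \rho)$ is $\Zcal$-progressive, as required.

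The only potential obstacle is the requirement that the input to \cref{@horkheimer} be a full $\Sigma$-schema, rather than just a vortex-free $\Sigma$-decomposition: one must argue that the specific choice of Dyck wall in the schema is immaterial for the conclusion we need, which is visible from inspecting the proof of \cref{@horkheimer}, where $D$ is never invoked once the railed nest and invading copy are fixed. Beyond this routine bookkeeping, the statement is an immediate corollary of the Redrawing Lemma.
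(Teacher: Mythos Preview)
Your proposal is correct and follows essentially the same approach as the paper: unfold the definition of a $\Zcal$-red cell to obtain a $c$-invading $\nonplanar(H')$-inflated copy $M$, then apply \cref{@horkheimer} with $k=1$ to the railed nest of the flat vortex to produce a $\Zcal$-host inside $\inG_{\delta}(\Delta)$. The paper's proof is slightly terser (it simply passes $\delta$ to \cref{@horkheimer} without discussing the schema), and your remark that the Dyck wall $D$ in the $\Sigma$-schema is inert in the Redrawing Lemma's proof is a fair bookkeeping point that the paper itself glosses over; your choice of $\Delta^{*}=\Delta_{1}$ coincides with the paper's $\Delta_{c_{\rho}}$, since by definition $c_{\rho}$ is $\Delta_{1}$ minus its boundary nodes.
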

\begin{proof} Let $\Delta_{c_{\rho}}$ be the disk that corresponds to $c_{\rho}$, $c \in C(\delta)$ be a $\Zcal$-red cell that is a subset of $c_{\rho}$, and $(\Ccal, \Pcal)$ be the railed nest equipped to $(\Delta, \rho)$ that is of order at least $f_{\ref{@horkheimer}}(1, h_{\Zcal})$.
Since $c$ is a $\Zcal$-red cell in $\delta$ there exists a $c$-invading $\nonplanar(H)$-inflated copy $M$ in $G,$ for some $H \in \Zcal.$
Now, to prove that $(\Delta, \rho)$ is $\Zcal$-progressive it suffices to apply \cref{@horkheimer} on $\Zcal$, $\delta$, $(\Ccal, \Pcal)$, with $\Delta^{*}$ being $\Delta_{c_{\rho}}$, $c$, with $\Delta$ being $\Delta$, and $M$.
\end{proof}

For an $\ell$-central brick $B,$ given $i \in [\ell]$ we define the $δ$-aligned disk $Δ_{B, i}$ as the disk that bounds the trace of the $\ell+2-i$-th layer of the $\ell$-region of $B.$
Notice that, for every $c \in \mathsf{influence}_{δ}(B)$ we have that $c \subseteq Δ_{B,1},$ that $Δ_{B, 1} \subseteq \ldots \subseteq Δ_{B, \ell},$ and that $\bd(Δ_{B, \ell})$ is disjoint from $\simple(D)$ and $\exceptional(D)$.
We also define these disks in the case of an $\ell$-central wall.

\begin{lemma}\label{@misfortune} Let $q \geq 2f_{\ref{@horkheimer}}(1, h_{\Zcal})$ be an even integer, $(A, \delta, D)$ be a $\Sigma$-schema of a graph $G$ in a surface $\Sigma,$ $\Zcal \in \Kbbb^{-},$ and $W$ be a $q$-central wall of $D.$
Then there exists a railed flat vortex $(\Delta, \rho)$ in $\delta$ of order $q/2$ such that $\Delta$ is the disk that bounds the perimeter of the $q$-region of $W.$
Moreover, if the perimeter of $W$ is $\Zcal$-red then $(\Delta, \rho)$ is also $\Zcal$-progressive.
\end{lemma}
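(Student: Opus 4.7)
The plan is to build the required railed flat vortex directly from the structural scaffolding provided by the $q$-region $W'$ of $W$. By definition of $q$-centrality, $W'$ is a subwall of $D$ (and hence grounded in $\delta$), and if we denote its layers by $L_1,L_2,\ldots$ with $L_1=\Perimeter(W')$ being the outermost, then $L_{q+2}=\Perimeter(W)$. In particular, there are $q+1$ layers of $W'$ strictly outside $\Perimeter(W)$, which I will exploit to obtain $q/2$ nested cycles with enough ``space'' between consecutive cycles to accommodate both the orthogonal radial linkage and the inevitable handling of cells that meet $\Perimeter(W)$.

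First I would define the nest $\Ccal=\langle C_1,\ldots,C_{q/2}\rangle$ by setting $C_{q/2}\coloneqq L_1$ and, more generally, $C_i\coloneqq L_{q-2i+3}$, so that $C_1=L_{q+1}$ lies one layer outside $\Perimeter(W)$. The cycles are pairwise disjoint, each is grounded in $\delta$ since $D$ is, and the traces bound a strictly nested family of disks $\Delta_1\subsetneq\cdots\subsetneq\Delta_{q/2}=\Delta$. For the radial linkage I would select $q/2$ pairwise disjoint subpaths $P_1,\ldots,P_{q/2}$ from distinct columns of $W'$, each restricted to the portion that traverses $L_1,\ldots,L_{q+1}$; the standard wall structure guarantees that each column meets every layer in a single connected component, so $\Pcal=\langle P_1,\ldots,P_{q/2}\rangle$ is automatically orthogonal to $\Ccal$. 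This produces a railed nest $(\Ccal,\Pcal)$ of order $q/2$ around $\Delta_1$, and the cylindrical rendition $\rho$ is then defined canonically by letting $c_\rho\coloneqq \Delta_1$ minus the nodes of $\rho$ on $\bd(\Delta_1)$, and by letting the remaining cells of $\rho$ be the cells of $\delta$ contained in $\Delta\setminus\Delta_1$. The first conclusion of the lemma follows.

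For the second statement, suppose $\Perimeter(W)=L_{q+2}$ is $\Zcal$-red. By definition there is a $\Zcal$-red cell $c$ of $\delta$ in $\mathsf{influence}_\delta(\Perimeter(W))$, and the hard part of this direction will be verifying that any such $c$ sits inside $c_\rho$. The point to check is that a cell of $\delta$ which contains an edge of $L_{q+2}$ lies in the open region between consecutive layers of $W'$ (either between $L_{q+1}$ and $L_{q+2}$, or between $L_{q+2}$ and a deeper layer), and is therefore contained in $\Delta_{L_{q+1}}=\Delta_1$; the remaining cells in $\mathsf{influence}_\delta(\Perimeter(W))$ are subsets of the disk bounded by $\Perimeter(W)$ and hence are even more deeply nested inside $\Delta_1$. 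Once this is in hand, $c\subseteq \Delta_1\setminus N=c_\rho$, and the order hypothesis $q/2\geq f_{\ref{@horkheimer}}(1,h_{\Zcal})$ lets us invoke \cref{flat_vortex_contains_host} to conclude that $(\Delta,\rho)$ is $\Zcal$-progressive. Apart from this cell-location bookkeeping, every step is essentially a repackaging of wall/Dyck-wall combinatorics, so no further obstacle is expected.
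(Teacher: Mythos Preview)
Your proposal is correct and follows essentially the same approach as the paper: use $q/2$ layers of the $q$-region $W'$ as the nest, use $q/2$ column/track segments of $W'$ (equivalently, of $D$) as the orthogonal radial linkage, and then invoke \cref{flat_vortex_contains_host} for the $\Zcal$-progressive conclusion.

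The only noteworthy difference is in the choice of the innermost cycle. You take $C_1=L_{q+1}$, one layer outside $\Perimeter(W)$, and then have to argue carefully that every cell in $\mathsf{influence}_\delta(\Perimeter(W))$ lies inside $\Delta_1$; you correctly flag this as the delicate point. The paper instead uses the \emph{outer} $q/2$ consecutive layers, so that its innermost cycle is roughly $L_{q/2+1}$. This leaves a buffer of about $q/2$ further layers between the nest and $\Perimeter(W)$, so the containment $\mathsf{influence}_\delta(\Perimeter(W))\subseteq \Delta_1$ is immediate and no tie-breaker/cell-boundary bookkeeping is needed. Your route works, but the paper's choice buys a cleaner argument at no cost. (Also, your indexing formula $C_i=L_{q-2i+3}$ does not land on $L_1$ at $i=q/2$; this is a harmless arithmetic slip, easily repaired by taking consecutive layers as the paper does.)
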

\begin{proof} Let $q' \coloneqq q/2.$
Observe that $B$ is a $2q'$-central brick of $D$.
Consider $\Ccal \coloneqq \langle C_{1}, \ldots, C_{q} \rangle$ to be the cycles in $D$ such that $Δ_{W, i}$ corresponds to the disk that bounds the trace of $C_{i}$, for every $i \in [q].$
By definition, $C_{q' + 1}$ corresponds to the $(q' + 1)$-th layer of the $q$-region of $W.$
Then, $C_{q'+1}$ corresponds to the perimeter of a $2(q'+1)$-wall $W'$ that contains $W$ and is a subwall of the $q$-region of $W.$
Since $\Perimeter(W') = 8(q'+1) + 4,$ it follows that in $W$ there exist $8(q'+1) + 4$ pairwise disjoint paths with one endpoint in $C_{q' + 1}$ and the other in $C_{q}.$
Let $\Pcal \coloneqq \{ P_{1}, \ldots, P_{q'} \}$ be a collection of $q'$ of these paths.
Since these paths are subpaths of tracks of $D$ they moreover are orthogonal to $\Ccal' \coloneqq \langle C_{q' + 1}, \ldots, C_{q} \rangle$ and hence the pair $(\Ccal', \Pcal)$ is a railed nest in $δ$ of order $q'$. 

Now, we show how to define a railed flat vortex in $\delta$ that is equipped with the previously defined railed nest.
Let $\Delta_{1}$ be the disk that bounds the trace of $C_{q' + 1}$ and $\Delta$ be the disk that bounds the trace of $C_{q},$ i.e., the perimeter of the $q$-region of $W.$
By definition $\Ccal$ can be seen as a nest around $\Delta_{1}$ in $\delta.$
Moreover, note that $\Delta_{1}$ fully contains the $\delta$-influence of the perimeter of $W.$
Hence it is straightforward to define a railed flat vortex $(\Delta, \rho)$ in $\delta$ of order $q'.$
Additionally, since $q' \geq f_{\ref{@horkheimer}}(1, h_{\Zcal}),$ if the perimeter of $W$ is $\Zcal$-red and as a result $\Delta_{1}$ contains a $\Zcal$-red cell of $\delta$ it follows that $(\Delta, \rho)$ is $\Zcal$-progressive by applying \autoref{flat_vortex_contains_host}.
\end{proof}

\begin{lemma}\label{@productive} Let $q \geq f_{\ref{@horkheimer}}(1, h_{\Zcal}),$ $(A, \delta, D)$ be a $\Sigma$-schema of a graph $G$ in a surface $\Sigma,$ $\Zcal \in \Kbbb^{-},$ and $\{ L_{1}, \ldots, L_{q} \}$ be a set of $q$ many simple (exceptional) layers of $D.$
Then there exists a railed flat vortex $(\Delta, L_{0}, \rho)$ in $\delta$ of order $q$ such that $\Delta$ is the disk that bounds the trace of $L_{q}.$
Moreover, if the disk that bounds the trace of $L_{1}$ contains a $\Zcal$-red cell then $(\Delta, \rho)$ is also $\Zcal$-progressive.
\end{lemma}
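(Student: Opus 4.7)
The plan is to mimic the construction in \cref{@misfortune}, but to use the layered structure of the Dyck wall around the simple (resp.\ exceptional) face in place of a central wall's $\ell$-region. All of the ingredients are readily available from the explicit construction of $D$ as a subdivision of the elementary $(\Sigma;d)$-Dyck wall, so very little new work is required beyond a careful assembly.

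First I would exhibit the nest cycles and the nested disks they bound. In the simple case, the hypothesis supplies $q$ layers $L_1,\dots,L_q$, which up to reindexing are the cycles $C_{d+1-i}$ of $D$. These are pairwise vertex-disjoint, each is contractible in $\Sigma$, and each bounds a disk $\Delta_{L_i}$ containing $\simple(D)$; the disks form a strictly nested sequence $\Delta_{L_1}\subsetneq\cdots\subsetneq\Delta_{L_q}$, with $\Delta_{L_q}=\Delta$. In the exceptional case, the analogous sequence is obtained from the recursive definition of exceptional layers together with the observation, recorded right after the definition, that the $i$-th exceptional layer's maximum-index cycle of $D$ is $C_i$; this certifies that the exceptional layers are pairwise disjoint cycles bounding a nested family of disks $\Delta_{L_1}\subsetneq\cdots\subsetneq\Delta_{L_q}$ each containing $\exceptional(D)$ and all handles/cross-caps of the embedding of $D$.

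Next I would extract the orthogonal radial linkage $\mathcal{P}$ from the tracks of $D$. By construction, every track of the Dyck wall crosses each cycle $C_j$ in a single connected subpath, hence also each $L_i$; moreover the number of tracks of $D$ is far greater than $q$. Picking any $q$ tracks and truncating each to its subpath running from $L_1$ to $L_q$ produces $\mathcal{P}=\langle P_1,\dots,P_q\rangle$, a radial linkage of size $q$ orthogonal to $\mathcal{C}=\langle L_1,\dots,L_q\rangle$. Setting $\Delta\coloneqq\Delta_{L_q}$ and defining the cylindrical rendition $\rho=(\Gamma_\rho,\mathcal{D}_\rho,c_\rho)$ of the society $(G\cap\Delta,\Omega_\Delta)$ around the cell $c_\rho$ corresponding to the interior of $\Delta_{L_1}$, we obtain the desired railed flat vortex $(\Delta,\rho)$ in $\delta$ of order $q$, equipped with the railed nest $(\mathcal{C},\mathcal{P})$.

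Finally, for the ``moreover'' clause, if the disk bounded by the trace of $L_1$ contains a $\Zcal$-red cell of $\delta$, then because the constructed $(\Delta,\rho)$ has order $q\ge f_{\ref{@horkheimer}}(1,h_{\Zcal})$, \cref{flat_vortex_contains_host} applied to the red cell residing in $c_\rho$ directly yields that $(\Delta,\rho)$ contains a $\Zcal$-host, i.e.\ that $(\Delta,\rho)$ is $\Zcal$-progressive. The only non-trivial bookkeeping lies in the exceptional case: one must verify that the recursively defined exceptional layers truly behave as nested concentric cycles in $\Sigma$ and that the tracks of $D$ cross them orthogonally. This is an inspection of the explicit structure of $D$ inherited from $\mathscr{D}^{h,c}_{2t}$ and does not require any new combinatorial input.
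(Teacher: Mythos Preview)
Your proposal is correct and follows essentially the same route as the paper: take $\mathcal{C}=\langle L_1,\dots,L_q\rangle$ as the nest, harvest $q$ tracks of $D$ for the orthogonal radial linkage, assemble the railed flat vortex, and invoke \cref{flat_vortex_contains_host} for the $\Zcal$-progressive clause. The only point where the paper is slightly more precise is in the choice of tracks: rather than ``any $q$ tracks'', it takes tracks from the annulus-wall portion of $D$ that is clear of handles and cross-caps, which is exactly what is needed to guarantee orthogonality with the exceptional layers without the additional inspection you defer to the end.
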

\begin{proof} Let $\Ccal \coloneqq \langle L_{1}, \ldots, L_{q} \rangle$ and $\Delta_{1}$ be the disk that bounds the trace of $L_{1}.$
It follows that $\Ccal$ is a nest around $\Delta_{1}$ in $\delta$ of order $q.$
Moreover, the existence of at least $q$ layers of $D$ implies that the order of $D$ is at least $q$ and hence there exists a collection $\Pcal$ of $q$ pairwise disjoint paths with one endpoint in $L_{1}$ and the other in $L_{q}$ defined as subpaths of the tracks of the annulus wall part of $D$ that is clear of handles and cross-caps and that are orthogonal to $\Ccal.$
Then the pair $(\Ccal, \Pcal)$ is a railed nest in $δ$ of order $q$ around $Δ_{1}.$

Now, we show how to define a railed flat vortex in $\delta$ that is equipped with the previously defined railed nest.
Let $\Delta$ be the disk that bounds the trace of $L_{q}.$
Now it is straightforward to define a railed flat vortex $(\Delta, \rho)$ of order $q.$
Additionally, since $q' \geq f_{\ref{@horkheimer}}(1, h_{\Zcal}),$ if $\Delta_{1}$ contains a $\Zcal$-red cell, it follows that $(\Delta, \rho)$ is $\Zcal$-progressive by applying \autoref{flat_vortex_contains_host}.
\end{proof}

We are now ready to present the main result of this subsection.

\begin{lemma}\label{@approached}
There exist functions $f^{1}_{\ref{@approached}} : \Nbbb \to \Nbbb$ and $f^{2}_{\ref{@approached}} : \Nbbb^{2} \to \Nbbb$ such that, 
for every $\Zcal \in \Kbbb^{-},$ there exists an algorithm that, given
\begin{itemize}
\item $k \in \Nbbb,$
\item $q \geq f^{1}_{\ref{@approached}}(h_{\Zcal}),$
\item a graph $G,$
\item a $Σ$-schema $(A, δ = (Γ, \Dcal), D)$ of $G$ in a surface $\Sigma$ that is $α$-anchored at a set $S,$ for some $α \in [2/3, 1),$ and $D$ is of order $d,$ where $d > f^{2}_{\ref{@approached}}(k, q) + 3,$
\end{itemize}
outputs either
\begin{itemize}
\item a mixed $\Zcal$-packing of size $k$ in $G - A,$ or 
\item a $Σ$-schema $(A, δ, D')$ that is $α$-anchored at $S,$ where $D'$ is a $(Σ; d')$-Dyck wall that is a subwall of $D$ of order $d' \geq d - f^{2}_{\ref{@approached}}(k, q),$ and a railed flat vortex collection $\mathbf{V}$ in $δ$ that is $\Zcal$-progressive, has size less than $k$ and order $q,$ such that for every $(Δ, ρ) \in \mathbf{V},$ $\inG_{δ}(Δ) \cap D'$ is the empty graph.
\end{itemize}
Moreover the above algorithm runs in time $\Ocal_{h_{\Zcal}}(|V(G)|^3)+\poly(q,k)\cdot |V(G)|$ and the functions $f^{1}_{\ref{@approached}}$ and $f^{2}_{\ref{@approached}}$ are of the following orders:
\begin{align*}
	f^{1}_{\ref{@approached}}(h_{\Zcal}) &= \Ocal(2^{\mathsf{poly}(h_{\Zcal})})\text{, and}\\
	f^{2}_{\ref{@approached}}(k, q) &= \Ocal(k^{2}q^{5}).
\end{align*}
\end{lemma}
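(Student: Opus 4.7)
The plan is to detect the $\Zcal$-red cells of $\delta,$ associate them with $\Zcal$-red bricks of $D,$ and then invoke \cref{@custodians} to either extract $k$ scattered red bricks (whose neighbourhoods become disjoint progressive flat vortices forming a packing via the redrawing lemma) or cluster them into fewer than $k$ flat vortices. I first use \cref{@calculations} to compute, in time $\Ocal_{h_{\Zcal}}(|V(G)|^{3}),$ the set of $\Zcal$-red cells of $\delta.$ I declare a contractible brick $B$ of $D$ to be $\Zcal$-red when $\mathsf{influence}_{\delta}(B)$ contains such a cell, and let $\Bcal$ be the resulting set; if $\Bcal = \emptyset$ I terminate with the empty collection and $D' = D.$ Otherwise I set $\ell \coloneqq 2q,$ so that every $\ell$-central wall of $D$ is $2q$-central and \cref{@misfortune} produces from it a flat vortex of order $q,$ and apply \cref{@custodians} to $\Bcal$ with parameters $k$ and $\ell.$

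If \cref{@custodians} returns $k$ red bricks $B_{1}, \ldots, B_{k}$ with pairwise disjoint $\ell$-regions, I apply \cref{@misfortune} to each (using the $2q$-central wall whose central brick is $B_{i}$), obtaining $k$ pairwise disjoint $\Zcal$-progressive flat vortices of order $q;$ here $q \geq f^{1}_{\ref{@approached}}(h_{\Zcal}) \geq f_{\ref{@horkheimer}}(1, h_{\Zcal})$ guarantees progressiveness, and \cref{@sprachphilosophischen} then produces the desired mixed $\Zcal$-packing of size $k.$ Otherwise \cref{@custodians} returns a collection $\Wcal$ of at most $k - 1$ $\ell$-central walls together with $\ell$ consecutive simple and $\ell$ consecutive exceptional layers of $D,$ with pairwise disjoint associated disks, such that every brick in $\Bcal$ is either a brick of some $W \in \Wcal$ or is drawn in the interior of $\Delta_{S_{\ell}}$ or $\Delta_{E_{\ell}}.$ I apply \cref{@misfortune} to each $W \in \Wcal$ to obtain one $\Zcal$-progressive flat vortex of order $q$ per wall. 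Whenever the interior of $\Delta_{S_{\ell}}$ (resp.\ $\Delta_{E_{\ell}}$) contains a $\Zcal$-red cell, I additionally apply \cref{@productive} to a block of $q$ consecutive simple (resp.\ exceptional) layers positioned just outside $\Delta_{S_{\ell}}$ (resp.\ $\Delta_{E_{\ell}}$), so that the inner disk $\Delta_{1}$ of the resulting order-$q$ flat vortex strictly contains $\Delta_{S_{\ell}}$ (resp.\ $\Delta_{E_{\ell}}$) and is therefore $\Zcal$-progressive; this is feasible because the required extra $q$ layers exist inside $D$ by our lower bound on $d$ and because the buffer $w$ inside \cref{@custodians} keeps them disjoint from all wall regions. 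Let $\mathbf{V}$ be the resulting collection. If $|\mathbf{V}| = k$ I output it as a mixed $\Zcal$-packing via \cref{@sprachphilosophischen}; otherwise $|\mathbf{V}| \leq k - 1$ and, by construction, every $\Zcal$-red cell of $\delta$ lies inside some $c_{\rho}$ with $(\Delta, \rho) \in \mathbf{V}.$

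It remains to extract $D'$ and verify the anchoring and the claimed bounds. Each central wall occupies a $2(\ell+2)$-region of $D,$ and each simple or exceptional flat vortex consumes a block of $\Ocal(q)$ consecutive simple or exceptional layers, so at most $\Ocal(kq^{2})$ cycles and tracks of $D$ are swallowed by the vortices; picking a maximum-order $(\Sigma; d')$-Dyck subwall $D'$ of $D$ disjoint from the interior disks of $\mathbf{V}$ yields $d' \geq d - f^{2}_{\ref{@approached}}(k, q)$ with $f^{2}_{\ref{@approached}}(k, q) = \Ocal(k^{2} q^{5}),$ matching the bound from \cref{@custodians}. Since every cycle of $D'$ is a cycle of $D$ and the track infrastructure of $D$ extends those of $D',$ the tangle $\mathcal{T}^{A}_{D'}$ is a truncation of $\mathcal{T}^{A}_{D},$ and the $\alpha$-anchoring of $(A, \delta, D')$ at $S$ follows from that of $(A, \delta, D)$ exactly as in \cref{@repetitive}. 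The running time is dominated by the red-cell detection: $\Ocal_{h_{\Zcal}}(|V(G)|^{3}) + \poly(q, k)\cdot|V(G)|.$ The main technical obstacle is the careful orientation of the nests used in \cref{@productive} around the simple and exceptional faces: a naive choice of layers produces a $c_{\rho}$ that is too small to contain the red cells guaranteed to sit inside $\Delta_{S_{\ell}}$ or $\Delta_{E_{\ell}},$ so one must instead wrap the nest around the \emph{outside} of these disks while remaining inside $D$ and disjoint from the central-wall flat vortices — hence the choice $\ell = 2q$ and the buffer $w$ built into \cref{@custodians}.
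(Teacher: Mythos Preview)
Your proof follows essentially the same route as the paper's: detect the $\Zcal$-red cells via \cref{@calculations}, feed the induced red bricks to \cref{@custodians} with $\ell=2q$, turn scattered bricks into a packing via \cref{@misfortune} and \cref{@sprachphilosophischen}, and otherwise convert the walls and the simple/exceptional layers into a $\Zcal$-progressive flat vortex collection via \cref{@misfortune} and \cref{@productive}, finally shrinking $D$ to $D'$ and invoking \cref{@repetitive}.

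The one place you diverge is the ``technical obstacle'' paragraph. The paper does not place the simple/exceptional nest \emph{outside} $\Delta_{S_{\ell}}$; it simply feeds $q$ of the $2q$ layers $S_1,\dots,S_{2q}$ provided by \cref{@custodians} straight into \cref{@productive}. This works because the proof of \cref{@custodians} actually guarantees the stronger fact that the non-central red bricks lie in the interior of $\Delta_{S_1}$ (not merely $\Delta_{S_\ell}$), so $c_{\rho}$ already contains them. Your alternative of grabbing $q$ additional layers beyond $S_{2q}$ is not obviously safe: \cref{@custodians} only certifies disjointness of the wall $\ell$-regions from $\Delta_{S_\ell}$, not from any larger disk, so your outer nest could collide with one of the wall flat vortices. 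This is easily repaired (use the inner layers, or re-run \cref{@custodians} with $\ell=3q$), but as written that step has a small gap.
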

\begin{proof} We define $f^{1}_{\ref{@approached}}(h_{\Zcal}) \coloneqq f_{\ref{@horkheimer}}(1, h_{\Zcal})$ and $f^{2}_{\ref{@approached}}(k, q) \coloneqq f_{\ref{@custodians}}(k, 2q).$

By \cref{@calculations}, in time $\Ocal_{h_{\Zcal}}(|V(G)|^3)$ we detect all $\Zcal$-red cells of $δ.$
Next, we apply \cref{@custodians} with $k,$ $\ell = 2q,$ and $\Bcal$ being the set of $\Zcal$-red bricks of $D.$
There are two outcomes.
Assume that the previous application returns a set of $\{ B_{1}, \ldots, B_{k} \} \subseteq \Bcal$ of $2q$-central bricks,
where for $i \neq j \in [k],$ $B_{i}$ and $B_{j}$ have disjoint $2q$-regions.
An application of \autoref{@misfortune} for every $i \in [k],$ to the $2q$-central brick $B_{i},$ implies the existence of a $\Zcal$-progressive flat vortex in $\delta$ such that the disks of all these railed flat vortices correspond to the disks that bound the trace of the perimeter of the corresponding $2q$-region if each brick $B_{i}$, and they are therefore pairwise disjoint.
This implies the existence of $\Zcal$-mixed packing in $G - A$ and we conclude.

This allows us to assume that the previous application returns a collection $\Wcal = \{ W_{i} \mid i \in [k]' \},$ $k' \leq k-1,$ of $2q$-central walls of $D,$ a collection $\Scal = \{ S_{1}, \ldots, S_{2q} \}$ of $2q$ consecutive simple layers of $D,$ and a collection $\Ecal = \{ E_{1}, \ldots, E_{2q} \}$ of $2q$ consecutive exceptional layers of $D,$ satisfying the specifications of \cref{@custodians}.
Now, by appropriately applying \autoref{@misfortune} to every wall in $\Wcal$ and \autoref{@productive} we may define a set $\mathbf{\Delta}$ that consists of all the implied railed flat vortices in $\delta$ of order $q.$

Observe that by the specifications of \cref{@custodians}, \cref{@misfortune}, and \cref{@productive} the disks corresponding to the railed flat vortices in $\mathbf{\delta}$ are pairwise disjoint and moreover all $\Zcal$-red cells of $\delta$ are contained within the cells of the cylindrical renditions of the railed flat vortices in $\mathbf{\delta}.$
Hence, the collection $\mathbf{\Delta}$ after we remove any of the railed flat vortices that are not $\Zcal$-progressive, defines a railed flat vortex collection in $δ$ of order $q.$
If the size of this collection is at least $k,$ then by \cref{@sprachphilosophischen}, we get a mixed $\Zcal$-packing of size $k$ in $G - A.$

To conclude, we consider the $Σ$-schema $(A, δ, D'),$ where $D'$ is any $(Σ; d')$-Dyck subwall of $D,$ where $d' \geq d - f^{2}_{\ref{@approached}}(k, q),$ such that $D'$ is disjoint from all disks in the final collection of railed flat vortices we obtain.
Notice that, by the specifications of \cref{@custodians} and the definition of $f^{2}_{\ref{@approached}}(k, q),$ $D'$ is well-defined.
Moreover, we can trivially apply \cref{@repetitive}, to show that $(A, δ, D')$ is $α$-anchored.
\end{proof}

\subsection{Refining flat vortices}
\label{@conjecture}

In this subsection, given a $Σ$-schema of some graph $G$ with a sufficiently large Dyck-wall, accompanied by a railed flat vortex collection of bounded size and sufficiently large order, we show how to further refine it in order to bound the depth of the flat vortex society of each of the railed flat vortices in our refined collection.
The outcome of this process will either succeed or produce a mixed $\Zcal$-packing of size $k$ in $G - A.$

\paragraph{Strip societies.}
Since we are dealing with railed-flat vortices whose corresponding society is cross-free, i.e., has a vortex-free rendition in a disk, we can immediately deduce that any transaction in such a society is planar and moreover satisfies some additional properties which we require later in our proofs.

\medskip
Let $G$ be a graph and $H$ be a subgraph of $G.$
An \emph{$H$-bridge} is either an edge $e$ with both endpoints in $H$ such that $e\notin E(H),$ or a subgraph of $G$ formed by a component $K$ of $G-V(H)$ along with all edges of $G$ with one endpoint in $V(K)$ and the other endpoint in $V(H).$
In the first case, we call the endpoints of $e$ the \emph{attachments} of the bridge, and in the second case the \emph{attachments} of the bridge are those vertices that do not belong to $K.$

Let $(G,\Omega)$ be a society and $\mathcal{P}$ be a planar transaction of order at least two in $( G,\Omega).$
The \emph{$\mathcal{P}$-strip society of $( G, \Omega)$} is defined as follows.
Let $\mathcal{P}=\{ P_1,P_2,\dots,P_{\ell}\}$ be ordered such that for each $i\in[\ell],$ $P_i$ has the endpoints $a_i$ and $b_i$ and $a_1,a_2,\dots,a_{\ell},b_{\ell},b_{\ell-1},\dots,b_1$ appear in $\Omega$ in the order listed.
We denote by $H$ the subgraph of $G$ defined by the union of the paths in $\mathcal{P}$ together with all vertices of $\Omega$ and by $H'$ the subgraph of $H$ consisting of $\mathcal{P}$ together with the vertices in $V(a_1\Omega a_{\ell})\cup V(b_{\ell}\Omega b_1).$
We consider the set $\mathcal{B}$ of all $H$-bridges of $G$ with at least one vertex in $V(H')\setminus V(P_1\cup P_{\ell}).$
For each $B\in\mathcal{B}$ let $B'$ be obtained from $B$ by deleting all attachments that do not belong to $V(H').$
Finally, let us denote by $G_1$ the graph defined as the union of $H'$ and all $B'$ where $B\in\mathcal{B}$ and let $\Omega_1\coloneqq a_1\Omega a_{\ell}\oplus b_{\ell}\Omega b_1.$
The resulting society $(G_1,\Omega_1)$ is the desired $\mathcal{P}$-strip society of $(G,\Omega).$
The paths $P_1$ and $P_{\ell}$ are called the \emph{boundary paths} of $(G_1, \Omega_1).$

We say that the $\mathcal{P}$-strip society $(G_1,\Omega_1)$ of $(G,\Omega)$ is \emph{isolated} if no edge of $G$ has one endpoint in $V(G_1) \setminus V(P_1\cup P_{\ell})$ and the other endpoint in $V(G)\setminus V(G_1).$

Moreover, an isolated $\mathcal{P}$-strip society is \emph{separating} if after deleting any non-boundary path of $\mathcal{P}$ there does not exist a path from one of the two resulting segments of $\Omega$ to the other.

Finally, a $\mathcal{P}$-strip society is \emph{rural} if it has a vortex-free rendition in a disk.

\medskip
The following lemma formalizes our previous remark that in a railed flat vortex any planar transaction is isolated, separating, and rural.

\begin{lemma}\label{transaction_isoseprur} Let $(\Delta, \rho)$ be a railed flat vortex of a vortex-free $\Sigma$-decomposition of a graph $G$ in a surface $\Sigma$. Then every transaction $\Pcal$ in the flat vortex society $(H, \Omega)$ of $(\Delta, \rho)$ is planar and the $\Pcal$-strip society of $(H, \Omega)$ is isolated, separating, and rural.
\end{lemma}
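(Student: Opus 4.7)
My plan is to exploit the assumption that $\delta$ is vortex-free in two stages: first, to upgrade it to the statement that the flat vortex society $(H,\Omega)$ is itself a rural society, and then to derive both the planarity of every transaction and the three structural properties of the strip society from Jordan-curve topology in the resulting disk embedding.

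First I would show that $(H,\Omega)$ is rural. The $\Delta_{q}$-decomposition $\delta' := \delta \cap \Delta_{q}$, obtained by keeping only the cells of $\delta$ lying inside $\Delta_{q}$ together with the drawing of $H=G\cap\Delta_q$, inherits vortex-freeness from $\delta$ (every cell still has $|\tilde c|\le 3$); its nodes on $\bd(\Delta_{q})$ are exactly $V(\Omega)$ in the cyclic order of $\Omega$, so $\delta'$ is a vortex-free rendition of $(H,\Omega)$ in $\Delta_{q}$. Given rurality, planarity of any transaction $\Pcal$ is then a routine consequence: for a would-be cross $(P_{1},P_{2})$ with endpoints $s_{1},s_{2},t_{1},t_{2}$ appearing in this cyclic order on $\Omega$, both $P_{1}$ and $P_{2}$ are grounded in $\delta'$, and in a vortex-free rendition any two vertex-disjoint grounded paths have vertex-disjoint traces; the trace of $P_{1}$ is then a simple curve in $\Delta_{q}$ from $s_{1}$ to $t_{1}$ on $\bd(\Delta_{q})$ that, by the Jordan curve theorem, separates $s_{2}$ from $t_{2}$, contradicting the existence of the trace of $P_{2}$.

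Next I would analyse the strip society. Ordering $\Pcal=\{P_{1},\dots,P_{\ell}\}$ so that $a_{1},\dots,a_{\ell},b_{\ell},\dots,b_{1}$ appear in this cyclic order on $\Omega$, the traces of $P_{1}$ and $P_{\ell}$ together with the arcs $a_{1}\Omega a_{\ell}$ and $b_{\ell}\Omega b_{1}$ on $\bd(\Delta_{q})$ bound a closed sub-disk $\Delta^{\mathrm{strip}}\subseteq \Delta_{q}$. For rurality of $(G_{1},\Omega_{1})$: any bridge $B\in\Bcal$ has an attachment strictly inside the strip, and its connectedness together with the vortex-free rendition forces its entire drawing to lie in $\Delta^{\mathrm{strip}}$ (otherwise some edge would be drawn in a cell straddling the trace of $P_{1}$ or $P_{\ell}$, which is impossible for cells with $|\tilde c|\le 3$); hence $B=B'$ and the restriction of $\delta'$ to $\Delta^{\mathrm{strip}}$ is a vortex-free rendition of $(G_{1},\Omega_{1})$. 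The same argument applied to an individual edge of $H$ rather than a whole bridge yields the \emph{isolated} property. For \emph{separating}, deleting an internal $P_{i}$ removes its trace from $\Delta^{\mathrm{strip}}$, splitting the sub-disk into two pieces, each containing one of the two segments of $\Omega_{1}$ that arise from the removal; any path in $G_{1}-P_{i}$ between these segments would give a grounded curve joining them, which by the Jordan curve theorem must cross the (removed) trace of $P_{i}$, again contradicting the vortex-freeness of the ambient rendition.

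The hard part will be the repeatedly invoked assertion that in a vortex-free rendition the traces of two vertex-disjoint grounded paths are vertex-disjoint, and accordingly that no edge or bridge of $H$ can be drawn across the trace of a transaction path. This is the only non-topological input to the argument; it is a standard but somewhat fiddly consequence of each cell having at most three boundary nodes together with the tie-breaker convention, and once it is established everything else in the lemma reduces to Jordan-curve reasoning inside the disk $\Delta_{q}$.
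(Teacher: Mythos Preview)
Your proposal is correct and follows essentially the same approach as the paper: both arguments observe that restricting the vortex-free $\Sigma$-decomposition $\delta$ to $\Delta$ (what the paper writes as $\delta\cap\Delta$) yields a vortex-free rendition of $(H,\Omega)$ in the disk, from which planarity of every transaction and the three strip-society properties follow. The paper's proof is a terse two-sentence sketch that declares these consequences immediate, whereas you have unpacked the Jordan-curve reasoning and correctly identified the one non-trivial ingredient (disjointness of traces of vertex-disjoint grounded paths in a vortex-free rendition) that the paper leaves implicit.
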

\begin{proof} The proof follows immediately from the fact that $\delta$ is vortex-free.
This implies the existence of a vortex-free rendition $\rho'$ of the flat vortex society $(H, \Omega)$ of $(\Delta, \rho)$.
In fact $\rho'$ can simply be defined as $\delta \cap \Delta$.
The fact that $\rho'$ is vortex-free, i.e., does not contain a cross, implies that any transaction $\Pcal$ in $(H, \Omega)$ is planar.
It follows that the $\Pcal$-strip society of $(H, \Omega)$ is isolated, separating, and rural.
\end{proof}

\paragraph{Compressing flat vortices.} There are two different ways we will ensure that we are making progress when refining our railed flat vortices. One of these ways is a \textsl{compression} trick whose idea originates from \cite{thilikos2022killing} and is very similar to the notion of tightness that we are already used in this paper.

Let $q \in \Nbbb$, $G$ be a graph, $\Sigma$ be a surface, and let $\delta$ be a vortex-free $\Sigma$-decomposition of $G$ in $\Sigma$.
Moreover, consider a railed flat vortex $(\Delta, \rho = (H, \Omega, c_{\rho}))$ of $\delta$ equipped with the railed nest $(\Ccal = \langle C_{1}, \ldots, C_{q} \rangle, \Rcal)$ in $\delta$ around $c_{\rho}$ of order $q$.
A railed flat vortex $(\Delta', \rho' = (H', \Omega', c_{\rho'}))$ of $\delta$ equipped with the railed nest $(\Ccal' = \langle C'_{1}, \ldots, C'_{q} \rangle, \Rcal')$ in $\delta$ around $c_{\rho'}$ of order $q$ is said to be a \emph{compression} of $(\Delta, \rho)$ if there exists $i \in [q]$ such that
\begin{itemize}
\item for every $j \in [i+1, q]$, $C_{j} = C'_{j}$ and
\item if $\Delta_{i}$ (resp. $\Delta'_{i}$) is the disk that bound the trace of $C_{i}$ (resp. $C'_{i}$) in $\rho$ (resp. $\rho'$) either $(H' \cap \Delta'_{i}) - V(C'_{i}) \subsetneq (H \cap \Delta_{i}) - V(C_{i})$ or $E(H' \cap \Delta'_{i}) \subsetneq E(H \cap \Delta_{i})$.
\end{itemize}

The compression of a railed flat vortex is essentially achieved by ``pushing'' one of the cycles of its nest closer to the cell in the middle of the flat vortex which might result in actually extending the nest within the area defined by the cell.
When this is the case we will make sure that it is safe to do so, i.e., only when no $\Zcal$-red cell is being pushed outside of the newly defined cell in the middle of our flat vortex.
Additionally, when we will ``push'' cycles to obtain a compression we will do so in a way that either maintains the orthogonality of the nest with its original linkage or in a more extreme case where this might not be possible we will obtain a new orthogonal linkage to be paired with our new nest.

\paragraph{Exposed transactions.} The second way we will be making progress when refining our flat vortices is by splitting the flat vortex in two new flat vortices.
To make sure this is possible we require our transactions to run through the cell in the middle of our flat vortex.

Let $(\Delta, \rho= (\Gamma_{\rho}, \Dcal_{\rho}, c_{\rho}))$ be a railed flat vortex in a $\Sigma$-decomposition $\delta$ of a graph $G$ in a surface $\Sigma.$
Let $(\Ccal, \Rcal)$ be the railed nest equipped to $(\Delta, \rho)$ and $(H, \Omega)$ be the flat vortex society of $(\Delta, \rho).$
A transaction $\mathcal{P}$ in $(H, \Omega)$ is said to be \emph{exposed} if $c_{\rho}$ intersects the drawing of every path in $\mathcal{P}.$

In the next lemma we use a  trick from \cite{thilikos2022killing} that allows us to find in every large enough transaction, either a big exposed transaction or a compression of our flat vortex that reduces the part of $G$ that belongs to its flat vortex society.

\begin{lemma}\label{exposure}
Let $s, p \in \Nbbb_{\geq 1}$, $(\Delta, \rho)$ be a railed flat vortex of order $s$ of a vortex-free $\Sigma$-decomposition of a graph $G$ in a surface $\Sigma$, and $(H, \Omega)$ be its flat vortex society.
Let $\Pcal$ be a transaction of order at least $2s + p$ in $(H, \Omega)$.
Then there exists
\begin{itemize}
    \item an exposed transaction $\mathcal{Q} \subseteq \mathcal{P}$ of order $p,$ or
    \item a compression of $(\Delta, \rho)$.
\end{itemize}
Moreover, there exists an algorithm that finds one of the two outcomes in time $\mathcal{O}(|G|).$
\end{lemma}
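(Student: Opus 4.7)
The plan is to combine the planarity of the transaction $\Pcal$ with a pigeonhole argument on the cycles of the railed nest equipping $(\Delta,\rho)$. Write $(\Ccal=\langle C_1,\dots,C_s\rangle,\Rcal)$ for this railed nest, let $\Delta_1\subsetneq\cdots\subsetneq\Delta_s=\Delta$ be its disk sequence, so that $c_\rho\subseteq\Delta_1$, and let $(H,\Omega)$ denote the flat vortex society. \cref{transaction_isoseprur} guarantees that $\Pcal$ is planar and that the $\Pcal$-strip society is isolated, separating, and rural; in particular the paths of $\Pcal$ carry a natural linear order $P_1,\dots,P_{2s+p}$ induced by the cyclic order of their endpoints on $\Omega$, and consecutive paths delimit nested strips in $\Delta$. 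Let $\mathcal{E}$ denote the set of exposed paths, i.e.\ those whose drawing meets $c_\rho$. If $|\mathcal{E}|\geq p$, output any size-$p$ subset of $\mathcal{E}$ as the desired exposed subtransaction $\Qcal$ and terminate.

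Otherwise $|\Pcal\setminus\mathcal{E}|\geq 2s+1$. For each non-exposed path $P$, define its \emph{depth} $d(P)$ as the smallest index $i$ such that the drawing of $P$ meets the interior of $\Delta_i$; since $P$ avoids $c_\rho$ we have $d(P)\geq 2$, and since $P\subseteq\Delta=\Delta_s$ we have $d(P)\leq s$. There are therefore at most $s-1$ admissible depth values, so by pigeonhole some depth $i^\star\in\{2,\dots,s\}$ is attained by at least three non-exposed paths. Pick two such paths $P,P'$ that are consecutive in the planar ordering; both cross the cycle $C_{i^\star}$ while avoiding the interior of $\Delta_{i^\star-1}$.

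Next I build the compression. The subpaths of $P$ and $P'$ that lie inside $\Delta_{i^\star}$, together with a suitable subarc $\alpha$ of the trace of $C_{i^\star}$, form a closed curve whose trace bounds a disk $\Delta'_{i^\star}\subsetneq\Delta_{i^\star}$ still containing $\Delta_{i^\star-1}$. Because the $\Pcal$-strip society is rural, this curve is the trace of a grounded cycle $C'_{i^\star}$ in $H$. The sequence $\Ccal'\coloneqq\langle C_1,\dots,C_{i^\star-1},C'_{i^\star},C_{i^\star+1},\dots,C_s\rangle$ is then a nest around $c_\rho$ whose inner graph at position $i^\star$ has strictly fewer vertices or edges than that of $\Ccal$. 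To obtain the compressed railed flat vortex $(\Delta,\rho')$ claimed by the lemma, I re-comb every rail of $\Rcal$ that passed through the discarded portion of $C_{i^\star}$, rerouting it along a subpath of $P$ or $P'$ together with $C_{i^\star}\setminus\alpha$, so as to preserve orthogonality with every cycle of $\Ccal'$.

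The main obstacle is precisely this rerouting step: the rails of $\Rcal$ and the two non-exposed paths $P,P'$ may interact in a tangled way inside the strip bounded by $P$ and $P'$, and one must guarantee that the rerouted rails meet every cycle of $\Ccal'$ in a single connected component. The $2s$ slack in the hypothesis $|\Pcal|\geq 2s+p$ is exactly what is consumed to secure both a usable pair $P,P'$ via the depth pigeonhole and enough room to absorb conflicts with the $s$ rails. Each substep---ordering the planar transaction, computing depths, choosing $P$ and $P'$, constructing $C'_{i^\star}$, and locally rerouting the rails---can be realized by a single sweep through the rendition in time $\Ocal(|G|)$, giving the claimed running time.
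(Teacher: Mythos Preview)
Your construction of the compressed nest has a genuine gap. You produce two non-exposed paths $P,P'$ of common depth $i^\star$ and then assert that ``the subpaths of $P$ and $P'$ that lie inside $\Delta_{i^\star}$, together with a suitable subarc $\alpha$ of the trace of $C_{i^\star}$, form a closed curve whose trace bounds a disk $\Delta'_{i^\star}\subsetneq\Delta_{i^\star}$ still containing $\Delta_{i^\star-1}$.'' This is not well-defined: each of $P,P'$ may enter and leave $\Delta_{i^\star}$ several times, so ``the subpaths'' are not single arcs; a \emph{single} subarc $\alpha$ cannot close up more than one gap; and even if you do produce a simple closed curve, nothing in your argument forces it to separate $\partial\Delta_{i^\star}$ from $\partial\Delta_{i^\star-1}$. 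Indeed, both $P$ and $P'$ avoid the interior of $\Delta_{i^\star-1}$ by the choice of $i^\star$, so every piece you use lies in the annulus $\Delta_{i^\star}\setminus\mathrm{int}(\Delta_{i^\star-1})$, and a curve assembled from such pieces together with arcs of $C_{i^\star}$ can perfectly well be null-homotopic in that annulus, bounding a small lens that misses $\Delta_{i^\star-1}$ entirely. The depth pigeonhole you run is too coarse to rule this out: having the same depth says nothing about on which side of $c_\rho$ the two paths sit or how their dips are positioned relative to each other.

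The paper avoids this by working with a \emph{single} subpath. It first partitions the $2s+1$ non-exposed paths into two classes according to which of the two sides of $\Delta$ their ``small side'' (the side not containing $c_\rho$) lies; one class $\mathcal{Q}'$ has at least $s+1$ members, and these have \emph{nested} small sides. From the nesting against only $s$ nest cycles one extracts a single subpath $L$ of some path in $\mathcal{Q}'$ with both endpoints on some $C_i$, internally disjoint from $\bigcup_j V(C_j)$, drawn in $\Delta_i$, and containing an edge off $C_i$. Because $L$ is a single arc with both ends on $C_i$, the cycle $C'$ in $C_i\cup L$ through $L$ automatically bounds a disk strictly inside $\Delta_i$ that still contains $c_\rho$; this is exactly the compression. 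The rail-fixing is then done against this single $L$, not against two transaction paths. So the missing idea in your attempt is precisely to use one dip, not two, and to obtain it via the small-side nesting rather than a depth count.
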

\begin{proof}

Assume that $\rho = (\Gamma_{\rho}, \Dcal_{\rho}, c_{\rho})$ and let $(\Ccal, \Rcal)$ be the railed nest in $\delta$ around $c_{\rho}$ of order $s$ equipped to $(\Delta, \rho)$.
If $\mathcal{P}$ contains an exposed transaction of order $p$ we can immediately conclude.
Moreover, we can check in linear time for the existence of such a transaction by checking for each path in $\mathcal{P}$ individually if it is exposed.
Hence, we may assume that there exists a linkage $\mathcal{Q} \subseteq \mathcal{P}$ of order $2s+1$ such that \textsl{no} path of $\mathcal{Q}$ is exposed.
It follows that $\mathcal{Q}$ is a planar transaction and no path in $\mathcal{Q}$ intersects the interior of $c_{\rho}$.
Notice that each member $Q$ of $\mathcal{Q}$ naturally separates $\Delta$ into two disks, exactly one of which intersects $c_{\rho}$ (in fact this disk must contain $c_{\rho}$).
Let us call the other disk the \emph{small side of $Q$}.
Given two members of $\mathcal{Q}$ then either the small side of one is contained in the small side of the other, or their small sides are disjoint.
It is straightforward to see that if we say that two members are \emph{equivalent} if their small sides intersect, this indeed defines an equivalence relation on $\mathcal{Q}.$
Moreover, there are exactly two equivalence classes, one of which, call it $\mathcal{Q}',$ must contain at least $s+1$ members.
Since $|\mathcal{C}|=s$ there must exist some $i\in[s]$ and some subpath $L$ of some path in $\mathcal{Q}'$ such that
\begin{itemize}
    \item both endpoints of $L$ belong to $V(C_i),$
    \item $L$ is internally disjoint from $\bigcup_{i\in[s]}V(C_i),$
    \item $L$ contains at least one edge that does not belong to $C_i,$ and
    \item $L$ is drawn in the disk that bounds $\mathsf{trace}(C_i)$ that is disjoint from the nodes corresponding to the vertices of $\Omega.$
\end{itemize}
Notice that $C_i\cup L$ contains a unique cycle $C'$ different from $C_i$ whose trace separates $c_{\rho}$ from the nodes corresponding to the vertices of $\Omega.$
Moreover, the disk $\Delta''$ that bounds $\mathsf{trace}(C')$ and that contains $c_{\rho}$, is properly contained in the disk that bounds $\mathsf{trace}(C_i)$ and contains $c_{\rho}$.
In particular, there exists an edge of $C_i$ which does not belong to $C_i',$ and there exists an edge that belongs to $C_i'$ but not to $C_i$ and therefore, the graph drawn in $\Delta''$ after deleting the vertices of $C_i'$ is properly contained in the graph drawn on $\Delta'$ after deleting the vertices of $C_i.$

We would now like to define the new nest $\mathcal{C}'\coloneqq \{ C_1,\dots,C_{i-1},C',C_{i+1},\dots,C_s\}.$
However, to fully obtain the desired compression of $(\Delta, \rho)$ we need to use the linkage $\mathcal{R}$ to obtain a linkage $\mathcal{R}'$ of the same order which is orthogonal to $\mathcal{C}'.$
Instead, we are going to change the cycle $C'$ once more.
Let us traverse along $C'$ in the clockwise direction.
For every member $R$ of $\mathcal{R}$ which does not intersect $L$ we do not have to do anything.
For every member $R$ of $\mathcal{R}$ which has a non-empty intersection with $L$ we may define $x_R$ to be the first vertex of $L$ on $R$ and $y_R$ to be the last such vertex.
Then there exists a unique $x_R$-$y_R$-subpath $R'$ of $R$ and a unique $x_R$-$y_R$ subpath $L'$ of $C'$ such that replacing $L'$ by $R'$ results in a new cycle whose trace separates the nodes corresponding to the vertices of $\Omega$ from $c_{\rho}.$
Moreover, this new cycle has now one potential path less that could break orthogonality.
Hence, iterating this process exhaustively finally yields the desired cycle $C''$ which allows us to define the nest $\mathcal{C}' \coloneqq \{ C_1,\dots,C_{i-1},C'',C_{i+1},\dots,C_s\}$ to obtain our desired compression of $(\Delta, \rho)$.
\end{proof}

\paragraph{Orthogonality.} Another important ingredient that we will need in order to split our flat vortices is the presence of an orthogonal exposed transaction in our flat vortex society.
For this purpose we develop a tool that allows us, given a flat vortex with a sufficiently large exposed transaction, to slightly change our flat vortex by finding an alternative nest of the same order and a new still large exposed transaction that is orthogonal to our new nest.

\begin{lemma}\label{orthogonal_transaction} Let $q, p  \in \Nbbb_{\geq 1},$ $(\Delta, \rho = (\Gamma_{\rho}, \Dcal_{\rho}, c_{\rho}))$ be a railed flat vortex of order $q$ of a vortex-free $\Sigma$-decomposition $\delta$ of a graph $G$ in a surface $\Sigma,$ and $(H, \Omega)$ be its flat vortex society.
Let $\Pcal$ be a planar exposed transaction of order at least $q \cdot (p + q + 2)$ in $(H, \Omega).$
Then there exists
\begin{itemize}
\item a railed flat vortex $(\Delta, \rho' = (\Gamma_{\rho}, \Dcal'_{\rho}, c'_{\rho}))$ of $\delta$ such that
\begin{itemize}
\item the flat vortex society of $(\Delta, \rho')$ is $(H, \Omega),$
\item $c_{\rho}$ is a subset of $c'_{\rho},$ and
\item $(\Delta, \rho')$ is equipped with the nest $\Ccal$ of order $q,$ and
\end{itemize}
\item a planar exposed transaction $\Qcal$ of order $p$ that is orthogonal to $\Ccal.$
\end{itemize}
Moreover, there exists an algorithm that finds the outcome above in time $\Ocal(q \cdot p \cdot |G|).$
\end{lemma}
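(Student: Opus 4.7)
The plan is to orthogonalize the transaction $\Pcal$ against the nest $\Ccal = \langle C_1, \ldots, C_q \rangle$ by induction on the cycles, processing $C_q, C_{q-1}, \ldots, C_1$ in this order while keeping the nest itself, the outer drawing $\Gamma_\rho$, and the society $(H, \Omega)$ fixed. The surplus of $q(p+q+2)$ on $|\Pcal|$ will pay for the losses incurred round by round, and the fragments of paths we discard inside $\Delta_1$ will be swallowed into the enlarged central cell $c'_\rho$, which gives both $c_\rho \subseteq c'_\rho$ and the unchanged nest as required.

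At the start of round $i$ we have a sub-transaction $\Pcal_i \subseteq \Pcal$ already orthogonal to $C_q, \ldots, C_{i+1}$, and the goal is to extract $\Pcal_{i-1} \subseteq \Pcal_i$ of order at least $|\Pcal_i| - (q+2)$ that is additionally orthogonal to $C_i$. For each $P \in \Pcal_i$ let $x_P$ (respectively $y_P$) be the first (respectively last) vertex of $P$ on $C_i$ when $P$ is traversed between its two endpoints in $\Omega$. Planarity of $\Pcal$, together with the exposure of each $P$ through $c_\rho$, forces the pairs $\{(x_P, y_P) \mid P \in \Pcal_i\}$ to appear along $C_i$ in a circular order consistent with the order of $\Pcal_i$ induced by $\Omega$. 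The rerouting step replaces the subpath $x_P P y_P$, which lies in $\Delta_i$ and (by hypothesis of non-orthogonality) touches $C_i$ at additional places, by a new $x_P$-$y_P$-path built from suitable portions of the rails $\Rcal$, which are themselves already orthogonal to $\Ccal$, plus a traversal through $c_\rho$; the new path is designed so that $C_i$ is met in exactly two components and $c_\rho$ is still visited, preserving orthogonality and exposure.

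Performing these reroutes simultaneously is delicate: two reroutes may try to share rail segments, interleave their stubs along $C_i$, or collide in the interior of $\Delta_i$. A planarity-based selection argument applied to the circular arrangement of $\{x_P, y_P\}$ on $C_i$ shows that a sub-family of order $|\Pcal_i| - (q+2)$ of pairwise compatible reroutes always exists: up to $q$ paths are set aside to keep enough rails reserved for rounds $i-1, \ldots, 1$, and at most $2$ more are lost at the two boundary segments of $\Omega$. Any middle fragment excised from a retained path and drawn inside $\Delta_1$ is folded into the new central cell, yielding $c_\rho \subsetneq c'_\rho$; everything drawn outside $\Delta_1$ is unaffected, so $(\Delta, \rho')$ remains a railed flat vortex equipped with the same nest $\Ccal$ and the same society $(H, \Omega)$.

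After $q$ rounds, starting from $|\Pcal| \geq q(p+q+2)$ and losing at most $q+2$ per round, we are left with at least $q(p+q+2) - q(q+2) = qp \geq p$ paths that are simultaneously orthogonal to every cycle of $\Ccal$; selecting any $p$ of them gives the desired $\Qcal$. Each round performs a linear scan of $\Pcal_i$ and its current reroutes in $O(p \cdot |G|)$ time, yielding the claimed total running time $O(q \cdot p \cdot |G|)$. The main technical obstacle is precisely the simultaneous rerouting in a single round: one must verify that the compatible reroutes can be realized in parallel within $\Delta_i$ without introducing new intersections \emph{and} without destroying the orthogonality already established for $C_q, \ldots, C_{i+1}$. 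Controlling this interaction between rounds is exactly what the $+q$ term in the loss bound pays for, and it is the most delicate ingredient of the argument.
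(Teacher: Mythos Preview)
Your proposal has a substantive gap and also rests on a misreading of the statement.

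\textbf{Misreading.} You assume the nest must stay fixed. In the conclusion, ``$(\Delta,\rho')$ is equipped with the nest $\Ccal$ of order $q$'' refers to the nest of the \emph{new} railed flat vortex $\rho'$; nothing says it coincides with the nest of $\rho$. In the paper's proof the cycles are modified to a new family $\Ccal'=\langle C_1',\dots,C_q'\rangle$, and the constraint $c_\rho\subseteq c_\rho'$ is met precisely because only \emph{expanding} bends are used when pushing each $C_i$ outwards.

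\textbf{The gap.} Your rerouting mechanism relies on the rails $\Rcal$, but $|\Rcal|=q$ while the target is $p$ pairwise disjoint orthogonal paths with $p$ arbitrary. You cannot route $\Theta(p)$ disjoint $x_P$--$y_P$ detours through $q$ rails; once two retained paths need the same rail segment inside the $C_i$--$C_1$ annulus they collide. The asserted ``planarity-based selection argument'' losing only $q+2$ per round does not overcome this: the bottleneck is not a combinatorial selection on $C_i$, it is the disjoint routing capacity between $C_i$ and $c_\rho$, which the rails bound by $q$. There is also no reason rerouting through rails in round $i$ preserves the orthogonality to $C_{i+1},\dots,C_q$ already achieved, since rail segments traverse all those cycles.

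\textbf{What the paper does instead.} The paper keeps the transaction essentially intact and alters the cycles. It introduces \emph{bends} of $C_i$ at a path $P_j$ (maximal $V(P_j)$--$V(P_j)$ subpaths of $C_i$), classified as \emph{shrinking}/\emph{expanding} and, among expanding, \emph{loose}/\emph{tight}. The key inductive claim is: once all loose expanding bends have been removed (by replacing $B$ with the parallel subpath $P_B$ of the transaction path), every expanding bend of $C_i$ has height at most $q-i+1$ and $C_i$ has exactly two pillars. Because all replacements are expanding, the innermost disk only grows, yielding $c_\rho\subseteq c_\rho'$. With these height bounds, discarding the outermost $q$ paths on each side and then picking every $q$-th remaining path produces a sub-transaction $\Pcal'$ of order $p+q$ such that each surviving bend meets at most one path of $\Pcal'$. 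A final local ``shortcut'' along pillar bends turns $\Pcal'$ into an orthogonal $\Qcal$, and $q$ of these paths are cropped to serve as the radial linkage of the new railed nest. The rails of the original $\rho$ are not used at all.
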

\begin{proof} By \autoref{transaction_isoseprur} we have that the $\Pcal$-strip society $(H', \Omega')$ of $(H, \Omega)$ is isolated, separating, and rural.
It is straightforward to obtain a rendition $\rho^{*}$ of $(H, \Omega)$ by combining a vortex-free rendition of $(H', \Omega')$ and $\rho$ such that $c_{\rho}$ is divided into a set $\Ucal$ of vortex cells in $\rho^{*}.$
Note that, since $\Pcal$ is exposed it follows that $|\Ucal| \geq 2.$
Moreover, we can define $\rho^{*}$ as a \emph{restriction} of $\delta,$ i.e., so that every cell of $\rho^{*}$ is a cell of $\delta \cap \Delta,$ except its vortex cells and cells neighbouring the vortex cells that are traversed by the two boundary paths of $\Pcal,$ which may be subsets of their corresponding cells in $\delta \cap \Delta.$
For what follows, we work with the rendition $\rho^{*}$.
Notice that in $\rho^{*}$ the drawing of the paths in $\Pcal$ never cross one another and this is crucial for the arguments that follow.

Let $s \coloneqq q \cdot (p + q + 2).$
Let $\Lambda_{\Pcal}$ be an ordering of the paths in $\Pcal$ such that while traversing $\Omega$ we first encounter one of the two endpoints for all paths in $\Pcal$ and then all others.
Let $P_{1}, \ldots, P_{s}$ be the paths in $\Pcal$ respecting the ordering $\Lambda_{\Pcal}.$
For $i \in [q]$ and $j \in [s],$ let $P$ be a $V(P_{j})$-$V(P_{j})$-path $B$ that is a subpath of $C_{i}.$
Let $P_{B}$ be the subpath of $P_{i}$ that shares its endpoints with $B.$
Let $\Delta_{B}$ be the disk bounded by the trace of the cycle $B \cup P_{B}.$
We call $B$ a \emph{bend} of $C_{i}$ at $P_{j}$ if $\Delta_{B}$ does not intersect the drawing of $C_{i} \setminus B.$
Moreover, we call a bend $B$ \emph{shrinking} if $\Delta_{B}$ is a subset of the disk that bounds the trace of $C_{i}$ and \emph{expanding} otherwise.
We call an expanding bend $B$ \emph{loose} if $\Delta_{B}$ does not intersect the drawing of any cycle in $\Ccal$ that is not $C_{i}$ and $\emph{tight}$ if any expanding bend $B'$ of $C_{i}$ where $B'$ is a subpath of $B$ is not loose.
We also define the \emph{height} of a bend $B$ as the non-negative integer $t + 1,$ where $t$ is the maximum value such that $B$ intersects at least one of $P_{j + t}$ or $P_{j - t}$ and $j + t \leq s$ or $j - t \geq 1$ (if $j + t$ exceeds $s$ we consider $P_{j + t} = P_{s}$ and if $j - t$ drops below $1$ we similarly assume that $P_{j - t} = P_{1}$).
Finally, we call any minimal $V(P_{1})$-$V(P_{s})$ path contained in a cycle $C \in \Ccal$ a \emph{pillar} of $C.$

First, notice that an expanding bend $B$ of a cycle $C \in \Ccal$ that is neither loose nor tight must contain a subpath $B'$ that is a loose expanding bend of $C.$
Moreover, if $B$ is any expanding bend of a cycle $C \in \Ccal,$ then the graph obtained by replacing the subpath $B$ of $C$ with the path $P_{B}$ defines a cycle $C'$ and the disk that is bounded by the trace of $C'$ strictly contains the disk that bounds the trace of $C,$ hence the name expanding.
In the case of a shrinking bend the opposite happens, particularly we obtain a cycle that bounds a strictly smaller area.
Additionally, notice that for every $C \in \Ccal,$ since $\Pcal$ is an exposed transaction, $C$ contains at least one pillar and since $C$ is a cycle, it must contain an even number of pairwise disjoint pillars.

We proceed to prove the following statement by an induction on the number of cycles in $\Ccal$: If no cycle in $\Ccal$ contains a loose expanding bend then for every $i \in [q],$
\begin{itemize}
\item every expanding bend of $C_{i}$ has height at most $q - i + 1,$
\item $C_{i}$ contains exactly two disjoint pillars $R_{i},$ $i \in [2],$ and
\item every shrinking bend of $C_{i}$ that is a subpath of $R_{i},$ $i \in [2],$ has height at most $q - i + 1.$
\end{itemize}

Now, let $k \in [1, q-1]$ and $\Ccal^{k}$ be the subset of $\Ccal$ that consists of its $k$ outermost cycles.
Assume that the statement above holds for $\Ccal^{k}.$
We prove that it also holds for $\Ccal^{k + 1}.$
Assume that no cycle in $\Ccal^{k + 1}$ contains a loose expanding bend.
Let $B$ be any tight expanding bend of $C_{q - k}$ at $P \in \Pcal$ which is the innermost cycle contained in $\Ccal^{k+1}$ and the only cycle of $\Ccal^{k+1}$ not contained in $\Ccal^{k}.$
Notice that any subpath $B'$ of $B$ whose endpoints are the endpoints of a maximal subpath of a path in $\Pcal$ that is drawn in $\Delta_{B}$ is also an expanding bend of $C_{q - k}$ where $\Delta_{B'} \subseteq \Delta_{B}.$
Since $B$ is tight, any such subpath $B'$ must also be tight.
This implies that the disk $\Delta_{B'}$ for any such subpath $B'$ intersects the drawing of some other cycle of $\Ccal.$
Since $B$ is expanding and hence $\Delta_{B}$ is not a subset of the disk that bounds the trace of $C_{q - k},$ it follows that $\Delta_{B'}$ intersects the drawing of cycles only in $\Ccal^{k}.$
In fact, it follows that any such intersection corresponds to an expanding bend of some cycle in $\Ccal^{k}$ at $P$ that is contained in $\Delta_{P}.$
Then, by assumption, if all these expanding bends are tight, it must be that the maximum height among all of them is $k.$
Hence, it must be that the height of $B$ is $k + 1.$

Moreover, it is straightforward to observe that $C_{q - k}$ contains exactly two disjoint pillars.
Indeed, if there were at least four disjoint pillars in $C_{q - k}$ then it is easy to see that there would exist an expanding bend of $C_{q - k}$ on either $P_{1}$ and/or $P_{s}$ whose height would be larger than $k + 1,$ which would imply that it is a loose expanding bend, which by assumption cannot exist.

Finally, it is also straightforward to observe that the existence of a shrinking bend of $C_{q - k}$ that is a subpath of either of the two pillars of $C_{q - k}$ of height more than $k + 1,$ implies the existence of an expanding bend of the same height, which as we proved cannot exist.

We define a new set of cycles $\Ccal' \coloneqq \langle C'_{1}, \ldots, C'_{q} \rangle,$ where $C'_{i},$ $i \in [q]$ is obtained from $C_{i}$ by iteratively replacing $B$ by $P_{B}$ for an expanding bend $B$ of $C$ until there is not loose expanding bend of $C.$
We do this starting from the outermost cycle of $\Ccal$ and working towards the innermost.
This can be done in time $\Ocal(q \cdot p \cdot |G|).$
Then the implications of the arguments above hold for $\Ccal'.$
Note that $C'_{q} = C_{q}.$

We now define a transaction $\Pcal' \subseteq \Pcal$ of order $p + q$ by skipping the first and last $q$ paths in $\Pcal$ and then choosing the first path from every bundle of $q$ many consecutive paths from the remaining paths of $\Pcal.$
By the arguments above the span of every expanding bend of every cycle in $\Ccal'$ at $P_{1}$ or $P_{s}$ is at most $q$ and therefore by definition no such bend can intersect any of the paths in $\Pcal'.$
It follows that any other bend of any cycle must be a bend that is a subpath of a pillar of the given cycle.
By definition of $\Pcal',$ this implies that any such bend of any cycle can intersect at most one path in $\Pcal'.$
This implies that the transaction $\Pcal'$ which is a planar and exposed transaction in $(H, \Omega)$ is almost orthogonal to our new set of cycles $\Ccal'.$
We need to make one more adjustment to each of the paths in $\Pcal'$ to finally make them orthogonal to $\Ccal'.$
We define the final transaction $\Qcal$ where each path $Q \in \Qcal$ is a path obtained from a path in $P \in \Pcal'$ that shares the same endpoints with $P$ and is defined by starting from one of the two endpoints of $P$ and while moving towards the other endpoint of $P$ taking any possible ``shortcut'', that is by greedily following along any bend of any pillar of a cycle in $\Ccal'$ that appears until we reach the other endpoint of $P.$
Since as we already observed any bend of any pillar of a cycle can intersect at most one path in $\Pcal',$ it is implied that $\Qcal$ is a linkage.
Moreover, by construction it now follows that $\Qcal$ is orthogonal to $\Ccal'$ as desired.

Please notice that, since by definition, $\rho^{*}$ is a restriction of $\delta,$ it is implied that all cycles of the newly defined nest $\Ccal'$ are grounded in $\delta,$ since by definition, the disks of the expanding bends we used to define the cycles in $\Ccal'$ did not intersect the vortex cells in $\Ucal.$
With this in mind, we can now conclude by defining a railed flat vortex in $\delta$ whose flat vortex society is still $(H, \Omega)$ by using the newly defined nest $\Ccal'$ and by cropping $q$ of the $p + q$ many paths to obtain the radial linkage that is required for the railed nest of the flat vortex.
Now, since to define $C'_{1}$ we only consider expanding bends of $C_{1}$ it also follows that the newly created vortex cell for our flat vortex contains the one of $(\Delta, \rho).$
Finally, since the shortcuts we take are subpaths of pillars it follows that $\Qcal$ contains at least one edge from the interior of the disk bounded by the trace of $C'_{1}$ and therefore is exposed with respect to our new railed flat vortex and we can conclude.
\end{proof}

\paragraph{Clean transactions.} As we already mentioned one of the ways we seek to make progress when refining our flat vortices is splitting them in two. However, we have to be careful.
We have to make sure that after we have successfully split a flat vortex, all $\Zcal$-red cells of our $\Sigma$-decomposition that were previously located deep within the cell in the middle of our original flat vortex, can now be located in the new cells in the middle of the two resulting flat vortices.
To achieve this we want that the exposed planar transaction that is isolated, separating, and rural, that we will use in order to split, to also be ``clean'' of $\Zcal$-red cells.

Let $\Zcal \in \Kbbb^{-}$ and $\delta$ be a vortex-free $\Sigma$-decomposition of a graph $G$ in a surface $\Sigma$.
Let $s \in \Nbbb_{\geq 1}$, $(\Delta, \rho)$ be a railed flat vortex of $\delta$ equipped with the nest $\Ccal = \langle C_{1}, \ldots, C_{s} \rangle$, and $(H, \Omega)$ be the flat vortex society of $(\Delta, \rho)$.
Let $\mathcal{P}$ be a transaction in $(H, \Omega)$ that is planar and exposed.
A \emph{parcel} of $\mathcal{P}$ is a cycle $C$ that is the union of two vertex-disjoint subpaths of $C_s$, say $X_1,X_2$, and two paths $P_1 \neq P_2 \in \mathcal{P}$ such that each $X_{i}$ is a subpath of $P_{i},$ $i \in [2],$ and $C$ intersects no other member of $\mathcal{P}.$
Notice that all but the boundary paths of $\Pcal$ belong to exactly two parcels and the boundary paths belong to exactly one parcel each.
We call a parcel $O$ of $\Pcal$ $\Zcal$-\emph{red} if there exists a $\Zcal$-red cell of $\delta$ in the $\delta$-influence of $O$.
We call $\mathcal{P}$ $\Zcal$-\emph{clean} if no parcel of $\Pcal$ is $\Zcal$-red.

From this point forward, let $\mathsf{q} \colon \Nbbb \to \Nbbb$ be the function defined as $\mathsf{q}(x) \coloneqq 2(f_{\ref{@horkheimer}}(1, x) + 2)$, for $x \in \Nbbb$.

\begin{lemma}\label{find_clean_transaction} Let $\Zcal \in \Kbbb^{-}$, $k, p \in \Nbbb_{\geq 1}$, $q \geq \mathsf{q}(h_{\Zcal})$, $\delta$ be a vortex-free $\Sigma$-decomposition of a graph $G$ in a surface $\Sigma$, $(\Delta, \rho)$ be a railed flat vortex in $\delta$ equipped with the nest $\Ccal$, and $(H, \Omega)$ be the flat vortex society of $(\Delta, \rho)$.
Let $\Pcal$ be a planar and exposed transaction in $(H, \Omega)$ of order $k \cdot \mathsf{q}(h_{\Zcal}) \cdot p$ that is orthogonal to $\Ccal.$
Then there exists either
\begin{itemize}
\item a mixed $\Zcal$-packing of size $k$ in $H$ or
\item a $\Zcal$-clean transaction $\Pcal' \subseteq \Pcal$ of order $p.$
\end{itemize}
Moreover, there exists an algorithm that finds one of the two outcomes in time $\mathcal{O}(G).$
\end{lemma}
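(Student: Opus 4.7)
The plan is to scan $\Pcal$ from left to right and, simultaneously, look for a long enough run of consecutive clean parcels (yielding the required $\Zcal$-clean sub-transaction) or for enough well-separated red parcels (which, combined with the Redrawing Lemma (\cref{@horkheimer}), will yield the required mixed $\Zcal$-packing). Enumerate $\Pcal = \langle P_{1},\ldots,P_{N}\rangle$ in the order induced by $\Omega$, with $N = k \cdot \mathsf{q}(h_{\Zcal}) \cdot p$, and for $i \in [N-1]$ let $O_{i}$ denote the parcel formed by the consecutive pair $(P_{i},P_{i+1})$; by the remark following the definition of parcels these are all the parcels of $\Pcal$.

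I would then run the following greedy procedure on $O_{1},\ldots,O_{N-1}$: maintain a counter $s$ for the current run of consecutive clean parcels and an ordered set $S$ of ``chosen'' red parcels, initially $s=0$ and $S=\emptyset$. On a clean parcel, increment $s$; as soon as $s=p-1$, stop and return the sub-transaction $\Pcal' \coloneqq \langle P_{i-p+2},\ldots,P_{i+1}\rangle$, whose $p-1$ internal parcels are exactly the consecutive clean parcels $O_{i-p+2},\ldots,O_{i}$, so $\Pcal'$ is $\Zcal$-clean of order $p$. On a red parcel, reset $s \coloneqq 0$ and add the current parcel to $S$ whenever $S$ is empty or the last element of $S$ lies at position at most $i - \mathsf{q}(h_{\Zcal})$. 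If the scan terminates without returning a clean transaction, no $p-1$ consecutive clean parcels exist, so after each greedy choice the forced skip of $\mathsf{q}(h_{\Zcal})-1$ positions is followed by at most $p-1$ positions until the next red, giving consecutive additions to $S$ at most $\mathsf{q}(h_{\Zcal})+p-2$ apart. A direct computation (using $\mathsf{q}(h_{\Zcal}) \geq 4$ and $N = k\cdot \mathsf{q}(h_{\Zcal})\cdot p$) then yields $|S| \geq k$.

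The technical core is to convert the first $k$ chosen reds $O_{r_{1}},\ldots,O_{r_{k}}$ into a mixed $\Zcal$-packing. Around each $O_{r_{j}}$ I will assemble a local railed flat vortex $(\Delta_{j},\rho_{j})$ of order $f_{\ref{@horkheimer}}(1,h_{\Zcal})$: the $l$-th cycle of the mini-nest (for $l = 1,\ldots,f_{\ref{@horkheimer}}(1,h_{\Zcal})$) is obtained by joining a subpath of $P_{r_{j}-l}$ and a subpath of $P_{r_{j}+l+1}$ via two arcs of $C_{s}$ on either side of $O_{r_{j}}$, while the orthogonal radial linkage is supplied by appropriate subpaths of cycles of $\Ccal$ crossing these ``sides''. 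The internal disk of $(\Delta_{j},\rho_{j})$ is set up to contain the disk bounded by $O_{r_{j}}$ and hence a $\Zcal$-red witness cell $c_{j}$ together with a $c_{j}$-invading $\nonplanar(H_{j})$-inflated copy for some $H_{j} \in \Zcal$. The enforced separation $\mathsf{q}(h_{\Zcal}) = 2(f_{\ref{@horkheimer}}(1,h_{\Zcal})+2)$ between consecutive chosen reds ensures that the $\Delta_{j}$ are pairwise disjoint sub-disks of $\Delta$, so invoking \cref{@horkheimer} with $k=1$ on each $(\Delta_{j},\rho_{j})$ yields a $\Zcal$-host in $\inG_{\delta}(\Delta_{j})$; the union of these $k$ hosts is the sought mixed $\Zcal$-packing of size $k$ in $H$.

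The hard part will be the topological verification of the mini railed flat vortex construction: checking that the proposed cycles are grounded in $\delta$, bound strictly nested disks whose innermost contains the parcel and a witness red cell, and admit an orthogonal radial linkage of the required order drawn only from subpaths of $\Ccal$ and $\Pcal$. Once this construction is in place, the $\Ocal(|G|)$ running time is immediate, since the greedy scan is linear in the number of parcels (assuming the $\Zcal$-red cells of $\delta$ have been precomputed) and the $k$ invocations of \cref{@horkheimer} with $k=1$ contribute only an additive $\Ocal_{h_{\Zcal}}(|V(G)|)$ term.
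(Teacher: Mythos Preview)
Your counting argument is essentially the paper's: the paper assumes no clean sub-transaction of order $p$ exists, thins $\Pcal$ by taking every $p$-th path to obtain $\Qcal$ of order $k\cdot\mathsf{q}(h_{\Zcal})$, and observes that every parcel of $\Qcal$ is then $\Zcal$-red. Your greedy scan for a run of $p-1$ consecutive clean parcels versus $k$ well-spaced red parcels is an equivalent rephrasing, and your bound $|S|\ge k$ goes through.

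The gap is in your mini-nest construction. You propose that the $l$-th cycle of the nest around $O_{r_j}$ is built from subpaths of $P_{r_j-l}$, $P_{r_j+l+1}$, and \emph{two arcs of the single cycle $C_s$}. But then for $l'>l$ the $C_s$-arc from the endpoint of $P_{r_j-l'}$ to that of $P_{r_j+l'+1}$ strictly contains the corresponding $C_s$-arc at level $l$, so the cycles at levels $l$ and $l'$ share vertices on $C_s$ and are not disjoint. This is not a verification issue you can resolve later; the object you describe simply is not a nest.

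The repair is exactly what the paper does: instead of closing every level with $C_s$, use the \emph{whole} nest $\Ccal$. The paper views $\cupall\Qcal\cup\cupall\Ccal$ as a grid of $k\cdot\mathsf{q}(h_{\Zcal})$ ``columns'' (the paths of $\Qcal$) crossed orthogonally by $2q\ge 2\mathsf{q}(h_{\Zcal})$ ``rows'' (the two arcs that each $C_j\in\Ccal$ contributes, one on either side of $c_{\rho}$, since the transaction is exposed and orthogonal). In this grid one finds $k$ pairwise disjoint $\mathsf{q}(h_{\Zcal})$-walls, each positioned so that its central brick lies in a red parcel of $\Qcal$; the layers of such a wall are automatically disjoint because successive layers use distinct columns \emph{and} distinct rows. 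The paper then invokes \cref{@misfortune}, which packages the layers of each wall as a railed nest and applies \cref{@horkheimer} via \cref{flat_vortex_contains_host}. If you wish to keep your ``direct railed flat vortex'' framing, you must similarly use arcs of a different $C_l\in\Ccal$ at each level $l$, not arcs of $C_s$; once you do that, your radial linkage from subpaths of the remaining $\Ccal$-cycles (or, dually, of the unused transaction paths) works, and your proof becomes the paper's proof seen from a different angle.
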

\begin{proof}

If there exists a $\Zcal$-clean transaction $\Pcal' \subseteq \Pcal$ of order $p$ we can immediately conclude.
Therefore we may assume that such a transaction does not exist.
Let $\Lambda_{\Pcal}$ be an ordering of the paths in $\Pcal$ such that while traversing $\Omega$ we first encounter one of the two endpoints for all paths in $\Pcal$ and then all others.
Let $r \coloneqq k \cdot \mathsf{q}(h_{\Zcal}).$
We define a transaction $\Qcal \subseteq \Pcal$ by selecting the first path from every bundle of $p$ many consecutive paths in $\Pcal$ respecting the ordering $\Lambda_{\Pcal}.$
Note that the order of $\Qcal$ is $r.$
Moreover, since there is no $\Zcal$-clean subtransaction of $\Pcal$ of order $p,$ it must be that every parcel of $\Qcal$ is $\Zcal$-red.
Our goal is to show that the graph $\cupall \Qcal \cup \cupall \Ccal$ contains $k$ pairwise disjoint $\mathsf{q}(h_{\Zcal})$-walls $W_{i},$ $i \in [k],$ whose central bricks are $\Zcal$-red as subgraphs.
Notice that in this case we can conclude by applying (the arguments in the proof of) \autoref{@misfortune} to every $W_{i},$ $i \in [k],$ and obtain as a result a mixed $\Zcal$-packing of size $k$ in $H.$
To see that this is the case, observe that in $\cupall \Qcal \cup \cupall \Ccal,$ since $|\Ccal| \geq \mathsf{q}(h_{\Zcal}),$ $|\Qcal| = k \cdot \mathsf{q}(h_{\Zcal}),$ and $\Qcal$ is exposed and orthogonal to $\Ccal,$ there is a system of $k \cdot \mathsf{q}(h_{\Zcal})$ (the paths in $\Qcal$) many paths that orthogonally cross $2 \cdot \mathsf{q}(h_{\Zcal})$ (obtained from $\Ccal$) many paths.
These can essentially act as the $k$ pairwise disjoint $\mathsf{q}(h_{\Zcal})$-walls that we are looking for.
\end{proof}

\paragraph{Making progress.} We are now ready to proceed with the main lemma on how we make progress in refining our railed flat vortices under the presence or a large enough $\Zcal$-clean transaction in their flat vortex society.

\begin{lemma}\label{split_vortex} Let $\Zcal \in \Kbbb^{-},$ $q \in \Nbbb_{\geq 1},$ $p \coloneqq 4q + 2,$ $\delta$ be a vortex-free $\Sigma$-decomposition of a graph $G$ in a surface $\Sigma,$ $(\Delta, \rho = (\Gamma_{\rho}, \Dcal_{\rho}, c_{\rho}))$ be a railed flat vortex in $\delta$ such that $c_{\rho}$ contains a $\Zcal$-red cell of $\delta$, and $(H, \Omega)$ be the flat vortex society of $(\Delta, \rho)$.
Let $\Pcal$ be a $\Zcal$-clean transaction in $(H, \Omega)$ of order $p$ that is orthogonal to $\Ccal.$
Then there exist
\begin{itemize}
\item two railed flat vortices $(\Delta^{1}, \rho^{1} = (\Gamma^{1}_{\rho}, \Dcal^{1}_{\rho}, c^{1}_{\rho}))$ and $(\Delta^{2}, \rho^{2} = (\Gamma^{2}_{\rho}, \Dcal^{2}_{\rho}, c^{2}_{\rho}))$ in $\delta$ of order $q$ such that
\begin{itemize}
\item both $\Delta^{1}$ and $\Delta^{2}$ are subsets of $\Delta,$
\item $\Delta^{1}$ and $\Delta^{2}$ are disjoint,
\item $c^{1}_{\rho}$ and $c^{2}_{\rho}$ both contain a $\Zcal$-red cell, and
\item all $\Zcal$-red cells of $\delta$ contained in $c_{\rho}$ are contained in $c^{1}_{\rho} \cup c^{2}_{\rho},$ or
\end{itemize}
\item a compression of $(\Delta, \rho)$ with the flat vortex society $(H', \Omega')$ such that $H'$ is a proper subgraph of $H.$
\end{itemize}
Moreover, there exists an algorithm that finds one of the two outcomes in time $\mathcal{O}(G^{2}).$
\end{lemma}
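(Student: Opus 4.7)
The plan is to exploit the $\Zcal$-clean transaction $\Pcal=\{P_1,\dots,P_{4q+2}\}$ to cut $(\Delta,\rho)$ in two. By the planarity of $\Pcal$ (\cref{transaction_isoseprur}) and the vortex-freeness of $\delta$, the drawings of the paths in $\Pcal$ form pairwise disjoint arcs in $\Delta$; label them according to the cyclic order of their endpoints on $\Omega$. Each $P_i$ crosses every $C_j\in\Ccal$ in two components (orthogonality) and, since $\Pcal$ is exposed, enters $c_\rho$, so its drawing divides $\Delta$ into two pieces. First I would look for a consecutive separator pair $(P_i,P_{i+1})$ with $i\in[q+1,3q+1]$ such that the strip region $R_i^{\mathrm{mid}}$ of $\Delta$ between $P_i$ and $P_{i+1}$ has no $\Zcal$-red cell, while both sides of $P_i\cup P_{i+1}$ in $c_\rho$ contain a $\Zcal$-red cell; the buffer $i\in[q+1,3q+1]$ guarantees at least $q$ paths on each side for the two new nests of order $q$.

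In the split case, if such an $i$ exists, for each $j\in[q]$ define $C^1_j$ by gluing the arc of $C_j$ lying on the ``left'' of its two crossings with $P_i$ to the subpath of $P_i$ between these two crossings that lies inside the disk bounded by the trace of $C_j$. The family $\Ccal^1=\langle C^1_1,\dots,C^1_q\rangle$ is grounded in $\delta$ and forms a nest, inherited from the nesting of $\Ccal$. Define $\Delta^1$ as the disk bounded by the trace of $C^1_q$ on the side containing the left portion of $c_\rho$, and $c^1_\rho$ as that left portion. The radial linkage $\Rcal^1$ of order $q$ is built by taking, for each $h\in[q]$, the entering leg of $P_h$ (a subpath from $\Omega$ down to its first crossing with $C^1_1$); each such leg crosses every $C^1_j$ in one component, using the orthogonality of $P_h$ to $\Ccal$ and its disjointness from $P_i$. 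The construction of $(\Delta^2,\rho^2)$ using $P_{i+1}$ and $P_{3q+3},\dots,P_{4q+2}$ is symmetric. Disjointness of $\Delta^1,\Delta^2$ follows from the buffer $R_i^{\mathrm{mid}}$; combined with the defining property of $i$, every $\Zcal$-red cell originally in $c_\rho$ ends up in $c^1_\rho\cup c^2_\rho$, and both central cells receive one.

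In the compression case, if no $i$ as above exists, then an analysis of how the $\Zcal$-red cells in $c_\rho$ distribute relative to the $P_i$ as $i$ varies over $[q+1,3q+1]$ yields an index $i^*$ and a side (say, the left) of $P_{i^*}$ such that the left side inside $c_\rho$ contains all $\Zcal$-red cells of $c_\rho$, while the right side of $P_{i^*}$ inside $c_\rho$ is nonempty. Perform the one-sided version of the split construction using $P_{i^*}$ as separator: for each $j\in[q]$, define $C'_j$ by gluing the left arc of $C_j$ between its two crossings with $P_{i^*}$ to the inside subpath of $P_{i^*}$ between these two crossings. Set $\Delta'$ to be the disk bounded by the trace of $C'_q$, $c_{\rho'}$ the left portion of $c_\rho$, and $\Rcal'$ the entering legs of $P_1,\dots,P_q$. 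This yields a railed flat vortex $(\Delta', \rho')$ of order $q$ whose flat vortex society $H' = G \cap \Delta'$ is a proper subgraph of $H$ (it misses at least the portions of $G$ drawn on the right side of $P_{i^*}$ inside $\Delta$), giving a compression of $(\Delta, \rho)$ with index $i=q$ in the sense of the definition.

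The main obstacle is the careful geometric-combinatorial verification that the constructed objects satisfy all requirements: nestedness of the new cycles, orthogonality and proper endpoint placement of the new radial linkages, disjointness of the new disks, and rurality of the new flat vortex societies. These all follow from the planar non-crossing structure of $\Pcal$ inside $\Delta$, the two-component orthogonality with $\Ccal$, and the vortex-freeness of $\delta$. The $\mathcal{O}(|G|^2)$ running time is achieved by determining, for each of the $2q+1$ candidate separators and each already-known $\Zcal$-red cell, on which side it lies, and then performing the linear-time combinatorial construction of the cycles and linkages.
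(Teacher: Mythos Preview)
Your overall dichotomy (split versus compression) matches the paper's, but your nest construction has a genuine gap. You build every new cycle $C^1_j$ by gluing the left arc of $C_j$ to the subpath of a \emph{single} path $P_i$ between its two crossings with $C_j$. These subpaths of $P_i$ are nested: the one for $C_1$ is contained in the one for $C_2$, and so on. Consequently $C^1_1,\dots,C^1_q$ all share a common subpath of $P_i$ and are not pairwise vertex-disjoint, so $\Ccal^1$ is not a nest. The same problem recurs in your compression construction.

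The paper avoids this by spending $q$ paths of $\Pcal$ per side on the nest alone: writing the $4q$ non-boundary paths as four blocks $\Tcal^1,\Qcal^1,\Qcal^2,\Tcal^2$ of size $q$ each, it sets $C^1_j$ to be the cycle in $C_j\cup Q^1_j$ (a \emph{different} path for each $j$) whose trace encloses the left piece of $c_\rho$, and symmetrically with $\Qcal^2$ on the right. Disjointness of the $C^1_j$ then follows from disjointness of the $C_j$ and the $Q^1_j$. The radial linkages come from $\Tcal^1$ and $\Tcal^2$. This is exactly why $p=4q+2$: two boundary paths, $2q$ paths for the two new nests, and $2q$ paths for the two new rail systems. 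There is also no search over indices $i$; the split is always at the fixed middle, and the compression case is simply the event that one of the two resulting central cells contains no $\Zcal$-red cell (so that single side, with its smaller society, already witnesses a compression).
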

\begin{proof} By \autoref{transaction_isoseprur} we have that the $\Pcal$-strip society $(H', \Omega')$ of $(H, \Omega)$ is isolated, separating, and rural.
It is straightforward to obtain a rendition $\rho''$ of $(H, \Omega)$ by combining a vortex-free rendition of $(H', \Omega')$ and $\rho$ such that $c_{\rho}$ is divided into exactly two vortex cells $c^{1}_{\rho''}$ and $c^{2}_{\rho''}.$
Moreover, we can define $\rho''$ as a \emph{restriction} of $\delta,$ i.e., so that every cell of $\rho^{*}$ is a cell of $\delta \cap \Delta,$ except its two vortex cells and cells neighbouring the two vortex cells that are traversed by the two boundary paths of $\Pcal,$ which may be subsets of their corresponding cells in $\delta \cap \Delta.$
Note that the fact that $c_{\rho}$ is divided into exactly two vortex cells in $\rho''$ follows from the fact that $\Pcal$ is exposed and orthogonal to $\Ccal.$

Now, let $I_{1}$ and $I_{2}$ be the two minimal segments of $\Omega$ such that $\Pcal$ is a $V(I_{1})$-$V(I_{2})$ linkage in $G.$
We consider an ordering $\langle P_{0}, P_{1}, \ldots, P_{4q}, P_{p - 1} \rangle$ of the paths in $\Pcal$ such that the endpoints of the paths in $I_{1}$ appear before any endpoint in $I_{2}$ while traversing $\Omega.$
The two paths $P_{0}$ and $P_{p-1}$ are the boundary paths of $(H', \Omega').$
We consider a partition of all non-boundary paths in $\Pcal$ in four sets of $q$ many consecutive paths.
\begin{align*}
\Tcal^{1} &\coloneqq  \{ P_{1}, \ldots, P_{q} \} = \{ T^{1}_{q}, \ldots, T^{1}_{1} \} \\
\Qcal^{1} &\coloneqq \{ P_{q + 1}, \ldots, P_{2q} \} = \{ Q^{1}_{q}, \ldots, Q^{1}_{1} \} \\
\Qcal^{2} &\coloneqq \{ P_{2q + 1}, \ldots, P_{3q} \} = \{ Q^{2}_{1}, \ldots, Q^{2}_{q} \} \\
\Tcal^{2} &\coloneqq \{ P_{3q + 1}, \ldots, P_{4q} \} = \{ T^{2}_{1}, \ldots, T^{2}_{q} \}
\end{align*}

Let $\Ccal = \langle C_{1}, \ldots, C_{q} \rangle$ be the nest around $c_{\rho}$ in $\rho.$
We define two subdisks $\Delta^{1}$ and $\Delta^{2}$ of $\Delta$ as follows.
$\Delta^{1}$ is defined as the disk bounded by the trace of the cycle in $C_{q} \cup Q^{1}_{1}$ that avoids $Q^{2}_{1}$ while $\Delta^{2}$ is defined in a symmetric way as the disk bounded by the trace of the cycle in $C_{q} \cup Q^{2}_{1}$ that avoids $Q^{1}_{1}.$
Clearly each of the two vortex cells $c^{i}_{\rho''},$ $i \in [2],$ is a subset of exactly one of $\Delta^{i},$ $i \in [2].$
Therefore we can assume without loss of generality that $c^{1}_{\rho''}$ is a subset of $\Delta^{1}$ while $c^{2}_{\rho}$ is a subset of $\Delta^{2}$ respectively.
Moreover, since $c_{\rho}$ contains a $\Zcal$-red cell of $\delta,$ by construction at least one of $c^{i}_{\rho''},$ $i \in [2]$ must also contain a $\Zcal$-red cell and all $\Zcal$-red cells that are subsets of $c_{\rho}$ are subsets of $c^{1}_{\rho''} \cup c^{2}_{\rho''}.$
The disks $\Delta^{1}$ and $\Delta^{2}$ are our candidate disks for the two new railed flat vortices.

For $i \in [2],$
let $x^{i}$ be a point drawn in $c^{i}_{\rho''}.$
We proceed to define $q$ many concentric cycles in $\Delta^{i}$ ``around'' $x^{i}$ as follows.
For every $j \in [q],$ let $C^{i}_{j}$ be a cycle in $C_{j} \cup Q^{i}_{j}$ such that the disk bounded by the trace of $C^{i}_{j}$ contains $x^{i}.$
The fact that $\Pcal$ is orthogonal to $\Ccal$ implies that all these cycles are well-defined and pairwise disjoint.

We define $\Ccal^{i} \coloneqq \langle C^{i}_{1}, \ldots, C^{i}_{q} \rangle$.
Note that by definition $\Delta^{i}$ corresponds to the disk bounded by the trace of $\Ccal^{i}_{q}$ that contains $x^{i}.$
Moreover, let $\Delta^{i}_{1}$ be the disk bounded by the trace of $C^{i}_{1}.$
By construction it must be that $c^{i}_{\rho''}$ is contained within $\Delta^{i}_{1}.$
It follows that $\Ccal^{i}$ is a nest around $\Delta^{i}_{1}$ in $\delta$ of order $q$ and that $\Delta^{1}_{1} \cup \Delta^{1}_{2}$ contain all $\Zcal$-red cells of $\delta$ that are subsets of $c_{\rho}.$
Now, consider the society $(H^{i} \coloneqq G \cap \Delta^{i}, \Omega^{i} \coloneqq \Omega_{\Delta^{i}})$ and let $\rho^{i} = (\Gamma^{i}_{\rho}, \Dcal^{i}_{\rho}, c^{i}_{\rho})$ be a cylindrical rendition of $(H^{i}, \Omega^{i})$ around a cell $c^{i}_{\rho}$ in $\Delta^{i}$ such that
\begin{itemize}
\item $\Gamma^{i}_{\rho} = \Gamma \cap \Delta^{i},$
\item $\Dcal^{i}_{\rho}$ contains $c^{i}_{\rho}$ and all cells of $\delta$ contained in $\Delta^{i}$ that are not contained in $\Delta^{i}_{1},$ and
\item $c^{i}_{\rho}$ is the disk $\Delta^{i}_{1}$ minus the nodes of $\rho^{i}$ on the boundary of $\Delta^{i}_{1}.$
\end{itemize}

To conclude that $(\Delta^{1}, \rho^{1})$ and $(\Delta^{2}, \rho^{2})$ are the desired flat vortices, we need to find a radial linkage of order $q$ for both nests $\Ccal^{1}$ and $\Ccal^{2}.$
Towards this we make use of the transactions $\Tcal^{1}$ and $\Tcal^{2}.$
For every path $P \in \Tcal^{i},$ let $\mathsf{r}^{i}(P)$ be the minimal $V(I_{1})$-$V(C^{i}_{q})$ subpath of $P.$
For $i \in [2],$ let $\mathsf{r}^{i}(\Tcal^{i}) \coloneqq \{ \mathsf{r}^{i}(T^{i}_{j}) \mid j \in [q] \}.$
By definition $\mathsf{r}^{i}(\Tcal^{i})$ is a radial linkage in $\rho^{i}.$
Moreover, since $\Tcal^{i}$ is orthogonal to $\Ccal$ it follows by construction that $\mathsf{r}^{i}(\Tcal^{i})$ is orthogonal to $\Ccal^{i}.$
Hence we conclude that the pair $(\Ccal^{i}, \mathsf{r}^{i}(\Tcal^{i}))$ is a railed nest around $c^{\rho}_{i}$ of order $q$ as desired.

To finish the proof we observe the following.
If $c^{i}_{\rho}$ contains a $\Zcal$-red cell of $\delta$ for both $i \in [2]$ the proof is complete.
Otherwise, assume without loss of generality, that only $c^{2}_{\rho}$ contains a $\Zcal$-red cell of $\delta.$
In this case we argue that $(\Delta^{2}, \rho^{2})$ is a compression of $(\Delta, \rho).$
Indeed this follows immediately since by definition $\Delta^{2} \subsetneq \Delta$ and at least the paths in $\Tcal^{1}$ are not contained in $H^{2}.$
\end{proof}

\paragraph{Obtaining a bounded number of bounded depth railed flat vortices.}

We are now ready to inductively apply \autoref{split_vortex} to prove that given a flat vortex collection of sufficiently large order we can either find a large mixed $\Zcal$-packing in our graph or refine it to a railed flat vortex collection of bounded size where each flat vortex has bounded depth and moreover such that all $\Zcal$-red cells of the starting decomposition of our graph are cornered within the interior of the flat vortices of our refined collection.

\medskip
Let $\delta$ be a $\Sigma$-decomposition of a graph $G$ in a surface $\Sigma$ and $\Zcal \in \Kbbb^{-}.$
We call $\Zcal$-\emph{enclosing} a railed flat vortex collection $\mathbf{\Delta}$ in $\delta$ such that
\begin{itemize}
\item for every $(\Delta, \rho) = (\Gamma_{\rho}, \Dcal_{\rho}, c_{\rho})) \in \mathbf{\Delta},$ $c_{\rho}$ contains a $\Zcal$-red cell of $\delta$ and
\item all $\Zcal$-red cells of $\delta$ are subsets of $\bigcup_{(\Delta, \rho) \in \mathbf{\Delta}} c_{\rho}.$
\end{itemize}

\begin{lemma}\label{obtain_vortex_collection} Let $\Zcal \in \Kbbb^{-},$ $k \in \Nbbb_{\geq 1},$ $q \geq \mathsf{q}(h_{\Zcal})$, $\delta$ be a vortex-free $\Sigma$-decomposition of a graph $G$ in a surface $\Sigma.$
Let $\mathbf{\Delta}$ be a $\Zcal$-enclosing railed flat vortex collection of size less that $k$ in $\delta.$
Then there exists either
\begin{itemize}
\item a mixed $\Zcal$-packing of size $k$ in $G$ or
\item a $\Zcal$-enclosing railed flat vortex collection of size less than $k$ in $\delta$ such that for every $(\Delta, \rho) \in \mathbf{\Delta}',$ the flat vortex society of $(\Delta, \rho)$ has depth at most $2q + q \cdot (k \cdot \mathsf{q}(h_{\Zcal}) \cdot (4q + 2) + q) + 2q - 1$ and $\Delta$ is a subset of $\Delta'$ for some $(\Delta', \rho') \in \mathbf{V}.$
\end{itemize}
Moreover, there exists an algorithm that finds one of the two outcomes in time $\mathcal{O}(|G|^{3}).$
\end{lemma}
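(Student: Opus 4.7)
The plan is to iteratively refine $\mathbf{\Delta}$ by a sequence of splits and compressions until every member has flat vortex society of depth at most $D \coloneqq 2q + q\cdot(k\cdot\mathsf{q}(h_{\Zcal})\cdot(4q+2) + q) + 2q - 1$. First I observe that every member of $\mathbf{\Delta}$ is automatically $\Zcal$-progressive: its railed nest has order $q \geq \mathsf{q}(h_{\Zcal}) \geq f_{\ref{@horkheimer}}(1,h_{\Zcal})$ and its central cell contains a $\Zcal$-red cell of $\delta$, so \cref{flat_vortex_contains_host} applies. Consequently, if at any point the size of $\mathbf{\Delta}$ reaches $k$, \cref{@sprachphilosophischen} immediately yields a mixed $\Zcal$-packing of size $k$ and we are done.

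The body of the main loop works as follows. While there exists $(\Delta,\rho) \in \mathbf{\Delta}$ whose flat vortex society $(H,\Omega)$ has depth strictly greater than $D$, I extract a transaction $\Pcal$ in $(H,\Omega)$ of order $2q + p$, where $p \coloneqq q(k\cdot\mathsf{q}(h_{\Zcal})\cdot(4q+2) + q + 2)$; a direct calculation verifies $D + 1 = 2q + p$, so such a $\Pcal$ exists. Applying \cref{exposure} either returns a compression of $(\Delta,\rho)$---which I use to substitute $(\Delta,\rho)$ in $\mathbf{\Delta}$---or an exposed transaction of order $p$, which is planar by \cref{transaction_isoseprur}. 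In the latter case \cref{orthogonal_transaction} converts this into a modified railed flat vortex with the same outer disk $\Delta$ and a (possibly enlarged) central cell, accompanied by a planar exposed orthogonal transaction of order $k\cdot\mathsf{q}(h_{\Zcal})\cdot(4q+2)$. Feeding this into \cref{find_clean_transaction} either terminates the procedure with a mixed $\Zcal$-packing of size $k$, or produces a $\Zcal$-clean transaction of order $4q + 2$. Finally, \cref{split_vortex} either returns a further compression of $(\Delta,\rho)$, or splits it into two new railed flat vortices which together replace $(\Delta,\rho)$ in $\mathbf{\Delta}$; if the resulting count reaches $k$, the observation above closes the argument.

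Termination is enforced by the lexicographic measure $\mu(\mathbf{\Delta}) \coloneqq (k - |\mathbf{\Delta}|,\, \sum_{(\Delta,\rho)\in\mathbf{\Delta}} |E(H_{(\Delta,\rho)})|)$: compressions preserve $|\mathbf{\Delta}|$ while strictly decreasing the second coordinate (by definition of compression), and each successful split strictly decreases the first coordinate. When the loop exits, every flat vortex in $\mathbf{\Delta}$ has depth at most $D$, giving the desired second outcome. Throughout the iterations the $\Zcal$-enclosing property is maintained: compressions and the central-cell enlargement from \cref{orthogonal_transaction} only grow the central cell, and this enlargement stays inside $\Delta$, which by disjointness of outer disks in a railed flat vortex collection cannot encroach on the central cell of any other vortex of $\mathbf{\Delta}$; meanwhile \cref{split_vortex} explicitly relocates every $\Zcal$-red cell originally inside $c_\rho$ into the union of the two newly created central cells. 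The containment $\Delta \subseteq \Delta'$ for some original $\Delta' \in \mathbf{\Delta}$ is preserved because every operation only shrinks the outer disk.

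The main obstacle I anticipate is the careful bookkeeping required to check that all of these invariants are indeed preserved by each sub-lemma---in particular, that every intermediate flat vortex retains a railed nest of order exactly $q$, which is needed to apply the next sub-lemma in the chain. This is guaranteed one step at a time by the output specifications of \cref{exposure}, \cref{orthogonal_transaction}, and \cref{split_vortex}. The running time is dominated by the $\mathcal{O}(|G|^2)$ cost per iteration of \cref{split_vortex}; since $\mu$ is bounded above by a quantity of order $\mathcal{O}(k + |E(G)|)$, the loop runs at most $\mathcal{O}(|G|)$ times and the total running time is $\mathcal{O}(|G|^3)$, as claimed.
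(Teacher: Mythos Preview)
Your approach mirrors the paper's: iterate through \cref{exposure}, \cref{orthogonal_transaction}, \cref{find_clean_transaction}, and \cref{split_vortex}, terminating either with a packing or when every flat vortex has bounded depth. The chain of order computations is correct, and the invariant that the collection remains $\Zcal$-enclosing is handled essentially as in the paper.

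The gap is in your termination measure. You claim that every compression strictly decreases $\sum_{(\Delta,\rho)\in\mathbf{\Delta}} |E(H_{(\Delta,\rho)})|$ ``by definition of compression'', but this is not what the definition says. A compression guarantees only that for \emph{some} $i\in[q]$ either $(H'\cap\Delta'_i)-V(C'_i)\subsetneq (H\cap\Delta_i)-V(C_i)$ or $E(H'\cap\Delta'_i)\subsetneq E(H\cap\Delta_i)$, while $C'_j=C_j$ for all $j>i$. If $i<q$ then the outermost cycle $C_q$, and hence the disk $\Delta=\Delta_q$ and the flat vortex society $H=G\cap\Delta_q$, is unchanged; only the inner disk $\Delta_i$ shrinks. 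This is precisely what the compressions produced by \cref{exposure} do (they tighten some cycle $C_i$ using a non-exposed subpath drawn inside $\Delta_i$). Your second coordinate is therefore blind to such compressions, and the lexicographic measure $\mu$ can stagnate indefinitely.

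The paper handles this by tracking, in effect, $\sum_{i\in[q]} |E(H\cap\Delta_i)|$ over the cycles of each nest: every compression strictly decreases one summand, and the total is bounded by $q\cdot|E(G)|$, giving $\mathcal{O}(q\cdot|G|)$ compressions overall (using $|E(G)|=\mathcal{O}(|G|)$). Replace your second coordinate with this finer quantity and the rest of your argument goes through.
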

\begin{proof}

We show that that there exists an algorithm that given a $\Zcal$-enclosing railed flat vortex collection $\mathbf{\Delta}^{1}$ of size $s$ and order $q$ in $\delta$ outputs a $\Zcal$-enclosing railed flat vortex collection $\mathbf{\Delta}^{2}$ of the same order that is either of size at least $s + 1$ or such that for every $(\Delta, \rho) \in \mathbf{\Delta}',$ the flat vortex society of $(\Delta, \rho)$ has depth at most $2q + q \cdot (k \cdot \mathsf{q}(h_{\Zcal}) \cdot (4q + 2) + q) + 2q - 1,$ in time $\mathcal{O}(G^{3}).$

We first argue why the existence of such an algorithm suffices for the purposes of our proof.
Indeed by an iterative application of the algorithm above to the given railed flat vortex collection, in at most $k$ iterations, we conclude with a $\Zcal$-enclosing railed flat vortex collection $\mathbf{\Delta}'$ such that either the size of $\mathbf{\Delta}'$ is less than $k$ and for every $(\Delta, \rho) \in \mathbf{\Delta}',$ the flat vortex society of $(\Delta, \rho)$ has depth at most $2q + q \cdot (k \cdot \mathsf{q}(h_{\Zcal}) \cdot (4q + 2) + q) + 2q - 1,$ in which case we can conclude, or the size of $\mathbf{\Delta}'$ is at least $k.$
In this case, by the assumption that $q \geq \mathsf{q}(h_{\Zcal})$ and that $\mathbf{\Delta}'$ is $\Zcal$-enclosing, we can apply \autoref{flat_vortex_contains_host} to every railed flat vortex in the collection to conclude that it is $\Zcal$-progressive and hence by \autoref{@sprachphilosophischen}, conclude with a mixed $\Zcal$-packing of size $k$ in $G.$

We now explain how the algorithm above works.
For every railed flat vortex $(\Delta, \rho) \in \mathbf{\Delta}^{1}$ we do the following.
If the flat vortex society $(H, \Omega)$ of $(\Delta, \rho)$ has depth at most $2q + q \cdot (k \cdot \mathsf{q}(h_{\Zcal}) \cdot (4q + 2) + q) + 2q - 1$ we are done.
Otherwise let $\Pcal$ be a transaction of order at least $2q + q \cdot (k \cdot \mathsf{q}(h_{\Zcal}) \cdot (4q + 2) + q) + 2q$ in $(H, \Omega).$
By applying \autoref{exposure}, we obtain either a compression of $(\Delta, \rho),$ or an exposed transaction in $(H, \Omega)$ of order $q \cdot (k \cdot \mathsf{q}(h_{\Zcal}) \cdot (4q + 2) + q) + 2q.$
Then, by an application of \autoref{orthogonal_transaction}, we obtain an alternative railed flat vortex $(\Delta, \rho' = (\Gamma'_{\rho}, \Dcal'_{\rho}, c'_{\rho}))$ in $\delta$ of order $q$ having the same society $(H, \Omega)$ and a planar exposed transaction $\Qcal$ in $(H, \Omega)$ of order $k \cdot \mathsf{q}(h_{\Zcal}) \cdot (4q + 2)$ that is orthogonal to the nest equipped to $(\Delta, \rho').$
Notice that by the specifications of \autoref{orthogonal_transaction}, the collection $\mathbf{\Delta}^{1} \setminus \{ (\Delta, \rho) \} \cup \{ (\Delta, \rho') \}$ is also $\Zcal$-enclosing.
We continue with an application of \autoref{find_clean_transaction} and we either obtain a mixed $\Zcal$-packing of size $k$ in $H$ (and hence in $G$) and we conclude or  a $\Zcal$-clean transaction of order $4q + 2$ in $(H, \Omega)$ that is orthogonal to the nest equipped to $(\Delta, \rho').$
In the latter case, we then apply \autoref{split_vortex}, where we either obtain a compression of $(\Delta, \rho')$ which is also a compression of $(\Delta, \rho),$ or two railed flat vortices $(\Delta^{1}, \rho^{1} = (\Gamma^{1}_{\rho}, \Dcal^{1}_{\rho}, c^{1}_{\rho}))$ and $(\Delta^{2}, \rho^{2} = (\Gamma^{2}_{\rho}, \Dcal^{2}_{\rho}, c^{2}_{\rho}))$ in $\delta$ of order $q$ such that
\begin{itemize}
\item both $\Delta^{1}$ and $\Delta^{2}$ are subsets of $\Delta,$
\item $\Delta^{1}$ and $\Delta^{2}$ are disjoint,
\item $c^{1}_{\rho}$ and $c^{2}_{\rho}$ both contain a $\Zcal$-red cell, and
\item all $\Zcal$-red cells of $\delta$ contained in $c'_{\rho}$ are contained in $c^{1}_{\rho} \cup c^{2}_{\rho}.$
\end{itemize}

In the latter case, we can conclude with the railed flat vortex collection $\mathbf{\Delta}^{1} \setminus \{ (\Delta, \rho) \} \cup \{ (\Delta^{1}, \rho^{1}), (\Delta^{2}, \rho^{2}) \}.$
Hence if we never encounter a compression of $(\Delta, \rho),$ for none of the railed flat vortices $(\Delta, \rho) \in \mathbf{\Delta}^{1},$ our algorithm will terminate in time $\mathcal{O}(|G|^{2}).$
To finish the proof, notice that every time we find a compression as the outcome of one of the above steps, at least one of the cycles of the nest equipped to $(\Delta, \rho)$ is ``pushed'' slightly closer to the cell $c_{\rho}$.
That means, at least one edge of $G$ is ``pushed'' further to the outside, or fully outside, of our railed flat vortex in each of these occurrences.
Since there are at most $|E(G)|$ many edges and only $q$ many cycles in our nest, we cannot find a compression more than $q \cdot |E(G)|$ many times.
As a result, and since we may assume that $G$ excludes a graph of bounded size as a minor which means $|E(G)| \in \mathcal{O}(|G|),$ after $\mathcal{O}(|G|^3)$ many iterations of the procedure above we can conclude.
\end{proof}

We are now ready to present the main result of this subsection which is an immediate consequence of \autoref{obtain_vortex_collection} and \autoref{flat_vortex_contains_host}.

\begin{corollary}\label{@sensualism}
There exist a function $f_{\ref{@sensualism}} : \Nbbb^{3} \to \Nbbb$ such that,
for every $\Zcal \in \Kbbb^{-},$ there exists an algorithm that, given
\begin{itemize}
   
\item $k \in \Nbbb,$
\item $q \geq \mathsf{q}(h_{\Zcal}),$
\item a graph $G,$
\item a $Σ$-schema $(A, δ, D)$ of $G$ in a surface $Σ,$ that is $α$-anchored at some set $S,$ where $α \in [2/3, 1),$ and $D$ is of order $d,$ where $d \geq 3,$ and
\item a flat vortex collection $\mathbf{V}$ in $δ$ that is $\Zcal$-progressive has size less than $k$ and order $q,$ such that for every $(Δ, ρ) \in \mathbf{V},$ $\inG_{δ}(Δ) \cap D$ is the empty graph,
\end{itemize}
outputs either
\begin{itemize}
\item a mixed $\Zcal$-packing of size $k$ in $G - A$ or
\item a $\Zcal$-progressive railed flat vortex collection $\mathbf{V}'$ in $\delta$ of size less than $k$ and order $q$ such that for every $(Δ, ρ) \in \mathbf{V}',$ the flat vortex society $(H, \Omega)$ of $(\Delta, \rho)$ has depth at most $f_{\ref{@sensualism}}(k, q, h_{\Zcal})$ and $H \cap D$ is the empty graph.
\end{itemize}
Moreover the previous algorithm runs in time $\Ocal_{h_{\Zcal}}(kq^{2} \cdot |V(G)|^3)$ and the function $f_{\ref{@sensualism}}$ is of order
\begin{align*}
&f_{\ref{@sensualism}}(k, q, h_{\Zcal}) = \Ocal(k \cdot q^{2} \cdot 2^{\mathsf{poly}(h_{\Zcal})}).
\end{align*}
\end{corollary}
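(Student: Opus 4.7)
The plan is to derive this corollary as a direct packaging of Lemma~\ref{obtain_vortex_collection} (@obtain\_vortex\_collection), using Lemma~\ref{flat_vortex_contains_host} (@flat\_vortex\_contains\_host) only to upgrade the output from "$\Zcal$-enclosing'' back to the stronger "$\Zcal$-progressive''. The first observation is that every $\Zcal$-progressive railed flat vortex collection is, by definition, automatically $\Zcal$-enclosing: each central cell $c_\rho$ already hosts a $\Zcal$-red cell, and all $\Zcal$-red cells of $\delta$ are confined to the union of these central cells. Hence the input $\mathbf{V}$ immediately satisfies the hypotheses of Lemma~\ref{obtain_vortex_collection}.

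Next, I would feed $\mathbf{V}$ together with $k$ and $q$ into Lemma~\ref{obtain_vortex_collection}. If the lemma returns a mixed $\Zcal$-packing of size $k$, this is a mixed $\Zcal$-packing in $G-A$ because $\delta$ is a $\Sigma$-decomposition of $G-A$, delivering the first outcome of the corollary. Otherwise, it returns a $\Zcal$-enclosing collection $\mathbf{V}'$ of size less than $k$ and order $q$, each flat vortex society of depth at most $2q + q(k\cdot \mathsf{q}(h_{\Zcal})\cdot(4q+2) + q) + 2q - 1$, and with every new disk contained in some disk of $\mathbf{V}$.

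Two final verifications convert $\mathbf{V}'$ into the object promised by the corollary. First, for each $(\Delta',\rho') \in \mathbf{V}'$, the central cell $c_{\rho'}$ contains a $\Zcal$-red cell of $\delta$ (because $\mathbf{V}'$ is $\Zcal$-enclosing) and the railed nest around it has order $q \geq \mathsf{q}(h_{\Zcal}) \geq f_{\ref{@horkheimer}}(1,h_{\Zcal})$, so Lemma~\ref{flat_vortex_contains_host} applies and guarantees that $(\Delta',\rho')$ is $\Zcal$-progressive; hence $\mathbf{V}'$ is $\Zcal$-progressive. Second, since each $\Delta'$ lies inside some $\Delta \in \mathbf{V}$ and $\inG_{\delta}(\Delta)\cap D$ is empty by hypothesis, the flat vortex society $H = G \cap \Delta'$ of $(\Delta',\rho')$ also satisfies $H \cap D = \emptyset$, preserving the $D$-free property needed downstream.

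For the quantitative bounds, substituting $\mathsf{q}(h_{\Zcal}) = 2(f_{\ref{@horkheimer}}(1,h_{\Zcal})+2) = 2^{\poly(h_{\Zcal})}$ into the depth expression of Lemma~\ref{obtain_vortex_collection} yields $f_{\ref{@sensualism}}(k,q,h_{\Zcal}) = \Ocal(k\cdot q^2 \cdot 2^{\poly(h_{\Zcal})})$, as claimed. The running time is dominated by the single call to Lemma~\ref{obtain_vortex_collection}, which itself repeatedly invokes \cref{@calculations} (Observation 3.4) to detect $\Zcal$-red cells; tracking the $kq^2$ dependencies hidden in its inner loop gives the stated $\Ocal_{h_{\Zcal}}(kq^2\cdot |V(G)|^3)$. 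There is no real technical obstacle in this proof: the only subtle point is the transition between the "enclosing'' language of Lemma~\ref{obtain_vortex_collection} and the "progressive'' plus $D$-free language demanded here, both of which are settled by direct invocation of the definitions and Lemma~\ref{flat_vortex_contains_host}.
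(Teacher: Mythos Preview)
Your proposal is correct and matches the paper's own treatment: the paper states that the corollary is ``an immediate consequence of \autoref{obtain_vortex_collection} and \autoref{flat_vortex_contains_host}'' without giving further details, and your argument fills in exactly those steps (progressive $\Rightarrow$ enclosing, apply \autoref{obtain_vortex_collection}, upgrade back to progressive via \autoref{flat_vortex_contains_host}, and transfer the $D$-disjointness through the disk containment).
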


\subsection{Killing flat vortices}\label{@translation}

In this subsection we show, given a $Σ$-schema $(A, δ, D)$ of a graph $G$ in a surface $\Sigma$ and a railed flat vortex collection in $\delta,$ refined as in \autoref{@sensualism}, how to either find a mixed $\Zcal$-packing of size $k$ and conclude, or find a separation of small order which covers all inflated copies in $G$ of the non-planar components of graphs in $\Zcal$ that are invading through $\Zcal$-red cells located within the flat vortices of the given collection.
As we shall see, this suffices to prove our local structure theorem.

\begin{lemma}\label{@repression}
There exist functions $f^{1}_{\ref{@repression}} : \Nbbb^{2} \to \Nbbb$ and $f^{2}_{\ref{@repression}} : \Nbbb^{2} \to \Nbbb$ such that,
for every $\Zcal \in \Kbbb^{-},$ there exist an algorithm 
that, given  
\begin{itemize}
\item $k, θ \in \Nbbb_{\geq 1},$ 
\item $q \geq f^{1}_{\ref{@repression}}(k, h_{\Zcal}),$
\item a graph $G,$ 
\item a $Σ$-schema $(A, δ, D)$ of $G$ in a surface $\Sigma,$ that is $α$-anchored at a set $S,$ where $α \in [2/3, 1),$ and $D$ is of order $d > f^{2}_{\ref{@repression}}(k, θ) + 3,$ and
\item a $\Zcal$-progressive railed flat vortex $(Δ, \rho = (\Gamma_{\rho}, \Dcal_{\rho}, c_{\rho}))$ in $\delta$ of order $q$ such that the flat vortex-society $(H, \Omega)$ of $(\Delta, \rho)$ has depth at most $\theta$ and $H \cap D$ is the empty graph,
\end{itemize}
outputs either
\begin{itemize}
\item a mixed $\Zcal$-packing of size $k$ in $G-A,$ or
\item a separation $(X, Y)$ of $G - A$ of order at most $f^{2}_{\ref{@repression}}(k, θ)$ such that $X \subseteq V(H)$ and if $M$ is a $c$-invading $\nonplanar(Z)$-inflated copy in $G - A,$ where $c \subseteq c_{\rho}$ and $Z \in \Zcal,$ then $M$ contains a vertex of $X \cap Y.$
\end{itemize}
Moreover the algorithm above runs in time $2^{\Ocal(k)\cdot\poly(h_{\Zcal})}|V(G)|+\Ocal_{h_{\Zcal}}(|V(G)|^3)$ and the functions $f^{2}_{\ref{@repression}}$ and $f^{1}_{\ref{@repression}}$ are of the following orders:
\begin{align*}
f^{2}_{\ref{@repression}}(k, θ) &= \Ocal(kθ)\text{, and}\\
f^{1}_{\ref{@repression}}(k, h_{\Zcal}) &= 2^{\Ocal(k) \cdot \mathsf{poly}(h_{\Zcal})}.\\
\end{align*}
\end{lemma}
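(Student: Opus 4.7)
\medskip
\noindent\textbf{Proof plan.} The plan is to exploit the bounded-depth assumption on the vortex society $(H,\Omega)$ to obtain a linear decomposition, greedily carve it into segments each of which ``hosts'' an invading copy, and then dichotomize on the number of such segments. Concretely, I would first invoke \cref{@prefascist} on $(H,\Omega)$: since the depth is at most $\theta$, the procedure must return a linear decomposition $\mathcal{L}=\langle X_1,\ldots,X_n,v_1,\ldots,v_n\rangle$ of adhesion at most $2\theta$, in time $\Ocal(\theta|V(H)|^2)$. Next I would use \cref{@calculations} to compute all $\Zcal$-red cells of $\delta$ that are subsets of $c_\rho$; for each such cell $c$, the minor-checking step of the same algorithm produces a concrete $c$-invading $\nonplanar(Z)$-inflated copy $M_c$ in $G-A$ for some $Z\in\Zcal$, since $c$ being red is witnessed by such a copy.

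The combinatorial core is the greedy segmentation. Set $p_0\coloneqq 1$ and, for $i\ge 1$, let $p_i$ be the smallest index $>p_{i-1}$ such that the union $X_{p_{i-1}}\cup\cdots\cup X_{p_i}$ contains a vertex of $V(\Omega)$ belonging to the boundary of some $\Zcal$-red adhesion contained in $c_\rho$ through which an invading inflated copy reaches $V(\Omega)$ via the radial linkage of the railed nest; stop when no further such index exists and call $r$ the number of segments obtained. By minimality of each $p_i$, the portions $M_i\cap\inG_\delta(\Delta)$ of the corresponding inflated copies $M_i$ can be chosen pairwise disjoint: their ``anchor bags'' $X_{p_{i-1}},\ldots,X_{p_i}$ form disjoint intervals in $\mathcal{L}$, and every vertex of $M_i$ drawn inside $\Delta$ can be rerouted, using the bags of its segment and the radial paths of the railed nest, to stay inside the strip induced by its interval (the laminar structure of linear decompositions makes this routing automatic, and the rails pin everything down before reaching $\Omega$).

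The dichotomy is then straightforward. If $r\ge k$, the collection $\{M_1,\ldots,M_k\}$ satisfies the hypotheses of the redrawing lemma \cref{@horkheimer} provided the railed nest has order at least $f_{\ref{@horkheimer}}(k,h_{\Zcal})$; this is precisely how I would fix $f^{1}_{\ref{@repression}}(k,h_{\Zcal})=f_{\ref{@horkheimer}}(k,h_{\Zcal})=2^{\Ocal(k)\cdot\poly(h_\Zcal)}$. Applying \cref{@horkheimer} to $(A,\delta,D)$, the railed nest of $(\Delta,\rho)$, the disks $\Delta^{*}=c_\rho$ and $\Delta$, and the family $\{M_i\}_{i\in[k]}$ yields a mixed $\Zcal$-packing of size $k$ in $\inG_\delta(\Delta)\subseteq G-A$ in time $2^{\Ocal(k)\cdot\poly(h_\Zcal)}|V(G)|$. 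If instead $r<k$, I set
\[
Y \coloneqq \bigcup_{i\in[r]} (X_{p_{i-1}}\cup X_{p_i})\cup\bigl(V(G-A)\setminus V(H)\bigr)\cup\partial,
\]
where $\partial$ collects the adhesion vertices separating the segments, and $X\coloneqq V(H)\setminus(Y\setminus(X\cap Y))$, with $X\cap Y$ consisting of the union of the pairwise adhesions $X_{p_i}\cap X_{p_i+1}$ together with the boundary vertices $V(\Omega)$ that bound the segments externally. Since there are at most $2r\le 2k$ such adhesions, each of size at most $2\theta$, we obtain $|X\cap Y|\le f^{2}_{\ref{@repression}}(k,\theta)=\Ocal(k\theta)$. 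By construction any $c$-invading $\nonplanar(Z)$-inflated copy $M$ with $c\subseteq c_\rho$ must reach $V(\Omega)$ through some red adhesion lying in exactly one segment, and therefore must either be captured entirely by $X$ (impossible since $\Sigma\in\Sbbb_\Zcal$ prevents a disk-embeddable complement on the $\Xcal$-core side by \cref{@primitives}) or cross one of the segment boundaries, hence meet $X\cap Y$.

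\medskip
\noindent\textbf{Main obstacle.} The delicate point is the disjointness of the portions $M_i\cap\inG_\delta(\Delta)$: a priori two invading copies entering through red adhesions in different segments could share vertices deep inside $c_\rho$, since the $c_\rho$ part of $M_i$ is outside the reach of the linear decomposition. This is where I would exploit the railed nest of $(\Delta,\rho)$ together with the ``$\Xcal$-core side'' property from \cref{invading_host_parallel_sep}: inside $c_\rho$ each $M_i$ has no constraint except that a minimal $V(\Omega)$-connection through the nest must pass through a bounded-size cut adjacent to its red adhesion, and the radial linkage of the nest projects these cuts to disjoint intervals of $V(\Omega)$, allowing us to restrict to disjoint representatives. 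Getting the bookkeeping right --- so that \cref{@horkheimer} can actually be invoked with genuinely disjoint $M_i\cap\inG_\delta(\Delta)$ --- is the technical heart of the lemma; everything else is a straightforward assembly of the tools already set up.
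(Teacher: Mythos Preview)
Your overall architecture matches the paper's: linear decomposition via \cref{@prefascist}, greedy segmentation, dichotomy on the number of segments, and \cref{@horkheimer} in the packing case. The gap is in your segmentation criterion. You look for a vertex of $V(\Omega)$ on the boundary of a red cell in $c_\rho$, but boundaries of red cells are ground vertices inside $c_\rho$, not on $V(\Omega)$; and a $c$-invading copy $M$ need neither use the radial linkage nor reach $V(\Omega)$ at all---$M$ can live entirely inside $H$, since cells of $\delta$ may host arbitrary (in particular non-planar) subgraphs. So the criterion is ill-defined, the covering claim ``$M$ must reach $V(\Omega)$ through some red adhesion'' fails, and the projection-to-disjoint-intervals fix you propose for the disjointness obstacle has nothing to project. (Separately, your definition $X \coloneqq V(H)\setminus(Y\setminus(X\cap Y))$ is circular.)

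The paper's remedy is to make the criterion semantic rather than geometric. For $i \le j$ set $H_{i,j}^+ \coloneqq (G \setminus H) \cup (H[\bigcup_{\ell\in[i,j]} X_\ell] - X_{i-1})$: keep all of $G-A$ outside $H$, but restrict the $H$-part to a segment of the linear decomposition with its left adhesion removed. Declare the segment ``$\Zcal$-met'' if $H_{i,j}^+$ contains a $c$-invading $\nonplanar(Z)$-inflated copy for some $c \subseteq c_\rho$ and $Z \in \Zcal$; this is tested by minor-checking and accounts for the $\Ocal_{h_\Zcal}(|V(G)|^3)$ term. The greedy step takes $p_r$ minimal with $H_{p_{r-1}+1,p_r}^+$ $\Zcal$-met. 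Both desiderata now come for free: when $r \ge k$, each witness $M_i$ lives in a graph whose $H$-part is confined to a segment disjoint from the others, so the graphs $M_i \cap H = M_i \cap \inG_\delta(\Delta)$ are automatically pairwise disjoint---precisely the hypothesis of \cref{@horkheimer}---and your ``main obstacle'' dissolves. When $r < k$, the procedure halts because $H_{p_r+1,n}^+$ is not $\Zcal$-met, so every invading copy in $G-A$ must meet some adhesion $X_{p_i}\cap X_{p_i\pm 1}$; taking $X = \bigcup_i X_{p_i}$ with separator the union of these adhesions yields order $\Ocal(k\theta)$.
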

\begin{proof}
We define $f^{1}_{\ref{@repression}}(k, h_{\Zcal}) \coloneqq \max\{ f_{\ref{@horkheimer}}(k,h_{\Zcal}), \mathsf{q}(h_{\Zcal}) \}$ and $f^{2}_{\ref{@repression}}(k, θ) \coloneqq 2 k θ.$

Let $\langle X_1, X_2, \dots, X_n, v_1, v_2, \dots, v_n \rangle$ be a linear decomposition of $(H, Ω)$ of adhesion at most $2 \theta,$ according to \cref{@prefascist} (this is done in time $\Ocal(θ\cdot |V(G)|^2)$).
We also set $X_{-1} = X_{0} = X_{n+1} = \emptyset.$
For every $i, j \in [0,n], i ≤ j,$ we define the graph $H_{i,j} \coloneqq H[\bigcup_{\ell \in [i, \ldots, j]} X_{\ell}]$ (notice that $H_{0,0}$ is the empty graph) and we set $H_{i,j}^+ = (G \setminus H) \cup (H_{i,j} - X_{i-1}).$
We say that $H_{i,j}^+$ is \emph{$\Zcal$-met} if it contains a $c$-invading $\nonplanar(Z)$-inflated copy $M,$ where $c \in C(\rho)$ and $Z \in \Zcal$ (notice that if this happens, then $c$ is an $\Zcal$-red cell).

We apply the following procedure:

\begin{itemize}
\item[1.] Initialize $P\coloneqq\emptyset,$ $p_{0}\coloneqq 0,$ $r\coloneqq 1.$
\item[2.] If $H_{p_r+1,n}^+$ is not $\Zcal$-met, then stop, otherwise go to step 3.
\item[3.] 
Let $p_{r}$ be the minimum $h > p_{r-1}$ such that $H_{p_{r-1}+1,h}^+$ is $\Zcal$-met.
Let also $A_{r} = X_{p_{r}} \cap X_{p_{r}+1},$ $\bar{A}_{r} = X_{p_{r}} \cap X_{p_{r}-1},$ and set $P\coloneqq P \cup \{p_{r}\}.$
Observe that, by the minimality of the choice of $h,$ $H_{p_{r-1}+1, p_{r} -1}^+ - \bar{A}_{r}$ is not $\Zcal$-met.
\item[4.] if $p_{r} < n,$ then set $r \coloneqq r+1$ and go to Step 2.
\end{itemize}
It follows from the minor-checking algorithm of Kawarabayashi, Kobayashi, and Reed \cite{KawarabayashiKR12Thedisjoint} that takes $\Ocal_{h_{\Zcal}}(|V(G)|^{2})$ time, that the procedure above takes $\Ocal_{h_{\Zcal}}(|V(G)|^{3})$ time.
The fact that $(Δ, ρ)$ is a $\Zcal$-progressive railed flat vortex in $δ$ implies that at least one of the cells of $\rho$ that are subsets of $c_{\rho}$ is $\Zcal$-red and that there are no other $\Zcal$-red cells that are subsets of $Δ.$
Notice the above procedure produces a collection of $A_{i}$'s that ensures the following fact:
\begin{eqnarray}
\begin{minipage}{14cm}
If $M$ is a $c$-invading $\nonplanar(Z)$-inflated copy in $G - A,$ where $c \subseteq c_{\rho}$ and $Z \in \Zcal,$ then $M$ contains some vertex of $A_{i},$ for some $i \in [r].$
\end{minipage}\label{@quasimythical}
\end{eqnarray}
We distinguish two cases.

\medskip
\noindent\emph{Case 1.} $|P|<k.$ In this case we consider the sets $A'=\bigcup_{i\in[r]}(A_{i}\cup \bar{A}_{i})$ and $X=\bigcup_{i\in[r]}X_{p_i}$
and notice that $A'\subseteq X$ and $|A'|≤2kθ.$ Notice also that $(X, Y \coloneqq (V(G - A) \setminus (X\setminus A') ))$ is a separation of $G - A$ of order  $2kθ.$
Moreover, by the fact in \eqref{@quasimythical}, the separation $(X, Y)$ has the desired property.

\medskip
\noindent\emph{Case 2.} $|P|≥k.$ This implies that for every $i\in [k],$ $H_{p_{i-1}+1,p_{i}}^+$ contains a $c$-invading $\nonplanar(Z_{i})$-inflated copy $M_i,$ where $c \subseteq c_{\rho}$ and $Z_{i} \in \Zcal.$
By the choice of $H_{p_{i-1}+1,p_{i}}^+,i\in[2k]$  during the procedure above, it follows that, for every $(i, j) \in{[k]\choose 2},$ $M_i \cap H$ and $M_{j} \cap H$ are disjoint.
We now have all conditions required for applying \cref{@horkheimer}.
As a result we obtain in time $2^{\Ocal(k)\cdot\poly(h_{\Zcal})}|V(G)|,$ a mixed $\Zcal$-packing of size $k$ in $H$ that is also a mixed $\Zcal$-packing of size $k$ in $G - A$ as required.
\end{proof}

\begin{lemma}\label{@loudspeakers} 
There exist functions $f^{1}_{\ref{@repression}} \colon \Nbbb^{2} \to \Nbbb$ and $f_{\ref{@loudspeakers}} \colon \Nbbb^{2} \to \Nbbb$ such that, for every $\Zcal \in \Kbbb^{-},$ there exists an algorithm that, given
\begin{itemize}
\item $k, θ \in \Nbbb_{\geq 1},$
\item $q \geq f^{1}_{\ref{@repression}}(k,h_{\Zcal}),$
\item a graph $G,$ 
\item a $\Sigma$-schema $(A, δ, D)$ of $G$ in a surface $\Sigma$ that is $α$-anchored at a set $S,$ where $α \in [2/3, 1),$ and $D$ is of order $d,$ where $d > f_{\ref{@loudspeakers}}(k, θ) + 3,$ and
\item a $\Zcal$-progressive flat vortex collection $\mathbf{V}$ in $δ$ of size less than $k$ and order $q$ such that for every $(\Delta, \rho) \in \mathbf{\Delta}$ the flat vortex society $(H, \Omega)$ of $(\Delta, \rho)$ has depth at most $\theta$ and $H \cap D$ is the empty graph,
\end{itemize}
outputs one of the following:
\begin{itemize}
\item a mixed $\Zcal$-packing of size $k$  in $G - A,$ or
\item a $Σ$-schema $(A \cup A', δ', D')$ of $G$ such that
\begin{itemize}
\item $|A'| \leq f_{\ref{@loudspeakers}}(k, θ),$
\item $(A \cup A', δ', D')$ is $α$-anchored at $S,$
\item the order of $D'$ is at least $d - f_{\ref{@loudspeakers}}(k, θ),$ and
\item none of the cells of $δ'$ are $\Zcal$-red.
\end{itemize}
\end{itemize}
Moreover the previous algorithm runs in time $2^{\Ocal(k)\cdot\poly(h_{\Zcal})}|V(G)|+\Ocal_{h_{\Zcal}}(k \cdot |V(G)|^3)$ and $f_{\ref{@loudspeakers}}$ is of order
\begin{align*}
f_{\ref{@loudspeakers}}(k, θ) &= \Ocal(k^{2}θ).\\
\end{align*}
\end{lemma}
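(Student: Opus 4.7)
The plan is to iterate over the flat vortices in $\mathbf{V}$, invoking \cref{@repression} on each and then combining the resulting separations into a single carving of the $\Sigma$-schema. Enumerate $\mathbf{V}=\{(\Delta_i,\rho_i) \mid i \in [|\mathbf{V}|]\}$ with $|\mathbf{V}|<k$. For each $i$ I apply \cref{@repression} with parameters $k,\theta$, the current schema, and the flat vortex $(\Delta_i,\rho_i)$: the hypotheses hold because $(H_i,\Omega_i)$ has depth at most $\theta$, the railed nest has order $q\geq f^{1}_{\ref{@repression}}(k,h_{\Zcal})$, and $H_i\cap D$ is empty. If any call returns a mixed $\Zcal$-packing of size $k$ in $G-A$, we output it and terminate. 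Otherwise I obtain separations $(X_i,Y_i)$ of $G-A$, each of order at most $f^{2}_{\ref{@repression}}(k,\theta)=O(k\theta)$, with $X_i\subseteq V(H_i)$ and the property that every $c$-invading $\nonplanar(Z)$-inflated copy in $G-A$ with $c\subseteq c_{\rho_i}$ meets $X_i\cap Y_i$.

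In the second phase I carve the schema one flat vortex at a time. Since the disks $\Delta_i$ are pairwise disjoint and $X_i\subseteq V(H_i)\subseteq V(G)\cap \Delta_i$, the Dyck wall $D$ is entirely contained in $Y_i$ for every $i$, hence $(X_i\cup A,A\cup Y_i)\in\mathcal{T}_D$. I iteratively take carvings at these separations, letting the apex set grow by $|X_i\cap Y_i|$ at each step. By \cref{@repetitive} I can always select the next $(\Sigma;d_i)$-Dyck subwall with $d_i\geq d-\sum_{j\leq i}|X_j\cap Y_j|$, and the schema remains $\alpha$-anchored at $S$. The hypothesis $d > f_{\ref{@loudspeakers}}(k,\theta)+3$ combined with $\sum_i|X_i\cap Y_i|\leq (k-1)\cdot f^{2}_{\ref{@repression}}(k,\theta)$ guarantees that the Dyck wall keeps order at least $4$ throughout, so every carving is well-defined. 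Setting $A'\coloneqq\bigcup_i(X_i\cap Y_i)$ and letting $D'$ be the final Dyck subwall, I obtain the desired schema $(A\cup A',\delta',D')$, and defining $f_{\ref{@loudspeakers}}(k,\theta)\coloneqq (k-1)\cdot f^{2}_{\ref{@repression}}(k,\theta)=O(k^2\theta)$ yields the claimed bound on $|A'|$.

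The final step is to verify that no cell of $\delta'$ is $\Zcal$-red. Because $\mathbf{V}$ is $\Zcal$-progressive, every $\Zcal$-red cell of $\delta$ lies inside $\bigcup_i c_{\rho_i}$. Cells of $\delta'$ drawn outside $\bigcup_i\Delta_i$ coincide with their counterparts in $\delta$ and cannot become newly $\Zcal$-red after enlarging the apex. The remaining cells of $\delta'$ are subsets of some $\Delta_i$ and are either untouched rings of the railed nest — which were not $\Zcal$-red in $\delta$ — or the cell absorbing the redrawing of $G[X_i\setminus Y_i]$. For the absorbed cell $c^*$, any $c^*$-invading $\nonplanar(Z)$-inflated copy in $G-(A\cup A')$ extends to a $c$-invading inflated copy in $G-A$ for some old cell $c\subseteq c_{\rho_i}$, and \cref{@repression} forces it to meet $X_i\cap Y_i\subseteq A'$, a contradiction. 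The dominant cost is the $k-1$ invocations of \cref{@repression}, giving total running time $2^{O(k)\cdot\poly(h_{\Zcal})}|V(G)| + O_{h_{\Zcal}}(k\cdot |V(G)|^3)$.

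The main technical obstacle is precisely this last verification: one must argue, despite the nontrivial change in the $\Sigma$-decomposition, that invading hosts cannot slip past the hitting guarantee of \cref{@repression}. The key points are that core-side invasion is monotone under enlarging the apex set, and that after carving the separation boundary of the absorbed cell is inherited from an old separation of $\delta$, so invasion into $c^*$ in $\delta'$ can always be traced back to invasion into some $c\subseteq c_{\rho_i}$ in $\delta$.
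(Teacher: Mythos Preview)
Your proposal is correct and follows essentially the same approach as the paper. The only notable difference is that the paper combines the separations into a single one, $(X',Y')\coloneqq(\bigcup_i X_i,\bigcap_i Y_i)$, and performs a single carving (then applies \cref{@repetitive} once), whereas you carve iteratively; this is a cosmetic variation and your slightly tighter constant $(k-1)\cdot f^{2}_{\ref{@repression}}(k,\theta)$ versus the paper's $k\cdot f^{2}_{\ref{@repression}}(k,\theta)$ reflects that. Your verification that no $\Zcal$-red cell survives---by tracing any hypothetical invading copy in $G-(A\cup A')$ back to a $c$-invading copy in $G-A$ with $c\subseteq c_{\rho_i}$ and invoking the hitting guarantee of \cref{@repression}---is exactly the paper's precursor argument, stated with a bit more case analysis.
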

\begin{proof} We define $f_{\ref{@loudspeakers}}(k, θ) \coloneqq k \cdot f^{2}_{\ref{@repression}}(k, θ).$

The proof is a parallel application of \cref{@repression} to every railed flat vortex in $\mathbf{V}.$
Let $\mathbf{V} = \{ (\Delta_{i}, \rho_{i}) \mid i \in [r] \},$ $r < k.$
For every $i \in [r]$ we apply \cref{@repression} on the $\Zcal$-progressive railed flat vortex $(\Delta_{i}, \rho_{i} = (\Gamma_{\rho}^{i}, \Dcal_{\rho}^{i}, c_{\rho}^{i}))$ and its corresponding flat vortex society $(H_{i}, \Omega_{i})$ of depth at most $\theta.$
As a result we either get a mixed $\Zcal$-packing in $G' \coloneqq G - A$ of size $k$ and we conclude or, for every $i \in [r],$ we get a separation $(X_{i}, Y_{i})$ of $G'$ of order at most $2kθ$ such that $X \subseteq V(H_{i})$ and
\begin{eqnarray} 
\begin{minipage}{14cm}
if $M$ is a $c$-invading $\nonplanar(Z)$-inflated copy in $G',$ where $c \subseteq c_{\rho}^{i}$ and $Z \in \Zcal,$ then $M$ contains a vertex of $X_{i} \cap Y_{i}.$
\end{minipage}\label{@temptations}
\end{eqnarray}

Notice that $(X' \coloneqq \bigcup_{i \in [r]} X_{i}, Y' \coloneqq \bigcap_{i \in [r]} Y_{i})$ defines a separation in $G'$ of order at most $f_{\ref{@loudspeakers}}(k, θ),$ where 
by the assumptions on $\mathbf{V}$ and the fact that $X \subseteq V(H_{i})$ it is implied that $D \subseteq G'[Y' \setminus X'].$

We next consider the $\Sigma$-schema $(A\cup A', δ',D'')$ of $G$ that is obtained by carving $(A, δ,D)$ by the separation $(X' \cup A, Y' \cup A).$ 
Let $\Tcal^{A}_{D}$ be the tangle of $G'$ induced by the Dyck grid $D\subseteq G'.$
As $D \subseteq G'[Y' \setminus X']$ it follows that $D'' = D$ and we deduce that $(X', Y') \in \Tcal^{A}_{D}.$
We now set $d' = d - f_{\ref{@loudspeakers}}(k, θ).$
Let $D'$ be any $(\Sigma, d')$-Dyck subwall of $D''.$
Then we can apply \cref{@repetitive} and obtain that $(A \cup A', δ',D')$ is $α$-anchored at $S.$

Finally, assume towards a contradiction, that some cell $c' \in C(δ')$ is $\Zcal$-red in $\delta'.$
Since $\mathbf{V}$ is a $\Zcal$-progressive railed flat vortex collection, it must be that $c' \subseteq c^{i}_{\rho},$ for some $i \in [r].$
Notice that this implies the existence of a $c'$-invading $\nonplanar(Z)$-inflated copy $M$ in $G - (A \cup A'),$ where $Z \in \Zcal.$
As $M$ is also a subgraph of $G - A,$ we have that $M$ is also a $c$-invading $\nonplanar(Z)$-inflated copy in $G -A$ where $c$ is the precursor of $c'$ in $δ.$
As $c$ is the precursor of $c',$ it also holds that $c \subseteq c^{i}_{ρ}$ therefore contradicting \eqref{@temptations}.
\end{proof}

We are now ready to present our local structure theorem.

\begin{theorem}
\label{@antiauthoritarian}
There exist functions $f^1_{\ref{@antiauthoritarian}} : \Nbbb^{4} \to \Nbbb,$ $f^2_{\ref{@antiauthoritarian}} : \Nbbb^{4} \to \Nbbb$ such that, for every $\Zcal  \in \mathbb{H}^{-},$ there exists an algorithm that, given
\begin{itemize}
\item $k , t \in \Nbbb_{\geq 1},$
\item a graph $G,$ and
\item an $(s, α)$-well-linked set $S$ of $G,$ where $α \in [2/3, 1)$ and $s \geq f^1_{\ref{@antiauthoritarian}}(γ_{\Zcal},h_{\Zcal},t,k),$
\end{itemize}
outputs either
\begin{itemize}
\item an inflated copy of the Dyck grid $\mathscr{D}_{t}^{\Sigma'},$ where $\Sigma' \in \sobs(\mathbb{S}_{\Zcal}),$ or
\item a mixed $\Zcal$-packing of size $k$ in $G,$ or
\item a $Σ$-schema $(A,δ,D)$ of $G$ in a surface $\Sigma \in \mathbb{S}_{\Zcal}$ such that
\begin{itemize}
\item $|A| \leq f^{2}_{\ref{@antiauthoritarian}}(γ_{\Zcal},h_{\Zcal},t,k),$
\item $(A,δ,D)$ is $\alpha$-anchored at $S,$ and
\item none of the cells of $\delta$ is $\Zcal$-red.
\end{itemize}
\end{itemize}
Moreover the previous algorithm runs in time $2^{2^{\Ocal_{γ_{\Zcal}}(\mathsf{poly}(t)) + \Ocal_{h_{\Zcal}}(k)}}\cdot |V(G)|^3\cdot \log(|V(G)|)$ and the functions $f^1_{\ref{@antiauthoritarian}}$ and $f^2_{\ref{@antiauthoritarian}}$ are of order
\begin{align*}
f^{1}_{\ref{@antiauthoritarian}}(γ_{\Zcal},h_{\Zcal},t,k) &= 2^{\Ocal_{γ_{\Zcal}}(\mathsf{poly}(t)) + \Ocal_{h_{\Zcal}}(k)}\text{ and}\\
f^{2}_{\ref{@antiauthoritarian}}(γ_{\Zcal},h_{\Zcal},t,k) &= 2^{\Ocal_{γ_{\Zcal}}(\mathsf{poly}(t)) + \Ocal_{h_{\Zcal}}(k)}.\\
\end{align*}
\end{theorem}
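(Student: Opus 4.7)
The plan is to cascade the four main results of this section in sequence, choosing parameters so that each hypothesis is satisfied and the output of each step feeds into the next. Let $q \coloneqq f^1_{\ref{@repression}}(k, h_{\Zcal})$ and $\theta \coloneqq f_{\ref{@sensualism}}(k, q, h_{\Zcal})$. I will fix an integer $d_0$, to be specified below, chosen so that $d_0 \geq f^2_{\ref{@approached}}(k, q) + f_{\ref{@loudspeakers}}(k, \theta) + 7$, and I will set $f^{1}_{\ref{@antiauthoritarian}}(\gamma_\Zcal, h_\Zcal, t, k) \coloneqq f^1_{\ref{@enlightened}}(\gamma_\Zcal, h_\Zcal, t, d_0, k)$ so that the given $(s,\alpha)$-well-linked set $S$ is large enough to launch the first step.

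First, I would apply \cref{@enlightened} with parameters $k$, $t$, $d_0$ to $G$ and $S$. If this returns an inflated Dyck grid or a mixed $\Zcal$-packing, we output it and terminate. Otherwise it yields a $\Sigma$-schema $(A_0, \delta_0, D_0)$ of $G$, $\alpha$-anchored at $S$, with $\Sigma \in \mathbb{S}_\Zcal$, $D_0$ of order $d_0$, and $|A_0| \leq f^2_{\ref{@enlightened}}(\gamma_\Zcal, h_\Zcal, t, d_0, k)$. Next, I would feed this schema into \cref{@approached} with parameters $k$ and $q$; the choice of $d_0$ ensures $d_0 > f^2_{\ref{@approached}}(k, q) + 3$, and $q \geq f^1_{\ref{@approached}}(h_\Zcal)$ holds because $f^1_{\ref{@repression}}(k, h_\Zcal) \geq f_{\ref{@horkheimer}}(1, h_\Zcal)$. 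Either we obtain a mixed $\Zcal$-packing of size $k$ (and are done), or we receive an updated schema $(A_0, \delta_0, D_1)$, still $\alpha$-anchored at $S$, with $D_1$ of order $d_1 \geq d_0 - f^2_{\ref{@approached}}(k, q)$, together with a $\Zcal$-progressive railed flat vortex collection $\mathbf{V}_1$ of order $q$ and size less than $k$, disjoint from $D_1$.

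I would then apply \cref{@sensualism} to $\mathbf{V}_1$; since $q \geq \mathsf{q}(h_\Zcal)$, this either produces a mixed $\Zcal$-packing of size $k$ or returns a $\Zcal$-progressive railed flat vortex collection $\mathbf{V}_2$ of size less than $k$ and order $q$ whose flat vortex societies all have depth at most $\theta$, leaving the schema $(A_0, \delta_0, D_1)$ itself unchanged and thus still $\alpha$-anchored at $S$. Finally, I would invoke \cref{@loudspeakers} on $(A_0, \delta_0, D_1)$ together with $\mathbf{V}_2$; the choice of $d_0$ guarantees $d_1 > f_{\ref{@loudspeakers}}(k, \theta) + 3$, and $q \geq f^1_{\ref{@repression}}(k, h_\Zcal)$ by definition. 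This either returns the desired mixed $\Zcal$-packing or a $\Sigma$-schema $(A_0 \cup A', \delta', D')$ that is $\alpha$-anchored at $S$, with $|A'| \leq f_{\ref{@loudspeakers}}(k, \theta)$, and crucially with no $\Zcal$-red cells. This is exactly the third outcome of the theorem.

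The structural argument is thus a straightforward chain of black-box invocations; the main technical issue is verifying that all parametric dependencies collapse to the declared bound $2^{\Ocal_{\gamma_\Zcal}(\poly(t)) + \Ocal_{h_\Zcal}(k)}$. Unwinding the asymptotics from the invoked lemmas, one obtains $q = 2^{\Ocal(k)\cdot\poly(h_\Zcal)}$, then $\theta = 2^{\Ocal(k)\cdot\poly(h_\Zcal)}$, and consequently $d_0 = 2^{\Ocal(k)\cdot\poly(h_\Zcal)}$. Substituting into $f^1_{\ref{@enlightened}}$ and $f^2_{\ref{@enlightened}}$ yields the stated bounds on $s$ and on $|A_0 \cup A'|$, while the running time inherits the dominant term $2^{2^{\Ocal_{\gamma_\Zcal}(\poly(t)) + \Ocal_{h_\Zcal}(k)}}\cdot|V(G)|^3\log(|V(G)|)$ from the first application of \cref{@enlightened}, the subsequent three steps each contributing only polynomial overhead by comparison. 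The anchoring property is preserved through each carving by \cref{@repetitive}, so the final schema remains $\alpha$-anchored at the original set $S$ as required.
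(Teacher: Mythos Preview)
Your proposal is correct and follows essentially the same route as the paper: the same four lemmas (\cref{@enlightened}, \cref{@approached}, \cref{@sensualism}, \cref{@loudspeakers}) are chained in the same order with the same parameter choices (your $q=f^1_{\ref{@repression}}(k,h_{\Zcal})$ coincides with the paper's $\max\{f_{\ref{@horkheimer}}(k,h_{\Zcal}),\mathsf{q}(h_{\Zcal})\}$), and your bookkeeping of $d_0$, $\theta$, and the final apex bound matches the paper's up to harmless constants.
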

\begin{proof}
We define 
\begin{align*}
f^1_{\ref{@antiauthoritarian}}(γ_{\Zcal},h_{\Zcal},t,k) &= f^{1}_{\ref{@enlightened}}(γ_{\Zcal},h_{\Zcal}, t, d_{1}, k) = 2^{\Ocal_{γ_{\Zcal}}(\mathsf{poly}(t)) + \Ocal_{h_{\Zcal}}(k)}\text{ and}\\
f^2_{\ref{@antiauthoritarian}}(γ_{\Zcal},h_{\Zcal},t,k) &= f^{2}_{\ref{@enlightened}}(γ_{\Zcal},h_{\Zcal}, t, d_{1}, k) + f_{\ref{@loudspeakers}}(k, θ) = 2^{\Ocal_{γ_{\Zcal}}(\mathsf{poly}(t)) + \Ocal_{h_{\Zcal}}(k)},
\end{align*}
where
\begin{align*}
q &\coloneqq \max\{ f_{\ref{@horkheimer}}(k,h_{\Zcal}), \mathsf{q}(h_{\Zcal}) \} = 2^{\Ocal_{h_{\Zcal}}(k)}\text{,}\\
θ &\coloneqq f_{\ref{@sensualism}}(k, q, h_{\Zcal}) = 2^{\Ocal_{h_{\Zcal}}(k)}\text{, and}\\
d_{1} &\coloneqq f^{2}_{\ref{@approached}}(k, q) + f_{\ref{@loudspeakers}}(k, θ) + 3 = 2^{\Ocal_{h_{\Zcal}}(k)}.
\end{align*}

We begin by applying \cref{@enlightened} on $\Zcal,$ $G,$ $k,$ $t,$ $d_{1}$ and $S.$
In case we get an inflated copy of the Dyck grid $\mathscr{D}^{Σ'}_{t},$ where $\Sigma' \in \sobs(\mathbb{S}_{\Zcal})$ we conclude.
In case we get a mixed $\Zcal$-packing of size $k$, we also conclude.
Otherwise, we get a $Σ$-schema $(A_{1}, δ_{1}, D_{1})$ of $G,$ for some $Σ \in \Sbbb_{\Zcal},$ where $|A_{1}| \leq f^{2}_{\ref{@enlightened}}(γ_{\Zcal},h_{\Zcal},t,k),$ that is $α$-anchored at $S,$ and $D_{1}$ is a $(Σ; d_{1})$-Dyck wall of $G - A$ that is grounded in $δ_{1}.$
This step concludes in time $2^{2^{\Ocal_{γ_{\Zcal}}(\mathsf{poly}(t)) + \Ocal_{h_{\Zcal}}(k)}}\cdot |V(G)|^3\cdot \log(|V(G)|).$

Next we apply \cref{@approached} on $\Zcal,$ $G,$ $k,$ $q,$ and $(A_{1}, δ_{1}, D_{1}).$
In case we get a mixed $\Zcal$-packing of size $k$ in $G - A$ we conclude.
Otherwise we get a $Σ$-schema $(A_{1}, δ_{1}, D_{2})$ that is $α$-anchored at $S,$ where $D_{2}$ is a $(Σ; d_{2})$-Dyck subwall of $D_{1}$ of order $d_{2} \geq d_{1} - f^{2}_{\ref{@approached}}(k, q),$ and a railed flat vortex collection $\mathbf{V}_{1}$ in $δ_{1}$ that is $\Zcal$-progressive has size less than $k$ and order $q$ such that for every $(Δ, ρ) \in \mathbf{V}_{1},$ $\inG_{δ_{1}}(Δ) \cap D_{2}$ is the empty graph.
This step concludes in time $\Ocal_{h_{\Zcal}}(|V(G)|^3)+\poly(q,k)\cdot |V(G)|.$

Next we apply \cref{@sensualism} on $\Zcal,$ $G,$ $k,$ $q,$ $(A_{1}, δ_{1}, D_{2}),$ and $\mathbf{V}_{1}.$
In case we get a mixed $\Zcal$-packing of size $k$ in $G - A$ we conclude.
Otherwise we get a $\Zcal$-progressive railed flat vortex collection $\mathbf{V}_{2}$ in $\delta_{1}$ of size less than $k$ and order $q$ such that for every $(Δ, ρ) \in \mathbf{V}_{2},$ the flat vortex society $(H, \Omega)$ of $(\Delta, \rho)$ has depth at most $\theta$ and $H \cap D$ is the empty graph.
This step concludes in time $\Ocal(kq^{2} \cdot |V(G)|^3).$

Finally, we apply \cref{@loudspeakers} on $\Zcal,$ $G,$ $k,$ $q,$ $θ,$ $(A_{1}, δ_{}, D_{2}),$ and $\mathbf{V}_{2}.$
In case we get a mixed $\Zcal$-packing of size $k$ in $G - A$ we conclude.
Otherwise we get a $Σ$-schema $(A_{1} \cup A_{2}, δ_{2}, D_{3})$ of $G,$ where $|A_{2}| \leq f_{\ref{@loudspeakers}}(k, θ),$ that is $α$-anchored at $S,$ the order of $D_{3}$ is $d_{3} \geq d_{2} - f_{\ref{@loudspeakers}}(k, θ),$ and none of the cells of $δ_{2}$ are $\Zcal$-red.
This step concludes in time $2^{\Ocal(k)\cdot\poly(h_{\Zcal})}|V(G)|+\Ocal_{h_{\Zcal}}(k \cdot |V(G)|^3).$

We conclude the proof with the $Σ$-schema $(A_{1} \cup A_{2}, δ_{2}, D_{3})$ of $G,$ where $|A_{1} \cup A_{2}| \leq f^{2}_{\ref{@enlightened}}(γ_{\Zcal},h_{\Zcal},t,k) + f_{\ref{@loudspeakers}}(k, θ) = f^2_{\ref{@antiauthoritarian}}(γ_{\Zcal},h_{\Zcal},t,k).$
\end{proof}

\section{The global structure theorem}
\label{@persecuting}

This section contains the last missing pieces of the main proof and concludes with the proof of \cref{@indescribably}, in the form of \cref{@indescribably} in this section.
These include the proof of a global structure theorem (\cref{@malcontents}) following standard balanced separator arguments, (\cref{@liabilities})
the derivation of the three outcomes of \cref{@indescribably}, exploiting the tree decomposition obtained from the global structure theorem, for the case of connected graphs in $\Zcal$ (\cref{@analogistic} and \cref{@restrained}), 
and, in conclusion,
the treatment of the disconnected case (\cref{@expurgated}). 

\subsection{From local to global}
\label{@liabilities}

The goal of this subsection is to prove a global version of the structure theorem we gave in \cref{@antiauthoritarian} in the form of \cref{@malcontents}.

\medskip
For this we first introduce the concept of a $\Zcal$-\textsl{local cover} for a given antichain $\Zcal \in \Hbbb^{-}.$

\paragraph{$\Zcal$-local covers.}
Given a graph $G$ and an antichain $\Zcal \in \Hbbb^{-},$ we say that a pair $(X, A)$ where $A \subseteq X \subseteq V(G)$ is a $\Zcal$-\emph{local cover} of $G$ if for every $\nonplanar(Z)$-inflated copy $M$ in $G,$ where $Z \in \Zcal,$
$$M \cap X \neq \emptyset \Rightarrow M \cap A \neq \emptyset.$$
The \emph{order} of a $\Zcal$-local cover $(X,A)$ is $|A|.$

\medskip
The proof of \cref{@malcontents} follows from standard balanced separator arguments and allows us to show that given an antichain $\Zcal \in \mathbb{H}^{-}$ and a graph $G$, in the absence of a large order Dyck grid minor corresponding to a surface in $\sobs(\mathbb{S}_{\Zcal})$ and of a large mixed $\Zcal$-packing in $G,$ we can compute a tree-decomposition $(T, \beta)$ of $G$ of bounded adhesion, where each bag $\beta(t)$ is accompanied by a set $\alpha(t)$ of bounded size, such that $(\beta(t), \alpha(t))$ is a $\Zcal$-local cover.
Formally we prove the following.

\begin{theorem}\label{@malcontents}
There exist functions $f^1_{\ref{@malcontents}} \colon \Nbbb^{4} \to \Nbbb,$ $f^2_{\ref{@malcontents}} \colon \Nbbb^{4} \to \Nbbb$ such that, for every $\Zcal \in \mathbb{H}^{-},$ there exists an algorithm that, given $k, t \in \Nbbb_{\geq 1}$ and a graph $G,$ outputs either
\begin{itemize}
\item an inflated copy of the Dyck grid $\mathscr{D}_{t}^{\Sigma'},$ where $\Sigma' \in \sobs(\mathbb{S}_{\Zcal}),$ or
\item a mixed $\Zcal$-packing of size $k$ in $G,$ or
\item a tree decomposition $(T, \beta)$ of $G$ with adhesion at most $f^{1}_{\ref{@malcontents}}(\gamma_{\Zcal},h_{\Zcal},t,k)$ and a function $\alpha \colon V(T) \to 2^{V(G)}$ such that, for every $t \in V(T),$ $(\beta(t), \alpha(t))$ is a $\Zcal$-local cover of order at most $f^2_{\ref{@malcontents}}(\gamma_{\Zcal}, h_{\Zcal}, t, k).$
\end{itemize}
Moreover the previous algorithm runs in time $2^{2^{\Ocal_{\gamma_{\Zcal}}(\mathsf{poly}(t)) + \Ocal_{h_{\Zcal}}(k)}} \cdot |V(G)|^4 \cdot \log(|V(G)|)$ and the functions $f^1_{\ref{@malcontents}}$ and $f^2_{\ref{@malcontents}}$ are of order
\begin{align*}
f^{1}_{\ref{@antiauthoritarian}}(\gamma_{\Zcal}, h_{\Zcal}, t, k) &= 2^{\Ocal_{γ_{\Zcal}}(\mathsf{poly}(t)) + \Ocal_{h_{\Zcal}}(k)}\text{ and}\\
f^{2}_{\ref{@antiauthoritarian}}(\gamma_{\Zcal}, h_{\Zcal}, t, k) &= 2^{\Ocal_{γ_{\Zcal}}(\mathsf{poly}(t)) + \Ocal_{h_{\Zcal}}(k)}.\\
\end{align*}
\end{theorem}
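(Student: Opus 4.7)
The plan is to construct $(T,\beta)$ together with $\alpha$ by a top-down recursive procedure that invokes the local structure theorem (\cref{@antiauthoritarian}) at each step to extract a balanced separator. We maintain the invariant that each recursive subproblem consists of a subgraph $G'\subseteq G$ together with a bounded-size interface $W\subseteq V(G')$ containing the boundary of $G'$ in $G$, and we produce a tree decomposition of $G'$ whose root bag contains $W$, with $(\beta(t),\alpha(t))$ a $\Zcal$-local cover of $G$ at every node and adhesion bounded by $f^1_{\ref{@malcontents}}$.

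At each recursive step, first I would check whether $G'$ admits an $(s,2/3)$-well-linked set for $s\coloneqq f^1_{\ref{@antiauthoritarian}}(\gamma_{\Zcal},h_{\Zcal},t,k)$ using a standard balanced-separator search. If no such set exists, then $\mathsf{tw}(G')=O(s^{10})$ by the Grid Theorem (\cref{@reestablish}), and I would output a bounded-width tree decomposition of $G'$ setting $\alpha(t)\coloneqq\beta(t)$ so that the local cover condition holds trivially. Otherwise, I apply \cref{@antiauthoritarian} to $G'$ and $S$. If the output is an inflated copy of $\mathscr{D}^{\Sigma'}_t$ for some $\Sigma'\in\sobs(\mathbb{S}_{\Zcal})$ or a mixed $\Zcal$-packing of size $k$, we return it and terminate. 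Otherwise we obtain a $\Sigma$-schema $(A,\delta,D)$ of $G'$ anchored at $S$ with $|A|\leq f^2_{\ref{@antiauthoritarian}}$ and no $\Zcal$-red cells of $\delta$.

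Next, using the $\Sigma$-embedding of $G'-A$ and the absence of $\Zcal$-red cells, I would extract a short closed curve $\gamma$ in $\Sigma$ crossing a bounded number of cells of $\delta$ and separating the cells of $\delta$ into two pieces each carrying at most $(2/3)|S|$ vertices of $S$. Since every cell has boundary of size at most three, $\gamma$ induces a separation $(X_1,X_2)$ of $G'$ whose separator $A\cup(X_1\cap X_2)$ has size bounded by a function of $\gamma_{\Zcal},h_{\Zcal},t,k$; the existence of such a balanced $\gamma$ follows from the anchor property of $(A,\delta,D)$ at $S$ combined with a pigeonhole argument on the dual of $\delta$ (using the bounded Euler genus of $\Sigma$, recall $\Sigma\in\mathbb{S}_{\Zcal}$). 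I then form the root $r$ with $\beta(r)\coloneqq W\cup A\cup(X_1\cap X_2)$ and $\alpha(r)\coloneqq\beta(r)$; since $\alpha(r)=\beta(r)$, the pair $(\beta(r),\alpha(r))$ is trivially a $\Zcal$-local cover of $G$, and the invariant that $W\subseteq\alpha(r)$ guarantees that any $\nonplanar(Z)$-inflated copy in $G$ reaching outside $G'$ hits $\alpha(r)$ through $W$. Finally, I recurse on the two subgraphs induced by $X_1$ and $X_2$, with child interfaces defined as the parts of $\beta(r)$ contained in each side, and attach the resulting tree decompositions as children of $r$.

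The main obstacle is controlling the interface size across the recursion: a naive accounting would let the interface $W$ accumulate apex vertices from every ancestor. The standard remedy is to arrange that the well-linked set sought at each recursive call be chosen disjoint from the current interface (enlarging $s$ by $|W|$), so the new apex $A$ produced by \cref{@antiauthoritarian} targets only the ``new'' part of the subproblem; combined with the balanced separator property, which shrinks $|V(G')|$ by a constant factor per level and caps the recursion depth at $O(\log |V(G)|)$, an induction shows that both the adhesion and the apex sizes stay bounded by $f^1_{\ref{@malcontents}}$ and $f^2_{\ref{@malcontents}}$ respectively (each of the same order as $f^1_{\ref{@antiauthoritarian}}$ and $f^2_{\ref{@antiauthoritarian}}$). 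The total running time is obtained by multiplying the per-level cost of \cref{@antiauthoritarian}, namely $2^{2^{\mathcal{O}_{\gamma_{\Zcal}}(\mathsf{poly}(t))+\mathcal{O}_{h_{\Zcal}}(k)}}|V(G)|^3\log|V(G)|$, by the $O(|V(G)|)$ ``progress'' factor of the balanced-separator recursion, yielding the claimed $|V(G)|^4\log|V(G)|$ dependence.
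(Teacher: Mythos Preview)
Your plan has a genuine gap at the step where you ``extract a short closed curve $\gamma$ in $\Sigma$'' balanced on $S$. No such curve of bounded length need exist. The anchoring property you invoke says precisely the opposite of what you want: for every separation $(B_1,B_2)\in\mathcal{T}_D^A$ of small order, the side $B_1\cup A$ carries \emph{fewer} than $(1-\alpha)|S|$ vertices of $S$, i.e.\ every short curve is \emph{unbalanced} on $S$. More concretely, after \cref{@antiauthoritarian} the graph $G'-A$ is merely a bounded-genus graph and may have arbitrarily large treewidth (think of a large planar grid with $A=\emptyset$); any noose that splits such a graph into two comparable pieces must cross $\Omega(\tw(G'-A))$ cells, so your separator $X_1\cap X_2$ is not bounded by any function of $\gamma_{\Zcal},h_{\Zcal},t,k$. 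Consequently the bags $\beta(r)=W\cup A\cup(X_1\cap X_2)$ cannot all be small, and since you set $\alpha(r)=\beta(r)$ throughout, the bound $|\alpha(r)|\le f^2_{\ref{@malcontents}}$ fails.

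The point you are missing is that the theorem does \emph{not} promise small bags, only small $\alpha(t)$ and small adhesion. In the paper's proof the root bag in the ``no small balanced separator for the interface $X$'' case is $\beta(r)=\ground(\delta)\cup A\cup X$, which contains \emph{every} ground vertex of $\delta$ and may be enormous; the children of $r$ are indexed by the cells $c\in C(\delta)$, each with adhesion $\pi_\delta(\tilde{c})\cup A\cup X_{J^c}$ of bounded size (using anchoring only to show $|X\cap V(\sigma(c))|<\tfrac13|X|$). The crucial step you never use is that the absence of $\Zcal$-red cells, together with \cref{@primitives}, forces every $\nonplanar(Z)$-inflated copy in $G-A$ to avoid $\ground(\delta)$ entirely; this is exactly what certifies that $(\beta(r),\alpha(r))$ with the \emph{small} set $\alpha(r)=A\cup X$ is a $\Zcal$-local cover. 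Your proposal treats ``no $\Zcal$-red cells'' as input to a balancedness argument, but its actual role is to decouple $\alpha(r)$ from $\beta(r)$.
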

\begin{proof} We define $f^{1}_{\ref{@malcontents}}(\gamma_{\Zcal}, h_{\Zcal}, t, k) \coloneqq 3g(\gamma_{\Zcal}, h_{\Zcal}, t, k)$ and $f^{2}_{\ref{@malcontents}}(\gamma_{\Zcal}, h_{\Zcal}, t, k) \coloneqq 4g(\gamma_{\Zcal}, h_{\Zcal}, t, k) + 1,$ where we define the function $g \colon \Nbbb^{4} \to \Nbbb$ so that $g(\gamma_{\Zcal}, h_{\Zcal}, t, k) = \max\{ f^1_{\ref{@antiauthoritarian}}(γ_{\Zcal},h_{\Zcal},t,k), f^2_{\ref{@antiauthoritarian}}(γ_{\Zcal},h_{\Zcal},t,k) + 3 \}.$

We prove the following stronger statement that immediately implies our claim.
We show that for every set $X \subseteq V(G)$ such that $|X| \leq 3g(\gamma_{\Zcal}, h_{\Zcal}, t, k) + 1$ there exists either
\begin{itemize}
\item an inflated copy of the Dyck grid $\mathscr{D}_{t}^{\Sigma'},$ where $\Sigma' \in \sobs(\mathbb{S}_{\Zcal}),$ or
\item a mixed $\Zcal$-packing of size $k$ in $G,$ or
\item a rooted tree decomposition $(T, r, \beta)$ of $G$ with adhesion at most $3g(\gamma_{\Zcal}, h_{\Zcal}, t, k) + 1$ such that $X \subseteq \beta(r)$ and a function $\alpha \colon V(T) \to 2^{V(G)}$ such that, for every $t \in V(T),$ $(\beta(t), \alpha(t))$ is a $\Zcal$-local cover of order at most $4g(\gamma_{\Zcal}, h_{\Zcal}, t, k) + 1.$
\end{itemize}
We prove the statement above by induction on $|V(G) \setminus X|.$

In the trivial case where $|V(G)| \leq 3g(\gamma_{\Zcal}, h_{\Zcal}, t, k) + 1$ we can immediately conclude with a tree decomposition with a single bag $\beta(r)$ that contains every vertex of $G$ and where $\alpha(t) = \beta(t).$
Then, we can assume that $|V(G)| > 3g(\gamma_{\Zcal}, h_{\Zcal}, t, k) + 1.$
Also, in the case that $|X| < 3g(\gamma_{\Zcal}, h_{\Zcal}, t, k) + 1,$ since $|V(G)| > 3g(\gamma_{\Zcal}, h_{\Zcal}, t, k) + 1,$ there is a vertex $v \in V(G) \setminus X.$
In this case we can set $X' \coloneqq X \cup \{ v \}$ and apply induction since $|V(G) \setminus X'| < |V(G) \setminus X|.$
Therefore we can assume that $|X| = 3g(\gamma_{\Zcal}, h_{\Zcal}, t, k) + 1$ and that $|V(G)| > 3g(\gamma_{\Zcal}, h_{\Zcal}, t, k) + 1.$
We now distinguish between the two main cases that will guide how we build our rooted tree decomposition.

\paragraph{There exists a ``small'' $\nicefrac{2}{3}$-balanced separator for $X$.}

Assume that there exists a $\nicefrac{2}{3}$-balanced separator $S$ for $X$ in $G$ such that $|S| \leq g(\gamma_{\Zcal}, h_{\Zcal}, t, k)$ which we can compute in time $2^{\Ocal(g(\gamma_{\Zcal}, h_{\Zcal}, t, k))}\cdot |G|$ (see \cite{Reed92finding}).
In this case we can define the desired rooted tree decomposition $(T, r, \beta)$ as follows.
We define $\beta(r) \coloneqq X \cup S$ and $\alpha(r) \coloneqq \beta(r).$
Then, let $J$ be any connected component in $G - S.$
Let $X_{J} \coloneqq V(J) \cap X$ and observe that since $S$ is a $\nicefrac{2}{3}$-balanced separator for $X$ it holds that $|X_{J}| \leq 2g(\gamma_{\Zcal}, h_{\Zcal}, t, k).$
Moreover, let $X_{J}' \coloneqq X_{J} \cup S$ and note that $|X_{J}'| \leq 3g(\gamma_{\Zcal}, h_{\Zcal}, t, k).$
Observe that, if $V(J) \setminus X_{J}' = \emptyset$ we can immediately conclude as all vertices of $J$ are already part of the root bag.
Therefore, there exists a vertex $v_{J} \in V(J) \setminus X'_{J}$.
Let $X''_{J} \coloneqq X'_{J} \cup \{ v_{J} \}.$
Then, $|X''_{J}| \leq 3g(\gamma_{\Zcal}, h_{\Zcal}, t, k) + 1$ and observe that $|(V(J) \cup S) \setminus X''_{J}| < |V(G) \setminus X|$ and we can apply induction on $G[V(J) \cup S]$ and $X_{J}''$ to obtain a rooted tree-decomposition $(T_{J}, r_{J}, \beta_{J})$ of $G[V(J) \cup S]$ with the desired properties.
We can now define our rooted tree-decomposition for $G$ by making the root nodes $r_{J}$ adjacent to $r,$ for every connected component $J$ of $G - S.$
Also, note that by definition, $\beta(r) \cap \beta(r_{J}) = X'_{J}$ and therefore the adhesion constraint is satisfied.

\paragraph{There is no ``small'' $\nicefrac{2}{3}$-balanced separator for $X$.}

Otherwise, we are in the case where there is no $\nicefrac{2}{3}$-balanced separator for $X$ in $G$ of size at most $g(\gamma_{\Zcal}, h_{\Zcal}, t, k)$ and therefore, by definition, $X$ is a $(g(\gamma_{\Zcal}, h_{\Zcal}, t, k), \nicefrac{2}{3})$-well-linked set of $G.$
Since $g(\gamma_{\Zcal}, h_{\Zcal} \geq f^1_{\ref{@antiauthoritarian}}(γ_{\Zcal},h_{\Zcal},t,k),$ we are now in the position to call upon \cref{@antiauthoritarian} with $k,$ $t,$ $G,$ and $X.$
There are three outcomes.
In case we obtain either an inflated copy of the Dyck grid $\mathscr{D}^{\Sigma'}_{t},$ where $\Sigma' \in \sobs(\Sbbb_{\Zcal}),$ or a mixed $\Zcal$-packing of size $k$ in $G$ we can immediately conclude.
Therefore we are left with the case where we obtain a $\Sigma$-schema $(A, \delta, D)$ of $G$ in a surface $\Sigma \in \Sbbb_{\Zcal}$ such that
\begin{itemize}
\item $|A| \leq f^{2}_{\ref{@antiauthoritarian}}(\gamma_{\Zcal}, h_{\Zcal}, t, k) \leq g(\gamma_{\Zcal}, h_{\Zcal}, t, k),$
\item $(A,δ,D)$ is $\nicefrac{2}{3}$-anchored at $S,$ and
\item none of the cells of $\delta$ is $\Zcal$-red.
\end{itemize}
In this case we can define the desired rooted tree decomposition $(T, r, \beta)$ as follows.
We define $\beta(r) \coloneqq \ground(\delta) \cup A \cup X$ and $\alpha(r) \coloneqq A \cup X.$
Notice that $|\alpha(r)| \leq 4g(\gamma_{\Zcal}, h_{\Zcal}, t, k) + 1$ as required.
Then, let $J^{c}$ be the graph $\sigma_{\delta}(c),$ for any cell $c \in C(\delta).$
Let $X_{J^{c}} \coloneqq V(J^{c}) \cap X.$
Since $|A \cup \pi_{\delta}(\tilde{c})| \leq f^{2}_{\ref{@antiauthoritarian}}(\gamma_{\Zcal}, h_{\Zcal}, t, k) + 3 = g(\gamma_{\Zcal}, h_{\Zcal}, t, k)$ and $(A, \delta, D)$ is $\nicefrac{2}{3}$-anchored it follows that $|X_{J^{c}}| < \nicefrac{1}{3}|X| \leq g(\gamma_{\Zcal}, h_{\Zcal}, t, k).$
Let $X'_{J^{c}} \coloneqq X_{J^{c}} \cup A \cup \pi_{\delta}(\tilde{c})$ and notice that $|X'_{J^{c}}| \leq 2g(\gamma_{\Zcal}, h_{\Zcal}, t, k).$
Now, as before, if $V(J^{c}) \setminus X'_{J^{c}} = \emptyset$ we can immediately conclude as all vertices of $J^{c}$ are already part of the root bag.
Therefore, there exists a vertex $v_{J^{c}} \in V(J^{c}) \setminus X'_{J^{c}}.$
Let $X''_{J^{c}} \coloneqq X'_{J^{c}} \cup \{ v_{J^{c}} \}.$
Then, $|X''_{J^{c}}| \leq 2g(\gamma_{\Zcal}, h_{\Zcal}, t, k) + 1$ and observe that $|(V(J^{c}) \cup A) \setminus X''_{J^{c}}| < |V(G) \setminus X|$ and we can apply induction on $G[V(J^{c}) \cup A]$ and $X_{J^{c}}''$ to obtain a rooted tree-decomposition $(T_{J^{c}}, r_{J^{c}}, \beta_{J^{c}})$ of $G[V(J^{c}) \cup A]$ with the desired properties.
We can now define our rooted tree-decomposition for $G$ by making the root nodes $r_{J^{c}}$ adjacent to $r,$ for every subgraph $J^{c},$ $c \in C(\delta).$
Also, note that by definition, $\beta(r) \cap \beta(r_{J^{c}}) = X'_{J^{c}}$ and therefore the adhesion constraint is satisfied.

To conclude it remains to show that $(\beta(r), \alpha(r))$ is a $\Zcal$-local cover.
Since no cell of $\delta$ is $\Zcal$-red, by definition this implies that there is no $c$-invading $\nonplanar(Z)$-inflated copy in $G - A,$ for any cell $c \in C(\delta)$ and any $Z \in \Zcal.$
Then, by \cref{@primitives}, for every $\nonplanar(Z)$-inflated copy in $G - A,$ where $Z \in \Zcal,$ it must be the case that $V(M) \cap \ground(\delta) = \emptyset.$
This implies that every $\nonplanar(Z)$-inflated copy in $G,$ where $Z \in \Zcal,$ that intersects $\beta(t)$ must, by definition of $\beta(t)$ intersect the set $A \cup X = \alpha(r).$
Therefore $(\beta(r), \alpha(r))$ is a $\Zcal$-local cover as desired.

\medskip
As for the running time of the claimed algorithm, notice that it is dominated by the number of times we call upon \cref{@antiauthoritarian} which in the worst case may be $|V(G)|$ many times.
Therefore the desired running times is satisfied.
\end{proof}

\subsection{Auxiliary algorithmic results}
\label{@analogistic}

In this subsection we present some known algorithmic results that will be used as subroutines in our final algorithm.

\medskip
The following states that every planar graph $H$ can be found as a minor of a wall of size quadratic in the size of $H.$
It can be seen as a special case of \cref{@headstrong}.

\begin{proposition}[\!\! \cite{RobertsonST94Quickly}]\label{@enlightening}
There exists some constant $c$  
such that every planar graph $H$ on $h$ vertices is a minor of a $(c \cdot |V(H)|^2 \times c \cdot |V(H)|^2)$-wall.
\end{proposition}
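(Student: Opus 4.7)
\medskip
\noindent\textbf{Proof plan.} The plan is to route through grids: I would first show that every planar $H$ on $h$ vertices is a minor of an $(O(h^2)\times O(h^2))$-grid, and then invoke the standard correspondence between grids and walls to transfer the containment to a wall of the same order of magnitude.

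For the first step, I would fix an arbitrary combinatorial planar embedding of $H$, and realize it as an \emph{orthogonal drawing} in an integer grid: each vertex of $H$ is placed at a distinct grid point, and each edge of $H$ is drawn as a grid path consisting of horizontal and vertical unit segments, so that distinct edges are internally vertex-disjoint. Such a drawing can be obtained by classical means (for instance by first applying a straight-line grid embedding in the spirit of de~Fraysseix--Pach--Pollack or Schnyder, placing $H$ in an $O(h)\times O(h)$ grid, and then \emph{refining} each unit cell into an $(h+1)\times(h+1)$ block so that the straight segments of the original drawing can be replaced by disjoint monotone grid paths routed inside pairwise disjoint ``tubes'' along them, and so that at each vertex block the up to $h-1$ incident edges can be attached via internally disjoint paths respecting the rotation system of the planar embedding). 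A direct bookkeeping on the number of bends per edge (at most $O(h)$) and the number of edges (at most $3h-6$) shows that a grid of size $(c_1 h^2)\times (c_1 h^2)$ suffices. This orthogonal drawing is immediately an $H$-minor-model in the grid: the branch set of a vertex $v$ is the single grid point where $v$ is placed together with the small connectors routed in $v$'s block, and the branch path of an edge $e$ is the grid path drawing $e$.

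For the second step, I would use the direct observation, following the definition of walls and elementary walls in Section~\ref{@crystallize}, that an $(n\times n)$-grid is a minor of the elementary $(2n)$-wall: contracting one of each pair of ``horizontal'' edges that was deleted when passing from the $(k\times 2k)$-grid to the elementary $k$-wall reconstructs a subdivided copy of an $(n\times n)$-grid. Consequently, the $(c_1 h^2)\times (c_1 h^2)$-grid that hosts $H$ is itself a minor of the $(2c_1 h^2)$-wall, and the claim follows with $c \coloneqq 2c_1$.

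The only substantive point is the orthogonal routing in the first step, where one has to make sure that the disjoint-edge routing is actually achievable within quadratic area while respecting the cyclic order of edges around each vertex; this is the main obstacle but it is a standard exercise in planar graph drawing and does not require new ideas.
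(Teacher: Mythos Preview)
The paper does not prove this proposition; it is stated with a citation to Robertson, Seymour, and Thomas~\cite{RobertsonST94Quickly} and used as a black box. So there is no ``paper's own proof'' to compare against here.

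Your plan is a correct and standard way to establish the result. The two-step reduction (planar graph $\to$ quadratic grid via an orthogonal drawing, then grid $\to$ wall) is exactly how one would prove this from scratch, and both steps are sound. One minor remark: your description of the first step is slightly informal---the refinement of a straight-line embedding into an orthogonal one with disjoint edge-routings requires some care (in particular, a vertex of degree $d$ needs a block large enough to accommodate $d$ ports in the correct cyclic order), but as you say this is standard in planar graph drawing and quadratic area is well known to suffice. The original source~\cite{RobertsonST94Quickly} obtains the bound by a direct routing argument tailored to the grid-minor question rather than by invoking the graph-drawing literature, but the underlying idea is the same.
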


\begin{lemma}\label{@horklieimer}
There exists a function $f_{\ref{@horklieimer}} \colon \Nbbb^2 \to \Nbbb$ and an algorithm that, given an $n$-vertex graph $G,$ an $h$-vertex planar graph $H,$ and $k \in \Nbbb,$ outputs either
\begin{itemize}
\item an $H$-packing of size $k$ in $G$ or 
\item a tree decomposition of $G$ of width at most $f_{\ref{@horklieimer}}(h,k).$
\end{itemize} 
Moreover, the algorithm above runs in time $2^{\poly(h,k)} |V(G)| \log(|V(G)|$ and $f_{\ref{@horklieimer}}(h,k) = \Ocal(h^{20} \cdot k^{5}).$
\end{lemma}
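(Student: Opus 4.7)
The plan is to combine the Grid Theorem (\cref{@reestablish}) with \cref{@enlightening} in a standard bidimensionality-style reduction. Let $c_{0}$ denote the constant from \cref{@enlightening}, so that every planar $h$-vertex graph $H$ is a minor of the elementary $(c_{0} h^{2})$-wall. Set $w \coloneqq c_{0} h^{2}\lceil\sqrt{k}\rceil$. Because a $w$-wall naturally decomposes into $\lfloor w/(c_{0}h^{2})\rfloor^{2} \geq k$ pairwise vertex-disjoint $(c_{0}h^{2})$-subwalls, each of which contains $H$ as a minor, any graph containing a $w$-wall as a subgraph admits an $H$-packing of size $k$ that can be read off directly from the fixed planar routing of $H$ into the elementary $(c_{0}h^{2})$-wall.

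First, I would call an algorithmic version of \cref{@reestablish} on $G$ with parameter $w$, aiming for one of two outcomes. Either \textbf{(a)} the algorithm returns a $w$-wall $W \subseteq G$, in which case I partition $W$ into $k$ disjoint $(c_{0}h^{2})$-subwalls and output the induced $H$-packing of size $k$; or \textbf{(b)} it returns a tree decomposition of $G$ of width at most $c\cdot w^{10}$, which gives the claimed bound
\[
f_{\ref{@horklieimer}}(h,k) \;=\; c\cdot\bigl(c_{0}h^{2}\lceil\sqrt{k}\rceil\bigr)^{10} \;=\; \Ocal(h^{20}k^{5}).
\]

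The main obstacle is having a single-exponential FPT algorithmic grid theorem with the required time bound $2^{\mathsf{poly}(h,k)}\cdot|V(G)|\log|V(G)|$, since \cref{@reestablish} is stated only combinatorially. This is provided by the recent line of algorithmic grid theorems (Chuzhoy--Tan and follow-ups), or can be obtained from the algorithm implicit in \cref{thm_algogrid} after producing a well-linked set of the appropriate order via standard balanced-separator recursion. Everything else is bookkeeping: the planar embedding of $H$ into the elementary $(c_{0}h^{2})$-wall is fixed, so no additional minor testing is needed once the $w$-wall has been located, and the $k$ subwall models are extracted in linear time.
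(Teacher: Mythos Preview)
Your proposal is correct and essentially identical to the paper's own proof: set the wall size to $\lceil\sqrt{k}\rceil\cdot c_{0}h^{2}$, observe via \cref{@enlightening} that such a wall yields an $H$-packing of size $k$, and invoke the grid theorem of \cref{@reestablish} to either produce the wall or a tree decomposition of width $\Ocal(h^{20}k^{5})$. You are actually more careful than the paper about the algorithmic aspect, since \cref{@reestablish} is stated only combinatorially and the paper simply writes ``run the algorithm of \cref{@reestablish}'' without further comment.
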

\begin{proof}
Let  $g = \lceil \sqrt{k} \rceil \cdot c_1 \cdot |V(H)|^2,$ where $c_{1}$ is the constant of \cref{@enlightening} and observe that if we have a $g$-wall in $G,$ then we can also find a $(k \cdot H)$-\major in $G.$
We now run the algorithm of \cref{@reestablish} with input $G$ and $g.$
This permits us to either output a $(k\cdot H)$-\major in $G$ or to obtain a tree decomposition of $G$ of width at most $c_2 \cdot g^{10},$ where $c_2$ is the constant of \cref{@reestablish}. 
\end{proof}

\begin{lemma}\label{@exonerates}
There exists an algorithm that, given an $n$-vertex graph $G,$ an $h$-vertex graph $H,$ and a tree decomposition of $G$ of width $w$ outputs either an $H$-packing in $G$ of size $k$ or an $H$-cover of $G$ of size $wk$ in time $2^{\poly_{h}(k,w)} \cdot |V(G)|.$
\end{lemma}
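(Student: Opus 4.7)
My plan is to design a top-down recursive algorithm on the rooted tree decomposition that at each step either grows the packing by one element or adds at most $w+1$ vertices to the cover. I would root $(T,\beta)$ at an arbitrary node $r$; for every node $t$ this defines the subgraph $G_t \coloneqq G[\beta(V(T_t))]$, where $T_t$ is the subtree rooted at $t$. The core subroutine I need is an algorithm that decides whether $G_t$ contains $H$ as a minor and, when the answer is positive, produces an actual $H$-minor model inside $G_t$. On a tree decomposition of width $w$ with $H$ of size $h$, this is a classical task solvable in time $2^{\poly_{h}(w)} \cdot |V(G_t)|$, either via a direct minor-containment dynamic program on the tree decomposition or, alternatively, via the constructive version of Courcelle's theorem.

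The recursive step proceeds as follows. If the current $G$ has no $H$-minor, return the empty packing and cover. Otherwise, pick a \emph{deepest} node $t$ such that $G_t$ still contains $H$ as a minor; by the choice of $t$, no child $c$ of $t$ satisfies $G_c \geq H$. I extract an $H$-minor model $M$ inside $G_t$ using the subroutine above, declare $M$ a packing element, add the (at most $w+1$) vertices of $\beta(t)$ to the accumulating cover, delete $V(G_t)$ from $G$, update the tree decomposition by removing $T_t$ and restricting the remaining bags, and recurse. The recursion halts as soon as either the packing has reached size $k$ (and we output it) or the remaining graph is $H$-minor-free (and we output the accumulated cover).

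Correctness for the packing is immediate since the $M_i$'s are pairwise disjoint by construction, each being contained in $V(G_{t_i})$, which is deleted before the next iteration. For the cover I would use the following observation: as no child $c$ of $t_i$ has $G_c \geq H$, the subgraph $G_{t_i} - \beta(t_i)$ is a disjoint union of $H$-minor-free graphs $G_c - \beta(t_i)$, hence, when $H$ is connected, $H$-minor-free itself; therefore every $H$-minor of the original $G$ using a vertex of $V(G_{t_i})\setminus \beta(t_i)$ at stage $i$ must already meet $\beta(t_i) \subseteq C$. For a disconnected $H$ the same scheme is applied connected-component-wise, or handled via the reduction to the connected case developed in \cref{@expurgated}, preserving the cover bound.

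In terms of bookkeeping, each iteration contributes one packing element and at most $w+1$ vertices to the cover, so if we do not exit through the packing alternative we finish with $|C| \leq (w+1)|P| \leq (w+1)k$, which matches the claimed $wk$ bound up to a harmless constant shift that is routinely absorbed. The only non-routine ingredient, and the main potential obstacle, is obtaining the dependence $2^{\poly_{h}(w)}$ in the running time of the $H$-minor subroutine; with that in place, at most $k$ iterations yield the overall bound $2^{\poly_{h}(k,w)} \cdot |V(G)|$ stated in the lemma.
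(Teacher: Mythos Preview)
Your greedy on the rooted tree decomposition—find a deepest $t$ with $G_t\geq H$, declare one packing element inside $G_t$, charge $\beta(t)$ to the cover, delete $V(G_t)$, repeat—is a correct and pleasantly elementary proof for \emph{connected} $H$, and it is genuinely different from what the paper does. The paper proceeds by two black boxes: it first runs the bounded-treewidth minor-checking algorithm of Adler--Dorn--Fomin--Sau--Thilikos to test directly for a $(k\cdot H)$-minor, and if none is found it invokes the optimal $H$-deletion algorithm of Baste--Sau--Thilikos, quoting as ``known'' that the minimum cover has size at most $wk$ whenever $G\not\geq kH$. Your route makes the packing/cover trade-off explicit and needs nothing beyond a single minor-containment subroutine; the paper's route finds the packing in one shot and delegates the cover bound to the literature.

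There is, however, a real gap in your treatment of disconnected $H$. The step ``$G_t-\beta(t)$ is $H$-minor-free because each $G_c-\beta(t)$ is'' fails: with $H=H_1+\dots+H_m$, the connected pieces $H_j$ may spread over distinct children, so a disjoint union of $H$-minor-free graphs need not be $H$-minor-free. Your two suggested remedies do not close this. Applying the scheme ``component-wise'' would at best yield an $H_j$-cover whose size is governed by the $H_j$-packing number rather than the $H$-packing number, and turning that into the stated $wk$ bound needs an additional argument you did not supply. Deferring to \cref{@expurgated} is circular: that section relies on \cref{@exonerates} through \cref{@flattering} and \cref{@unruliness}, and in particular \cref{@flattering} applies \cref{@exonerates} to the (possibly disconnected) planar graphs $P_{\bar I}$. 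If you want to keep your elementary argument, you must supply a separate reduction for disconnected $H$; the paper avoids this by leaning on the cited deletion algorithm and the ``known'' bound.
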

\begin{proof}
With such a tree decomposition at hand, we may find, if it exists, a $(k\cdot H)$-\major of $G$ in time $2^{\Ocal(w)}(hk)^{2w} 2^{\Ocal(hk)} \cdot |V(G)|,$ using, e.g., the minor-checking algorithm of \cite{AdlerDFST11Faster}.
If this is not the case, then it is known that $G$ has a $H$-cover of size at most $wk$ and a minimal such cover can be found in time $2^{\Ocal_{h}(w \log(w))} \cdot |V(G)|$ using the algorithm of~\cite{BasteST20acomp,BasteST19Hitting}. \end{proof}

A direct consequence of \cref{@horklieimer} and \cref{@exonerates} is the following.
\begin{corollary}\label{@flattering}
There exists an algorithm that, given a graph $G,$ an $h$-vertex planar graph $H,$ and $k \in \Nbbb,$ outputs either an $H$-packing of size $k$ in $G$ or an $H$-cover of $G$ of size $k \cdot f_{\ref{@horklieimer}}(h,k)$ in time $2^{\poly_{h}(k)} \cdot |V(G)|.$
\end{corollary}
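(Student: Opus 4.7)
The plan is to obtain the result by directly chaining the two preceding lemmas, \cref{@horklieimer} and \cref{@exonerates}, using the fact that each of them already takes care of one of the two possible outcomes (packing vs.\ bounded-width tree decomposition, and packing vs.\ cover on a bounded-width tree decomposition). No new structural ingredient is needed.

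First, I would run the algorithm of \cref{@horklieimer} on the input $(G,H,k)$. There are two possible outcomes. If it returns an $H$-packing of size $k$ in $G$, then we simply output it and we are done. Otherwise, it returns a tree decomposition $(T,\beta)$ of $G$ of width $w\le f_{\ref{@horklieimer}}(h,k)=\Ocal(h^{20}\cdot k^{5})$. This first step runs in time $2^{\poly(h,k)}|V(G)|\log|V(G)|$, which is absorbed in the target bound $2^{\poly_{h}(k)}|V(G)|$ (up to a logarithmic factor, or by a standard rebalancing/sparsification, which can be folded into the exponent in $k$).

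Second, I would feed the graph $G$, the pattern $H$, and the tree decomposition $(T,\beta)$ of width $w$ to the algorithm of \cref{@exonerates}. Again there are two possible outcomes: either an $H$-packing of size $k$ in $G$, in which case we output it and stop, or an $H$-cover of $G$ of size at most $wk\le k\cdot f_{\ref{@horklieimer}}(h,k)$, which is exactly the bound claimed in the statement of \cref{@flattering}, and which we then output. This second step runs in time $2^{\poly_{h}(k,w)}|V(G)|=2^{\poly_{h}(k)}|V(G)|$, since $w$ itself is polynomial in $k$ with coefficients depending on $h$.

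Putting the two calls together gives in all cases either an $H$-packing of size $k$ in $G$ or an $H$-cover of $G$ of size at most $k\cdot f_{\ref{@horklieimer}}(h,k)$, and the total running time is dominated by the second call and hence stays within $2^{\poly_{h}(k)}\cdot|V(G)|$. Since both outcomes of each of the two lemmas are covered, and the dependence on $h$ is absorbed in the subscript of $\poly_{h}$, there is no real obstacle in the proof; the only point one has to verify is that the tree decomposition output by \cref{@horklieimer} is of the exact form required as input by \cref{@exonerates} (a tree decomposition of width $w$), which is indeed the case.
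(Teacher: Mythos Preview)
Your proposal is correct and matches the paper's approach exactly: the paper states \cref{@flattering} as ``a direct consequence of \cref{@horklieimer} and \cref{@exonerates}'' without further proof, and your chaining of the two lemmas is precisely that direct consequence. The only minor point is the $\log|V(G)|$ factor from \cref{@horklieimer}, which cannot literally be folded into $2^{\poly_h(k)}$ since it depends on the input size; the paper is simply being loose here, so your caveat is appropriate.
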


\subsection{From $\Zcal$-local covers to $\Zcal$-local cover separations}\label{@restrained}

In this subsection we show how to transform a tree decomposition as the one in the second output of \cref{@malcontents} to a separation $(X,Y),$ where $(X, X\cap Y)$ is a $\Zcal$-local cover and $G[Y\setminus X]$ is $\Zcal$-minor free, i.e., $\Zcal \not\leq G$.

\begin{lemma}\label{@contradictory}
For every antichain $\Zcal \in \Hbbb^{-},$ there is an algorithm that, given $k, d \in \Nbbb,$ a graph $G,$ a tree decomposition $(T,\beta)$ of adhesion at most $z,$ and a function $\alpha \colon V(T) \to 2^{V(G)},$ where for every $t \in V(T),$ $(\beta(t), \alpha(t))$ is a $\Zcal$-local cover of $G$ of order at most $d,$ outputs either
\begin{enumerate}
\item a mixed $\Zcal$-packing of size $k$ in $G$ or
\item a $\Zcal$-local cover $(X, A)$ of $G$ such that 
\begin{itemize}
\item $G - X$ is $\Zcal$-minor free,
\item $|A| ≤ (k-1)\cdot (z+d),$
\item for every connected component $C$ of $G-X$ it holds that $|N_{G}(V(C))| \leq z$
\end{itemize}
\end{enumerate}
Moreover, the algorithm above runs in time $\Ocal_{h_{\Zcal}}(|V(G)|^3).$
\end{lemma}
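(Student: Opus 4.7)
The plan is a divide-and-conquer recursion on the tree decomposition, following the template sketched for \textbf{Step 2} in the introduction. Let $\textsc{Solve}(G,T,\beta,\alpha)$ be the routine: first test with the minor-checking algorithm of Kawarabayashi--Kobayashi--Reed whether $G$ contains some $Z\in\Zcal$ as a minor (in $\Ocal_{h_{\Zcal}}(|V(G)|^{2})$ time). If it does not, return the trivial $(X,A,\mathcal{M})=(\emptyset,\emptyset,\emptyset)$. Otherwise scan $E(T)$ for an edge $xy$ such that both $G_{x}\coloneqq G[\beta(V(T_{x}))\setminus D_{xy}]$ and $G_{y}\coloneqq G[\beta(V(T_{y}))\setminus D_{xy}]$ contain a $\Zcal$-minor, where $T_{x},T_{y}$ are the two components of $T-xy$ and $D_{xy}=\beta(x)\cap\beta(y)$. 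If such an edge exists (Case~1), recurse on both sides, add $D_{xy}$ to $X$ and to $A$, and return the union of the two recursive outputs together with the collected hosts. If no such edge exists (Case~2), a standard voting argument on $T$ locates a node $t^{*}\in V(T)$ such that every $\Zcal$-host of $G$ meets $\beta(t^{*})$; extract one such host $M$, add $\beta(t^{*})$ to $X$ and $\alpha(t^{*})$ to $A$, and return $(X,A,\{M\})$. As soon as the accumulated collection $\mathcal{M}$ of $\Zcal$-hosts reaches size $k$, abort the recursion and output $\mathcal{M}$ as the mixed $\Zcal$-packing.

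Three of the four output conditions are direct. Disjointness of the hosts in $\mathcal{M}$ is immediate because every Case~1 cut separates its two recursive subproblems by a disjoint adhesion, so hosts produced at different leaves of the recursion tree live in vertex-disjoint pieces of $G$. That $G-X$ is $\Zcal$-minor free holds because every Case~2 leaf absorbs a bag $\beta(t^{*})$ meeting every surviving $\Zcal$-host of its subproblem. The local cover property of $(X,A)$ is verified by noting that any $\nonplanar(Z)$-inflated copy $M'$ with $M'\cap X\neq\emptyset$ either meets an adhesion $D_{xy}\subseteq A$ contributed by some Case~1 cut, or meets $\beta(t^{*})$ for some Case~2 leaf $t^{*}$, in which case the hypothesis that $(\beta(t^{*}),\alpha(t^{*}))$ is a $\Zcal$-local cover of $G$ forces $M'\cap\alpha(t^{*})\neq\emptyset\subseteq A$. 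The cardinality bound follows by a counting argument: if the recursion tree contained $k$ leaves the algorithm would have output a $k$-packing, so it has at most $k-1$ leaves and therefore at most $k-2$ internal Case~1 nodes, giving $|A|\le(k-2)z+(k-1)d\le(k-1)(z+d)$.

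The principal technical obstacle is the neighbourhood bound $|N_{G}(V(C))|\le z$. Each component $C$ of the final $G-X$ corresponds to a maximal subtree of $T$ sandwiched between two consecutive cuts on a root-to-leaf path of the recursion tree. The plan is to show, by induction on the recursion, that such a $C$ is attached to the rest of $G$ through exactly one adhesion of the original decomposition: either the adhesion $D_{xy}$ of the lowest Case~1 cut above $C$ in the recursion, or, when $C$ sits below a Case~2 leaf, the single adhesion between the top node of $C$'s bag-subtree and the Case~2 bag $\beta(t^{*})$. A key point is that the recursive call on $G_{x}$ is executed on the induced subgraph $G[\beta(V(T_{x}))\setminus D_{xy}]$, so any edge from $C$ to the parent adhesion $D_{xy}$ can only go through the single top bag $\beta(x)$ of $T_{x}$; a careful bookkeeping, analogous to that carried out for the carving separations in \cref{@translation}, then shows that at the Case~2 leaf one may absorb the few bag-vertices that link $C$ to $D_{xy}$ into $\beta(t^{*})$ without exceeding the promised bound on $|A|$. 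This bookkeeping is the core combinatorial step and is the main hurdle in the proof. The running time is $\Ocal_{h_{\Zcal}}(|V(G)|^{3})$: there are $\Ocal(|V(T)|)=\Ocal(|V(G)|)$ recursive steps, each dominated by one KKR minor-test of cost $\Ocal_{h_{\Zcal}}(|V(G)|^{2})$.
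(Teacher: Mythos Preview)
Your recursion is exactly the paper's two-phase procedure: split on an adhesion with $\Zcal$-hosts on both sides (your Case~1, the paper's first phase), else locate a bag $\beta(t^{*})$ meeting every surviving host and put it into $X$ (your Case~2, the paper's second phase, where an orientation of $T^{i}$ towards the side carrying a $\Zcal$-minor produces a sink node $t^{i}$). Your verifications of the local-cover property, of $G-X$ being $\Zcal$-minor-free, and of the bound $|A|\le(k-1)(z+d)$ are correct and essentially match the paper's.

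You misplace the difficulty. The neighbourhood bound you flag as the ``principal technical obstacle'' is dispatched in one line in the paper: a component $C$ of $G-X$ lies inside one piece $G_{i}$ (cut off from the rest of $G$ by Case-1 adhesions, all of which are already in $A$) and there inside a single subtree of $T^{i}-t^{i}$, so $N_{G_{i}}(V(C))\subseteq\beta^{i}(a^{i})\cap\beta^{i}(t^{i})$ for the unique edge $a^{i}t^{i}$ attaching that subtree to $t^{i}$. Hence $N_{G}(V(C))\setminus A$ is contained in a single adhesion of $(T,\beta)$. Your premise that $C$ attaches through ``exactly one adhesion'' is in fact false --- it may border several Case-1 adhesions --- but all of those already lie in $A$, so no absorption or extra bookkeeping is needed; the elaborate mechanism you sketch addresses a non-problem. (Strictly, what one obtains is $|N_{G}(V(C))\setminus A|\le z$ rather than $|N_{G}(V(C))|\le z$ as the lemma states, but the only downstream consumer, \cref{@wrongfully}, bounds $|A\cup\bigcup_{C}N_{G}(V(C))|$, for which this weaker form is exactly what is required.)
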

\begin{proof}
For two adjacent nodes $t_{1}$ and $t_2$ of $T,$ we denote by $T_{t_1}$ and $T_{t_2}$ the components of $T-{t_1t_2}$ containing $t_1$ and $t_2$ respectively. 

Then, for $i \in [2],$ $G_{t_i}$ denotes the subgraph induced by $V_{t_{i}} = \cupall \{\beta(t') \mid t'\in V(T_{t_{i}})\}\setminus \beta(t_{3-i}).$
We define, for $i \in [2],$ $\beta_{t_{i}} = (T_{t_{i}}, \beta_{t_{i}})$ where $\beta_{t_{i}}(t) = \beta(t) \cap V_{t_{i}},$ for $t \in V(T_{t_{i}}),$ and define $\alpha_{t_{i}} \colon V(T_{t_{i}})\to 2^{V_{t_{i}}}$ such that $\alpha_{t_{i}}(t) = (\alpha(t) \cap V_{t_{i}}),$ for $t \in V(T_{t_{i}}).$

Initially, we set $X \coloneqq A \coloneqq \emptyset.$
If $(T, \beta)$ contains two adjacent nodes $t_1$ and $t_2$ such that each of the two subgraphs $G_{t_1}$ and $G_{t_2}$ contains a $\Zcal$-\major $M,$ then we set $X \coloneqq A \coloneqq A \cup (\beta(t_1) \cap \beta(t_2))$ and recurse, for $i \in [2],$ in the tree decomposition $(T_{t_i}, \beta_{t_i})$ of $G_{t_{i}}$ along with the function $\alpha_{t_{i}}.$
Eventually, we either find a mixed $\Zcal$-packing of $G$ of size $k$ or we obtain at most $k - 1$ pairwise disjoint subgraphs $G_1, \dots G_{\ell}$ of $G - X,$ together with their corresponding tree decompositions $(T^1, \beta^1),$ \dots, $(T^{\ell}, \beta^{\ell})$ and their corresponding functions in $\{ \alpha^i \colon V(T^i) \to 2^{V(G_i)} \mid i \in [\ell] \}.$
Observe that after the above procedure completes, we have $|A| \leq (k-1) \cdot z$ and that, for every $i \in [\ell],$ $\Zcal \leq G_{i}$.

For every $i \in [\ell],$ we now process the tree decomposition $(T^i, \beta^i)$ as follows.
Suppose that $(T^i, \beta^i)$ contains an adhesion $\beta^i(a) \cap \beta^i(b)$ for some $ab \in E(T^i),$ such that $\Zcal \not \leq G_i - (\beta^i(a) \cap \beta^i(b)).$
Then we update $A \coloneqq A \cup (\beta^i(a) \cap \beta^i(b))$ and $X \coloneqq X \cup V(G_i).$
Otherwise, if such an adhesion does not exist, this implies that for every edge $ab \in E(T^i)$ only one of $G^i_a$ or $G^i_{b}$ contains some $H \in \Zcal$ as a minor.
Therefore we may consider an orientation of the edges of $T^{i}$ where every each is oriented towards the side that contains some $H \in \Zcal$ as a minor.
This implies that in this orientation of $(T^i,\beta^i)$, there there must be a sink node $t^i$ such that for every adhesion $\beta^i(a^i) \cap \beta^i(t^i),$ with $a^i t^i \in E(T^i),$ the subgraph $G^i_{a^i}$ of $G^i$ is $\Zcal$-minor free.
Οbserve that $(\beta^{i}(t^i), \alpha^{i}(t^i))$ is a $\Zcal$-local cover for $G^i$ and update $A \coloneqq A \cup \alpha^{i}(t^i)$ and $X \coloneqq X\cup \beta^{i}(t^i).$
It follows that the resulting pair $(X,A)$ is a $\Zcal$-local cover of $G$ and that $G - X$ is $\Zcal$-minor free.
Moreover, by construction of $A$ and $X,$ we have that $|A| \leq (k-1)\cdot (z+d).$
Finally, for every connected component $C$ of $G - X,$ $N_G(V(C)) \setminus A$ is a subset of some adhesion of $(T, \beta)$ implying that $|N_{G}(V(C))| \leq z.$

The time complexity follows from the fact that, in the initial step, every adhesion set is processed once.
When processing an adhesion set $\beta(a) \cap \beta(b),$ we call the minor checking algorithm for every $H \in \Zcal$ in $G_a$ and in $G_b.$
By \cite{KawarabayashiKR12Thedisjoint}, this requires $\Ocal_{h_{\Zcal}}(|V(G)|^2)$ time.
If the minor checking succeeds on $G_{t_1}$ and $G_{t_2},$ then we move $\beta(t_{1}) \cap \beta(t_{2})$ and update the tree decomposition to ``separate'' $(T_{t_1}, \beta_{t_2})$ from $(T_{t_{1}}, \beta_{t_2}).$
This requires $\Ocal(|V(G)|)$ time.
Otherwise the adhesion set $\beta(a) \cap \beta(b)$ is declared ``dead'' and it will never be used again.
In the second step, either we identify an adhesion set or a bag.
Again for each tree decomposition $(T^i, \beta^i)$ this requires $\Ocal(|V(G)|)$ calls to the minor checking algorithm for each $H \in \Zcal$ in $G^i.$
It follows that the overall running time is $\Ocal_{h_{\Zcal}}(|V(G)|^3).$
\end{proof}

Now, given an antichain $\Zcal\in\Hbbb^-,$ we define $\Kcal_{\Zcal} \coloneqq \{\nonplanar(H)\mid H\in\Zcal\}$ and observe that $|\Kcal_{\Zcal}|≤|\Zcal|.$

\begin{lemma} \label{@unruliness}
For every $\Zcal\in\Hbbb^-,$ there exists an algorithm that, given a graph $G,$ a separation $(A,B)$ of $G$ of order $s,$ a mixed $\Kcal_\Zcal$-packing $\{M_{1}^{A}, \ldots, M_{k}^{A}$\} of $G[A \setminus B],$ and a mixed $\Kcal_\Zcal$-packing $\{M_{1}^{B}, \ldots, M_{k}^{B}\}$ of $G[B\setminus A]$ outputs either
\begin{itemize}
\item a mixed $\Zcal$-packing of size $k$ in $G$ or 
\item a $\Zcal$-cover of $G$ of size $f_{\ref{@horklieimer}}(h_{\Zcal},k) + s = \Ocal(h^{20}\cdot k^{5}) + s.$
\end{itemize}
Moreover, the algorithm above runs in time $2^{\poly_{h_{\Zcal}}(k,s)} \cdot |V(G)|\cdot \log(|V(G)|).$
\end{lemma}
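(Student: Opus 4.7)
The plan is to use the two given $\Kcal_{\Zcal}$-packings to supply the non-planar parts of candidate $\Zcal$-hosts, and then invoke the planar Erdős--Pósa result \cref{@flattering} on the opposite side of the separation either to produce the missing planar complements or to extract a small planar cover. To this end I would define the planar graph $P\coloneqq\bigsqcup_{H\in\Zcal}\mathsf{planar}(H)$ on at most $h_{\Zcal}^{2}$ vertices. The key observation is that every $P$-host contains $\mathsf{planar}(H)$ as a minor for every $H\in\Zcal,$ so the union of an $\nonplanar(H)$-host with a disjoint $P$-host is automatically an $H$-host.

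First I would apply \cref{@flattering} to $G[B\setminus A]$ with the planar graph $P$ and parameter $k.$ If it returns a $P$-packing $\{N_1,\ldots,N_k\}$ in $G[B\setminus A],$ then for each $i\in[k]$ the graph $M_i^A\cup N_i$ is an $H_i^A$-host, where $H_i^A\in\Zcal$ is the graph whose non-planar part is witnessed by $M_i^A;$ since the $M_i^A$ lie in $A\setminus B$ and the $N_i$ lie in $B\setminus A,$ the collection $\{M_i^A\cup N_i\mid i\in[k]\}$ is automatically pairwise disjoint and yields the required mixed $\Zcal$-packing of size $k$ in $G.$ Otherwise \cref{@flattering} returns a $P$-cover $C_B$ of $G[B\setminus A];$ I would then perform the symmetric step on $G[A\setminus B]$ using the packing $\{M_i^B\},$ either finishing with the desired $\Zcal$-packing or obtaining a second cover $C_A.$ In the remaining case the candidate cover is $(A\cap B)\cup C_A\cup C_B,$ and to check that it is a $\Zcal$-cover I would handle each $H\in\Zcal$ individually by reapplying the same argument with $\mathsf{planar}(H)$ in place of $P,$ noting that any $H$-host outside the cover lies entirely inside $G[A\setminus B]-C_A$ or $G[B\setminus A]-C_B$ and would contain a forbidden $\mathsf{planar}(H)$-minor there.

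The hard part will be matching the declared cover bound $f_{\ref{@horklieimer}}(h_{\Zcal},k)+s$ rather than the naive $\Ocal(k\cdot f_{\ref{@horklieimer}}(h_{\Zcal},k))+s$ that falls out of applying \cref{@flattering} directly on both sides. To obtain the stronger bound I would invoke \cref{@horklieimer} (instead of passing through \cref{@flattering}) to produce a tree decomposition of each side of width $f_{\ref{@horklieimer}}(h_{\Zcal},k),$ and then use the given $\Kcal_{\Zcal}$-packings of size $k$ as certificates: the existence of $k$ pairwise disjoint $\nonplanar(H)$-hosts on a side means that any $\Zcal$-packing dynamic programming routine on the resulting tree decomposition either succeeds with a $\Zcal$-packing of size $k$ on that side (which together with the other side's $\nonplanar$ packing yields the desired mixed $\Zcal$-packing of $G$), or else concentrates all obstructions inside a single bag of the decomposition, which together with $S=A\cap B$ realizes the claimed $f_{\ref{@horklieimer}}(h_{\Zcal},k)+s$ bound. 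The running time $2^{\poly_{h_{\Zcal}}(k,s)}\cdot|V(G)|\cdot\log(|V(G)|)$ is inherited from one call to \cref{@horklieimer} per side, plus the dynamic programming over the two tree decompositions.
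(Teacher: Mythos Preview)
Your second approach is on the right track but has a genuine gap. The claim that a dynamic programming routine on one side's tree decomposition ``concentrates all obstructions inside a single bag'' is not a statement that \cref{@exonerates} (or any standard packing/covering DP) delivers: what you actually get from a width-$w$ decomposition with no $k$-packing is a cover of size $wk$, not a single bag. Moreover, even if each of $G[A\setminus B]$ and $G[B\setminus A]$ separately fails to contain a mixed $\Zcal$-packing of size $k$, the whole graph $G$ may still contain one that uses both sides, so running DP on the sides independently cannot directly yield a $\Zcal$-cover of $G$.

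The step you are missing is the one the paper takes: once \cref{@horklieimer} produces tree decompositions of $G[A\setminus B]$ and of $G[B\setminus A]$ of width $f_{\ref{@horklieimer}}(h_{\Zcal},k)$, you \emph{glue} them through the separator $A\cap B$ (adding $A\cap B$ to every bag, or joining the two roots through a bag containing $A\cap B$) to obtain a single tree decomposition of $G$ of width $f_{\ref{@horklieimer}}(h_{\Zcal},k)+s$. A single call to \cref{@exonerates} on this global decomposition then gives either the desired packing or a cover whose size is one width term plus $s$, matching the stated bound. Your first approach via \cref{@flattering} cannot be repaired to this bound, and your correctness patch (``reapply with $\mathsf{planar}(H)$ for each $H$'') does not obviously yield a $\Zcal$-cover either, since a $P$-cover only forbids the disjoint union of all planar parts, not each $\mathsf{planar}(H)$ individually.
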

\begin{proof}
Let $P_1^A, \dots, P_k^A$ be planar graphs such that for every $1 \leq i \leq k,$ $M_i^B + P^A_i = H_i \in \Zcal.$
Consider also the planar graph $P^{A} = P_1^A + \dots + P_k^A.$
By applying the algorithm of \cref{@horklieimer}, we either find a $P^{A}$-\major in $G[A \setminus B]$ that along with $M_1^B + \dots + M_k^B,$ forms a mixed $\Zcal$-packing of $G$ of size $k,$ or returns a tree decomposition of $G[A \setminus B]$ of width $f_{\ref{@horklieimer}}(h, k) = \Ocal(h^{20}\cdot k^{5}).$
We work in the same way on $G[A \setminus B]$ and again we either find a mixed $\Zcal$-packing of $G$ or a tree decomposition of $G[B \setminus A]$ of width $\Ocal(h^{20}\cdot k^{5}).$
Now,  we can clearly combine the two tree decomposition in an a tree decomposition of $G$ of width $\Ocal(h^{20}\cdot k^{5}) + s$ and we can conclude by applying \autoref{@exonerates} to either find an $H$-\major with $H = H_1 + \dots + H_k$ or an $H$-cover of size $\Ocal(h^{20}\cdot k^{5}) + s.$
\end{proof}

Given a graph $G$ and a separation $(X,Y),$ we say that $(X,Y)$ is a $\Zcal$-\emph{local cover separation} of $G$ if $(X, X\cap Y)$ is a $\Zcal$-local cover and $\Zcal \not\leq G[Y\setminus X].$

\begin{lemma}\label{@wrongfully} 
For every $\Zcal \in \Hbbb^-,$ there exists an algorithm that, given $k, d \in \Nbbb,$ a graph $G,$ and a $\Zcal$-local cover $(X,A)$ of $G$ such that
\begin{itemize}
\item $G - X$ is $\Zcal$-minor free and
\item for every connected component $C$ of $G-X$ it holds that $|N_{G}(V(C))| \leq d,$
\end{itemize}
outputs either
\begin{enumerate}
\item a mixed $\Zcal$-packing of size $k$ in $G,$
\item a $\Zcal$-cover of $G$ of size $f_{\ref{@horklieimer}}(h_{\Zcal},k)+ kd = \Ocal(h^{20}\cdot k^{5})+kd,$ or
\item a $\Zcal$-local cover separation $(X', Y')$ of $G$ of order at most $|A| + (2k -1) \cdot d.$
\end{enumerate}
Moreover, the algorithm above runs in time $2^{\poly_{h_{\Zcal}}(k,d)} \cdot |V(G)| \log(|V(G)|).$
\end{lemma}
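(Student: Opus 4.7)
The plan is to study the connected components of $G - X$ and to classify each as ``bad'' or ``clean'' according to whether it contains some $\nonplanar(H)$ with $H \in \Zcal$ as a minor. A key preliminary observation is that, since $\Zcal \in \Hbbb^-$, every $H \in \Zcal$ is Kuratowski-connected and non-planar, so, as already recorded in the paper, $\nonplanar(H)$ is connected. Consequently, any $\nonplanar(H)$-inflated copy sitting inside $G - X$ is entirely contained in a single component of $G - X$. Let $\mathcal{O}$ denote the set of bad components; I would case-split on $|\mathcal{O}|$.

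If $|\mathcal{O}| \leq 2k - 1$, I would set $A' \coloneqq A \cup \bigcup_{C \in \mathcal{O}} N_G(V(C))$, $X' \coloneqq V(G) \setminus \bigcup_{C \in \mathcal{O}} V(C)$ and $Y' \coloneqq A' \cup \bigcup_{C \in \mathcal{O}} V(C)$, and argue that $(X', Y')$ is the desired $\Zcal$-local cover separation. The bound $|A'| \leq |A| + (2k-1)d$ is immediate; $(X', Y')$ is a genuine separation of order $|A'|$ because each $N_G(V(C))$ is contained in $X \subseteq A'$ and distinct components of $G - X$ are mutually non-adjacent; the induced subgraph $G[Y' \setminus X']$ is $\Zcal$-minor free as a subgraph of $G - X$. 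To see that $(X', A')$ is a $\Zcal$-local cover, I would check that any $\nonplanar(H)$-inflated copy $M$ with $M \cap X' \neq \emptyset$ either intersects $X$ (and is then caught by $A \subseteq A'$ via the assumed local cover property of $(X, A)$), or lies in $G - X$; in the latter case, connectivity of $\nonplanar(H)$ forces $M$ into a single component $C$, necessarily bad, so $V(M) \subseteq Y' \setminus X'$ contradicts $M \cap X' \neq \emptyset$.

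If $|\mathcal{O}| \geq 2k$, I would pick $2k$ bad components $C_1, \ldots, C_{2k}$ and, for each $i$, a witnessing $\nonplanar(H_i)$-inflated copy $M_i \subseteq C_i$. Setting $A_{\mathsf{s}} \coloneqq \bigcup_{i=1}^{k} V(C_i) \cup \bigcup_{i=1}^{k} N_G(V(C_i))$ and $B_{\mathsf{s}} \coloneqq V(G) \setminus \bigcup_{i=1}^{k} V(C_i)$ yields a separation of order $|A_{\mathsf{s}} \cap B_{\mathsf{s}}| \leq kd$, and $M_1, \ldots, M_k$ live in $G[A_{\mathsf{s}} \setminus B_{\mathsf{s}}]$ while $M_{k+1}, \ldots, M_{2k}$ live in $G[B_{\mathsf{s}} \setminus A_{\mathsf{s}}]$, forming mixed $\Kcal_{\Zcal}$-packings of size $k$ on each side. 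Applying \cref{@unruliness} to this data then returns either a mixed $\Zcal$-packing of size $k$ in $G$ (outcome 1) or a $\Zcal$-cover of size $f_{\ref{@horklieimer}}(h_{\Zcal}, k) + kd$ (outcome 2), matching the claimed bound.

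The conceptually delicate point, and the main combinatorial obstacle, is ruling out the possibility that a $\nonplanar(H)$-inflated copy ``straddles'' several components of $G - X$, thereby hitting $X'$ without being caught by any $N_G(V(C))$; this is precisely where the Kuratowski-connectivity of every member of $\Zcal$ is used. Algorithmically, the main obstacle is fitting the detection of bad components within the claimed $2^{\poly_{h_{\Zcal}}(k,d)} |V(G)| \log(|V(G)|)$ budget: rather than run a per-component minor test, I would iteratively search $G - X$ for $\nonplanar(H)$-inflated copies, stopping as soon as $2k$ witnesses have been located in pairwise distinct components (case 2) or, otherwise, producing a global tree-decompositional certificate in the spirit of \cref{@horklieimer} that rules out further bad components and so certifies $|\mathcal{O}| \leq 2k-1$, placing us in case 1.
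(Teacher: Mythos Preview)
Your proposal is correct and follows essentially the same approach as the paper: partition the components of $G-X$ according to whether they host some $\nonplanar(H)$-minor (the paper calls these $\Ccal^1$, you call them $\mathcal{O}$), branch on $|\mathcal{O}| \gtrless 2k$, and in the large case feed a separation of order $\leq kd$ into \cref{@unruliness}, while in the small case absorb the clean components into $X$ and the bad neighbourhoods into $A$. Your sets $X',Y',A'$ coincide with the paper's, and your justification that $(X',A')$ is a $\Zcal$-local cover (via connectedness of $\nonplanar(H)$) is actually more explicit than the paper's one-line ``observe''. One small slip: you write ``$N_G(V(C))$ is contained in $X \subseteq A'$'', but the inclusion you need is $A' \subseteq X \subseteq X'$, not $X \subseteq A'$; this is just a typo and the argument stands. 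Your remark on the running time is also well-taken: the paper simply invokes a quadratic minor test per component, which strictly speaking overshoots the stated near-linear budget, so your more careful discussion is an improvement rather than a deviation.
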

\begin{proof}
Let $\Ccal$ be the set of connected components of $G - X.$
We compute a partition $\{\Ccal^0, \Ccal^1\}$ of $\Ccal$ where $\Ccal^{1}$ contains the connected components in $\Ccal$ that contain some graph in $\Kcal_\Zcal$ as a minor.
The partition $\{\Ccal^0, \Ccal^1\}$ can be computed in time $\Ocal_{h_{\Zcal}}(V(G)|^2)$ using the quadratic minor testing algorithm of Kawarabayashi, Kobayashi, and Reed \cite{KawarabayashiKR12Thedisjoint}.
If $|\Ccal^1| \geq 2k$ then $G$ contains a separation $(A,B)$ of order at most $k \cdot d$ such that both $G[A \setminus B]$ and $G[B \setminus A]$ contain a mixed $\Kcal_\Zcal$-packing of size $k.$
So the algorithm of \cref{@unruliness} applies for $s = k \cdot d$ and we are done.
 
In the case that $|\Ccal^1| \leq 2k-1,$ we set $X' = X \bigcup_{C\in\Ccal^{0}}V(C),$ $A' = A \cup \bigcup_{C\in\Ccal^1}N_{G}(V(C))$ and $Y'=A'\cup (V(G)\setminus X').$
Observe that $(X',Y')$ is a $\Zcal$-local cover separation of $G$ of order at most $|A| + (2k-1) \cdot d.$
The fact that $G[Y' \setminus X'] = G - X'$ is $\Zcal$-minor free follows directly from the fact that $G-X$ is $\Zcal$-minor free.
\end{proof}

\subsection{Proof of the upper bound}\label{@expurgated}

In this subsection we conclude the proof of \cref{@indescribably}.

\medskip
By combining \cref{@malcontents}, \cref{@contradictory}, and \cref{@wrongfully} we have the following.

\begin{corollary}\label{@disreputable}
There exists a function $f_{\ref{@disreputable}} \colon \mathbb{N}^4 \to \mathbb{N}$ and an algorithm as follows:

\medskip
\noindent {\sl Algorithm} ${\bf Alg \mbox{-}Main}(\Zcal, G, t, k)$

\noindent {\sl Input}: $\Zcal\in\Hbbb^-,$ a graph $G,$ and two integers $t, k \in \Bbb{N}.$

\noindent {\sl Output}: either
\begin{itemize}
\item a $\mathscr{D}^{Σ}_{t}$-\major of $G,$ for some $Σ \in \sobs(\mathbb{S}_{\Zcal}),$
\item a mixed $\Zcal$-packing in $G$ of size at most $k,$
\item a $\Zcal$-cover of $G$ of size at most $f_{\ref{@disreputable}}(γ_{\Zcal},h_{\Zcal},t,k),$ or
\item a $\Zcal$-local cover separation $(X,Y)$ of $G$ of order at most $f_{\ref{@disreputable}}(\gamma_{\Zcal}, h_{\Zcal}, t, k).$
\end{itemize}
Moreover, the previous algorithm runs in time $2^{2^{\Ocal_{γ_{\Zcal}}(\mathsf{poly}(t)) + \Ocal_{h_{\Zcal}}(k)}}\cdot |V(G)|^4 \cdot \log(|V(G)|)$ and  $f_{\ref{@disreputable}}(\gamma_{\Zcal}, h_{\Zcal}, t, k)$ is of order $2^{\Ocal_{\gamma_{\Zcal}}(\mathsf{poly}(t)) + \Ocal_{h_{\Zcal}}(k)}.$
\end{corollary}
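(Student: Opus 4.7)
The plan is to chain the three results \cref{@malcontents}, \cref{@contradictory}, and \cref{@wrongfully} into a single algorithm, pushing whichever ``positive'' outcome appears along the way directly to the output. The parameters will be controlled by taking the maximum of the relevant bounds, and since each of the three bounds is of the same shape $2^{\Ocal_{\gamma_{\Zcal}}(\mathsf{poly}(t)) + \Ocal_{h_{\Zcal}}(k)}$, the final $f_{\ref{@disreputable}}$ stays within this asymptotic regime.

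First, I would invoke the algorithm of \cref{@malcontents} on the input $(\Zcal, G, t, k)$. If it returns a $\mathscr{D}^{\Sigma}_{t}$-host for some $\Sigma \in \sobs(\mathbb{S}_{\Zcal})$ or a mixed $\Zcal$-packing of size $k$, we output the same and terminate. Otherwise, we obtain a tree decomposition $(T, \beta)$ of $G$ of adhesion $z \leq f^{1}_{\ref{@malcontents}}(\gamma_{\Zcal},h_{\Zcal},t,k)$ together with a function $\alpha \colon V(T) \to 2^{V(G)}$ such that for every $t' \in V(T)$, the pair $(\beta(t'), \alpha(t'))$ is a $\Zcal$-local cover of order at most $d \coloneqq f^{2}_{\ref{@malcontents}}(\gamma_{\Zcal},h_{\Zcal},t,k)$.

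Next, I feed $(T,\beta)$ and $\alpha$ into \cref{@contradictory} with the parameters $k$ and $d$. Either this yields a mixed $\Zcal$-packing of size $k$ (which we return), or it returns a $\Zcal$-local cover $(X, A)$ of $G$ with $|A| \leq (k-1)(z+d)$, such that $G - X$ is $\Zcal$-minor-free and every component $C$ of $G - X$ satisfies $|N_{G}(V(C))| \leq z$. Finally, I hand $(X,A)$ to the algorithm of \cref{@wrongfully} with parameters $k$ and $z$ (playing the role of the bound~$d$ in that lemma). Its three possible outputs are exactly the remaining three outputs allowed in the statement: a mixed $\Zcal$-packing of size $k$; a $\Zcal$-cover of size $f_{\ref{@horklieimer}}(h_{\Zcal},k) + kz$; or a $\Zcal$-local cover separation $(X', Y')$ of order at most $|A| + (2k-1)z$. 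In each case we return it verbatim.

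It remains to verify the quantitative claims. The $\Zcal$-cover bound is $f_{\ref{@horklieimer}}(h_{\Zcal},k) + kz = \Ocal(h_{\Zcal}^{20}k^{5}) + k\cdot f^{1}_{\ref{@malcontents}}(\gamma_{\Zcal},h_{\Zcal},t,k)$, and the $\Zcal$-local cover separation bound is $(k-1)(z+d) + (2k-1)z$, both of which remain $2^{\Ocal_{\gamma_{\Zcal}}(\mathsf{poly}(t)) + \Ocal_{h_{\Zcal}}(k)}$; we take $f_{\ref{@disreputable}}(\gamma_{\Zcal},h_{\Zcal},t,k)$ to be the maximum of the two. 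For the running time, \cref{@malcontents} dominates with $2^{2^{\Ocal_{\gamma_{\Zcal}}(\mathsf{poly}(t)) + \Ocal_{h_{\Zcal}}(k)}}\cdot |V(G)|^{4}\cdot\log(|V(G)|)$, while \cref{@contradictory} contributes $\Ocal_{h_{\Zcal}}(|V(G)|^{3})$ and \cref{@wrongfully} contributes $2^{\poly_{h_{\Zcal}}(k,d)} \cdot |V(G)|\cdot\log(|V(G)|)$, both absorbed into the stated total. There is essentially no obstacle here beyond bookkeeping: the proof is a clean assembly of the three preceding results, with the only care needed being to ensure that whenever one of the subroutines outputs a ``mixed $\Zcal$-packing of size $k$'' we pass it to the caller, since that is one of the four permitted outputs.
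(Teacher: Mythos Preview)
Your proof is correct and follows essentially the same three-step pipeline as the paper's own argument: apply \cref{@malcontents}, then \cref{@contradictory}, then \cref{@wrongfully}, passing each intermediate structural output to the next subroutine and returning any packing, host, or cover that appears along the way. The only cosmetic difference is that you (correctly) feed $z$ as the neighbourhood bound into \cref{@wrongfully}, whereas the paper's write-up uses $d$ in the resulting estimates; since both $z$ and $d$ are $2^{\Ocal_{\gamma_{\Zcal}}(\mathsf{poly}(t)) + \Ocal_{h_{\Zcal}}(k)}$, this makes no difference to the stated bounds.
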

\begin{proof}
By calling upon \cref{@malcontents} and given that we do not have one of its two first outcomes as a result, we may assume to obtain a tree decomposition $(T, \beta)$ of adhesion at most $ z \coloneqq f^{1}_{\ref{@malcontents}}(\gamma_{\Zcal}, h_{\Zcal}, t, k)$ and a function $\alpha \colon V(T) \to 2^{V(G)},$ where for every $t \in V(T),$ $(\beta(t), \alpha(t))$ is a $\Zcal$-local cover of $G$ of order at most $d \coloneqq f^{2}_{\ref{@malcontents}}(\gamma_{\Zcal}, h_{\Zcal}, t, k).$
We next apply \cref{@contradictory}.
In case we obtain its first outcome, we can conclude.
Otherwise, we obtain a $\Zcal$-local cover $(X, A)$ of $G$ such that $G - X$ is $\Zcal$-minor free, $|A| \leq (k-1) \cdot (z+d),$ and, moreover, for every connected component $C$ of $G - X,$ it holds that $|N_{G}(V(C))| \leq z.$

Applying now the algorithm of \cref{@wrongfully} we obtain either
\begin{enumerate}
\item a mixed $\Zcal$-packing of size $k$ in $G,$
\item a $\Zcal$-cover of $G$ of size $f_{\ref{@horklieimer}}(h_{\Zcal},k)+ kd = \Ocal(h_{\Zcal}^{20}\cdot k^5) + k \cdot d = 2^{\Ocal_{γ_{\Zcal}}(\mathsf{poly}(t)) + \Ocal_{h_{\Zcal}}(k)},$ or
\item a $\Zcal$-local cover separation $(X',Y')$ of $G$ of order at most $(k-1) \cdot (z+d) + (2k-1) \cdot d = 2^{\Ocal_{γ_{\Zcal}}(\mathsf{poly}(t)) + \Ocal_{h_{\Zcal}}(k)}.$
\end{enumerate}
By choosing $f_{\ref{@disreputable}} \colon \mathbb{N}^4 \to \mathbb{N}$ so that $f_{\ref{@disreputable}}(γ_{\Zcal},h_{\Zcal},t,k) = \max\{ f_{\ref{@horklieimer}}(h_{\Zcal}, k) + kd, (k-1) \cdot (z+d) + (2k-1) \cdot d \},$ we finally have that $f_{\ref{@disreputable}}(γ_{\Zcal},h_{\Zcal},t,k) = 2^{\Ocal_{γ_{\Zcal}}(\mathsf{poly}(t)) + \Ocal_{h_{\Zcal}}(k)}.$

The running time follows by observing that the running time of \cref{@malcontents} dominates those of \cref{@contradictory}, and \cref{@wrongfully}.
\end{proof}

\begin{observation}\label{@unequivocally}
If $H$ is a \textsl{connected} non-planar Kuratowski-connected graph and $(X,Y)$ is an $H$-local cover separation of $G,$ then $X \cap Y$ is an $H$-cover of $G,$ for every $H \in \Zcal.$
\end{observation}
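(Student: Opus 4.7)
The plan is to prove the observation by contradiction, supposing that $X\cap Y$ fails to be an $H$-cover of $G$ and deriving a violation of one of the two defining properties of an $H$-local cover separation. Specifically, if $X\cap Y$ is not an $H$-cover, then $H$ is a minor of $G-(X\cap Y),$ so $G-(X\cap Y)$ contains an $H$-inflated copy $M,$ i.e.\ there is a minimal $H$-expansion $(M,T)$ with $M\subseteq G-(X\cap Y).$

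The first key step is to argue that we may take $M$ to be connected. Since $H$ is connected by hypothesis (and since $\nonplanar(H)=H$ because $H$ is non-planar), minimality of the $H$-expansion lets us restrict $M$ to the (unique) connected component carrying an $H$-minor model and discard the rest without losing the expansion property. Once $M$ is connected and $V(M)\subseteq V(G)\setminus (X\cap Y)=(X\setminus Y)\cup(Y\setminus X),$ the fact that $(X,Y)$ is a separation of $G$ — so that no edge of $G$ crosses between $X\setminus Y$ and $Y\setminus X$ — forces either $V(M)\subseteq X\setminus Y$ or $V(M)\subseteq Y\setminus X.$

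The second key step is to eliminate both alternatives. If $V(M)\subseteq Y\setminus X,$ then $M$ is a subgraph of $G[Y\setminus X],$ which is $H$-minor-free by the very definition of an $H$-local cover separation; this contradicts $M$ being an $H$-inflated copy. If on the other hand $V(M)\subseteq X\setminus Y,$ then $M\cap X\neq\emptyset,$ so the $H$-local cover property of $(X,X\cap Y)$ gives $V(M)\cap(X\cap Y)\neq\emptyset,$ contradicting $M\subseteq G-(X\cap Y).$

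No step here is a genuine obstacle; the only subtlety worth stating carefully is the reduction to a connected $H$-inflated copy, which relies on $H$ being connected and on the freedom we have when choosing the minimal $H$-expansion. The rest is an immediate consequence of the definitions of \emph{separation}, \emph{$H$-local cover}, and \emph{$H$-local cover separation} given in \cref{@restrained} and \cref{@proclaimed}.
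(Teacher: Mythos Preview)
Your proof is correct and is precisely the intended argument; the paper states this as an observation without proof, and what you have written is the natural way to unpack the definitions. The only place one might nitpick is the phrasing around minimality: rather than arguing that a minimal $H$-expansion must be connected, it is cleaner to say that since $H$ is connected any $H$-minor model in $G-(X\cap Y)$ lies in a single component of $G-(X\cap Y),$ and inside that component one can choose an $H$-inflated copy $M$---but this is exactly what you mean and the conclusion is the same.
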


Notice that \cref{@wrongfully} and \cref{@unequivocally} already implies the upper bound of \cref{@admiration} (that is \cref{@indescribably}) in the case where $\Zcal$ contains only connected graphs.
The next part is dedicated to the general (non-necessarily connected) case.

\begin{proof}[Proof of \cref{@indescribably}.]
Let $\hat{H}$ be a graph in $\Zcal$ that is non-planar, Kuratowski-connected, and a shallow-vortex minor.
Let $\hat{P}_{1}, \ldots, \hat{P}_{z}$ be the planar components of $\hat{H},$ i.e., $\hat{H} = \nonplanar(\hat{H}) + \hat{P}_{1} + \cdots + \hat{P}_{z}.$
We also define
$$\hat{\Zcal} = \cupall \big\{ \nonplanar(\hat{H}) + \{\sum_{i\in I}\hat{P}_{i}\mid i\in I\}\mid I\in 2^{[z]} \big\}.$$
Notice that all graphs in $\hat{\Zcal}$ are shallow-vortex minors.

\medskip
We define the following recursive algorithm:

\medskip
\noindent {\sl Procedure} ${\bf Rec\mbox{-}EP}(G,H,k)$

\smallskip
\noindent {\sl Input}: a graph $G,$ a non-planar Kuratowski-connected graph $H,$ and $k \in \Nbbb.$
We demand that if $ H \not\in \hat{\Zcal},$ then $G$ is morever $\hat{H}$-minor free.
We also define $c_{H}$ to be the number of planar connected components of $H.$
\smallskip

\noindent {\sl Output}: either
\begin{itemize}
\item a $\mathscr{D}^{Σ}_{t}$-\major of $G,$ for some $\Sigma \in \sobs(\mathbb{S}_{\{H\}}),$
\item an $H$-packing in $G$ of size at most $k,$ or
\item an $H$-cover in $G$ of size at most $2^{c_{H} \choose 2} \cdot f_{\ref{@disreputable}}(\gamma_{\Zcal}, h_{\Zcal}, t, k) + f_{\ref{@disreputable}}(\gamma_{\Zcal}, h_{\Zcal}, t, k).$
\end{itemize}

\smallskip
\noindent {\sl Description}:
If $H \in \hat{\Zcal}$ then we set $\Zcal^+ = \{ H \},$ otherwise we set $\Zcal^+ = \{H, \hat{H}\}.$
In either case, we have that $\Zcal^+ \in (\Kbbb \cap \Vbbb) \cap \Pbbb.$
This allows for the application of the first step of {\bf Rec\mbox{-}EP} that is to call ${\bf Alg\mbox{-}Main}(\Zcal^+,G,t,k)$ from \cref{@disreputable}.
If one of the three first outcomes are returned, then we conclude.
Here, we stress that, in the second case where a $\Zcal^+$-packing is returned for the case where $\Zcal^+ = \{H, \hat{H}\},$ this must be an $H$-packing as, by assumption, $G$ is $\hat{H}$-minor free.
So we may assume that the output is an $\Zcal^+$-local cover separation $(X,Y)$ of $G$ of order at most $f_{\ref{@disreputable}}(h_{\Zcal^+}, t, k).$

In the base case where $H$ is connected we are done as, from \cref{@unequivocally}, $X \cap Y$ is an $H$-cover of $G.$
Suppose now that $H$ is not connected and let $P_{1}, \ldots, P_{c_{H}}$ be the planar connected components of $\tilde{H}.$
Given some $I \in 2^{[c_{H}]},$ we denote $\bar{I} \coloneqq [c_{H}] \setminus I.$
Given an $I\in 2^{[c_{H}]} \setminus \{[c_{H}]\},$ we denote $P_{I} = \cupall \{P_{i} \mid i \in I\}$ and $P_{\bar{I}} = \cupall \{P_{i} \mid i \not\in I \}.$

The next step of the algorithm is to call ${\bf Rec\mbox{-}EP}(G[Y\setminus X], \nonplanar(H) + P_{I}, k),$ for every $I \in 2^{[c_{H}]} \setminus \{[c_{H}]\}$ and keep in mind that each such call is performed in a graph with strictly less planar connected components than $H.$
If, in some of these calls, the first outcome appears, we are done and the algorithm stops.
If this does not happen, then the outcomes of the above calls partition $2^{[c_{H}]} \setminus \{[c_{H}]\}$ into two sets $\Ical^{\mathsf{p}}$ and $\Ical^{\mathsf{c}}$ such that $\Ical^{\mathsf{p}}$ contains the $I$'s for which the algorithm returns some $(k \cdot (\nonplanar(H) + P_{I}))$-\major $M_{I}$ of $G[Y/X]$ and $\Ical^{\mathsf{c}}$ contains all the $I$'s for which the algorithm returns some $(\nonplanar(H)+ P_{I})$-cover $S_{I}$ of $G[Y/X]$ of size $2^{(c_{H}-1) \choose 2} \cdot f_{\ref{@disreputable}}(h_{P_{I}}, t, k).$

The next step is to partition $2^{[c_{H}]} \setminus \{[c_{H}]\}$ into two sets $\bar{\Ical}^{\mathsf{p}}$ and $\bar{\Ical}^{\mathsf{c}}$ where $\bar{\Ical}^{\mathsf{p}}$ contains the $I$'s for which $G[X \setminus Y]$ contains some $(k \cdot P_{\bar{I}})$-\major $\bar{M}_{I}$ and $\bar{\Ical}^{\mathsf{c}}$ contains all the $I$'s for which $G[X \setminus Y]$ contains some $P_{\overline{I}}$-cover $S_{\overline{I}}$ of size at most $k \cdot f_{\ref{@horklieimer}}(h,k) ≤ f_{\ref{@disreputable}}(h_{P_{\overline{I}}}, t, k).$
This can be done because $P_{\overline{I}}$ is planar and using \cref{@flattering} in time $2^{\poly_{h_{\Zcal}}(k)}\cdot |V(G)|.$

Now, notice that if there exists some $I \in {\Ical}^{\mathsf{p}} \cap \bar{\Ical}^{\mathsf{p}}$ then the algorithm returns $\bar{M}_{I} \cup M_{I}$ as a $(k\cdot H)$-\major of $G$ and stops.
Otherwise, it returns $S \coloneqq (X\cup Y) \cup (\bigcup_{I\in \Ical^{\mathsf{c}}} S_{I}) \cup (\bigcup_{I \in \bar{\Ical}^{\mathsf{c}}} \bar{S}_{I})$ as an $H$-cover of $G.$

This completes the description of the algorithm {\bf Rec\mbox{-}EP}.

\medskip
Notice now that in each recursive call, the set of the vertices added in $S,$ that are different than those from $(X \cup Y)$ (that are always the same), during its at most $2^{c_{H}}$ recursive calls is upper bounded by $2^{c_{H}} \cdot \big(2^{(c_{H}-1)\choose 2} \cdot f_{\ref{@disreputable}}(h_{P_{I}}, t, k) + f_{\ref{@horklieimer}}(h,k)\big) ≤ f_{\ref{@disreputable}}(h_{P_{\overline{I}}}, t, k) ≤ 2^{c_{H}\choose 2}\cdot f_{\ref{@disreputable}}(h_{H},t,k),$ as required.

\medskip
The main algorithm of the lemma works as follows.

\begin{itemize}
\item Let $\{ H_{1}, \ldots, H_{μ} \}$ be an ordering of $\Zcal$ where $H_{1} = \hat{H}.$
\item Set $S \coloneqq \emptyset,$ $i \coloneqq 1.$
\item While $i ≤ μ,$ do the following:
\begin{itemize}
\item run ${\bf Rec\mbox{-}EP}(G,H_{i},k).$ 
\item If ${\bf Rec\mbox{-}EP}(G,\hat{H},k)$ returns the first or second outcome, then output accordingly and stop.
\item If  ${\bf Rec\mbox{-}EP}(G,\hat{H},k)$ returns an $H$-cover $S_{H}$ in $G,$ then update $S \coloneqq S \cup S_{H}$ and $G \coloneqq G - S.$
\item Set $i \coloneqq i + 1.$
\end{itemize}
\item Output $S.$
\end{itemize}
 
The fact that the shallow-vortex minor $\hat{H}$ is processed first in the above procedure is important.
It guarantees for the rest of the recursive calls that $G$ will be $\hat{H}$-minor free as required in the specifications of algorithm {\bf Rec\mbox{-}EP}.
 
Notice that if we eventually obtain a $\Zcal$-cover in $G,$ then this is upper bounded by
$$f_{\ref{@indescribably}}(\gamma_{\Zcal}, h_{\Zcal}, t, k) \coloneqq |\Zcal| \cdot \big(2^{c_{H}\choose 2} \cdot f_{\ref{@disreputable}}(\gamma_{\Zcal}, h_{\Zcal}, t, k) + f_{\ref{@disreputable}}(\gamma_{\Zcal}, h_{\Zcal}, t, k)\big) = 2^{\Ocal_{γ_{\Zcal}}(\mathsf{poly}(t)) + \Ocal_{h_{\Zcal}}(k)}.$$

Notice that the algorithm above makes at most $|\Zcal|$ calls to ${\bf Rec\mbox{-}EP}(G,H,k)$ for each of the graphs in $\Zcal.$
If $T(|V(G)|,|V(H)|)$ is the running time of ${\bf Rec\mbox{-}EP}(G,H,k),$ then
$$T(|V(G)|,|V(H)|) ≤ 2^{|V(H)|-1} \big(T(|V(G)|,|V(H)|-1) + 2^{\poly_{h_{\Zcal}}(k)} \cdot |V(G)| + T^*(\gamma_{\Zcal},h_{\Zcal},t,k) \big),$$
where $T^*(\gamma_{\Zcal}, h_{\Zcal}, t, k)$ is the running time of \cref{@disreputable}.
The recursion above implies that the overall algorithm runs in time whose expression is the same as the one of $T^*(\gamma_{\Zcal},h_{\Zcal},t,k)$ in \cref{@disreputable}.
\end{proof}

\section{Proof of the main theorem}
\label{@homerische}

In this section we conclude the proof of our main result, which is \cref{@admiration}.
Given that we have the proof of  \cref{@indescribably}, it only remains to prove the lower bound of \cref{@admiration}.

\subsection{Proof of the lower bound}
\label{@instigated}

In this subsection we provide a proof of the lower bound of \cref{@admiration}.
We require the following fact about the additivity of the Euler genus of a graph (see~\cite{MoharT01Graphs}). We use the notation $\mathsf{cc}(G)$ for the set of connected components of the graph $G$.
 
\begin{proposition}\label{@skepticism} Let $G$ be a graph. It holds that $\eg(G) = \sum_{C \in \mathsf{cc}(G)} \eg(C)$.
\end{proposition}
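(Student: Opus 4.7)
The plan is to prove $\eg(G) = \sum_{C \in \mathsf{cc}(G)} \eg(C)$ by establishing the two inequalities separately, the lower bound being the substantive direction.

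For $\eg(G) \leq \sum_{C \in \mathsf{cc}(G)} \eg(C)$, I enumerate $\mathsf{cc}(G) = \{C_{1},\ldots,C_{k}\}$ and fix, for each $i \in [k]$, an embedding of $C_{i}$ in a surface $\Sigma_{i}$ with $\eg(\Sigma_{i}) = \eg(C_{i})$. I then form the iterated connected sum $\Sigma \coloneqq \Sigma_{1}\#\cdots\#\Sigma_{k}$, performing each connect operation along a small disk of $\Sigma_{i}$ chosen disjoint from the embedded image of $C_{i}$. Since connected sum is additive on Euler genus and the original embeddings survive intact within $\Sigma$, one obtains a single embedding of $G$ in $\Sigma$, whence $\eg(G) \leq \sum_{C} \eg(C)$.

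For $\eg(G) \geq \sum_{C \in \mathsf{cc}(G)} \eg(C)$, I would induct on $k \coloneqq |\mathsf{cc}(G)|$, the case $k = 1$ being immediate. For $k \geq 2$, fix an optimal embedding of $G$ in a surface $\Sigma$ with $\eg(\Sigma) = \eg(G)$, single out a component $C_{1}$, and take an open regular neighborhood $N$ of $C_{1}$ in $\Sigma$ chosen small enough to be disjoint from the other components. Then $N$ is connected and $\partial N$ is a disjoint union of simple closed curves, each lying in $\Sigma \setminus G$. The aim is to produce a curve $\gamma \subseteq \partial N$ that is separating in $\Sigma$ with $N$ on one side and at least one other component of $G$ on the other. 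Given such a $\gamma$, cutting $\Sigma$ along $\gamma$ and capping both resulting boundary circles by disks produces $\Sigma_{A} \sqcup \Sigma_{B}$ with $\eg(\Sigma_{A}) + \eg(\Sigma_{B}) = \eg(\Sigma)$, and the subembeddings of $G$ in $\Sigma_{A}$ and $\Sigma_{B}$ each involve strictly fewer than $k$ components, so the inductive hypothesis yields $\sum_{C} \eg(C) \leq \eg(\Sigma_{A}) + \eg(\Sigma_{B}) = \eg(G)$.

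The main obstacle is producing the curve $\gamma$, and this is where optimality of the embedding is crucial. If any boundary component of $N$ were non-separating in $\Sigma$, cutting along it and capping both sides would yield a surface $\Sigma'$ with $\eg(\Sigma') < \eg(\Sigma)$ into which $G$ still embeds (the curve being disjoint from $G$), contradicting the optimality of $\Sigma$; hence every component of $\partial N$ is separating. Now each component $C_{i}$ with $i \geq 2$ is connected and disjoint from $N$, hence lies in a single connected component of $\Sigma \setminus N$; pick such a component $R$ containing at least one $C_{i}$ and let $\gamma$ be any boundary component of $R$, so that $\gamma \subseteq \partial N$. Since $\gamma$ is separating in $\Sigma$, its complement $\Sigma \setminus \gamma$ has two connected components; as $N$ and $R$ are each connected, disjoint from $\gamma$, and lie on opposite local sides of $\gamma$, each is entirely contained in one of the two global sides. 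Thus $\gamma$ separates $N \supseteq C_{1}$ from $R \supseteq C_{i}$, providing the useful curve and completing the induction.
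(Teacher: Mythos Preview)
Your argument is correct. The upper bound via connected sums is standard, and for the lower bound your use of optimality to force every boundary component of the regular neighborhood to be separating is the right idea; the subsequent selection of a curve $\gamma$ separating $C_1$ from at least one other component, together with the Euler-characteristic bookkeeping $\eg(\Sigma_A)+\eg(\Sigma_B)=\eg(\Sigma)$ after cutting and capping, cleanly sets up the induction.

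Note, however, that the paper does not actually prove this proposition: it is stated as a known fact with a citation to Mohar and Thomassen's \emph{Graphs on Surfaces}~\cite{MoharT01Graphs}, where the additivity of Euler genus (over connected components, and more generally over blocks) is established. So you have supplied a self-contained topological proof where the paper simply invokes the literature. Your argument is essentially the classical one---the orientable case goes back to Battle, Harary, Kodama, and Youngs, and the non-orientable case to Stahl and Beineke---and what it buys is independence from the reference at the cost of the surface-topology machinery (regular neighborhoods, two-sidedness of boundary curves, behavior of Euler genus under surgery) that the paper otherwise does not need to develop.
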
 

We prove two lemmata that showcase the behaviour of packing versus covering of a non-planar graph, in a Dyck-grid which contains it as a minor.

\begin{lemma}\label{@automobiles} Let $H$ be a non-planar graph that is $Σ$-embeddable, for some surface $Σ.$
Then, there exists a non-negative integer $c$ such that, for every $k \in \mathbb{N},$ $\pack_{\{ H \}}(\mathscr{D}^{Σ}_{k}) \leq c.$ Moreover, $c = 1 + \eg(Σ) - \eg(H).$
\end{lemma}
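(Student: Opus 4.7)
\medskip

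\noindent\textbf{Proof plan for \cref{@automobiles}.} The plan is to leverage additivity of the Euler genus (namely \cref{@skepticism}) together with the fact that the Dyck-grid $\mathscr{D}^{\Sigma}_k$ is, by construction, $\Sigma$-embeddable. Concretely, I will show that any $\{H\}$-packing in $\mathscr{D}^{\Sigma}_k$ yields a subgraph of an embedded graph whose Euler genus is at least $p\cdot\eg(H)$, where $p$ is the size of the packing; since this subgraph embeds in $\Sigma$, we must have $p\cdot\eg(H)\le \eg(\Sigma)$, from which the claimed bound follows by a short arithmetic comparison.

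First, I would recall that $\mathscr{D}^{\Sigma}_k$ admits a drawing without crossings in $\Sigma$, as explicitly described just after the definition of Dyck grids in \cref{sec_nots_defs}. In particular, every subgraph of $\mathscr{D}^{\Sigma}_k$ is $\Sigma$-embeddable and therefore has Euler genus at most $\eg(\Sigma)$. Then I would fix an $\{H\}$-packing $\{M_1,\dots,M_p\}$ in $\mathscr{D}^{\Sigma}_k$, i.e., pairwise vertex-disjoint subgraphs of $\mathscr{D}^{\Sigma}_k$ each containing $H$ as a minor. Since the Euler genus is minor-monotone and $H$ is non-planar, $\eg(M_i)\ge\eg(H)\ge 1$ for every $i\in[p]$.

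Now consider the subgraph $M\coloneqq M_1\cup\cdots\cup M_p$ of $\mathscr{D}^{\Sigma}_k$. Because the $M_i$ are pairwise vertex-disjoint, every connected component of $M$ is a connected component of a unique $M_i$, so $\mathsf{cc}(M)=\bigcup_{i\in[p]}\mathsf{cc}(M_i)$. Applying \cref{@skepticism} first to $M$ and then to each $M_i$ gives
\[
\eg(M)=\sum_{C\in\mathsf{cc}(M)}\eg(C)=\sum_{i\in[p]}\sum_{C\in\mathsf{cc}(M_i)}\eg(C)=\sum_{i\in[p]}\eg(M_i)\ge p\cdot\eg(H).
\]
Since $M$ is a subgraph of $\mathscr{D}^{\Sigma}_k$, it is $\Sigma$-embeddable, hence $\eg(M)\le\eg(\Sigma)$. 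Combining the two inequalities yields $p\le\eg(\Sigma)/\eg(H)$.

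To conclude, I would verify the purely arithmetic inequality $\eg(\Sigma)/\eg(H)\le 1+\eg(\Sigma)-\eg(H)$, which is equivalent to $(\eg(H)-1)(\eg(\Sigma)-\eg(H))\ge 0$ and therefore holds because $\eg(H)\ge 1$ (as $H$ is non-planar) and $\eg(\Sigma)\ge\eg(H)$ (as $H$ embeds in $\Sigma$). Hence $\pack_{\{H\}}(\mathscr{D}^{\Sigma}_k)\le 1+\eg(\Sigma)-\eg(H)$, independently of $k$, giving the desired constant $c$. There is no real obstacle here: the entire argument rests on the additivity of Euler genus over disjoint components supplied by \cref{@skepticism}, and the only subtlety is the final slackening of $\eg(\Sigma)/\eg(H)$ to $1+\eg(\Sigma)-\eg(H)$, which the lemma adopts for later notational convenience.
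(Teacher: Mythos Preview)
Your proof is correct and follows essentially the same approach as the paper: both arguments rest on the additivity of Euler genus (\cref{@skepticism}) together with the $\Sigma$-embeddability of $\mathscr{D}^{\Sigma}_k$. The only cosmetic difference is that the paper phrases the conclusion via \cref{@headstrong} applied to $(c+1)\cdot H$, whereas you work directly with the hosts $M_i$ inside $\mathscr{D}^{\Sigma}_k$ and then slacken the sharper bound $\eg(\Sigma)/\eg(H)$ to the stated $c$; the underlying content is identical.
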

\begin{proof} Since $H$ is $Σ$-embeddable, we have that $\eg(Σ) \geq \eg(H).$ Since $H$ is non-planar $\eg(H) \geq 1.$ 
Then, by \autoref{@skepticism}, for every $k \geq 2,$ $\eg(k \cdot H) = k \cdot \eg(H) > \eg(H).$ 
Let $c = 1 + \eg(Σ) - \eg(H) \geq 1.$ 
Then, $(c + 1) \cdot H$ is not $Σ$-embedabble and hence, by \cref{@headstrong}, for every $k \in \mathbb{N},$ $\pack_{\{ H \}}(\mathscr{D}^{Σ}_{k}) \leq c.$
\end{proof}

\begin{lemma}\label{@predominantly} Let $H$ be a graph that is $Σ$-embeddable, for some surface $Σ.$
Then, there exists a polynomial function $f : \mathbb{N} \to \mathbb{N}$ such that for every $k \in \mathbb{N},$ $\cover_{\{ H \}}(\mathscr{D}^{Σ}_{k}) > f(k).$
\end{lemma}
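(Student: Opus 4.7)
The proof is a direct combination of Propositions \ref{@headstrong} and \ref{@monopolism}, exploiting that Dyck grids contain half-integral packings of themselves and that $\mathscr{D}^{\Sigma}$ is minor-universal for $\Sigma$-embeddable graphs. Since $H$ is $\Sigma$-embeddable, fix the constant $m := f_{\ref{@headstrong}}(\eg(\Sigma),|V(H)|)$, which depends only on $H$ and $\Sigma$, so that $H$ is a minor of $\mathscr{D}^{\Sigma}_{m}$.

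Given $k \in \mathbb{N}$, set $j := \lfloor k/m \rfloor$. Since the parametric family $\mathscr{D}^{\Sigma}$ is minor-monotone, $\mathscr{D}^{\Sigma}_{jm}$ is a minor of $\mathscr{D}^{\Sigma}_{k}$. Applying Proposition \ref{@monopolism} with $t := m$ produces a half-integral $\mathscr{D}^{\Sigma}_{m}$-packing of $\mathscr{D}^{\Sigma}_{jm}$ of size $j$; lifting this packing through the corresponding minor model yields a half-integral $\mathscr{D}^{\Sigma}_{m}$-packing $\{M_{1},\ldots,M_{j}\}$ inside $\mathscr{D}^{\Sigma}_{k}$. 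By the choice of $m$, each $M_{i}$ contains $H$ as a minor, and by definition of a half-integral packing, every vertex of $\mathscr{D}^{\Sigma}_{k}$ belongs to at most two of the $M_{i}$'s.

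The lower bound on the cover follows from a simple counting argument: for any $S \subseteq V(\mathscr{D}^{\Sigma}_{k})$, each vertex in $S$ meets at most two of the $M_{i}$'s, hence at most $2|S|$ of them intersect $S$. So whenever $|S| < j/2$ at least one $M_{i}$ is disjoint from $S$, witnessing an $H$-minor in $\mathscr{D}^{\Sigma}_{k}-S$. This shows $\cover_{\{H\}}(\mathscr{D}^{\Sigma}_{k}) \geq \lceil j/2 \rceil$, and so the linear (hence polynomial) function $f(k) := \max\{0,\, \lfloor k/(2m) \rfloor - 1\}$ satisfies the claim. There is no substantive obstacle: the whole argument is essentially an assembly of the two cited propositions, with the only bookkeeping concerning small values of $k$, absorbed in the $\max$.
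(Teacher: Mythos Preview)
Your proof is correct and follows essentially the same route as the paper: both arguments fix the constant $m=c_H=f_{\ref{@headstrong}}(\eg(\Sigma),|V(H)|)$ from \cref{@headstrong}, invoke \cref{@monopolism} to obtain a half-integral $\mathscr{D}^{\Sigma}_{m}$-packing of size $\lfloor k/m\rfloor$ inside $\mathscr{D}^{\Sigma}_{k}$, and then use the obvious counting argument that any cover must hit at least half of the members of a half-integral packing. Your write-up is slightly more careful about the lifting through the minor model and the small-$k$ bookkeeping, but there is no substantive difference.
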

\begin{proof} By \cref{@headstrong}, let $c_{H} = f_{\ref{@headstrong}}(\eg(Σ), h).$ We have that $H \leq \mathscr{D}^{Σ}_{c_{H}}.$
By this fact and \autoref{@monopolism}, for every $k \in \mathbb{N},$ $\mathscr{D}^{Σ}_{k \cdot c_{H}}$ contains a half-integral packing of $k$ copies of $\mathscr{D}^{Σ}_{c_{H}}$ and thus of $H.$ Then, by definition of a half-integral packing, an $H$-cover in $\mathscr{D}^{Σ}_{k \cdot c_{H}}$ has size at least $k/2.$
Hence, observe that for $f(k) = \lfloor k/(2c_{H}) \rfloor,$ we have that for every $k \in \mathbb{N},$ $\cover_{\{ H \}}(\mathscr{D}^{Σ}_{k}) > f(k).$
\end{proof}

\subsection{Proof of \cref{@admiration}}
\label{@prophetesses}

\begin{proof}[Proof of \cref{@admiration}]
Recall that $\text{\scriptsize\textsf{EP}}_{\Zcal}\coloneqq \mathsf{p}_{\mathfrak{D}_{\Zcal}},$ where $\mathfrak{D}_{\Zcal}\coloneqq \{ \mathscr{D}^{Σ} \mid Σ\in\mathsf{sobs}(\mathbb{S}_{\Zcal}) \}.$
Let $\Gcal$ be a graph class.

For the upper bound, assume that $\textsf{EP}_{\Zcal}$ is bounded in $\Gcal,$ therefore, there is some $t\in\Nbbb$ such that $\mathsf{p}_{\mathfrak{D}_{\Zcal}}(G)<t.$
This implies that every graph $G$ in $\Gcal$ excludes $\mathscr{D}^{Σ}_{t}$ as a minor, for every $Σ\in\sobs(\mathbb{S}_{\Zcal}).$
Assume now that $\pack_{\Zcal}(G)≤k.$
From \cref{@indescribably}, the only possible output is an $\Zcal$-cover of size at most $2^{\Ocal_{γ_{\Zcal}}(\poly(t))+\Ocal_{h_{\Zcal}}(k)}$  in $G,$ therefore $\Zcal$ has the EP-property in $\Gcal.$

Assume now that $\textsf{EP}_{\Zcal}$ is not bounded in $\Gcal.$
This means that, for some $Σ\in\mathsf{sobs}(\mathbb{S}_{\Zcal}),$  all graphs in $\langle \mathscr{D}^{Σ}_{k}\rangle_{k\in\Nbbb}$ belong to $\Gcal.$
Since $Σ \in \sobs(\mathbb{S}_{\Zcal}),$ every $H \in \Zcal$ is $Σ$-embeddable. 
Let $c_{\Zcal} = \max \{ \eg(Σ) - \eg(H) \mid H \in \Zcal  \}.$ 
Then, by \autoref{@automobiles}, for every $k \in \mathbb{N},$ $\pack_{\Zcal}(\mathscr{D}^{Σ}_{k}) \leq c_{\Zcal}.$ Moreover, by \autoref{@predominantly}, there exists a polynomial function $g : \mathbb{N} \to \mathbb{N}$ such that for every $k \in \mathbb{N},$ $\cover_{\Zcal}(\mathscr{D}^{Σ}_{k}) > g(k).$ 
Then, since for every $k \in \mathbb{N},$ $\mathscr{D}^{Σ}_{k} \in \Gcal,$ clearly there is no function $f : \mathbb{N} \to \mathbb{N}$ such that $\forall G \in \Gcal : \cover_{\Zcal}(G) ≤ f(\pack_{\Zcal}(G))$ and hence $\Zcal$ does not have the EP-property in $\Gcal.$
\end{proof}

\section{Conclusion and open problems}\label{@regressing}

\paragraph{Obstructions of graph classes.} 
The (minor)-\emph{obstruction set} of a graph class $\Gcal,$ denoted by $\obs(\Gcal),$ consists of the minor-minimal elements of $\gall \setminus \Gcal.$ 
Clearly $\obs(\Gcal)$ is an antichain.
Moreover, it is finite by Robertson's \& Seymour's theorem.
Obstruction sets permit the following equivalent statement of \cref{@admiration}.

\begin{theorem}\label{@incomplete}
Let $\Zcal$ be an antichain in $\Hbbb^-$ and let $\Gcal$ be a minor-closed graph class. 
$\Zcal$ has the Erdős-Pósa property in $\Gcal$ if and only if, for every surface $Σ \in \sobs(\Sbbb_{\Zcal}),$ there exists an obstruction in $\obs(\Gcal)$ which is $Σ$-embeddable. 
\end{theorem}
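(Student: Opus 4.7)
The plan is to derive \cref{@incomplete} directly from \cref{@admiration} by showing that boundedness of $\text{\scriptsize\textsf{EP}}_{\Zcal}$ on a minor-closed class $\Gcal$ is equivalent to the existence, for every $Σ \in \sobs(\Sbbb_{\Zcal})$, of a $Σ$-embeddable graph in $\obs(\Gcal)$. Both directions will rest on two elementary facts already recorded in the preliminaries: the Dyck grid $\mathscr{D}^{Σ}_{k}$ is itself $Σ$-embeddable for every $k$ (this is clear from the construction, and is stated right after the definition), and by \cref{@headstrong} every $Σ$-embeddable graph $H$ is a minor of $\mathscr{D}^{Σ}_{f_{\ref{@headstrong}}(\eg(Σ),|V(H)|)}$. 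Throughout the argument we will also use that $|\sobs(\Sbbb_{\Zcal})| \le 2$, which was noted just after the definition of $\sobs$.

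For the ``if'' direction, assume that for every $Σ \in \sobs(\Sbbb_{\Zcal})$ there exists an obstruction $H_{Σ} \in \obs(\Gcal)$ that is $Σ$-embeddable, and set $k_{Σ} \coloneqq f_{\ref{@headstrong}}(\eg(Σ),|V(H_{Σ})|)$, so that $H_{Σ} \leq \mathscr{D}^{Σ}_{k_{Σ}}$. Since $\Gcal$ is minor-closed and $H_{Σ} \notin \Gcal$, no graph $G \in \Gcal$ can contain $\mathscr{D}^{Σ}_{k_{Σ}}$ as a minor. Taking the maximum over the at most two surfaces in $\sobs(\Sbbb_{\Zcal})$, we obtain a single integer $K$ such that $\text{\scriptsize\textsf{EP}}_{\Zcal}(G) < K$ for every $G \in \Gcal$, i.e., $\text{\scriptsize\textsf{EP}}_{\Zcal}$ is bounded in $\Gcal$. \cref{@admiration} then yields the Erdős-Pósa property of $\Zcal$ in $\Gcal$. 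For the ``only if'' direction, assume that $\Zcal$ has the Erdős-Pósa property in $\Gcal$. By \cref{@admiration} there is a $K \in \Nbbb$ with $\text{\scriptsize\textsf{EP}}_{\Zcal}(G) \leq K$ for every $G \in \Gcal$. Fix an arbitrary $Σ \in \sobs(\Sbbb_{\Zcal})$ and consider $D \coloneqq \mathscr{D}^{Σ}_{K+1}$: since $D$ trivially contains itself as a minor, $\text{\scriptsize\textsf{EP}}_{\Zcal}(D) \geq K + 1$, so $D \notin \Gcal$. Consequently $\obs(\Gcal)$ contains some $H$ with $H \leq D$; as $D$ is $Σ$-embeddable and $Σ$-embeddability is preserved under taking minors, $H$ is the desired $Σ$-embeddable obstruction.

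The main obstacle has already been absorbed into \cref{@admiration}; what remains here is a pure packaging argument that replaces the parameter $\text{\scriptsize\textsf{EP}}_{\Zcal}$ by the more combinatorial statement about $\obs(\Gcal)$. The only mild subtlety is ensuring that the bound $K$ obtained from finitely many surfaces in $\sobs(\Sbbb_{\Zcal})$ is uniform, which is immediate once one recalls that this surface obstruction set has size at most two. No new structural machinery is needed beyond \cref{@admiration} and \cref{@headstrong}.
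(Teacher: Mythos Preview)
Your proposal is correct and is precisely the argument the paper has in mind: the paper presents \cref{@incomplete} as a direct reformulation of \cref{@admiration} without spelling out a proof, and your two-direction argument using \cref{@headstrong} and the $Σ$-embeddability of $\mathscr{D}^{Σ}_{k}$ is exactly how that equivalence is unpacked. The observation that $|\sobs(\Sbbb_{\Zcal})|\leq 2$ makes the uniform bound immediate is the only point worth noting, and you handle it correctly.
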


\subsection{Universal obstructions}

Let $\p \colon \gall\to\Nbbb$ be a minor-monotone graph parameter.
We say that a set $\mathfrak{H}$ of minor-monotone parametric graphs is a (minor-)\emph{universal obstruction} for $\p$ if $\p \sim \p_{\mathfrak{H}}$ (recall \eqref{@liberating} for the definition of $\p_{\mathfrak{H}}$). 
Universal obstruction may serve as canonical representations of graph parameters. (For more on the foundation of universal obstructions of parameters, see~\cite{paul2023graph,paul2023universal}.)
From this point of view, \cref{@admiration} can be restated follows:
 
\begin{theorem}
\label{@confession}
For every $\Zcal\in\Hbbb^-,$ the set of parametric graphs $\mathfrak{D}_{\Zcal}=\{ \mathscr{D}^{Σ} \mid Σ\in\mathsf{sobs}(\mathbb{S}_{\Zcal}) \}\cup\{\langle k\cdot H\rangle_{k\in\Nbbb}\mid H\in\Zcal\}$ is a universal obstruction for both $\cover_{\Zcal}$ and  $\nicefrac{1}{2}\text{-}\pack_{\Zcal}.$
\end{theorem}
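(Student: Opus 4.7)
The plan is to establish the two equivalences $\cover_{\Zcal}\sim\mathsf{p}_{\mathfrak{D}_{\Zcal}}$ and $\nicefrac{1}{2}\text{-}\pack_{\Zcal}\sim\mathsf{p}_{\mathfrak{D}_{\Zcal}}$ separately; by definition of universal obstruction, these jointly give \cref{@confession}. All four required inequalities will reduce to short assembly arguments from machinery already built in the paper, together with the minor-monotonicity of $\cover_{\Zcal}$, $\pack_{\Zcal}$, and $\nicefrac{1}{2}\text{-}\pack_{\Zcal}$.

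For the ``lower bound'' directions $\mathsf{p}_{\mathfrak{D}_{\Zcal}}\lesssim\cover_{\Zcal}$ and $\mathsf{p}_{\mathfrak{D}_{\Zcal}}\lesssim\nicefrac{1}{2}\text{-}\pack_{\Zcal}$, I would split on which ``half'' of $\mathfrak{D}_{\Zcal}$ is witnessing a large value. If $K\cdot H\leq G$ for some $H\in\Zcal$, then $\pack_{\Zcal}(G)\geq K$ and so both $\cover_{\Zcal}(G)\geq K$ and $\nicefrac{1}{2}\text{-}\pack_{\Zcal}(G)\geq K$ follow trivially. If $\mathscr{D}^{Σ}_{K}\leq G$ for some $Σ\in\sobs(\mathbb{S}_{\Zcal})$, then by the very definition of $\sobs(\mathbb{S}_{\Zcal})$ there exists $H\in\Zcal$ that embeds in $Σ$; for $\cover_{\Zcal}$ the result follows from \cref{@predominantly} together with the minor-monotonicity of $\cover_{\{H\}}$, which gives $\cover_{\Zcal}(G)\geq\cover_{\{H\}}(\mathscr{D}^{Σ}_{K})>g(K)$ for a polynomial $g$. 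For $\nicefrac{1}{2}\text{-}\pack_{\Zcal}$, set $c_{H}\coloneqq f_{\ref{@headstrong}}(\eg(Σ),|V(H)|)$, apply \cref{@monopolism} to $\mathscr{D}^{Σ}_{K}$ with parameter $c_{H}$ to extract a half-integral $\mathscr{D}^{Σ}_{c_{H}}$-packing of size $\lfloor K/c_{H}\rfloor$, and use $H\leq\mathscr{D}^{Σ}_{c_{H}}$ (from \cref{@headstrong}) to convert it into a half-integral $H$-packing of the same size.

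The ``upper bound'' direction $\cover_{\Zcal}\lesssim\mathsf{p}_{\mathfrak{D}_{\Zcal}}$ is where all the combinatorial work lies, but it is already fully encapsulated in \cref{@indescribably}. Concretely, I set $t\coloneqq\mathsf{p}_{\mathfrak{D}_{\Zcal}}(G)$ and call \cref{@indescribably} with integers $t+1$ and $k\coloneqq t+1$. The assumption $\mathsf{p}_{\mathfrak{D}_{\Zcal}}(G)\leq t$ rules out both the first outcome (a $\mathscr{D}^{Σ}_{t+1}$-\major for some $Σ\in\sobs(\mathbb{S}_{\Zcal})$) and the second (an $\Zcal$-packing of size $t+1$, which would witness $(t+1)\cdot H\leq G$ for some $H\in\Zcal$). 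Hence the third outcome must hold, producing an $\Zcal$-cover of size at most $f_{\ref{@indescribably}}(γ_{\Zcal},h_{\Zcal},t+1,t+1)=2^{\poly_{γ_{\Zcal},h_{\Zcal}}(t)}$. The remaining inequality $\nicefrac{1}{2}\text{-}\pack_{\Zcal}\lesssim\mathsf{p}_{\mathfrak{D}_{\Zcal}}$ then follows from the textbook bound $\nicefrac{1}{2}\text{-}\pack_{\Zcal}\leq 2\cdot\cover_{\Zcal}$. The expected obstacle is purely cosmetic: since the hard combinatorial content sits inside \cref{@indescribably}, the only care needed is to check the minor-monotonicity of $\nicefrac{1}{2}\text{-}\pack_{\Zcal}$ carefully so that the lower bound produced on a Dyck-grid minor transfers to the ambient graph, and to keep track of the fact that the resulting gap functions depend only on $γ_{\Zcal}$ and $h_{\Zcal}$.
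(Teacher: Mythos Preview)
Your proposal is correct and matches the approach implicit in the paper. The paper does not give a separate proof of \cref{@confession}; it presents the statement as a reformulation of \cref{@admiration}, relying on the same ingredients you invoke: \cref{@indescribably} for the upper bound on $\cover_{\Zcal}$, and \cref{@headstrong}, \cref{@monopolism}, \cref{@predominantly} (together with the trivial inequalities $\pack_{\Zcal}\leq\nicefrac{1}{2}\text{-}\pack_{\Zcal}\leq 2\cdot\cover_{\Zcal}$) for the lower bounds. Your write-up makes explicit precisely the assembly argument the paper leaves to the reader.
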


Given some $\Zcal\in\Hbbb,$  for every $k\in\Nbbb,$ we define $\Ccal_{k}^{\Zcal}=\{G\mid \cover_{\Zcal}(G)≤k\}.$ \cref{@confession} (or the equivalent  \cref{@incomplete})  gives us some valuable information about the obstructions in  $\obs(\Ccal_{k}^{\Zcal}),$ for every $k.$

Certainly, the simplest antichain in $\Hbbb^-$ is the one consisting of the two Kuratowski graphs $\Kcal=\{K_{5},K_{3,3}\}.$
The parameter $\cover_{\Kcal}$ is the \textsl{planarizer number} that is the minimum number of vertices whose removal can make a graph planar. 
The obstruction $\obs(\Ccal_{k}^{\Kcal})$ is unknown for every positive value of $k$ and its size is expected to grow rapidly as a function of $k$ (see \cite{Dinneen97} for  an exponential lower bound and \cite{SauST23apices} for a triply exponential upper bound).
The identification of $\obs(\Ccal_{k}^{\Kcal})$ is a non-trivial problem even for small values of $k.$
In particular, it has been studied extensively for the case where $k=1$ in~\cite{LiptonMMPRT16sixv,Mattman16forb,Yu06more}. 
In this direction, Mattman and Pierce conjectured that $\obs(\Ccal_{k}^{\Kcal})$ contains the $Y\Delta Y$-families of $K_{n+5}$ and $K_{3^2,2^n}$ and provided evidence towards this in~\cite{MattmanP16thea}.
Recently, Jobson and Kézdy identified \textsl{all} graphs in $\obs(\Ccal_{1}^{\Kcal})$ of connectivity two in \cite{JobsonK21allm}, where they also reported that $|\obs(\Ccal_{1}^{\Kcal})| \geq 401.$

It is easy to see that $\{(k+1)\cdot K_{5},(k+1)\cdot K_{3,3}\}\subseteq \obs(\Ccal_{k}^{\Kcal}),$ for every $k\in\Nbbb.$
Our results, together with the fact that $\sobs(\Sbbb_{\Kcal})=\{Σ^{(1,0)},Σ^{(0,1)}\},$ provide the following extra information about $\obs(\Ccal_{k}^{\Kcal})$: for every $k\in \Nbbb,$ it contains some graph, say  $G_{k}^{\sf t},$ embeddable in the torus and some graph, say  $G_{k}^{\sf p},$ embeddable in the projective plane. 
Most importantly, our results indicate, that the four-member subset $\{(k+1)\cdot K_{5},(k+1)\cdot K_{3,3},G_{k}^{\sf t},G_{k}^{\sf p}\}$ of $\obs(\Ccal_{k}^{\Kcal})$ is sufficient to determine the approximate behaviour of the planarizer number.

Similar implications can be derived for every $\Zcal\in\Hbbb^-.$
For instance, if $\Pcal$ is the Petersen family, we again have that $\sobs(\Sbbb_{\Pcal})=\{Σ^{(1,0)},Σ^{(0,1)}\}.$
Therefore the parameter defined as the minimum number of vertices to remove so as to make a graph linkless, is approximately characterized by picking only nine graphs of $\obs(\Ccal_{k}^{\Pcal}),$ for every $k\in\Nbbb.$

Other examples of surface obstructions corresponding to known graphs that are Kuratowski-connected and shallow-vortex minors are $\sobs(\Sbbb_{\{K_{5}\}})=\sobs(\Sbbb_{\{K_{6}\}})=\sobs(\Sbbb_{\{M_{2n}\}})=\{Σ^{(1,0)},Σ^{(0,1)}\},$ 
where $M_{2n}$ is the $2n$-Möbius ladder,\footnote{The \emph{Möbius ladder} $M_{2n}$ is formed if we consider a cycle on $2n$ vertices and then connect by edges the $n$ anti-diametrical pairs of vertices. Notice that $M_{6}=K_{3,3}.$
$M_{8}$ is called the \emph{Wagner Graph}.} for $n\in\Nbbb_{≥3}.$
Two other examples are $\sobs(\Sbbb_{\{K_{4,4}\}})=\{Σ^{(1,0)},Σ^{(0,2)}\}$ and $\sobs(\Sbbb_{\{K_{7}\}})=\{Σ^{(1,0)}\}.$

Another implication of our results is the following.

\begin{theorem}
\label{@superstruc}
For every closed and proper set of surfaces $\Sbbb,$ the set of parametric graphs $\mathfrak{V}_{\Zcal}=\{ \mathscr{D}^{Σ} \mid Σ\in\mathsf{sobs}(\mathbb{S})\}\cup\{\langle \mathscr{V}_{k}\rangle_{k\in\Nbbb}\}$ is a universal obstruction for $\Sbbb\mbox{-}\tw_{\mathsf{apex}}.$
\end{theorem}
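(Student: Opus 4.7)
\textbf{Proof proposal for \cref{@superstruc}.} The plan is to establish the equivalence $\Sbbb\mbox{-}\tw_{\mathsf{apex}} \sim \mathsf{p}_{\mathfrak{V}_{\Sbbb}}$ (where the subscript in $\mathfrak{V}_{\Zcal}$ is understood as $\mathfrak{V}_{\Sbbb}$, as only $\Sbbb$ is involved) by proving the two asymptotic inequalities composing it. The easy direction is a structural observation on the canonical obstructions, while the hard direction is essentially a corollary of the structural machinery already developed in \cref{@reciprocal}, in particular \cref{@enlightened}, together with \cref{@industrialism}.

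\emph{Easy direction, $\mathsf{p}_{\mathfrak{V}_{\Sbbb}} \lesssim \Sbbb\mbox{-}\tw_{\mathsf{apex}}$.} First we observe that $\Sbbb\mbox{-}\tw_{\mathsf{apex}}$ is minor-monotone: given a tree decomposition $(T,\beta)$ of $G$ witnessing $\Sbbb\mbox{-}\tw_{\mathsf{apex}}(G)\le k$, any minor $H\le G$ inherits a tree decomposition with torsos being minors of the torsos of $G$, hence still realizable as apex-over-$\Sbbb$ graphs with apex of size $\le k$. It therefore suffices to prove that each parametric graph in $\mathfrak{V}_{\Sbbb}$ has $\Sbbb\mbox{-}\tw_{\mathsf{apex}}$ growing to infinity. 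For $\mathscr{V}_{k}$, we exploit that $\mathscr{V}_{k}$ contains $K_{3,r}$ as a minor for every $r$ (as remarked in the paper) and is $3$-connected for large $k$; hence any minor model of $\mathscr{V}_{k}$ lies inside a single torso $G_{t}$, forcing $\mathrm{eg}(K_{3,r})\le\mathrm{eg}(\Sigma_{t})+\mathcal{O}(|A_{t}|^{2})$, and since $\Sbbb$ is proper the Euler genus of any $\Sigma_{t}\in\Sbbb$ is bounded, giving a bound on $k$ in terms of $|A_{t}|$. For $\mathscr{D}^{\Sigma}$ with $\Sigma\in\sobs(\Sbbb)$, the same $3$-connectivity reduction places an inflated copy of $\mathscr{D}^{\Sigma}_{k}$ inside a single torso; then using the rigidity of Dyck-grids on their native surface (for any $A$ with $|A|\le k/c$, the graph $\mathscr{D}^{\Sigma}_{k}-A$ still contains an inflated copy of $\mathscr{D}^{\Sigma}_{\Omega(k)}$, which by minimality of $\Sigma\in\sobs(\Sbbb)$ is not embeddable in any $\Sigma'\in\Sbbb$), one obtains that $|A_{t}|=\Omega(k)$, as desired.

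\emph{Hard direction, $\Sbbb\mbox{-}\tw_{\mathsf{apex}} \lesssim \mathsf{p}_{\mathfrak{V}_{\Sbbb}}$.} Suppose $G$ excludes $\mathscr{D}^{\Sigma}_{t}$ for every $\Sigma\in\sobs(\Sbbb)$ and also excludes $\mathscr{V}_{t}$ as minors. By \cref{@industrialism}, the exclusion of the Dyck grids yields $\Sbbb\mbox{-}\tw(G)\le f_{1}(t)$, and hence a tree decomposition $(T,\beta)$ where each torso $G_{t}$ admits a set $A_{t}$ with $\tw(G_{t},A_{t})\le f_{1}(t)$ and $G_{t}-A_{t}$ embeddable in some $\Sigma_{t}\in\Sbbb$. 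It remains to upgrade each ``treewidth-modulator'' $A_{t}$ into a bounded-\emph{size} apex set, at the cost of either modifying the tree decomposition (taking carvings) or producing a shallow-vortex-grid minor. This step is exactly what the vortex-killing argument of \cref{@reciprocal}---namely \cref{@rockefeller} applied inside each torso, together with the tightening lemma \cref{@considered}---provides: for each vortex we either detect a shallow-vortex grid $\mathscr{V}_{t}$ (excluded by assumption) or absorb a bounded number of vertices into the apex set, yielding in the end a tree decomposition certifying $\Sbbb\mbox{-}\tw_{\mathsf{apex}}(G)\le f_{2}(t)$. The composition of the bounds in Steps (easy) and (hard) produces the gap function $f$ witnessing $\Sbbb\mbox{-}\tw_{\mathsf{apex}}\sim\mathsf{p}_{\mathfrak{V}_{\Sbbb}}$.

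\emph{Main obstacle.} The non-trivial ingredient is the easy direction for $\mathscr{D}^{\Sigma}$: establishing that $\Sbbb\mbox{-}\tw_{\mathsf{apex}}(\mathscr{D}^{\Sigma}_{k})=\Omega(k)$ rather than merely $\Omega(1)$. A genus-only argument fails since $\mathrm{eg}(\mathscr{D}^{\Sigma}_{k})=\mathrm{eg}(\Sigma)$ is constant in $k$; one must use the finer fact that every non-separating cycle of $\mathscr{D}^{\Sigma}_{k}$ has length $\Omega(k)$, so any apex set cheating down one handle or cross-cap must already have size $\Omega(k)$. Making this precise is standard (it parallels the folklore ``grid-in-surface rigidity'' used implicitly in \cref{@headstrong} and \cref{@monopolism}), but it is the only place where a new, non-packaged combinatorial argument is required; the rest of the proof is a direct assembly of \cref{@industrialism} and \cref{@reciprocal}.
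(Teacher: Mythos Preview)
The paper does not prove \cref{@superstruc} in a self-contained way: it identifies \cref{@enlightened} as the relevant local structure theorem and then outsources the full equivalence $\Sbbb\mbox{-}\tw_{\mathsf{apex}}\sim\p_{\mathfrak{V}_{\Sbbb}}$ to \cite[Theorem~5.18]{thilikos2023approximating}. Your attempt to argue both directions explicitly is thus genuinely different in scope, but it contains a real gap in the easy direction. The step ``$\mathscr{V}_{k}$ (resp.\ $\mathscr{D}^{\Sigma}_{k}$) is $3$-connected, hence any minor model lies inside a single torso $G_{t}$'' is false as stated: $3$-connectivity only forces non-trivial adhesions to have size at least $3$, it does not force any bag to equal $V(G)$, and the definition of $\Sbbb\mbox{-}\tw_{\mathsf{apex}}$ imposes no bound on adhesion size, so torsos need not be minors of $G$ and your inequality $\mathrm{eg}(K_{3,r})\le\mathrm{eg}(\Sigma_{t})+\mathcal{O}(|A_{t}|^{2})$ cannot simply be read off one torso. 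A correct lower bound needs a tangle-localisation argument (the large wall in $\mathscr{V}_{k}$ or $\mathscr{D}^{\Sigma}_{k}$ points the tangle to some node $t^{*}$) together with control on the adhesions at $t^{*}$; this is exactly the content absorbed into the external citation the paper invokes, and it is not bypassed by your ``non-separating cycle length'' remark for $\mathscr{D}^{\Sigma}_{k}$ either.

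In the hard direction your route via \cref{@industrialism} followed by ``upgrade modulators using \cref{@rockefeller}'' also has a missing link: \cref{@industrialism} yields a tree decomposition with \emph{treewidth-bounded} modulators $A_t$, not a $\Sigma$-decomposition with vortices surrounded by nests, which is the input format that \cref{@considered} and \cref{@rockefeller} consume. The object supplying vortices-with-nests is \cref{@duplicating}; the paper packages the chain \cref{@duplicating}$+$\cref{@considered}$+$\cref{@rockefeller} as \cref{@enlightened} at the local (well-linked-set) level and then cites the external reference for the local-to-global step. Your sketch conflates the output of \cref{@industrialism} with that of \cref{@duplicating}, so as written the vortex-killing lemmas have nothing to act on.
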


The above is a direct consequence of \cref{@enlightened} that is a local structure theorem for graphs excluding the parametric graphs in $\mathfrak{V}_{\Zcal},$ i.e., graphs where $\p_{\mathfrak{V}_{\Zcal}}$ is bounded.
The parameter $\Sbbb\mbox{-}\tw_{\mathsf{apex}}$ (defined in \eqref{@ambivalent}) corresponds to the global version of this theorem.
In particular, the equivalence between $\Sbbb\mbox{-}\tw_{\mathsf{apex}}$ and $\p_{\mathfrak{V}_{\Zcal}}$ follows directly by \cite[Theorem 5.18]{thilikos2023approximating}.
Notice that $\Sbbb\mbox{-}\tw_{\mathsf{apex}}$ can be seen as a parametric extension of graph embeddability and that the exclusion of shallow-vortex minors is pivotal for its definition.
The potential algorithmic applications of $\Sbbb\mbox{-}\tw_{\mathsf{apex}}$ are open to investigate.

Notice that for both the equivalences in \cref{@confession} and in \cref{@superstruc} we have a single exponential gap which, in turn, determines the gap of our \FPT-approximations.
Is it possible to reduce this to a polynomial one?
Certainly, this requires a polynomial dependency on $k$ and $t$ in \cref{@indescribably} (the lower bound in \cref{@predominantly} is polynomial). 
There are two sources of exponentiality in the proof of \cref{@indescribably}.
The first is in the exclusion of the Dyck grid $\mathscr{D}_{t}^{Σ'},$ for  $Σ'\in\sobs(\mathbb{S})$ in \cref{@duplicating}, where we have an exponential dependency on $t.$
This dependency already emerges from the bounds in \cite{kawarabayashi2020quickly}.
On the other hand the exponentially dependency on $k$ emerges from the redrawing lemma \cref{@horkheimer}, where the exponential bound comes from the dependencies of the planar linkage theorem in \cite{AdlerKKLST17Irrelevant}.
Avoiding these two sources of exponentiality appears to be a hard task. 
An alternative approach is to try instead to ``enlarge'' the size of the universal obstructions to obtain a polynomial parametric graph. This would also be desirable  for the purposes of better \FPT-approximation algorithms.

\subsection{Going further than $\Kbbb\cap \Vbbb$.}

The central question proposed by this work is to chart the threshold of half-integrality when covering and packing graphs as minors.
In this paper we resolved this question for every antichain in $\Kbbb\cap \Vbbb.$
The wide open question is whether and how this can be done for more general families of antichains. 
For this, one needs to prove structure theorems on  the exclusion of parameterized graphs of unbounded genus, as those in \cref{@collection}.
The challenges that have to be met for this, when going beyond $\Kbbb,$ are different from those encountered when going beyond $\Vbbb.$ 
We believe that the proof strategy of our paper can serve as a starting point for both directions towards the general case.
The resolution of the general case is highly non-trivial and requires new tools and ideas.

We conclude with a conjecture. 
Our guess is that when we insist on universal obstructions of bounded genus, then we cannot go much further than the horizon of $\Kbbb\cap \Vbbb.$
Let $\Bbbb$ be the set of all antichains consisting of graphs where each of them can be embedded in both the torus and the projective plane.
As an example, observe that $\{K_{3,4}\}\in \Bbbb\setminus\Kbbb,$ while $\{K_{3,5}\}\not\in \Bbbb.$
We conjecture the following.

\begin{conjecture}
Let $\Zcal$ be an antichain and let $\text{\scriptsize\textsf{EP}}_\Zcal \colon \gall\to\Nbbb$ be a graph parameter such that $\Zcal$ has the Erdős-Pósa property in a minor-closed graph class $\mathcal{G}$ if and only if $\text{\scriptsize\textsf{EP}}_\Zcal$ is bounded in $\mathcal{G}$.
Then $\Zcal \in (\Kbbb \cap \Vbbb) \cup \Bbbb$ if and only there exists some $g_{\Zcal}$ such that all universal obstructions of $\text{\scriptsize\textsf{EP}}_\Zcal$ consist of parametric graphs of Euler genus  $\leq  g_{\Zcal}.$
\end{conjecture}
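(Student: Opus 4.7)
The biconditional splits naturally into two directions, and the forward direction splits further along the union.  For $\Zcal\in\Kbbb\cap\Vbbb$ the conclusion is immediate from \cref{@confession}: the universal obstruction $\mathfrak{D}_\Zcal=\{\mathscr{D}^\Sigma\mid\Sigma\in\sobs(\Sbbb_\Zcal)\}$ (together with the trivial parametric graphs $\langle k\cdot H\rangle_{k\in\Nbbb}$, each of fixed genus) has Euler genus bounded by $\gamma_\Zcal+2$, since every $\Sigma\in\sobs(\Sbbb_\Zcal)$ is obtained by adding at most one handle or cross-cap to a surface into which some $H\in\Zcal$ embeds, so we may take $g_\Zcal:=\gamma_\Zcal+2$.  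For $\Zcal\in\Bbbb\setminus(\Kbbb\cap\Vbbb)$, every graph in $\Zcal$ embeds in both $\Sigma^{(1,0)}$ and $\Sigma^{(0,1)}$, forcing $\sobs(\Sbbb_\Zcal)\subseteq\{\Sigma^{(1,0)},\Sigma^{(0,1)}\}$; the corresponding Dyck grids then have Euler genus at most $2$, and one is left with the task of verifying that these Dyck grids really do form a universal obstruction, i.e.\ proving \cref{@physiologist} for this family of antichains.

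For the backward direction, assume $\Zcal\notin(\Kbbb\cap\Vbbb)\cup\Bbbb$.  We follow the template constructions sketched in \cref{@collection} to exhibit, for each failure mode, a minor-monotone parametric graph $\mathscr{G}$ of unbounded Euler genus which is a counterexample to the EP property of $\Zcal$.  Concretely, if some $H\in\Zcal$ is not Kuratowski-connected, then a minimal separation $(A,B)$ of $H$ of order at most three with both sides non-$(A\cap B)$-disk-embeddable yields a $K_{3,3}$-like ``bad-separation'' gadget; pasting many pairwise disjoint copies of this gadget along the outer cycle of $\mathscr{H}_k$ or $\mathscr{C}_k$ produces $\mathscr{G}_k$, of Euler genus $\Omega(k)$ (witnessed by the $K_{3,3}$-packing), admitting no two disjoint $H$-hosts but requiring $\Omega(k)$ vertices to hit all $H$-hosts.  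If $H$ is not a shallow-vortex minor, the analogous construction uses the third parametric graph in \cref{@collection}, forcing $K_{3,r}$ minors.  The additional hypothesis $\Zcal\notin\Bbbb$ guarantees that no surface of genus at most two accommodates all of $\Zcal$, so the growth of Euler genus in $\mathscr{G}$ cannot be ``absorbed'' into a bounded-genus substitute; by the $\lesssim$-minimality arguments that underpin universal obstructions (see \cite{paul2023graph,paul2023universal}), $\mathscr{G}$ must appear (up to equivalence) in every universal obstruction of $\text{\scriptsize\textsf{EP}}_\Zcal$, and so no universal obstruction can consist only of parametric graphs of Euler genus at most $g_\Zcal$ for any fixed $g_\Zcal$.

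\paragraph{Main obstacle.}  The principal difficulty is the forward direction for $\Zcal\in\Bbbb\setminus(\Kbbb\cap\Vbbb)$: one must establish \cref{@physiologist} in a regime where \emph{neither} structural hypothesis of the present paper is available.  In particular, the redrawing lemma (\cref{@horkheimer}) rests crucially on Kuratowski-connectivity, and the vortex-killing step (\cref{@reciprocal}) rests crucially on the exclusion of a large shallow-vortex grid; both would need full replacements.  A plausible route is to exploit the extreme smallness of $\sobs(\Sbbb_\Zcal)\subseteq\{\Sigma^{(1,0)},\Sigma^{(0,1)}\}$ to force torsos in an $\Sbbb_\Zcal$-tree decomposition to be essentially toroidal or projective-planar, and then to perform a direct case analysis of the finitely many ``embedding types'' by which a $\Zcal$-host can traverse apex vertices, but developing such an analysis appears to require new structural tools in the spirit of the Graph Minors Structure theorem tailored to these two specific surfaces.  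A secondary, more technical obstacle in the backward direction is to verify that the constructed parametric graphs are $\lesssim$-incomparable to every parametric graph of bounded Euler genus, which amounts to proving that the pasted non-Kuratowski or non-shallow-vortex gadgets are essential — not incidental — witnesses of the half-integrality bottleneck.
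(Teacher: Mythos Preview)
The statement you are addressing is a \emph{conjecture} in the paper, not a theorem; it is presented in the concluding section as an open problem, and the paper offers no proof whatsoever. There is therefore nothing in the paper to compare your proposal against.

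Your write-up is accordingly not a proof but a proof \emph{plan}, and you are candid about this: you correctly identify that the forward direction for $\Zcal\in\Bbbb\setminus(\Kbbb\cap\Vbbb)$ requires establishing \cref{@physiologist} outside the regime covered by the paper's machinery, and you note that both the redrawing lemma and the vortex-killing step would need wholesale replacements. That is exactly the gap, and it is genuinely open. A few further cautions on your sketch: for the backward direction, the constructions in \cref{@collection} are illustrative rather than fully general --- you would need to show that for \emph{every} $\Zcal\notin(\Kbbb\cap\Vbbb)\cup\Bbbb$ some such unbounded-genus parametric counterexample exists, which requires a case analysis over the possible failure modes (non-Kuratowski-connected member, no shallow-vortex minor member, and the interaction with $\Bbbb$). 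Also, your claim that $\sobs(\Sbbb_\Zcal)\subseteq\{\Sigma^{(1,0)},\Sigma^{(0,1)}\}$ for $\Zcal\in\Bbbb$ needs more care: $\Bbbb$ asks that each $H\in\Zcal$ embed in both the torus and the projective plane, so $\Sbbb_\Zcal\supseteq\{\Sigma^\varnothing,\Sigma^{(0,0)}\}$, but the precise surface obstruction set depends on whether some $H$ also fails to embed in, say, the Klein bottle.
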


\paragraph{Acknowledgements:} The authors wish to thank the anonymous reviewers for their remarks and suggestions on  earlier versions of this paper.

\end{document}